\documentclass[reqno,12pt]{amsart}

\usepackage{graphicx, amsmath, amssymb, amscd, amsthm, euscript, psfrag, amsfonts,bm}

\setcounter{tocdepth}{1}

\newtheorem{theorem}{Theorem}[section]
\newtheorem{lemma}[theorem]{Lemma}
\newtheorem{proposition}[theorem]{Proposition}
\newtheorem{corollary}[theorem]{Corollary}

\theoremstyle{definition}
\newtheorem{definition}[theorem]{Definition}
\newtheorem{notation}[theorem]{Notation}

\newtheorem{remark}[theorem]{Remark}

\numberwithin{equation}{section}

\newcommand{\bH}{\mathbb H}
\newcommand{\LG}{\Lambda(\Gamma)}
\newcommand{\tS}{\tilde S}
\newcommand{\Euc}{\op{Euc}}

\newcommand{\e}{\epsilon}
\newcommand{\z}{\mathbb{Z}}

\newcommand{\N}{\mathbb{N}}

\newcommand{\br}{\mathbb{R}}
\newcommand{\R}{\mathbb{R}}

\newcommand{\gt}{{\Cal G}^t}

\newcommand{\gr}{{\Cal G}^r}
\newcommand{{\grinv}}{{\Cal G}^{-r}}

\newcommand{\p}{\mathbf{p}}
\newcommand{\ba}{\backslash}\newcommand{\bs}{\backslash}

\newcommand{\G}{\Gamma}

\newcommand{\g}{\gamma}

\newcommand{\Cal}{\mathcal}

\newcommand{\la}{\langle}
\newcommand{\ra}{\rangle}

\newcommand{\SL}{\operatorname{SL}}

\newcommand{\Int}{\operatorname{int}}

\newcommand{\bdp}{{\rm bp}}

\newcommand{\inv}{^{-1}}
\newcommand{\cH}{{\mathcal{H}}}

\newcommand{\SO}{\operatorname{SO}}

\newcommand{\T}{\operatorname{T}}
\newcommand{\Th}{\operatorname{T}^1(\bH^n)}
\newcommand{\bh}{\partial\mathbb{H}^n}

\newcommand{\sk}{\operatorname{sk}}

\newcommand{\Vol}{\op{Vol}}
\newcommand{\PSL}{\op{PSL}}

\newcommand{\cB}{{\mathcal B}}
\newcommand{\cD}{{\mathcal D}}

\newcommand{\cF}{{\mathcal F}}

\newcommand{\PS}{\rm{PS}}
\newcommand{\muPS}{\mu^{\PS}}

\newcommand{\norm}[1]{\lVert #1 \rVert}
\newcommand{\abs}[1]{\lvert #1 \rvert}

\newcommand{\op}{\operatorname}
\newcommand{\supp}{\operatorname{supp}}
\newcommand{\diag}{\operatorname{diag}}

\newcommand{\codim}{\operatorname{codim}}

\newcommand{\BR}{\operatorname{BR}}
\newcommand{\Leb}{\operatorname{Leb}}
\newcommand{\BMS}{\operatorname{BMS}}
\newcommand{\Riem}{\operatorname{Riem}}
\newcommand{\Hn}{{\mathbb H}^n}
\newcommand{\rank}{\operatorname{rank}}

\newcommand{\parcorank}{\operatorname{pb-corank}}
\newcommand{\diam}{\operatorname{diam}}
\newcommand{\V}{\operatorname{Viz}}
\newcommand{\Vinv}{\operatorname{V}\inv}

\newcommand{\tE}{\tilde E}

\newcommand{\cl}[1]{\overline{#1}}
\renewcommand{\muPS}{\mu^{\PS}}
\renewcommand{\setminus}{\smallsetminus}

\newcommand{\Span}{\operatorname{span}}

\newcommand{\Lie}{\operatorname{Lie}}

\newcommand{\gri}{{\mathcal G}^{r_i}}
\newcommand{\gro}{{\mathcal G}^{r_0}}

\newcommand{\bg}{{\bm{\g}}}
\newcommand{\bk}{{\bm{k}}}
\newcommand{\bss}{{\bm{s}}}
\newcommand{\bw}{{\bm{w}}}
\newcommand{\Z}{\z}
\newcommand{\cE}{{\mathcal E}}
\newcommand{\rmr}{{\rm r}}
\newcommand{\rmp}{{\rm p}}
\newcommand{\eucl}{{\rm eucl}}
\newcommand{\bfb}{\operatorname{b}}
\newcommand{\bfh}{\operatorname{h}}

\newcommand{\GmH}{\G_H\bs H}
\newcommand{\GmG}{\G\bs G}
\newcommand{\mBR}{{\bar m}^{\BR}}

\newcommand{\bfq}{{\mathbf q}}
\newcommand{\quo}{\bfq}

\begin{document}

\title[Equidistribution and counting]
{Equidistribution and counting for orbits of geometrically finite hyperbolic
groups}

\author{Hee Oh}
\address{Mathematics department, Brown university, Providence, RI 02912
and Korea Institute for Advanced Study, Seoul, Korea}
\email{heeoh@math.brown.edu}
\thanks{H. Oh was supported in part by NSF Grant \#0629322 and \#1068094.}

\author{Nimish A. Shah}
\address{Department of Mathematics, The Ohio State University, Columbus, OH 43210}
\email{shah@math.osu.edu}
\thanks{N. Shah was supported in part by NSF Grant \#1001654.}

\subjclass[2010] {Primary 11N45, 37F35, 22E40; Secondary 37A17, 20F67}

\keywords{Geometrically finite hyperbolic groups, mixing of geodesic flow, 
totally geodesic submanifolds, Patterson-Sullivan measure}

\begin{abstract}
Let $G$ be the identity component of $\SO(n,1)$, $n\ge 2$, acting linearly on
 a finite dimensional real vector space $V$.
 Consider a vector $w_0\in V$ such that the stabilizer of $w_0$ is a symmetric subgroup of $G$
or the stabilizer of the line $\br w_0$ is a parabolic subgroup of $G$.
For any non-elementary discrete subgroup $\G$ of $G$ with its orbit $w_0\G$ discrete, we
 compute an asymptotic formula (as $T\to \infty$)
for the number of points in $w_0\G$ of norm at most $T$,
provided that the Bowen-Margulis-Sullivan measure on $\T^1(\G\ba \bH^n)$ and the $\G$-skinning size
of $w_0$ are finite.

The main ergodic ingredient in our approach is the description for the limiting distribution of the 
orthogonal translates of a totally geodesically immersed closed submanifold of 
$\G\ba\bH^n$. We also give a criterion on the finiteness of the $\G$-skinning size of $w_0$ for 
$\G$ geometrically finite.
\end{abstract}

\maketitle
\tableofcontents

\section{Introduction}

\subsection{Motivation and Overview}
\label{subsec:motivation}
Let $G$ denote the identity component of the special orthogonal group $\SO(n,1)$, $n\ge 2$,
and $V$ a finite dimensional real vector space on which $G$ acts linearly from the right.

A discrete subgroup of a locally compact group with finite co-volume is called a lattice.
For $v\in V$ and a subgroup $H$ of $G$, let $H_{v}=\{h\in H:vh=v\}$ denote the
stabilizer of $v$ in $H$.

A subgroup $H$ of $G$ is called
{\it symmetric} if there exists a nontrivial involutive automorphism $\sigma$ of $G$ such that
the identity component of $H$ is same as the identity component of $G^\sigma=\{g\in G:
\sigma(g)=g\}$.

\begin{theorem}[Duke-Rudnick-Sarnak \cite{DukeRudnickSarnak1993}] \label{drs}
Fix $w_0\in V$ such that $G_{w_0}$ is symmetric. Let $\G$ be a lattice in $G$ such that
 $ \G_{w_0}$ is a lattice in $G_{w_0}$.
Then for any norm $\|\cdot \|$ on $V$,
$$\lim_{T\to \infty}\frac{\#\{w\in w_0\G: \|w\|<T\}}{\op{vol}(B_T)} = \frac{\op{vol}(\G_{w_0}\ba
G_{w_0})}{\op{vol}(\G\ba G)}$$
where $B_T:=\{w\in w_0G:\|w\|<T\}$ and the volumes on
$G_{w_0}, G$ and $w_0G\simeq G_{w_0}\ba G$ are computed with respect to
the right invariant measures chosen compatibly.
\end{theorem}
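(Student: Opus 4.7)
The plan is to follow the test-function/mixing approach pioneered by Eskin--McMullen. Set $H := G_{w_0}$ and fix compatible Haar measures on $G$, $H$, and $H\bs G$. Since the right $G$-action identifies $w_0 G$ with $H\bs G$, the count
\[
N(T) := \#\{w\in w_0\G:\|w\|<T\}
\]
equals $F_T(\G e)$, where
\[
F_T(\G g) := \sum_{[\g]\in \G_{w_0}\bs\G}\mathbf{1}_{B_T}(w_0\g g)
\]
is a well-defined function on $\G\bs G$. Choose an approximate identity $\psi_\e$ on $\G\bs G$: a nonnegative continuous bump of total integral one, supported in a small neighborhood of $\G e$. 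Because the family $B_T$ is radially well-rounded (a small dilation changes $\op{vol}(B_T)$ by only an $O(\e)$-multiplicative factor), the pointwise value $F_T(\G e)$ is sandwiched between $\langle F_{T/(1+\e)},\psi_\e\rangle_{\G\bs G}$ and $\langle F_{T(1+\e)},\psi_\e\rangle_{\G\bs G}$. This reduces the theorem to the asymptotic analysis of this inner product.

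Next I would unfold the inner product. The sum over $\G_{w_0}\bs\G$ defining $F_T$ collapses into an integral over $\G_{w_0}\bs G$, and since $\mathbf{1}_{B_T}(w_0 g)$ is left $H$-invariant, disintegrating along the fibration $\G_{w_0}\bs G\to H\bs G$ yields
\[
\langle F_T,\psi_\e\rangle = \int_{H\bs G}\mathbf{1}_{B_T}(w_0 g)\,\phi_\e(g)\,d\mu(g),
\qquad
\phi_\e(g) := \int_{\G_{w_0}\bs H}\psi_\e(\G hg)\,dh.
\]
Here $\phi_\e(g)$ is the total mass that $\psi_\e$ places on the translate $(\G_{w_0}\bs H)g$ of the closed symmetric $H$-orbit inside $\G\bs G$.

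The ergodic core of the argument is the equidistribution
\[
\phi_\e(g)\ \longrightarrow\ \frac{\op{vol}(\G_{w_0}\bs H)}{\op{vol}(\G\bs G)}\int_{\G\bs G}\psi_\e
\]
as $g$ leaves every compact subset of $H\bs G$, uniformly on compacta in the transverse direction. Because $H$ is symmetric, there is a decomposition $G = H A^+ K$ transverse to $H$, and the Howe--Moore decay of matrix coefficients on $L^2(\G\bs G)$ gives precisely the desired equidistribution of the translates $(\G_{w_0}\bs H)a_t k$ as $a_t\to\infty$ in $A^+$. Substituting this limit into the unfolded inner product and using $\op{vol}(B_T)\to\infty$ yields
\[
\langle F_T,\psi_\e\rangle \sim \frac{\op{vol}(\G_{w_0}\bs H)}{\op{vol}(\G\bs G)}\op{vol}(B_T),
\]
and combining with the sandwich bound proves Theorem~\ref{drs}. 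The principal obstacle is establishing the equidistribution of the $H$-orbit translates uniformly on compacta; the symmetric-subgroup hypothesis on $G_{w_0}$ is used precisely to place the translating elements into a decomposition on which Howe--Moore-type mixing applies directly.
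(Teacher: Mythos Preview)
The paper does not give its own proof of Theorem~\ref{drs}; it is quoted as a background result of Duke--Rudnick--Sarnak, with the remark that Eskin--McMullen gave a simpler proof via mixing of the geodesic flow. Your sketch is exactly that Eskin--McMullen argument, and it is correct in outline.

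That said, it is worth noting that the paper's proof of its main generalization (Theorem~\ref{m11}) in Section~\ref{cs} follows precisely the same template you describe: define the counting function $F_{B_T(\Omega)}$ on $\G\bs G$, sandwich $F_{B_T}(e)$ between $\langle F_{B_{(1\pm\e)T}},\Psi_\e\rangle$ using the strong wavefront lemma (Lemma~\ref{swl}), unfold the inner product via the $HAK$ decomposition (Proposition~\ref{prop:H:E}), and feed in the equidistribution of $H$-orbit translates. The only substantive divergence from your sketch is that in the infinite-covolume setting the equidistribution input is not Howe--Moore decay but Theorem~\ref{mainergint}, which is driven by mixing of the BMS measure and produces the Burger--Roblin measure rather than Haar measure in the limit; in the lattice case these collapse to the classical statement. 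So your approach and the paper's are the same up to the choice of ergodic input, which is exactly what one expects.
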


Eskin and McMullen \cite{EskinMcMullen1993}
gave a simpler proof of Theorem~\ref{drs}
based on the mixing property of the geodesic flow of a hyperbolic
manifold with finite volume. It may be noted that this approach for counting via mixing was
used earlier by Margulis in his 1970 thesis \cite{Margulisthesis}.
We also refer to \cite{BeO} for a quantitative version of Theorem \ref{drs}.
\medskip

The group $G$ can be considered as the group of orientation preserving isometries of the $n$-dimensional
hyperbolic space $\bH^n$.
The main achievement of this paper lies in extending Theorem \ref{drs}
to a suitable class of discrete subgroups $\G$ of infinite covolume in $G$; namely, the groups
$\G$ with finite Bowen-Margulis-Sullivan measure
$m^{\BMS}$ on $\G\ba \bH^n$. In particular, this class contains all geometrically finite subgroups of
$G$.
The analogue of $\op{vol}(\G_{w_0}\ba G_{w_0})$ turns out to be a very interesting
quantity, which we will call the `skinning size' of $w_0$ relative to $\Gamma$ and denote by
$\sk_\G(w_0)$. In fact, $\sk_\G(w_0)$ will be the total mass of a Patterson-Sullivan type measure on the
unit normal bundle of a closed immersed submanifold of $\Gamma\ba\bH^n$
associated to $G_{w_0}$.  One of the important components of this work is to completely
determine when $\sk_\G(w_0)$ is finite (Theorem \ref{skfinite}). In particular, 
$\sk_\G(w_0)<\infty$ for any geometrically finite $\G$ whose critical exponent $\delta$ is greater than the
codimension of the associated submanifold.

\medskip

The main ergodic theoretic ingredient in the proof is the description for the limiting
distribution of the evolution of the smooth measure on the unit normal bundle of a closed
 totally geodesically immersed submanifold of $\G\ba\bH^n$ under the
geodesic flow. The corresponding equidistribution statement (Theorem~\ref{mainergint}) is
applicable to many other problems; for example, in \cite{OhShahcircle,OhShahsphere}, it has
been applied to the study of the asymptotic distributions in circle packings in the Euclidean
plane or a sphere, invariant under a non-elementary
group of Mobius transformations.

\subsection{Statement of main result} Our generalization of Theorem \ref{drs} for discrete
subgroups
which are not necessarily lattices involves
terms which can be best explained in the language of the hyperbolic geometry.
Let $\G < G$ be a torsion-free
discrete subgroup which is non-elementary, that is, $\G$ has no abelian subgroup of finite
index.  This is a standing assumption on $\G$ throughout the whole paper.
Now $\G$ acts properly discontinuously on $\bH^n$. Let $0 <\delta\le n-1$
be the critical exponent of $\G$ (see \S\ref{subsec:criticalexp}).  Let
$\{\nu_x\}_{x\in \bH^n}$ be a $\G$-invariant conformal density of dimension $\delta$ on the
geometric boundary
$\partial{\bH^n}$ (see~\eqref{eq:conformal})
which exists by Patterson~\cite{Patterson1976} and Sullivan~\cite{Sullivan1979}.
 Let $m^{\BMS}$ denote the Bowen-Margulis-Sullivan measure on the unit tangent bundle
$\T^1(\G\ba \bH^n)$ associated to $\{\nu_x\}$ (see~\eqref{eq:defBMS}).

For $u\in \T^1(\bH^n)$, we denote by $u^{\pm}\in \partial{\bH^n}$ the forward and the
backward endpoints of the geodesic
determined by $u$ respectively and by $\pi(u)\in \bH^n$ the base point of $u$. Let
$\p:\T^1(\bH^n)\to \T^1(\G\bs\bH^n)$ be the canonical quotient map.

\medskip

Let $V$ be a finite dimensional vector space on which $G$ acts linearly. Let $w_0\in V$ be such
that $G_{w_0}$ is a symmetric subgroup or
the stabilizer $G_{\br w_0}$ of the line $\br w_0$ is a parabolic subgroup.
We define a subset $\tilde E\subset \T^1(\bH^n)$ associated to the orbit $w_0\G$ in each case.

When $G_{w_0}$ is a symmetric subgroup  associated to an involution $\sigma$,
choose a Cartan involution $\theta$ of $G$ which commutes with $\sigma$, and let $o\in \bH^n$ be such that its stabilizer 
$G_{o}$ is the fixed group of  $\theta$.
 Then $\tS:=G_{w_0}.o$ is an isometric imbedding of $\bH^k$ in $\bH^n$ for some $0\le k\le n-1$, where the embeddings of 
$\bH^0$ and $\bH^1$ mean a point and a complete geodesic respectively. Let $\tE\subset\T^1(\bH^n)$ be the unit normal
 bundle of $\tS$.

In the case when $G_{\br w_0}$
 is parabolic, we fix any $o\in\bH^n$.
If $N$ is the unipotent radical of $G_{\br w_0}$, then $\tS:=N. o$ is a horosphere.
We set $\tilde E\subset  \T^1(\bH^n)$  to be the unstable horosphere over $\tS$.

Now in either case,
we define the following Borel measure on $\tilde E$:
 $$d\mu^{\PS}_{\tilde E}(v):=e^{\delta \beta_{v^+}(x,\pi(v))}\, d\nu_x(v^+)$$
for $x\in\bH^n$ and  $\beta_
\xi(x_1,x_2)$ denotes the value of the Busemann function, that is,
the signed distance between the horospheres based at $\xi$, one passing through $x_1$ and the other through $x_2$ 
(see \eqref{eq:busemann}).
 This definition of $\mu^{\PS}_{\tilde E}$ is independent of the choice of $x\in \bH^n$. Due to the $\G$-invariance property of 
the conformal density $\{\nu_x\}$, it induces a measure on $E:=\p(\tilde E)$ which we denote by $\mu^{\PS}_{E}$.

Fix any $X_0\in \tilde E$ based at $o$, and let $A=\{a_r:r\in \br\}$ be a one-parameter subgroup of $G$ consisting of 
$\R$-diagonalizable elements such that $r\mapsto a_r.X_0$ is a unit speed geodesic. Note that $A$ is contained in a copy of 
$\SO(2,1)\cong\PSL(2,\R)$ such that $a_{r}$ corresponds to $d_{r}=\diag(e^{r/2}, e^{-r/2})$. Any irreducible representation of 
$\PSL(2,\R)$ is given by the standard action of $\SL(2,\R)$ on homogeneous polynomials of degree $k$ in two variables such that the action of $-I$ is trivial, so $k$ is even and the largest eigenvalue of $d_{r}$ is $e^{(k/2)r}$. Therefore if $\lambda$ denotes the $\log$ of the largest eigenvalue of $a_1$ on $\R\text{-}\Span(w_0 G)$, then $\lambda\in\N$. We set
\[
w_0^{\lambda}:=\lim_{r\to\infty}{e^{-\lambda r}} { w_0a_{r}}\neq 0,
\quad \text{by \cite[Lemma~4.2]{GorodnikOhShah2009}.}
\]

\begin{theorem} \label{m11}
Let $\G <G$ be a non-elementary discrete subgroup with $\abs{m^{\BMS}}<\infty$.
Suppose that $w_0\G$ is discrete and that its skinning size $\op{sk}_\G(w_0):=\abs{\mu_E^{\PS}}$ is finite. 
Then for any $G_o$-invariant norm  $\norm{\cdot}$ on $V$, we have

\begin{equation} \label{eq:m11}
\lim_{T\to\infty}\frac{\#\{w\in w_0\G: \|w\|<T\}}{T^{\delta/\lambda}}=\frac{\abs{\nu_o} \cdot
\op{sk}_\G(w_0) }
{\delta\cdot \abs{m^{\BMS}} \cdot \norm{w_0^\lambda}^{\delta/\lambda}}\;.
\end{equation}
\end{theorem}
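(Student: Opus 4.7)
My plan is to follow the mixing--based counting scheme of Margulis and Eskin--McMullen \cite{EskinMcMullen1993}, with the Haar measure on $\G\bs G$ replaced by $m^{\BMS}$ and the volume $\vol(\G_{w_0}\bs G_{w_0})$ replaced by the skinning mass $\sk_\G(w_0)=|\mu_E^{\PS}|$. The ergodic engine is the equidistribution of geodesic translates of the skinning measure against $m^{\BMS}$, announced earlier in the paper (Theorem~\ref{mainergint}).

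\textbf{Norm-ball geometry.} Let $H:=G_{w_0}$ in the symmetric case and $H:=N$, the unipotent radical of $G_{\br w_0}$, in the parabolic case, so that $\tS=H.o$. In both cases $G=HAK$ with $K=G_o$; the $K$-invariance of $\|\cdot\|$ and the weight-$\lambda$ action of $a_r$ on $\R\text{-}\Span(w_0G)$ with leading vector $w_0^\lambda$ give
$$\|w_0\,h a_r k\|\;=\;\|w_0 a_r\|\;\sim\;e^{\lambda r}\|w_0^\lambda\|\qquad(r\to\infty).$$
Hence, modulo a thin boundary layer whose contribution is negligible in the limit, $B_T$ is the image of $H\times[0,R(T)]\times K$ with $R(T):=\lambda^{-1}\log(T/\|w_0^\lambda\|)$.

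\textbf{Smooth, unfold, equidistribute.} Pick a smooth $\psi_\e\ge 0$ on $\G\bs G$ of Haar-mass one and supported in an $\e$-neighbourhood of $\G e$, and set
$$F_T(\G g):=\sum_{[\g]\in\G_{w_0}\bs\G}\chi_{B_T}(w_0\g g),$$
so that $N(T):=\#\{w\in w_0\G:\|w\|<T\}=F_T(\G e)$. A standard approximate-identity argument gives $N(T)\sim\int_{\G\bs G}F_T\psi_\e\,dg$ up to an $\e$-shell boundary error. Unfolding against Haar and using the polar decomposition $dg=\rho(r)\,dh\,dr\,dk$ converts the right-hand side, again up to a boundary error, into
$$\int_0^{R(T)}\!\!\int_K\Bigl(\int_{\G_{w_0}\bs H}\widetilde\psi_\e(ha_rk)\,dh\Bigr)\rho(r)\,dk\,dr.$$
Theorem~\ref{mainergint}, read as the statement that the $e^{\delta r}$-renormalised geodesic translate of $\mu_E^{\PS}$ tends to $(\sk_\G(w_0)/|m^{\BMS}|)\cdot m^{\BMS}$, converts the innermost integral (after absorbing the Patterson--Sullivan factor converting $dh$ to $d\mu_E^{\PS}$ into the Jacobian $\rho(r)$) into
$$e^{-\delta r}\int_{\G_{w_0}\bs H}\widetilde\psi_\e(h a_r k)\,dh\;\xrightarrow[r\to\infty]{}\;\frac{\sk_\G(w_0)}{|m^{\BMS}|}\,\theta_o(k),$$
where $\theta_o(k)\,dk$ is the conformal density $\nu_o$ transported to the visual sphere at $o$, so $\int_K\theta_o=|\nu_o|$. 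Integrating the limit against $e^{\delta r}\,dr$ on $[0,R(T)]$ produces the factor $T^{\delta/\lambda}/(\delta\|w_0^\lambda\|^{\delta/\lambda})$, and the standard sandwich $F_{T(1-C\e)}(\G e)\le\int F_T\psi_\e\le F_{T(1+C\e)}(\G e)$ removes the smoothing and yields \eqref{eq:m11}.

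\textbf{Main obstacle.} What distinguishes this from the lattice case is that Theorem~\ref{mainergint} delivers equidistribution only against \emph{continuous, compactly supported} tests on $\G\bs G$, whereas the $\widetilde\psi_\e$ produced by unfolding need not be compactly supported on $\G_{w_0}\bs H$ when $\G_{w_0}\bs\tS$ has cusps, and the $r$-range $[0,R(T)]$ is non-compact. To exchange the $r\to\infty$ limit with the $dr$-integral one needs uniform-in-$r$ non-escape-of-mass for the geodesic-translated skinning measure. The finiteness criterion for $\sk_\G(w_0)$ promised by Theorem~\ref{skfinite}, together with Sullivan-type shadow-lemma bounds for $m^{\BMS}$ near the cuspidal thin parts, is precisely what supplies this control; marrying these ingredients with a dominated-convergence argument is the technical heart of the proof.
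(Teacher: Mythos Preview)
Your overall scheme---define the counting function $F_T$, unfold via the $HAK$ decomposition, feed the inner $H$-integral into the equidistribution theorem, then remove the smoothing by an approximate-identity sandwich---is exactly the paper's strategy. But the middle step is garbled in a way that matters.

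You write that Theorem~\ref{mainergint} says ``the $e^{\delta r}$-renormalised geodesic translate of $\mu_E^{\PS}$ tends to $(\sk_\G(w_0)/|m^{\BMS}|)\cdot m^{\BMS}$''. That is not what it says. Theorem~\ref{mainergint} is about the $e^{(n-1-\delta)r}$-renormalised translates of the \emph{Lebesgue} measure $\mu_E^{\Leb}$ (which is what $dh$ projects to, see \eqref{eq:dh}), and the limit is the \emph{Burger--Roblin} measure $m^{\BR}$, not $m^{\BMS}$. This is not a cosmetic point: $m^{\BR}$ is an infinite measure when $\Gamma$ is not a lattice (Theorem~\ref{thm:mbf}), and the mechanism by which $|\nu_o|$ enters the final formula is precisely that $m^{\BR}$, written in Iwasawa coordinates $g=ka_rn$ (Proposition~\ref{mbr}), has $d\nu_o(kX_0^-)$ as its $K$-factor, so that testing $m^{\BR}$ against a $K$-shifted approximate identity returns $\nu_o$ on $K/M$ (Proposition~\ref{corpt}). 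Your line ``$\theta_o(k)\,dk$ is the conformal density $\nu_o$ transported to the visual sphere'' names the right object but gives no argument for why it appears; in your telling, with $m^{\BMS}$ in place of $m^{\BR}$, it would not. Relatedly, your asymptotic $e^{-\delta r}\int_{\Gamma_{w_0}\backslash H}\widetilde\psi_\e(ha_rk)\,dh\to\cdots$ has the wrong exponent: without the density factor $\rho(r)\sim e^{(n-1)r}$ the $H$-integral scales like $e^{-(n-1-\delta)r}$, not $e^{\delta r}$.

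Your ``main obstacle'' paragraph also misidentifies the difficulty. The test function $\widetilde\psi_\e$ \emph{is} compactly supported on $\Gamma\backslash G$; what is potentially non-compact is the domain of integration $E$ (equivalently $\Gamma_{w_0}\backslash H$). But Theorem~\ref{mainergint} is stated precisely for $F=E$ with only the hypothesis $|\mu_E^{\PS}|<\infty$, and that hypothesis is assumed in Theorem~\ref{m11}. No shadow-lemma estimates or appeal to Theorem~\ref{skfinite} are needed here; the exchange of the $r$-limit with the $dr$-integral is handled by splitting $[0,R(T)]$ at a large fixed $r_0$ and noting the compact piece is $O(1)$ (see \eqref{eq:634}--\eqref{eq:550}).
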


\begin{remark}
(1)  If $\G$ is convex cocompact, $\sk_\Gamma(w_0)<\infty$. In the case when $G_{\br w_0}$ is
parabolic, $\sk_\Gamma (w_0)<\infty$ as well. A finiteness criterion for $\sk_\Gamma (w_0)$
is provided in the section 1.4.

(2) Since $w_0\G$ is infinite, $\sk_{\G}(w_0)>0$ (Proposition~\ref{nempty}), and hence the
limit \eqref{eq:m11} is strictly positive.

(3) The description of the limit changes if we do not assume the $G_o$-invariance of the 
norm $\norm{\cdot}$; see Theorem~\ref{thm:counting1}, 
Remark~\ref{rem:main}(\ref{itm:71}-\ref{itm:73}), and 
Theorem~\ref{thm:count-cone}.

(4) If $G_{w_0}$ is symmetric and $\G$ is Zariski dense in $G$,
 then the condition $\abs{\mu_{E}^{\PS}}<\infty$ implies that $w_0\G$ is discrete, for
 by Theorem~\ref{thm:finite-closed} and Remark~\ref{rem:proper}, $w_{0}\G$ is closed in 
$w_0G$, and by \cite[Lemma~4.2]{GorodnikOhShah2009}, $w_{0}G$ is closed in $V$. Therefore $w_{0}\G$ is closed and hence discrete
in $V$.

(5) If $G_{\R w_0}$ is parabolic, then the condition $\abs{\muPS_E}<\infty$ implies that  $w_0\G$ is discrete.
To see this, note that if the horosphere $\tS$ is based at $\xi$, then $\partial\tS=\{\xi\}$ and 
by Theorem~\ref{thm:finite-closed}, $\G\tS$ is closed in $\bH^{n}$ 
and $w_{0}\G$ is closed in $w_0G=\cl{w_0G}\setminus\{0\}$.
If $w_0\G$ were not closed in $V$,  $w_{0}\g_{i}\to 0$ for a sequence $\{\gamma_i\}\subset \G$. Then  $\g_{i} o\to \xi$
and $\xi$ is a horospherical limit point of $\G$. Since $\abs{m^{\BMS}}$ is finite, the geodesic flow is mixing (Theorem~\ref{thm:mixing}) and hence by \cite[Thm.A \& Prop.B]{Dalbo2000}, $\G\tS$ is dense in $\pi(\{u:u^-\in\Lambda(\G)\})$, a contradiction to $\G\tS$ being closed. Therefore $w_{0}\G$ is closed and hence discrete in $V$. 

Thanks are due to the referee for the last two remarks. 
\end{remark}

 A discrete group $\G$ is called {\em geometrically finite},
if the unit neighborhood of its convex core\footnote{The {\em convex core} $C_\G\subset \G\bs\bH^n$ of $\G$ is the image of the minimal convex subset of $\bH^n$ which contains all geodesics connecting any two points in the limit set of $\G$.} 
has finite Riemannian volume (see also Theorem~\ref{bo}).  Any discrete group  admitting a finite sided polyhedron as a fundamental domain in $\bH^n$ is geometrically finite.

Sullivan \cite{Sullivan1979} showed that $\abs{m^{\BMS}}<\infty$ for all geometrically finite $\G$.
However Theorem \ref{m11} is not limited to those,
as Peign\'e \cite{Peigne2003} constructed a large class of geometrically infinite groups admitting a finite Bowen-Margulis-Sullivan measure.

We will provide a general criterion on the finiteness of $\sk_\G(w_0)$ in Theorem~\ref{thm:parcorank}.
For the sake of concreteness,
we first describe the results for the standard representation of $G$.

\subsection{Standard representation of $G$.}
 Let $Q$ be a real quadratic form of signature $(n,1)$ for $n\ge 2$ and $G$ the identity
component of the
special orthogonal group $\SO(Q)$.
Then $G$ acts on $\br^{n+1}$ by the matrix multiplication from the right, i.e., the standard representation.
For any non-zero $w_0\in \br^{n+1}$, up to conjugation and commensurability,
$G_{w_0}$ is  $\SO(n-1,1)$ (resp.\ $\SO(n)$) if $Q(w_0)>0$ (resp.\ if $Q(w_0)<0$).
If $Q(w_0)=0$, the stabilizer of the line $\br w_0$ is a parabolic subgroup.
Therefore Theorem~\ref{m11} is applicable for any non-zero $w_0\in \br^{n+1}$, provided $\sk_\G(w_0)<\infty$
(in this case, $\lambda=1$).

An element $\g\in \G$ is called {\em parabolic\/} if there exists a unique fixed point of $\g$ in
$ \partial{\bH^n}$.  For $\xi\in \partial{\bH^n}$, we denote by $\G_{\xi}$ the stabilizer of $\xi$ in $\G$
and call $\xi$ a {\em parabolic fixed point\/} of $\G$ if $\xi$ is fixed by a parabolic element of
$\G$.

Noting that $G_{w_0}$ is the isometry group of the codimension one totally geodesic subspace, say,
 $\tilde S_{w_0}$, when $Q(w_0)>0$, we give the following:

\begin{definition}  \rm  Let $w_0\G$ be discrete.
Then $w_0\in \br^{n+1}$ is said to be {\em externally $\G$-parabolic\/} if $Q(w_0)>0$ and there exists a parabolic 
fixed point $\xi \in \partial\tilde{S}_{w_0}$ for $\G$ such that $G_{w_{0}}\cap \G_{\xi}$ is trivial,
where $\partial\tS_{w_0}\subset\partial{\bH^n}$ denotes the boundary of $\tilde S_{w_0}$ in $\cl{\bH^n}$.
\end{definition}

{\begin{figure}
\begin{center}
 \includegraphics[width=3cm]{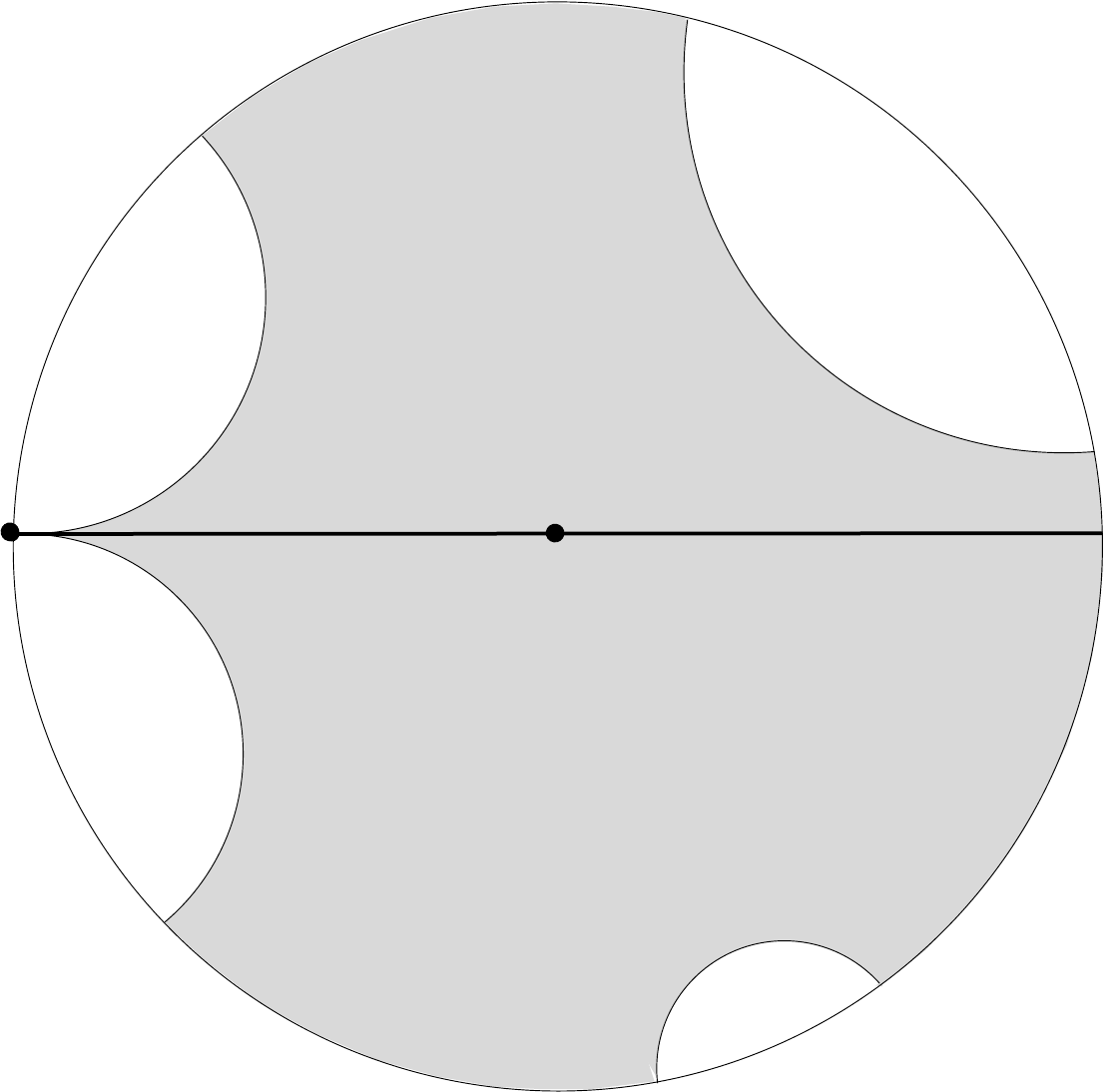}
    \caption{An externally $\G$-parabolic vector}
    \label{f1}
 \end{center}
\end{figure}}

For $n=2$, $w_0\in\R^3$ with $Q(w_0)>0$  is externally $\G$-parabolic if and only if the
projection of the geodesic $\tilde S_{w_0}$ in $\G\bs\bH^n$ is divergent in both directions, and at
least one end of $\tS_{w_0}$ goes into a cusp of a fundamental domain of $\G$ in $\bH^2$
(see Fig.~\ref{f1}).

\begin{theorem}[On the finiteness of $\sk_\G(w_0)$] \label{skfinite} Let $\G$ be geometrically finite and $w_0\G$ discrete.
\begin{enumerate}
\item If $\delta>1$, then $ \op{sk}_\G(w_0) <\infty.$

\item If $\delta \le 1$, then
$ \op{sk}_\G(w_0) =\infty $ if and only if $w_0$ is externally $\G$-parabolic.
\end{enumerate}
\end{theorem}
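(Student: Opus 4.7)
The plan is to localize the question to the cusps of $\G\bs\bH^{n}$ via the thick--thin decomposition of a geometrically finite hyperbolic manifold, and reduce the finiteness of $\sk_{\G}(w_{0})=|\muPS_{E}|$ in each cusp to the convergence of an explicit Euclidean integral. The cases $Q(w_{0})\le 0$ are immediate: for $Q(w_{0})<0$ the submanifold $\tilde S$ is a single point and $|\muPS_{E}|$ is a finite multiple of $|\nu_{o}|$, and the horospherical case $Q(w_{0})=0$ is covered by Remark~1. Hence the content lies in $Q(w_{0})>0$, where $\tilde S_{w_{0}}$ is a codimension-one totally geodesic hyperplane. Because $\muPS_{E}$ is a locally finite Radon measure on $E$, its mass over the preimage in $E$ of any compact set in $\G\bs\bH^{n}$ is finite. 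Thus $\sk_{\G}(w_{0})<\infty$ iff the mass of $\muPS_{E}$ over a standard horoball neighborhood of every cusp of $\G\bs\bH^{n}$ is finite.

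Fix a representative bounded parabolic fixed point $\xi$ of $\G$ of rank $k=\rank(\G_{\xi})$ and pass to the upper half-space model with $\xi=\infty$, so that $\G_{\xi}$ acts on $\R^{n-1}$ as a Euclidean crystallographic group of rank $k$. If no $\G$-translate of $\tilde S_{w_{0}}$ has $\xi$ in its boundary, then every such translate is a bounded hyperbolic hemisphere, its intersection with the horoball $\{z>1\}$ is bounded, and the shadow lemma produces a finite contribution. Otherwise, replacing $w_{0}$ by a $\G$-translate, assume $\xi\in\partial\tilde S_{w_{0}}$, so that $\tilde S_{w_{0}}=\{x_{1}=0\}$ in coordinates $x=(x_{1},y,z)\in\R\times\R^{n-2}\times(0,\infty)$, with unit-normal endpoints $v^{+}=(\pm z,y,0)$. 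Set $k':=\rank(G_{w_{0}}\cap\G_{\xi})$; a fundamental domain for $G_{w_{0}}\cap\G_{\xi}$ acting on $\tilde S_{w_{0}}\cap\{z>1\}$ is compact in $k'$ of the Euclidean $y$-directions and of full Lebesgue extent in the remaining $n-2-k'$ directions, crossed with $z\in(1,\infty)$.

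A direct computation of $e^{\delta\beta_{v^{+}}(o,x)}$ in these coordinates, combined with the Stratmann--Velani global measure formula for $\nu_{o}$ near $\xi$, reduces the mass of $\muPS_{E}$ over this cusp to an explicit Euclidean integral on the above fundamental domain. The convergence can be read off: if $k'\ge 1$, compactness of the fundamental domain in $k'$ of the $y$-directions absorbs the otherwise divergent part and the integral converges for every $\delta>0$; if $k'=0$, the full $\R^{n-2}$-integral in $y$ combined with the $z$-integral converges if and only if $\delta>1$. Translating back, $\sk_{\G}(w_{0})=\infty$ exactly when some parabolic $\xi\in\partial\tilde S_{w_{0}}$ (up to $\G$-action) has $G_{w_{0}}\cap\G_{\xi}$ trivial and $\delta\le 1$, i.e., $w_{0}$ is externally $\G$-parabolic and $\delta\le 1$. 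The main technical obstacle is the $k'=0$ cusp integral: one must apply the Stratmann--Velani asymptotics uniformly as $v^{+}=(\pm z,y,0)$ moves through $\R^{n-1}$ approaching $\xi=\infty$ visually, and balance the decay of $d\nu_{o}$ against the growth of the Busemann factor so that the critical threshold turns out to be the codimension of $\tilde S_{w_{0}}$ in $\bH^{n}$, namely $\delta=1$, independently of $n$ and $k$.
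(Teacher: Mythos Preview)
Your overall plan---reduce to the case $Q(w_0)>0$, localize to neighborhoods of bounded parabolic fixed points $\xi\in\partial\tilde S_{w_0}$, and estimate $\mu^{\PS}_E$ there---is the same as the paper's. The paper carries this out in Sections~5--6: it proves a general criterion (Theorem~\ref{stfinite}) that $|\mu^{\PS}_E|<\infty$ iff $\parcorank(\G_{\tS})<\delta$, where $\parcorank(\G_{\tS})=\max_{\xi}\bigl(\rank(\G_\xi)-\rank(\G_\xi\cap\G_{\tS})\bigr)$, and then specializes to codimension one. However, your implementation has a genuine gap.

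The quantity that governs convergence at a cusp $\xi$ is not $k'=\rank(\G_\xi\cap G_{w_0})$ by itself, but the \emph{corank} $r_\xi=k-k'$. The paper shows (Proposition~\ref{prop:mupsEbounds}, via a lattice-box decomposition and direct conformality, not Stratmann--Velani) that the PS-mass near $\xi$ is $\asymp\sum_{\bk\in\Z^{r_\xi}\setminus\{0\}}|\bk|^{-\delta}$, so the threshold is $\delta=r_\xi$. Your claim that ``$k'\ge1$ implies convergence for every $\delta>0$'' is not justified by the integral heuristic you give: compactness in $k'$ of the $y$-directions does not by itself absorb divergence coming from the remaining $r_\xi$ transverse directions inside $L$. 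What actually makes your dichotomy work is a structural fact you never state: in codimension one, $r_\xi\le 1$ always (Lemma~\ref{corank}), and this requires a nontrivial argument using properness of $\G_{\tS}\backslash\tS\to\G\backslash\bH^n$ (Proposition~\ref{prop:parallel}). Once $r_\xi\le 1$ is known, your case split becomes correct: $k'\ge1$ together with $r_\xi\le1$ and the Dal'bo--Otal--Peign\'e bound $\delta>k/2$ forces convergence, while $k'=0$ forces $k=1$ (hence $r_\xi=1$) and the threshold is exactly $\delta=1$.

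Separately, your integral setup is too coarse: you propose integrating over $\R^{n-2-k'}$ in the non-compact $y$-directions, but $\nu_o$ is supported in a bounded tube around the $k$-dimensional minimal subspace $L$, so the effective domain of integration is an $r_\xi$-dimensional strip, not an $(n-2-k')$-dimensional one. The paper handles this by showing (Propositions~\ref{prop:P_LS} and~\ref{eq:v+=kw}) that the visual images $v^+$ of the relevant normal vectors lie in a union of translates $\bg^{\bk}(\cF)$ indexed by $\bk\in\Z^{r_\xi}$, and then estimating each translate via conformality (Lemma~\ref{eq:e13}). Your Stratmann--Velani approach could be made to work, but you would still need to isolate the $r_\xi$-dimensional direction and prove the corank bound separately.
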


\begin{corollary}
Let $\G$ be geometrically finite and $w_0\Gamma $ discrete. If
either  $\delta>1$ or
 $w_0$ is not externally $\G$-parabolic, then \eqref{eq:m11} holds.
\end{corollary}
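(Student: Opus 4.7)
The plan is to obtain this corollary directly from Theorem~\ref{m11} by verifying its three hypotheses one at a time, using Theorem~\ref{skfinite} together with Sullivan's theorem on the finiteness of $m^{\BMS}$ for geometrically finite groups (cited in the paragraph immediately after Theorem~\ref{m11}).

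First I would note that $\G$ is non-elementary by the standing assumption on $\G$ in this paper, and since $\G$ is also geometrically finite, Sullivan's theorem supplies $\abs{m^{\BMS}}<\infty$. The discreteness of the orbit $w_0\G$ is directly assumed in the corollary. So the only hypothesis remaining to be verified is the finiteness of the skinning size $\sk_\G(w_0)$. This is where Theorem~\ref{skfinite} enters. The case $\delta>1$ is immediate from part~(1) of that theorem. In the complementary case $\delta\le 1$, the hypothesis of the corollary rules out $w_0$ being externally $\G$-parabolic, so the ``if and only if'' in part~(2) of Theorem~\ref{skfinite} yields $\sk_\G(w_0)<\infty$. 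Thus in either scenario covered by the corollary, all three hypotheses of Theorem~\ref{m11} hold, and the counting asymptotic \eqref{eq:m11} follows with no further argument.

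The main obstacle is not in this corollary itself but in the proofs of the two theorems it synthesizes: the substantive work lies in Theorem~\ref{m11} (the counting asymptotic, built via the equidistribution of orthogonal translates of a closed totally geodesic immersed submanifold of $\G\bs\bH^n$) and in Theorem~\ref{skfinite} (the dichotomy for finiteness of the skinning measure, which in the delicate range $\delta\le 1$ requires analyzing how the cusps of $\G$ meet the totally geodesic subspace $\tilde S_{w_0}$ and detecting precisely when an extra parabolic direction produces a divergent contribution). The purpose of the corollary is purely to repackage these two inputs into a clean, hypothesis-light counting statement under the most natural conditions on $\G$ and $w_0$.
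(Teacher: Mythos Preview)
Your proposal is correct and matches the paper's approach: the corollary is stated without proof precisely because it follows immediately from Theorem~\ref{m11} once Sullivan's finiteness of $m^{\BMS}$ for geometrically finite $\G$ and Theorem~\ref{skfinite} supply the hypotheses, exactly as you outline.
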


\begin{remark}
\rm (1) For geometrically finite $\Gamma$, if the Riemannian volume of $E$ is finite, then
$\op{sk}_\G(w_0)<\infty$ (Corollary~\ref{Cor:Leb:PS}).

 (2) It can be proved that if $\delta\le 1$ and $w_0$ is externally $\G$-parabolic,
the asymptotic count is of the order  $ T\log T$ if $\delta=1$
and of the order $T$ if $\delta<1$, instead
of $\T^{\delta}$ (cf.~\cite{OhShahlog}).

(3) When $Q(w_0)<0$, the orbital counting with respect to the hyperbolic metric balls was
obtained by Lax and
Phillips \cite{LaxPhillips} for $\G$ geometrically finite with $\delta>(n-1)/2$, by Lalley
\cite{Lalley1989}
for convex cocompact subgroups and by Roblin \cite{Roblin2003} for all groups with finite
Bowen-Margulis-Sullivan
measure.

(4) When $Q(w_0)=0$ and $\G$ is
geometrically finite with $\delta>(n-1)/2$, a version of Theorem \ref{m11} was
obtained in \cite{KontorovichOh}.
\end{remark}

\subsection{Equidistribution of expanding submanifolds} \label{subsec:Expanding}
In this section, we will describe the main ergodic theoretic ingredients used in the proof of
Theorem \ref{m11}. Let $\tE \subset \T^1(\bH^n)$ be one of the following:
\begin{enumerate}
\item an unstable horosphere over a horosphere $\tS$ in $\bH^n$;
\item the unit normal bundle of a complete proper connected totally geodesic subspace  $\tS$ of $
\bH^n$; that is,
 $\tS$ is  an isometric imbedding of $\bH^k$ in $\bH^n$ for some $0\le k\le n-1$.
\end{enumerate}

Let $\G$ be a discrete subgroup of $G$, and set $E:=\p(\tE)$ for the projection $\p:\T^1(\bH^n)\to \T^1(\Gamma\ba \bH^n)$.

 Recall that $\{\nu_x:x\in \bH^n\}$ denotes a Patterson-Sullivan
density of dimension $\delta$. Let $\{m_x:x\in \bH^n\}$ denote a $G$-invariant conformal density of dimension $(n-1)$.
We consider the following locally finite Borel measures on $\tilde E$:
$$d\mu^{\Leb}_{\tilde E}(v)=e^{(n-1)\beta_{v^+}(o,\pi(v))}dm_o(v^+),\quad
d\mu^{\PS}_{\tilde E}(v)=e^{\delta\beta_{v^+}(o,\pi(v))}d\nu_o(v^+),$$
where $o\in \bH^n$. Note that $\mu_{\tE}^{\Leb}$ is the measure associated to the Riemannian volume form on $\tE$.

The measures $\mu_{\tE}^{\PS}$ and $\mu_{\tE}^{\Leb}$ are invariant under 
$\G_{\tilde E}=\{\gamma\in \G: \gamma(\tilde E)=\tilde E\}$
and hence induce measures on $\G_{\tilde E}\ba \tilde E$.
We denote by $\mu_{E}^{\Leb}$ and $\mu_E^{\PS}$ respectively the projections of these measures on $E$ via
the projection map  $\G_{\tilde E}\ba \tilde E \to E$ induced by $\p$.

Let $m^{\BR}$ denote the Burger-Roblin
measure on   $\T^1(\G\ba \bH^n)$ associated to the conformal densities $\{\nu_x\}$ in the
backward
direction and $\{m_x\}$ in the forward direction (\cite{Burger1990}, \cite{Roblin2003}, see~
\eqref{eq:defBR}).

Let  $\{{\gt}\}$ denote the geodesic flow on $\T^1(\bH^n)$.

\begin{theorem}
\label{mainergint}
Suppose that $\abs{m^{\BMS}}<\infty$ and $\abs{\mu_E^{\PS}}<\infty$.
Let $F\subset E$ be a Borel subset with $\mu_E^{\PS}(\partial F)=0$.
 For any $\psi\in C_c(\T^1(\G\ba \bH^n))$,
\begin{equation}\label{tre}
\lim_{t\to +\infty} e^{(n-1-\delta)t}\cdot
\int_{F}\psi({\gt}(v))\; d\mu^{\Leb}_{E}(v)
=\frac{ \mu_{E}^{\PS}(F)}{\abs{m^{\BMS}}}
 \cdot m^{\BR} (\psi).
\end{equation} In particular, this holds for $F=E$.
\end{theorem}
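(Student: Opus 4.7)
The plan is to follow the Margulis--Eskin--McMullen--Roblin ``mixing + thickening'' strategy, using the mixing of the geodesic flow on $(\T^1(\G\ba\bH^n),m^{\BMS})$ (available since $|m^{\BMS}|<\infty$) together with the local product structure of the conformal densities. The first step is to establish this structure: near any $v\in\tE$, Hopf coordinates $(v^+,v^-,s)$ identify a neighborhood in $\T^1(\bH^n)$ with a product $\tE\times W^{ss}_\epsilon\times(-\epsilon,\epsilon)$, where $W^{ss}_\epsilon$ is a small strong-stable transversal through $v$ and the last factor is the flow. Direct computation from the definitions yields the local factorizations
\[
 dm^{\BMS}=d\mu_{\tE}^{\PS}\otimes d\mu^{\PS}_{ss}\otimes ds,\qquad dm^{\BR}=d\mu_{\tE}^{\Leb}\otimes d\mu^{\PS}_{ss}\otimes ds,
\]
where $\mu^{\PS}_{ss}$ is the Patterson--Sullivan measure on the strong stable leaf. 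The crucial observation is that the conditional on the stable-plus-flow transversal is \emph{identical} for the two global measures; all the difference between $m^{\BR}$ and $m^{\BMS}$ is concentrated in the direction tangent to $\tE$ (Lebesgue versus PS).

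Next, fix $\epsilon>0$ and a smooth bump $\rho^\epsilon$ on $W^{ss}_\epsilon\times(-\epsilon,\epsilon)$ normalized by $\int\rho^\epsilon\,d\mu^{\PS}_{ss}\,ds=1$, and define $\Psi^\epsilon\in C_c(\T^1(\G\ba\bH^n))$ as the thickening of $F$ by $\rho^\epsilon$ in the transversal direction, descended from the Hopf product structure. The factorizations above give $m^{\BMS}(\Psi^\epsilon)\to\mu_E^{\PS}(F)$ and $m^{\BR}(\Psi^\epsilon)\to\mu_E^{\Leb}(F)$ as $\epsilon\to 0$ (using $\mu_E^{\PS}(\partial F)=0$), and since $\mathcal G^t$ contracts $W^{ss}$ at rate $e^{-t}$ and $\psi\in C_c$ is uniformly continuous,
\[
\int\Psi^\epsilon(v)\,\psi(\mathcal G^tv)\,dm^{\BR}(v)=\int_F\psi(\mathcal G^tv)\,d\mu_E^{\Leb}(v)+o_\epsilon(1)
\]
uniformly in $t\ge 0$. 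The identity $(\mathcal G^t)_*m^{\BR}=e^{(n-1-\delta)t}m^{\BR}$, which is immediate from $\beta_{v^\pm}(o,\pi(\mathcal G^tv))=\beta_{v^\pm}(o,\pi(v))\mp t$ applied to the defining formula for $m^{\BR}$, converts (via $v=\mathcal G^{-t}w$) the scaled $m^{\BR}$-integral into
\[
 e^{(n-1-\delta)t}\int\Psi^\epsilon(v)\,\psi(\mathcal G^tv)\,dm^{\BR}(v) = \int (\Psi^\epsilon\circ\mathcal G^{-t})(w)\,\psi(w)\,dm^{\BR}(w).
\]
Here $\Psi^\epsilon\circ\mathcal G^{-t}$ is supported in an exponentially thin (stable-direction) tube of thickness $\epsilon$ in the flow direction, concentrated along $\mathcal G^tF$ in the unstable direction. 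Comparing this $m^{\BR}$-integral with the corresponding $m^{\BMS}$-integral via the Leb/PS correction on each local unstable leaf (using the product structure of Step~1) and then invoking mixing of $\mathcal G^t$ on $(\T^1(\G\ba\bH^n),m^{\BMS})$ against a suitably smoothed version of $\Psi^\epsilon\circ\mathcal G^{-t}$, one passes to the limit $\mu_E^{\PS}(F)\cdot m^{\BR}(\psi)/|m^{\BMS}|$; letting $\epsilon\to 0$ completes the proof.

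The main obstacle is the last step: converting the $m^{\BR}$-integral to an $m^{\BMS}$-integral amenable to mixing. Since $m^{\BR}$ is not $\mathcal G^t$-invariant and is typically mutually singular with $m^{\BMS}$ (their conditionals on unstable leaves are Lebesgue vs.\ Patterson--Sullivan), the mixing theorem cannot be applied to $m^{\BR}$ directly. The appearance of $m^{\BR}(\psi)$ rather than $m^{\BMS}(\psi)$ in the asymptotic is precisely dictated by the Lebesgue-weighted spreading of $\Psi^\epsilon\circ\mathcal G^{-t}$ along the unstable direction, so the ``dual'' conformal density $\{m_x\}$ enters exactly here, and controlling it requires delicate bookkeeping with the Busemann cocycle and the two transversal product structures. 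Secondary difficulties include handling noncompactness of $E$ via an exhaustion of $F$ by compact subsets (using $|\mu_E^{\PS}|<\infty$), and uniformity of the thickening approximation in $t$, which uses the equicontinuity of stable contraction under $\mathcal G^t$.
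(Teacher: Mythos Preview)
Your plan is in the right spirit, but the step you yourself flag as ``the main obstacle'' is a genuine gap, not a detail to be filled in. After your change of variables you arrive at
\[
e^{(n-1-\delta)t}\int_{F}\psi(\mathcal G^t v)\,d\mu_E^{\Leb}(v)+o_\epsilon(1)=\int(\Psi^\epsilon\circ\mathcal G^{-t})\,\psi\,dm^{\BR},
\]
and now you need this to converge to $\mu_E^{\PS}(F)\,m^{\BR}(\psi)/|m^{\BMS}|$. Your proposed mechanism (``compare with the corresponding $m^{\BMS}$-integral via the Leb/PS correction on each local unstable leaf, then invoke mixing'') does not work as stated: $m^{\BR}$ and $m^{\BMS}$ are mutually singular, so there is no leafwise Radon--Nikodym derivative to correct by. If one naively replaces $dm^{\BR}$ by $dm^{\BMS}$ and uses $\mathcal G^t$-invariance of $m^{\BMS}$ together with mixing, one obtains $\mu_E^{\PS}(F)\,m^{\BMS}(\psi)/|m^{\BMS}|$, with $m^{\BMS}(\psi)$ rather than $m^{\BR}(\psi)$; the whole point is that the Lebesgue density must enter in the forward direction, and your outline gives no mechanism for this. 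A secondary issue: when $\tS$ is totally geodesic, $\tE$ is \emph{not} an unstable leaf, so the exact product factorizations $dm^{\BMS}=d\mu_{\tE}^{\PS}\otimes d\mu^{\PS}_{ss}\otimes ds$ and $dm^{\BR}=d\mu_{\tE}^{\Leb}\otimes d\mu^{\PS}_{ss}\otimes ds$ do not hold on the nose; the change of transversal from $\tE$ to the local unstable horosphere $\cH_v^+$ carries a Busemann cocycle factor (cf.\ the maps $\xi_v,q_v$ and Proposition~\ref{prop:xiv}), which must be tracked.

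The paper avoids this impasse by inserting an intermediate object: the \emph{transversal equidistribution theorem} (Theorem~\ref{thm:transversal}). First, mixing of $m^{\BMS}$ is used to prove equidistribution of $\mathcal G^r_\ast\mu_E^{\PS}$ toward $m^{\BMS}$ (Theorem~\ref{thm:grmuE}); this is the thickening argument you describe, but run entirely with PS densities. From this one deduces that the weighted intersection points $T\cap\mathcal G^r(E)$ of $\mathcal G^r(E)$ with a weak-stable transversal $T$ equidistribute with respect to the transverse measure $\lambda_T$. Finally, Corollary~\ref{psie} applied with the \emph{Lebesgue} conformal density (so $\delta_\mu=n-1$) expresses $e^{(n-1)r}\int_E\Psi(\mathcal G^r u)f(u)\,d\mu_E^{\Leb}(u)$ as a sum over these same intersection points, each weighted by $\int_{tP}\Psi\,d\mu^{\Leb}_{\cH_t^+}$; the transversal theorem then yields the limit $\mu_E^{\PS}(f)\,\lambda_T(\psi)/|m^{\BMS}|$, and the product formula $m^{\BR}(\Psi)=\int_T\bigl(\int_{tP}\Psi\,d\mu^{\Leb}_{\cH_t^+}\bigr)d\lambda_T(t)$ delivers $m^{\BR}(\Psi)$. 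In short, the Lebesgue density never meets mixing directly; it enters only through the product decomposition of $m^{\BR}$ over a box containing $\operatorname{supp}\psi$, after the PS-side of the argument has already produced the $\lambda_T$-distribution of intersections. That decoupling is the missing idea in your proposal.
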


See Theorem \ref{m2} for a version of Theorem~\ref{mainergint} without the finiteness assumption on
$\abs{\mu_E^{\PS}}$.

\begin{remark}\rm Theorem \ref{mainergint} applies to $F$ with $\mu_E^{\Leb}(F)=\infty$ as well,
provided $|\mu_E^{\PS}|< \infty$. The proof for this generality requires
greater care since it cannot be deduced from the cases of $F$ bounded.
 It is precisely this general nature of our equidistribution theorem
 which enabled us to state Theorem \ref{m11} for general groups $\G$ only
assuming the finiteness of the skinning size $\op{sk}_\G(w_0)=\abs{\mu_E^{\PS}}$ for a
suitable $E$.
\end{remark}

When $E$ is a horosphere and $F$ is bounded, Theorem \ref{mainergint} was obtained
earlier by Roblin \cite[P.52]{Roblin2003}.
We were motivated to formulate and prove the result from an independent view point; our
attention is especially on the case of $
\pi(E)$ being a totally geodesic immersion.   This case involves many new features,
observations, and applications (cf.~\cite{OhShahcircle}, \cite{OhShahsphere}).
The main key to our proof is the transversality theorem \ref{thm:transversal}, which was influenced by
the work of Schapira~\cite{Schapira2005}. The transversality
theorem provides a precise relation between the transversal intersections of geodesic
evolution of $F$ with a given piece, say
$T$, of a weak stable leaf and the transversal measure corresponding to the $m^{\BMS}$
measure on $T$.
\medskip

For $\G$ Zariski dense, we generalize Theorem \ref{mainergint} to $\psi\in C_c(\G\ba G)$.
To state the generalization, we fix $o\in \bH^n$ and $X_0\in \tilde E$ based at $o$. 
Then, for $K=G_o$ and $M=G_{X_0}$, we may identify $\bH^n$ and $\T^1(\bH^n)$ with
 $G/K$ and $G/M$ respectively. Let $A=\{a_r\}$ be the
one-parameter subgroup such that the right translation action by $a_r$ on $G/M$ corresponds to $\gr$.
Let $\mBR$ denote the measure on $\G\ba G$ which is
the $M$-invariant extension of $m^{\BR}$ via the natural projection map
 $\G\ba G\to \G\ba G/M=\T^1(\G\ba \bH^n)$. Let $H=G_{\tE}$, and let $dh$ denote the invariant
 measure on $\G_H\ba H$ whose projection to $E$ coincides with $\mu_E^{\Leb}$.

\begin{theorem} \label{thmi:mainerg-group}
Let $\G$ be a Zariski dense discrete subgroup of $G$ such that $\abs{m^{\BMS}}<\infty$ and
 $\abs{\muPS_E}<\infty$. 
Then for any $\psi\in C_c(\G\bs G)$,
\[
\lim_{r\to\infty} e^{(n-1-\delta)r} \int_{h\in \G_H\backslash H} 
\psi(\G h a_r)\,d h=\frac{\abs{\muPS_E}}{\abs{m^{\BMS}}}\mBR(\psi).
\]
\end{theorem}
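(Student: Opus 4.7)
The plan is to deduce Theorem~\ref{thmi:mainerg-group} from Theorem~\ref{mainergint} by lifting equidistribution from $\T^1(\G\bs\bH^n)=\G\bs G/M$ up to $\G\bs G$, with Zariski density of $\G$ playing the crucial role. The first step is straightforward: if $\psi\in C_c(\G\bs G)$ is right $M$-invariant it descends to $\tilde\psi\in C_c(\T^1(\G\bs\bH^n))$, and since $\mBR$ is the $M$-invariant extension of $m^{\BR}$ one has $\mBR(\psi)=m^{\BR}(\tilde\psi)$; the fibration $\G_H\bs H\to E$ with compact fiber $H\cap M$ rewrites the left-hand integral as $\int_E\tilde\psi(\gr v)\,d\mu_E^{\Leb}(v)$, once $dh$ is normalized to project to $\mu_E^{\Leb}$, and Theorem~\ref{mainergint} with $F=E$ then produces the required limit.

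For a general $\psi\in C_c(\G\bs G)$ I would split $\psi=\bar\psi+\psi^{\perp}$, with $\bar\psi(g):=\int_M\psi(gm)\,dm$ right $M$-invariant and $\psi^{\perp}$ of mean zero on every right $M$-orbit. The $\bar\psi$-contribution is covered by the previous paragraph, using $\mBR(\bar\psi)=\mBR(\psi)$, so it remains to show that
\[
e^{(n-1-\delta)r}\int_{\G_H\bs H}\psi^{\perp}(\G h a_r)\,dh\longrightarrow 0.
\]
Since $M$ and $A$ commute, this is equivalent to showing that for each $m\in M$ the limit of $e^{(n-1-\delta)r}\int_{\G_H\bs H}\psi(\G h m a_r)\,dh$ is independent of $m$; integrating in $m$ then collapses the statement onto the $\bar\psi$-limit already computed.

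To kill this $M$-direction error the natural strategy is to upgrade the mixing input behind Theorem~\ref{mainergint}: that theorem is powered, through the transversality theorem~\ref{thm:transversal}, by mixing of the geodesic flow on $(\T^1(\G\bs\bH^n),m^{\BMS})$, and I would replace this by mixing of the right $A$-translation on $(\G\bs G,\bar m^{\BMS})$ against arbitrary test functions in $C_c(\G\bs G)$, where $\bar m^{\BMS}$ denotes the $M$-invariant extension of $m^{\BMS}$. This is the place where Zariski density is used: in the rank-one setting it guarantees ergodicity, and hence mixing, of the right $M$-action on $(\G\bs G,\bar m^{\BMS})$, which promotes geodesic-flow mixing on $\T^1(\G\bs\bH^n)$ to $A$-mixing on the full frame bundle $\G\bs G$. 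With this enhanced mixing in hand, the flow-box and transversal-intersection argument that proves Theorem~\ref{mainergint} replays verbatim on $\G\bs G$ (flow boxes now thickened along the $M$-direction) and yields the limit $\tfrac{\abs{\muPS_E}}{\abs{m^{\BMS}}}\mBR(\psi)$. The hardest part will be this mixing upgrade together with checking that the transversal measure assigned to a flow box in $\G\bs G$ agrees, after $M$-disintegration, with the $\bar m^{\BMS}$ transversal measure, so that no $M$-bias arises from how a piece of $H\cdot a_r$ can wrap non-uniformly around the $M$-fibers of a flow box; the Zariski-density hypothesis is exactly what rules such bias out in the limit.
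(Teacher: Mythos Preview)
Your treatment of the $M$-invariant case is exactly what the paper does: push $\psi$ down to $\T^1(\G\bs\bH^n)$ and invoke Theorem~\ref{mainergint}. The divergence is in how one handles the $M$-direction, and here the paper takes a rather different and more economical route than the mixing upgrade you propose.

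The paper does \emph{not} attempt to prove $A$-mixing of $\bar m^{\BMS}$ on $\G\bs G$. Instead it argues at the level of weak limits of the measures $\lambda_r(\psi):=e^{(n-1-\delta)r}\int_{\G_H\bs H}\psi(\G ha_r)\,dh$. Since the pushforward $\quo_*(\lambda_r)$ to $\T^1(\G\bs\bH^n)$ converges (by Theorem~\ref{mainergint}) and $\quo$ is proper, any subsequence has a weak limit $\lambda$ on $\G\bs G$ with $\quo_*\lambda=C\,m^{\BR}$. Two claims finish the proof. First, $\lambda$ is $N$-invariant: for $z\in N$ near $e$ one writes $a_rz=h_r a_r b_r$ with $h_r\in H$ and $b_r\to e$ (a wavefront-type identity coming from the fact that $H$-orbits and $N$-orbits are tangent along $\tE$), and then the $H$-invariance of $dh$ absorbs $h_r$. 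Second, $\lambda=C\,\mBR$: one forms the $M_\e$-average $\eta_\e$ of $\lambda$, notes $\eta_\e\ll\mBR$, and invokes the Flaminio--Spatzier theorem that $\mBR$ is $N$-ergodic for Zariski dense $\G$ to conclude $\eta_\e=c_\e\mBR$; a short argument then forces $\lambda$ itself to be $M$-invariant. So Zariski density enters through $N$-ergodicity of the \emph{Burger--Roblin} measure, not through any mixing statement for $\bar m^{\BMS}$.

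Your route---promoting geodesic-flow mixing to frame-flow mixing on $\G\bs G$ and then rerunning the transversal machinery there---is conceptually sound and would indeed yield the theorem, but the promotion step you flag as ``the hardest part'' is genuinely hard: it is essentially the content of later work (Winter) establishing mixing of the frame flow for Zariski dense groups with finite BMS measure, and the passage from $M$-ergodicity of $\bar m^{\BMS}$ to $A$-mixing is not automatic. The paper's approach sidesteps this entirely by trading a mixing statement for an ergodicity statement about a \emph{different} measure ($\mBR$ rather than $\bar m^{\BMS}$) combined with a soft $N$-invariance argument for limit measures.
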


\medskip

When $\Gamma$ is a lattice in $G$ and $E$ is of finite Riemannian volume, Theorem
\ref{thmi:mainerg-group} is due to Sarnak~\cite{Sarnak1981} for horocycles
in $\bH^2$, Randol~\cite{Randol1984} for unit normal vectors based at a point in the
cocompact lattice case in $\bH^{2}$, Duke-Rudnick-Sarnak~\cite{DukeRudnickSarnak1993}
and Eskin-McMullen~\cite{EskinMcMullen1993} in general (also see \cite[Appendix]
{KleinbockMargulis1996}).

In Section~\ref{cs}, we deduce Theorem \ref{m11} from Theorem \ref{mainergint}. The standard techniques of orbital counting 
via equidistribution results
require significant modifications  due to the fact that $m^{\BR}$ is not $G$-invariant.

\subsection{On finiteness of $\mu_E^{\PS}$ for geometrically finite $\Gamma$} \label{tg}
An important condition for
the application of Theorem~1.8 is
to determine when $\mu_E^{\PS}$ is finite.
In this subsection we assume that $\G$ is geometrically finite.
Letting $\tE$ and $\tilde S=\pi(\tilde E)$ be as in section \ref{subsec:Expanding},
{\em suppose further that the natural imbedding $\G_{\tS}\bs \tS\to \G\bs \bH^n$ is proper\/};
in particular, $\p(\tS)$ is
closed in $\G\bs \bH^n$, where
$\G_{\tS}=\{\gamma\in \Gamma:\gamma\tS=\tS\}$.

When $\tS$ is a point or a horosphere, $\mu_{E}^{\PS}$ is compactly supported (Theorem \ref{hstar}).

\begin{theorem}[Theorem~\ref{gff}] \label{Thm:closed:GeomFinite}
If $\tS$ is totally geodesic, then $\G_{\tS}$ is geometrically finite.
\end{theorem}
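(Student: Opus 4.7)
The plan is to verify Bowditch's criterion: a discrete subgroup $H\le \op{Isom}(\bH^k)$ is geometrically finite if and only if every point of $\Lambda(H)$ is either a conical limit point or a bounded parabolic fixed point of $H$. Since $\G_{\tS}\subset\G$ and $\Lambda(\G_{\tS})\subset \Lambda(\G)\cap\partial\tS$, geometric finiteness of $\G$ places every $\zeta\in\Lambda(\G_{\tS})$ in one of two cases: conical or bounded parabolic for $\G$. The key input I take from the properness hypothesis is its equivalent local-finiteness form: the family of totally geodesic translates $\{\g\tS:\g\in \G_{\tS}\backslash \G\}$ is locally finite in $\bH^n$.

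For the conical case, pick $o\in\tS$, a geodesic ray $r$ in $\tS$ from $o$ ending at $\zeta$, and $\g_i\in\G$ with $d(\g_i o,r(t_i))\le C$ and $t_i\to\infty$. Because $r(t_i)\in\tS$, the translate $\g_i^{-1}\tS$ meets the fixed ball $B(o,C)$; local finiteness forces the $\g_i^{-1}\tS$ to take only finitely many values, so after a subsequence we may write $\g_i=\eta_i\alpha$ with $\eta_i\in \G_{\tS}$ and $\alpha\in\G$ fixed. For any $o'\in\tS$, the sequence $\eta_i o'=\g_i(\alpha^{-1}o')$ stays within $C+d(o,\alpha^{-1}o')$ of $r(t_i)\subset\tS$, proving that $\zeta$ is conical for $\G_{\tS}$.

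For the bounded-parabolic case, work in the upper-half-space model with $\zeta=\infty$, so that $\G_\zeta$ acts by Euclidean isometries on the horosphere $H_\zeta\cong\R^{n-1}$ and on $\partial\bH^n\setminus\{\zeta\}=\R^{n-1}$, and $V':=\partial\tS\setminus\{\zeta\}$ is a $(k-1)$-dimensional affine subspace preserved by $(\G_{\tS})_\zeta$. I first show that $(\G_{\tS})_\zeta$ contains a parabolic. Choose $o'\in \op{hull}(\Lambda(\G))\cap\tS$ and $\eta_i\in\G_{\tS}$ with $\eta_i o'\to\zeta$; bounded parabolicity of $\zeta$ for $\G$ provides cocompactness of $\G_\zeta$ on $\op{hull}(\Lambda(\G))\cap B_\zeta$, so for suitable $\alpha_i\in\G_\zeta$ the points $\alpha_i^{-1}\eta_i o'$ lie in a fixed compact set. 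Discreteness of $\G$ forces $\alpha_i^{-1}\eta_i$ to take only finitely many values, so after a subsequence $\alpha_i^{-1}\eta_i=\beta$ is constant and $\eta_i\eta_j^{-1}=\alpha_i\alpha_j^{-1}\in\G_\zeta\cap\G_{\tS}=(\G_{\tS})_\zeta$ produces infinitely many elements of $(\G_{\tS})_\zeta$; since $\G_\zeta$ is virtually abelian with only parabolic infinite-order elements, $(\G_{\tS})_\zeta$ contains a parabolic. For the cocompactness of $(\G_{\tS})_\zeta$ on $\Lambda(\G_{\tS})\setminus\{\zeta\}$, pick a compact $K_1\subset\R^{n-1}$ with $\G_\zeta K_1\supset\Lambda(\G)\setminus\{\zeta\}$ (bounded parabolicity of $\zeta$ for $\G$), giving $\Lambda(\G_{\tS})\setminus\{\zeta\}\subset \bigcup_{\alpha\in\G_\zeta}(V'\cap \alpha K_1)$. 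The condition $V'\cap\alpha K_1\ne\emptyset$ means $\alpha^{-1}\tS$ meets a compact thickening of $K_1$ in $\bH^n$, so local finiteness yields only finitely many coset representatives $\alpha_1,\ldots,\alpha_N\in (\G_{\tS})_\zeta\backslash\G_\zeta$; since $(\G_{\tS})_\zeta$ preserves $V'$, the union collapses to $(\G_{\tS})_\zeta\cdot K_2$ where $K_2=\bigcup_j (V'\cap \alpha_j K_1)$ is compact in $V'$.

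The main obstacle lies in the bounded-parabolic case: one must first extract a parabolic element of $(\G_{\tS})_\zeta$ from any sequence $\eta_i o'\to\zeta$ in $\G_{\tS}$, ruling out externally-parabolic configurations as compatible with $\zeta\in\Lambda(\G_{\tS})$, and then descend the ambient cocompact action of $\G_\zeta$ on $\Lambda(\G)\setminus\{\zeta\}$ to a cocompact action of the restricted stabilizer $(\G_{\tS})_\zeta$ on $\Lambda(\G_{\tS})\setminus\{\zeta\}$. Both steps hinge on converting the local finiteness of $\{\g\tS\}$ in $\bH^n$ into local finiteness of the family of parallel affine translates $\alpha V'$ inside the horosphere, after which Bowditch's criterion closes the argument.
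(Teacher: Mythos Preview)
Your overall strategy matches the paper's: verify Bowditch's criterion by showing every $\zeta\in\Lambda(\G_{\tS})$ is either conical or bounded parabolic for $\G_{\tS}$. Your conical case is essentially Lemma~\ref{gor} of the paper, rephrased via local finiteness of the translates $\{\gamma\tS\}$ rather than directly via properness of $\G_{\tS}\backslash\tS\to\G\backslash\bH^n$; the two formulations are equivalent and the argument is correct.

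For the bounded parabolic case your route differs from the paper's. The paper (Proposition~\ref{sstar}, resting on Proposition~\ref{prop:parallel}) analyzes the Bieberbach structure of $\G_\zeta$ on $\R^{n-1}$: it shows that a $\G'_\zeta$-minimal affine subspace $L$ intersects a translate of $\partial\tS\setminus\{\zeta\}$ in a $(\G'_\zeta\cap G_{\tS})$-minimal subspace, using the closedness of $\G G_{\tS}$ in $G$ via a Lie-group closure argument. From this the dichotomy (bounded parabolic for $\G_{\tS}$ vs.\ $\zeta\notin\Lambda(\G_{\tS})$) falls out. Your approach is more directly geometric, leveraging local finiteness of the translates throughout; your Step~2 (cocompactness of $(\G_{\tS})_\zeta$ on $\Lambda(\G_{\tS})\setminus\{\zeta\}$) is correct and elegant.

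There is, however, a genuine gap in Step~1. The assertion that bounded parabolicity gives ``cocompactness of $\G_\zeta$ on $\op{hull}(\Lambda(\G))\cap B_\zeta$'' is false: $\G_\zeta$ consists of parabolics fixing $\zeta$, hence preserves each horosphere based at $\zeta$, so the quotient $\G_\zeta\backslash(\op{hull}(\Lambda(\G))\cap B_\zeta)$ is a noncompact cusp of the form $(\text{compact})\times[h_0,\infty)$. Consequently $\alpha_i^{-1}\eta_io'$ has the same height $y_i$ as $\eta_io'$, and if $y_i\to\infty$ these points do not lie in any fixed compact set, so discreteness alone does not force $\alpha_i^{-1}\eta_i$ to take finitely many values.

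The fix is to reuse the local-finiteness device from your Step~2 in place of discreteness. Writing $\eta_io'=(x_i,y_i)$ with $x_i\in V'$, choose $\alpha_i\in\G_\zeta$ so that $\alpha_i^{-1}x_i$ lies in a fixed compact $K'\subset\R^{n-1}$ (possible since $x_i$ stays within bounded distance of the $\G'_\zeta$-minimal subspace $L$, as $o'\in\op{hull}(\Lambda(\G))$). Then the vertical plane $\alpha_i^{-1}\tS$ meets the fixed compact set $K'\times\{1\}$, so by local finiteness only finitely many cosets $\G_{\tS}\alpha_i$ occur; since $(\G_{\tS})_\zeta\backslash\G_\zeta\hookrightarrow\G_{\tS}\backslash\G$, only finitely many $(\G_{\tS})_\zeta$-cosets occur. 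If $\|x_i\|\to\infty$ along a subsequence the $\alpha_i$ are forced to take infinitely many values and you obtain infinitely many elements $\alpha_i\alpha_j^{-1}\in(\G_{\tS})_\zeta$. If instead $\|x_i\|$ stays bounded then $y_i\to\infty$, which makes $\eta_io'$ stay within bounded hyperbolic distance of a vertical geodesic to $\zeta$; this would render $\zeta$ radial for $\G_{\tS}$ and hence for $\G$, contradicting the bounded-parabolic hypothesis. With this repair your argument goes through.
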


\begin{definition}[Parabolic-Corank] \label{def:par-corank}
Let $\Lambda_{\rmp}(\G)$ denote the set of parabolic fixed points of $\G$ in $\partial{\bH^n}$.
For any $\xi\in\Lambda_{\rmp}(\G)$, $\G_{\xi}$ is a virtually free abelian group of rank at least one.
Define
\begin{equation*}
 \parcorank(\G_{\tS})=\max_{\xi\in\Lambda_{\rmp}(\G)\cap \partial ( \tS)} \left(\rank(\G_\xi)-\rank(\G_
\xi
\cap \G_{\tS})\right).
 \end{equation*}
\end{definition}

If $\Lambda_{\rmp}(\G)\cap \partial ( \tS)=\emptyset$, we set $\parcorank(\G_{\tS})=0$.
In particular, the parabolic co-rank of $\G_{\tS}$ is always zero when
 $\G$ is convex cocompact.

\begin{lemma}[Lemma~\ref{corank}]
\label{lemma:par-corank} If $\tS$ is totally geodesic, then
 \[
 \parcorank(\G_{\tS})\leq \codim(\tS).
 \]
\end{lemma}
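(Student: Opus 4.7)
The plan is to work one parabolic fixed point $\xi\in\Lambda_{\rmp}(\G)\cap\partial\tS$ at a time and show that
$\rank(\G_\xi)-\rank(\G_\xi\cap\G_{\tS})\le\codim(\tS)$; taking the maximum over such $\xi$ then gives the lemma. (If $\tS$ is a point, the set is empty and $\parcorank(\G_{\tS})=0$ by convention, so we may assume $\dim\tS=k\ge 1$.) First I would place $\xi$ at $\infty$ in the upper half-space model of $\bH^n$, so horospheres based at $\xi$ become horizontal Euclidean hyperplanes $\R^{n-1}\times\{c\}$. In this model the totally geodesic $\bH^k$-subspace $\tS$, which has $\xi\in\partial\tS$, is a vertical affine subspace of the form $W\times\R_{>0}$ where $W\subset\R^{n-1}$ is an affine subspace of dimension $k-1$; let $W_0\subset\R^{n-1}$ denote the linear subspace parallel to $W$, so $\dim W_0=k-1$.

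Next I would use the fact that the stabilizer $G_\xi$ of $\xi$ in $G$ acts on any horosphere based at $\xi$ as the full group of Euclidean isometries $\Euc(n-1)=\O(n-1)\ltimes\R^{n-1}$, and that (since $\G$ acts properly discontinuously and $\xi$ is a bounded parabolic fixed point) $\G_\xi$ is realized as a discrete subgroup of $\Euc(n-1)$. By the standard structure theorem for discrete subgroups of Euclidean isometry groups (Bieberbach), $\G_\xi$ contains a finite index normal subgroup $\Lambda$ consisting entirely of translations, and $\Lambda$ is a lattice in some linear subspace $V\subseteq\R^{n-1}$ with $\dim V=\rank(\G_\xi)$.

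The crucial observation is that a translation $\tau_v\in\Lambda$ preserves $\tS$ (viewed as $W\times\R_{>0}$) if and only if $v\in W_0$. Therefore $\Lambda\cap W_0$ is a lattice in $V\cap W_0$, it sits inside $\G_\xi\cap\G_{\tS}$, and it has rank $\dim(V\cap W_0)$. Consequently
\[
\rank(\G_\xi)-\rank(\G_\xi\cap\G_{\tS})\le \dim V-\dim(V\cap W_0)=\dim\bigl(V/(V\cap W_0)\bigr)\le \dim(\R^{n-1}/W_0)=n-k=\codim(\tS),
\]
which is the desired inequality.

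The main point requiring care, and what I expect to be the real content of the write-up, is the application of the Bieberbach-type structure result to identify $\rank(\G_\xi)$ with the dimension of the translation subspace $V$; once that is in hand, the rest is the elementary linear-algebra bound on $V$ versus $W_0$. Everything else (placing $\xi$ at infinity, describing $\tS$ as a vertical cylinder, and identifying $\G_\xi\cap\G_{\tS}$ with the subgroup preserving $W$) is geometric setup.
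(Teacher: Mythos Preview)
Your overall strategy matches the paper's, but there is a genuine gap. The assertion ``$\Lambda\cap W_0$ is a lattice in $V\cap W_0$'' is false in general: if $\Lambda$ is a lattice in $V$ and $W_0$ is a subspace, then $\Lambda\cap W_0$ is discrete in $V\cap W_0$ but need not be cocompact. For a concrete example take $n=3$, $\tS=\bH^2$ (so $\codim\tS=1$), $\xi=\infty$, $\G_\xi\cong\Z^2$ acting by the standard integer translations on $\R^2$, and $W_0$ a line of irrational slope. Then $V=\R^2$, $V\cap W_0=W_0$ is $1$-dimensional, but $\Lambda\cap W_0=\{0\}$, so your chain of inequalities would give $2-0\le 2-1=1$, which is absurd. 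What saves the lemma is the standing hypothesis (in force in \S\ref{tg} and \S\ref{not:model}) that $\G_{\tS}\backslash\tS\to\G\backslash\bH^n$ is proper; in the example just given this fails, since $W$ projects to a dense line in the torus $\R^2/\Z^2$.

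The paper closes exactly this gap via Proposition~\ref{prop:parallel}, whose proof uses properness (equivalently, closedness of $\G G_{\tS}$ in $G$) in an essential way: one shows that the closure of $\Delta\cdot L_0$ in the torus $\G'_\infty\backslash L$ is the image of a subspace $L_1\supseteq L_0$, and then closedness of $\G G_{\tS}$ forces the connected group generated to lie in $G_{\tS}$, hence $L_1=L_0$, so $\Delta=\G'_\infty\cap G_{\tS}$ acts cocompactly on $L_0$. Once that is established, the linear-algebra bound $\dim L-\dim(L\cap W_0)\le (n-1)-(k-1)=\codim\tS$ finishes the proof exactly as you wrote. A smaller point: Bieberbach for a merely discrete (not cocompact) subgroup of $\Euc(n-1)$ yields a finite-index \emph{abelian} subgroup, not one consisting of pure translations; its elements can be screw motions with nontrivial rotational part acting trivially only on the minimal subspace. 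This does not affect the ranks, but your phrasing ``consisting entirely of translations'' should be corrected.
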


 \begin{theorem}[Theorems~\ref{cstar-sb} and  \ref{stfinite}]
\label{thm:parcorank}  We have:
\begin{enumerate}
 \item $\supp(\mu^{\PS}_E)$ is compact  if and only if $\parcorank(\G_{\tS})=0$.

 \item $\abs{\mu^{\PS}_E}<\infty$ if and only if $\parcorank(\G_{\tS})<\delta$.
\end{enumerate}
 \end{theorem}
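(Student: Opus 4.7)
By Theorem~\ref{Thm:closed:GeomFinite}, $\G_\tS$ is itself geometrically finite; by Lemma~\ref{lemma:par-corank}, $\parcorank(\G_\tS)\le\codim(\tS)<\infty$, and in particular the set of $\G_\tS$-orbits of parabolic fixed points of $\G$ lying on $\partial\tS$ is finite. The strategy is to decompose $E$ into a relatively compact ``thick'' core and finitely many ``cuspidal'' regions indexed by $\xi\in\Lambda_\rmp(\G)\cap\partial\tS$, and then to reduce each assertion to an estimate of $\muPS_\tE$ on a single cusp.

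Via the positive-endpoint map $v\mapsto v^+=:\eta$, identify $\tE$ with $\bh\setminus\partial\tS$; under this identification $\pi(v)\in\tS$ is the foot $p(\eta)$ of the perpendicular from $\eta$ to $\tS$, $\muPS_\tE$ becomes the density $e^{\delta\beta_\eta(o,p(\eta))}d\nu_o(\eta)$, and $\supp(\muPS_\tE)$ corresponds to $\Lambda(\G)\setminus\partial\tS$. Fix $\G$-equivariantly disjoint Margulis horoballs $\{H_\xi\}_{\xi\in\Lambda_\rmp(\G)}$; for each $\xi\in\Lambda_\rmp(\G)\cap\partial\tS$, let
\[
C_\xi:=\{\eta\in\bh\setminus\partial\tS:\text{the geodesic from } p(\eta) \text{ to } \eta \text{ eventually enters } H_\xi\},
\]
a $(\G_\xi\cap\G_\tS)$-invariant neighborhood of $\xi$. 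Using the geometric finiteness of $\G$ on $\bH^n$ together with that of $\G_\tS$ on $\tS$, one shows that the complement $(\Lambda(\G)\setminus\partial\tS)\setminus\bigcup_\xi \G_\tS C_\xi$ projects to a relatively compact subset of $E$. It thus suffices to analyse $\muPS_E$ on each $(\G_\xi\cap\G_\tS)\ba C_\xi$.

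For part (1), use that $\xi$ is a bounded parabolic fixed point of $\G$, so $\G_\xi$ acts cocompactly on $\Lambda(\G)\setminus\{\xi\}$. If $\parcorank(\xi):=\rank(\G_\xi)-\rank(\G_\xi\cap\G_\tS)=0$, then $\G_\xi\cap\G_\tS$ has finite index in $\G_\xi$ and also acts cocompactly on $\Lambda(\G)\setminus\{\xi\}$ near $\xi$; the image of $C_\xi\cap\Lambda(\G)$ in $E$ is then relatively compact. Conversely, if $\parcorank(\xi)>0$, the quotient $\G_\xi/(\G_\xi\cap\G_\tS)$ is an infinite rank-$\parcorank(\xi)$ lattice of parabolic translations along $\parcorank(\xi)$ Euclidean directions transverse to $\partial\tS$ at $\xi$; applying its elements to any radial limit point of $\G$ near $\xi$ produces, by a straightforward distinctness argument modulo $\G_\tS$, infinitely many distinct points of $\supp(\muPS_E)$ with no convergent subsequence in $E$. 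Maximising over $\xi$ proves (1). For part (2), work in cuspidal coordinates for $H_\xi$ and apply the Sullivan shadow lemma in its Stratmann--Velani refinement for geometrically finite groups. The partial quotient by $\G_\xi\cap\G_\tS$ leaves $\parcorank(\xi)$ parabolic directions unquotiented, and the resulting cusp integral takes the schematic form
\[
\int_{(\G_\xi\cap\G_\tS)\ba C_\xi}d\muPS_\tE\;\asymp\;\int_1^\infty e^{-(\delta-\parcorank(\xi))t}\,dt,
\]
which is finite iff $\delta>\parcorank(\xi)$. Summing over the finitely many $\xi\in\Lambda_\rmp(\G)\cap\partial\tS$ yields (2).

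The main obstacle is the cusp estimate in the last step. The standard Stratmann--Velani global measure formula handles the \emph{full} $\G_\xi$-quotient of $H_\xi$; here one must perform the \emph{partial} quotient by the subgroup $\G_\xi\cap\G_\tS$ and verify that the $\parcorank(\xi)$ unquotiented directions of parabolic translation contribute exactly a growth factor $e^{\parcorank(\xi)t}$ at depth $t$, so that all other exponents and constants in the shadow estimate cancel cleanly to leave the convergence criterion $\delta>\parcorank(\xi)$.
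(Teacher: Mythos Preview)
Your overall strategy---decompose $E$ into a thick core plus finitely many cuspidal pieces indexed by $\xi\in\Lambda_\rmp(\G)\cap\partial\tS$, then reduce everything to a single-cusp estimate with convergence criterion $\delta>\parcorank(\xi)$---is exactly what the paper does, and your final integral $\int_1^\infty e^{-(\delta-r)t}\,dt$ is equivalent to the paper's $\sum_{\bk\in\Z^r\setminus\{0\}}|\bk|^{-\delta}$.

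The implementations differ in two places. First, for the thick/thin decomposition the paper does not use horoball cusp regions $C_\xi$ but instead works with a Dirichlet domain $\cDa$ for $\G_\tS$ acting on $\tS$, sets $\cE_U=\{v\in\tE:\pi(v)\in\cDa,\ v^+\in U\cap\Lambda(\G)\}$, and shows directly that any boundary point of $\cDa$ in $\Lambda(\G)$ must be a bounded parabolic fixed point (Proposition~\ref{prop:Dir-radial}). Second, and more substantially, for the cusp estimate the paper does \emph{not} invoke Stratmann--Velani. Instead it works in upper-half-space coordinates with $\xi=\infty$, picks generators $\bg=(\g_1,\dots,\g_r)$ for the transverse parabolic directions, and tiles a neighborhood of $\infty$ by translates $\bg^{\bk}\cF$ of a fixed compact box. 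The key computation (Lemma~\ref{eq:e13}) is that $\int_{\bg^{\bk}\cF}e^{\delta\beta_\xi(o,\pi(\Vinv(\xi)))}\,d\nu_o(\xi)\asymp|\bk|^{-\delta}$, obtained by combining the explicit Busemann formula $e^{\beta_\xi(o,\pi(\Vinv(\xi)))}=(1+\|\xi\|^2)/\bfh(\Vinv(\xi))\asymp|\bk|$ with the conformality relation $\nu_o(\bg^{\bk}\cF)\asymp|\bk|^{-2\delta}$. Summing over $\bk$ gives both the upper bound (Proposition~\ref{prop:mupsEbounds}(1)) and the lower bound (Proposition~\ref{prop:mupsEbounds}(2)), yielding parts (1) and (2) simultaneously. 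This explicit-coordinate approach makes the ``$r$ unquotiented directions contribute $e^{rt}$'' step that you flag as the main obstacle completely transparent: it is just the lattice-point count $\#\{\bk\in\Z^r:|\bk|\le N\}\asymp N^r$. Your Stratmann--Velani route would also work, but the partial-quotient bookkeeping you worry about is real, and the paper's hands-on computation avoids it entirely.

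One small point: for the converse in (1) you argue by producing an escaping orbit, whereas the paper simply observes that the lower measure bound $\muPS_E(\p(\cB_T))\gg\sum|\bk|^{-\delta}>0$ already forces the support to meet arbitrarily deep horoball slices $\cB_T$, hence to be noncompact. Either argument is fine.
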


Note that by \cite[Prop. 2]{DalboOtalPeign},
$\delta> \tfrac 12 \max_{\xi\in\Lambda_{\rmp}(\G)} \rank(\G_\xi)$.  As a consequence of Theorem~\ref{thm:parcorank}, 
we get:

\begin{corollary}[Corollary~\ref{infvol}]
\label{Cor:Leb:PS}
Suppose that $\dim(\tS)\ge (n+1)/2$.
If $\abs{\mu_E^{\Leb}}<\infty$, then
$\abs{\mu_E^{\PS}}<\infty$.
\end{corollary}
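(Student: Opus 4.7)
The plan is to invoke Theorem~\ref{thm:parcorank}(2), which reduces the statement to the inequality $\parcorank(\G_{\tS}) < \delta$. First I would unpack the Riemannian hypothesis: since $\mu_{\tE}^{\Leb}$ is the Riemannian volume on the unit normal bundle $\tE$, and the fibration $\tE\to\tS$ has compact fibers $S^{n-k-1}$ (where $k = \dim\tS$), the finiteness $\abs{\mu_E^{\Leb}} < \infty$ is equivalent to $\Vol(\G_{\tS}\bs\tS) < \infty$. Thus $\G_{\tS}$, which is automatically discrete in $G_{\tS}\cong\op{Isom}(\bH^k)$, is a lattice there; in particular it is geometrically finite with $\Lambda(\G_{\tS}) = \partial\tS$, and every parabolic fixed point of $\G_{\tS}$ has stabilizer of maximal rank $k-1$.

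Next I would analyze $\parcorank$ one cusp at a time. Fix $\xi \in \Lambda_{\rmp}(\G)\cap\partial\tS$. Since $\xi$ is a limit point of the geometrically finite group $\G_{\tS}$, it is either conical or parabolic for $\G_{\tS}$. Conicality is monotone under the inclusion $\G_{\tS}\subset \G$: a witnessing sequence in $\G_{\tS}$ also witnesses conicality in $\G$. But $\xi$, being a parabolic fixed point of the geometrically finite group $\G$, is bounded parabolic and therefore not conical for $\G$. Hence $\xi$ must be parabolic for $\G_{\tS}$, and the maximal-rank property of lattices in $\op{Isom}(\bH^k)$ yields $\rank(\G_\xi\cap\G_{\tS}) = k-1$. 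Consequently the contribution of $\xi$ to $\parcorank(\G_{\tS})$ is $r_\xi - (k-1)$, where $r_\xi := \rank(\G_\xi) \le n-1$.

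Finally, I would combine this with the dimension hypothesis and the Dalbo-Otal-Peign\'e bound. The assumption $k\ge (n+1)/2$ gives $r_\xi \le n-1 \le 2(k-1)$, which is equivalent to $r_\xi - (k-1) \le r_\xi/2$. Since $\delta > \tfrac12 \max_{\xi\in\Lambda_{\rmp}(\G)} \rank(\G_\xi)\ge r_\xi/2$ by the inequality cited just before the corollary, we obtain $r_\xi - (k-1) \le r_\xi/2 < \delta$, uniformly in $\xi$. If $\Lambda_{\rmp}(\G)\cap\partial\tS$ is empty, then $\parcorank(\G_{\tS}) = 0 < \delta$ trivially. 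Either way $\parcorank(\G_{\tS}) < \delta$, and Theorem~\ref{thm:parcorank}(2) gives $\abs{\mu_E^{\PS}} < \infty$.

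The step I expect to be trickiest is the second: upgrading the naive dimension-counting bound $\rank(\G_\xi\cap\G_{\tS})\ge r_\xi + k - n$ (which would only yield $\parcorank(\G_{\tS})\le\codim(\tS)$, too weak in this regime) to the full-rank equality $s_\xi = k-1$. This upgrade depends jointly on (i) the conical-vs-parabolic dichotomy for geometrically finite groups and (ii) the monotonicity of conicality under inclusion of discrete subgroups. Neither ingredient is deep in isolation, but together they are precisely what allows the Dalbo-Otal-Peign\'e bound to dominate the codimension bound once $\dim\tS \ge (n+1)/2$.
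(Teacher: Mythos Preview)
Your proposal is correct and follows essentially the same route as the paper: deduce from $\abs{\mu_E^{\Leb}}<\infty$ that $\G_{\tS}$ is a lattice, use the lattice property to get $\rank(\G_\xi\cap\G_{\tS})=k-1$ at each relevant cusp, then combine the dimension hypothesis with the Dalbo--Otal--Peign\'e inequality to force $\parcorank(\G_{\tS})<\delta$ and invoke Theorem~\ref{thm:parcorank}(2). Your final inequality $r_\xi-(k-1)\le r_\xi/2<\delta$ is a slightly cleaner rearrangement than the paper's chain, and your explicit conical-versus-parabolic argument for why $\xi$ must be parabolic in $\G_{\tS}$ spells out what the paper compresses into ``a property of a lattice in rank one Lie group''.
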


\subsection{Finiteness of $\muPS_E$ or $\mu^{\Leb}_E$ and closedness of $E$}

Let $\tE$ and $E$ be as in the subsection \ref{subsec:Expanding}. In \cite{Raghunathanbook}, it is shown that $
\abs{\mu_E^{\Leb}}<\infty$ implies
that $E$ is a closed subset of $\T^1(\Gamma\ba \bH^n)$.  We prove an analogous statement
for $\mu_E^{\PS}$.

\begin{theorem}[Theorem~\ref{thm:finite-closed}]
Let $\G$ be a discrete Zariski dense subgroup of $G$. If $\abs{\mu_{E}^{\PS}}<\infty$, then the natural embedding
$\G_{\tS}\bs\tS\to \G\bs \bH^n$ is proper.
 \end{theorem}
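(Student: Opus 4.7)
We argue by contrapositive: assume the embedding $\G_\tS\bs\tS\to\G\bs\bH^n$ is not proper; we show $\abs{\muPS_E}=\infty$. Since $\G_\tE=\G_\tS$ (elements preserving $\tS$ act isometrically on the perpendicular bundle) and $\muPS_E$ is the pushforward of the quotient of $\muPS_\tE$ by $\G_\tE$, total mass is preserved: $\abs{\muPS_E}=\abs{\muPS_{\G_\tE\bs\tE}}$. It therefore suffices to exhibit a sequence of pairwise disjoint Borel subsets of $\G_\tE\bs\tE$, each of $\muPS$-mass bounded below by a uniform positive constant.

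\paragraph{Extraction and limit geometry.} Non-properness yields $x_n\in\tS$ and $\g_n\in\G$ with $\g_nx_n\to y\in\bH^n$ while $\{\G_\tS x_n\}$ has no convergent subsequence in $\G_\tS\bs\tS$. Standard arguments (using that $\{x_n\}$ must be unbounded, $\G$ discrete, and $\tS$ closed in $\bH^n$) give $x_n\to\xi\in\partial\tS\cap\Lambda(\G)$, force the $\g_n\tS$ to be eventually distinct, and after further subsequencing arrange $\{\G_\tS x_n\}$ to be pairwise at distance greater than some fixed $3R>0$ in $\G_\tS\bs\tS$. A compactness argument extracts a limit totally geodesic subspace (resp.\ horosphere) $S_\infty$ through $y$ with $\g_n\tS\to S_\infty$ on every compact set.

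\paragraph{Uniform mass, disjointness, and conclusion.} Zariski density of $\G$ ensures $\Lambda(\G)$ is not contained in the boundary of any proper totally geodesic subspace of $\bH^n$; in particular $\Lambda(\G)\not\subset\partial S_\infty$. Taking $R$ large enough, the open set
\[
V_\infty=\bigl\{\eta\in\partial\bH^n\setminus\partial S_\infty:\ x(\eta,S_\infty)\in B(y,R/2)\bigr\}
\]
meets $\Lambda(\G)$, so we pick a compact $C\subset V_\infty$ with $\nu_o(C)>0$; here $x(\eta,\cdot)$ denotes the foot of perpendicular (totally geodesic case) or the intersection of $S_\infty$ with the geodesic from its base through $\eta$ (horospherical case). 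Put $W_n:=\{v\in\g_n\tE:v^+\in C\}$. The convergence $\g_n\tS\to S_\infty$ places $x(\eta,\g_n\tS)\in B(y,R)$ for all $\eta\in C$ and large $n$, and the endpoint-map disintegration of the PS measure yields
\[
\muPS_{\g_n\tE}(W_n)=\int_C e^{\delta\beta_\eta(o,x(\eta,\g_n\tS))}\,d\nu_o(\eta)\geq c>0
\]
uniformly in $n$, using boundedness of $\beta_\eta(o,\cdot)$ on the compact set $C\times\overline{B(y,R)}$ and $\nu_o(C)>0$. By $\G$-invariance of the PS measure on $\T^1(\bH^n)$, $\muPS_\tE(\g_n^{-1}W_n)=\muPS_{\g_n\tE}(W_n)\geq c$; the base points of $\g_n^{-1}W_n$ lie in $B(x_n,R+o(1))$, which by the $3R$-spacing of $\{\G_\tS x_n\}$ project to pairwise disjoint subsets of $\G_\tE\bs\tE$. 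Summing yields $\abs{\muPS_E}\geq\sum_n c=\infty$, the required contradiction. The principal difficulty is obtaining the uniform lower bound for $\muPS_{\g_n\tE}(W_n)$: it rests essentially on Zariski density (without which $\Lambda(\G)\subset\partial S_\infty$ could occur, forcing $\nu_o(C)=0$) together with the geometric convergence $\g_n\tS\to S_\infty$ that gives uniform control of the foot-of-perpendicular (or unstable-projection) map as $n\to\infty$.
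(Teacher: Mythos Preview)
Your approach is sound and genuinely different from the paper's. The paper deduces the result from its transversal machinery: assuming non-properness, it finds a vector in a limit leaf $g\tE$ whose forward endpoint is a radial limit point of $\G$, flows it until the orbit returns near some $u$, builds an admissible box $B=TP$ about $u$, and shows that for some fixed $r$ the transversal $T$ meets $\gr(\G\tE)$ in infinitely many $\G_\tE$-inequivalent points --- contradicting Corollary~\ref{finite}, which gives $\sum_{t\in T\cap\gr(E)}\Nrt<\infty$ when $\abs{\muPS_E}<\infty$. Your argument instead exhibits directly infinitely many disjoint pieces of $\G_\tE\bs\tE$ each of uniformly positive PS-mass; it is more elementary and needs none of the box/plaque machinery, whereas the paper's route has the virtue of reusing tools already built for the equidistribution theorems.

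There is one gap in your final step. From $\muPS_\tE(\g_n^{-1}W_n)\geq c$ and pairwise disjointness of the images in $\G_\tE\bs\tE$ you infer $\abs{\muPS_E}\geq\sum_n c$, but passing to the quotient measure requires the projection $\tE\to\G_\tE\bs\tE$ to be \emph{injective on each} $\g_n^{-1}W_n$, not merely that images for different $n$ are disjoint. Your $3R$-spacing of $\{\G_\tS x_n\}$ secures the latter but not the former: if $x_n$ lies deep in a cusp of $\G_\tS\bs\tS$ the local injectivity radius there can be much smaller than $R$. The repair is short. Any $h\in\G_\tE\setminus\{e\}$ identifying two vectors of $\g_n^{-1}W_n$ conjugates to $\g_n h\g_n^{-1}\in\G$, which moves a point of $B(y,R+1)$ into $B(y,R+1)$; hence the multiplicity is at most $N:=\#\{g\in\G:gB(y,R+1)\cap B(y,R+1)\neq\emptyset\}<\infty$, uniformly in $n$, so each image has mass at least $c/N$ and the sum still diverges. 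Alternatively, recenter the ball at the footpoint $y_0=x(\eta_0,S_\infty)$ of a single chosen $\eta_0\in\Lambda(\G)\setminus\partial S_\infty$ and take $R$ small enough that $B(y_0,2R)$ injects into $\G\bs\bH^n$; then both injectivity on each $W_n$ and disjointness across $n$ follow at once from the proper discontinuity of $\G$.
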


\subsection{Integrability of $\phi_0$ and a characterization of a lattice}
 Define $\phi_0\in C(\Gamma\ba \bH^n)$ by
$$\phi_0(x):=|\nu_x|\quad\text{ for $x\in \Gamma\ba \bH^n$}.$$
The function $\phi_0$ is an eigenfunction of the hyperbolic Laplace operator
with eigenvalue $-\delta (n-1-\delta)$ \cite{Sullivan1979}. Sullivan \cite{Sullivan1984}
 showed that if $\delta >\tfrac{n-1}2$, then
 $\phi_0\in L^2(\Gamma\ba \bH^n,d\Vol_{{\Riem}})$ if and only if $\abs{m^{\BMS}}<\infty$.
The following theorem, which is
 a new application of Ratner's theorem \cite{Ratner1991},
relates the integrability of $\phi_0$ with the finiteness of $\op{Vol}_{\rm Riem}(\G\ba \bH^n)$:

\begin{theorem}[\S\ref{subsec:eigen}] \label{thm:mbf}
For any discrete subgroup $\G$,
the following statements are equivalent:
\begin{enumerate}
\item \label{mbf3} $\phi_0\in L^1(\Gamma\ba\bH^n,d\Vol_{\Riem})$;
\item  \label{mbf2} $\abs{m^{\BR}}<\infty$; 
\item \label{mbf1} $\Gamma$ is a lattice in $G$.
\end{enumerate}
\end{theorem}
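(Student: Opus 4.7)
The plan is to establish $(3)\Rightarrow(1),(2)$, the equivalence $(1)\Leftrightarrow(2)$, and the main implication $(2)\Rightarrow(3)$, where Ratner's theorem enters. For $(3)\Rightarrow(1),(2)$: when $\G$ is a lattice, $\delta=n-1$ and by uniqueness the Patterson--Sullivan density $\nu_x$ is a scalar multiple of the visual measure $m_x$; hence $\phi_0\equiv|m_o|$ is constant and lies in $L^1(\G\bs\bH^n)$ by finite covolume, and $m^{\BR}$ is proportional to the Liouville measure, so $|m^{\BR}|<\infty$. For $(1)\Leftrightarrow(2)$ in general, I would disintegrate $m^{\BR}$ along the basepoint projection $\pi\colon\T^1(\G\bs\bH^n)\to\G\bs\bH^n$: using Hopf coordinates together with the conformality relations $d\nu_x=e^{\delta\beta_\xi(o,x)}d\nu_o$ and $dm_x=e^{(n-1)\beta_\xi(o,x)}dm_o$, a direct fiber integration gives $\pi_*m^{\BR}=|m_o|\cdot\phi_0\cdot d\Vol_{\Riem}$, whence $|m^{\BR}|=|m_o|\cdot\|\phi_0\|_{L^1}$.

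For $(2)\Rightarrow(3)$ I proceed in two steps, the first being to force $\delta=n-1$. A direct computation in Hopf coordinates yields the quasi-invariance $(\gt)_*m^{\BR}=e^{(n-1-\delta)t}m^{\BR}$: the factor $dm_o(v^+)\,ds$ is $\gt$-invariant, while the $\delta$-conformal factor $d\nu_o(v^-)$ produces the exponential cocycle. Since pushforward always preserves total mass, $|m^{\BR}|=e^{(n-1-\delta)t}|m^{\BR}|$ for every $t$, and combined with $0<|m^{\BR}|<\infty$ this forces $\delta=n-1$.

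The second step concludes that $\G$ is a lattice via Ratner's theorem. The $M$-invariant lift $\bar m^{\BR}$ of $m^{\BR}$ to $\G\bs G$ is a finite Radon measure invariant under the horospherical unipotent subgroup $N$. Ratner's measure classification applied to the $N$-action decomposes $\bar m^{\BR}$ into $N$-ergodic algebraic components, each supported on a closed orbit $\G\bs\G Hg$ with $N\subseteq H\subseteq G$ closed and $g\inv\G g\cap H$ a lattice in $H$. Among the closed subgroups containing $N$, only $N$, $MN$, and $G$ admit lattices, since the intermediate $AN$ and $MAN$ are non-unimodular. Components on $N$- or $MN$-orbits live on countably many closed horocycles indexed by the parabolic fixed points of $\G$. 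On the other hand, pushing $\bar m^{\BR}$ down to $\G\bs\partial\bH^n$ via $g\mapsto g\cdot v^-$ produces (up to scalar) the image of $\nu$, which is non-atomic because $\G$ is non-elementary and $\Lambda(\G)$ is perfect. This non-atomicity rules out the cusp-type components exhausting $\bar m^{\BR}$, so some $N$-ergodic component must be the $G$-invariant Haar measure on $\G\bs G$, forcing $\G$ to be a lattice. The main obstacle is precisely this Ratner step: combining the algebraic classification with the Hopf-coordinate structure of $m^{\BR}$ and the non-atomicity of the Patterson--Sullivan density to eliminate all non-Haar ergodic components.
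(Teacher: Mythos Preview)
Your approach is essentially the paper's: both establish $(1)\Leftrightarrow(2)$ via the pushforward $\pi_*m^{\BR}\propto\phi_0\,d\Vol_{\Riem}$, and both prove $(2)\Rightarrow(3)$ by applying Ratner's theorem to the finite $N$-invariant lift $\bar m^{\BR}$ on $\G\bs G$ and ruling out the possibility that all ergodic components live on closed horospherical orbits by observing this would concentrate $\nu_o$ on the countable set of parabolic fixed points.

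Two minor points. Your Step~1, forcing $\delta=n-1$ from the quasi-invariance of $m^{\BR}$ under the geodesic flow, is correct (up to a sign in the exponent) but you never use it in Step~2, and the paper omits it entirely; Ratner applies as soon as $\bar m^{\BR}$ is $N$-invariant and finite, regardless of $\delta$. Second, your stated reason for the non-atomicity of $\nu_o$, namely ``because $\Lambda(\G)$ is perfect'', is not a valid argument: a measure can be purely atomic and still have perfect support (take a weighted sum of Dirac masses on a countable dense set). The non-atomicity of the Patterson--Sullivan density for non-elementary $\G$ is true but requires a genuine argument from conformality and the infinitude of $\G$-orbits. The paper, for its part, simply asserts that concentration on parabolic fixed points is a contradiction without spelling out why, so neither proof is fully explicit at this step.
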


Although $m^{\BR}$ depends on the choice of the base point $o$, its finiteness is independent of the choice. If  $\G$ is a lattice, then $\delta=n-1$ and hence $\phi_0$ is a constant function by the uniqueness of the harmonic function~\cite{Yau1975}.

\subsection*{Acknowledgements}
We thank Thomas Roblin for useful comments on an earlier version of this paper. We thank
the referee for carefully reading the paper and asking many pertinent questions, which led us to proving more general results 
and improving the overall presentation of the paper. 

\section{Transverse measures}

\subsection{}
Let $(\bH^n, d)$ denote the hyperbolic $n$-space and $\partial{\bH^n}$ its geometric
boundary.
Let $G$ denote the identity component of the isometry group of $\bH^n$.
We denote by $\op{T}^1(\bH^n)$ the unit tangent bundle of $\bH^n$ and by $\pi$ the natural
projection from $\op{T}^1(\bH^n)\to \bH^n$. By abuse of notation, we use $d$ to denote a left
$G$-invariant  metric on
$\T^1(\bH^n)$ such that $d(\pi(u),\pi(v))=\min\{ d(u_1, v_1): \pi(u_1)=\pi(u), \pi(v_1)=\pi(v)\}$.
For a subset $A$ of $\T^1(\bH^n)$ or $\bH^n$ or $\partial{\bH^n}$ and a subgroup $H$ of
$G$, we denote by $H_A$ the stabilizer subgroup $\{g\in H: g(A)=A\}$ of $A$ in $H$.

Denote by $\{\gr: r\in \br\}$ the geodesic flow on $\T^1(\bH)$. For $u\in \op{T}^1(\bH^n)$,
 we set
 \begin{equation} \label{eq:u+-}
 u^+:=\lim_{r\to\infty}\gr(u)\quad\text{and}\quad
 u^-:=\lim_{r\to -\infty}\gr(u),
 \end{equation}
 which are the endpoints in $\partial{\bH^n}$ of the geodesic defined by $u$.
Note that
$(g(u))^\pm=g(u^\pm)$ for $g\in G$.
 The map $\V: \op{T}^1(\bH^n)\to \partial{\bH^n}$ given by $\V(u) =u^+$
 is called the {\it visual\/} map.

\subsection{}
\label{subsec:busemann}
The {\em Busemann function\/} $\beta:\bh\times \bH^n\times \bH^n\to \br$
 is defined as follows: for $\xi\in \bh$ and $x, y\in \bH^n$,
\begin{equation} \label{eq:busemann}
\beta_\xi(x,y)=\lim_{r\to \infty}d(x,\xi_r)-d(y,\xi_r)
\end{equation}
where $\xi_r$ is any geodesic ray tending to $\xi$ as $r\to \infty$; and the limiting value is
independent of the choice of the ray $\xi_r$. 

Note that $\beta$ is differentiable and invariant under isometries; that is, for $g\in G$ and $x, y\in \bH^n$, 
$\beta_\xi(x,y)=\beta_{g(\xi)}(g(x),g(y))$. 

For $u\in \op{T}^1(\bH^n)$,
 the {\em unstable horosphere\/} based at $u^-$ is the set
 $${\mathcal H}^+_u=\{v\in \op{T}^1(\bH^n): v^-=u^-, \beta_{u^-}(\pi(u),\pi(v))=0\},$$
and the {\em stable horosphere\/} based at $u^+$ is the set
 $${\mathcal H}_u^-=\{v\in \op{T}^1(\bH^n): v^+=u^+, \beta_{u^+}(\pi(u),\pi(v))=0\}.$$

The {\em weak stable manifold\/} corresponding to $u$ is
 \[
 \tilde W_u^s=\V^{-1}(u^+)=\{v\in \T^1(\bH^n): v^+=u^+\}.
 \]
 \begin{align} 
 \label{eq:expand}
\text{$v_{1},v_{2}\in \cH^{+}_{u}$, $r\in\R$} &\Rightarrow d(\gr(v_{1}),\gr(v_{2}))=e^{r}d(v_{1},v_{2}). \\
\label{eq:stable}
\text{$v_{1},v_{2}\in \tilde W_{u}^{s}$, $r\geq 0$} &\Rightarrow d(\gr(v_{1}),\gr(v_{2}))\leq d(v_{1},v_{2}). 
\end{align}

The image under $\pi$ of a stable or an unstable horosphere $\cH$ in $\Th$ based at $\xi$ is
called a {\em horosphere in $\bH^n$ based at $\xi$}. Hence
$\pi(\cH)=\{y\in \bH^n: \beta_\xi(x,y)=0\}$ for $x\in\pi(\cH)$.

\subsection{}\label{name}
Let $\tilde S$ be  one of the following: a horosphere or a complete connected totally geodesic submanifold of $\bH^n$ of 
dimension $k$ for $0\leq k\leq n-1$. Let $\tE\subset \T^1(\Hn)$ denote the unstable horosphere with $\pi(\tE)=\tS$ if $\tS$ is a 
horosphere, and the unit normal bundle over $\tS$ if $\tS$ is totally geodesic.

\begin{lemma} \label{lemma:diffeo}
The visual map $\V$ restricted to $\tE$ is a diffeomorphism onto $\partial (\Hn) \setminus
\partial (\tS)$.
\end{lemma}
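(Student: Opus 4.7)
The plan is to first establish that $\V|_{\tE}$ is a set-theoretic bijection onto $\partial\Hn \setminus \partial\tS$, then to upgrade this bijection to a diffeomorphism by an explicit calculation in the upper half-space model of $\Hn$. For injectivity, take $u_1, u_2 \in \tE$ with $u_1^+ = u_2^+ = \eta$. In the horosphere case, every vector in $\tE$ has $u^- = \xi$, the base point of $\tS$, so the oriented geodesics through $u_1$ and $u_2$ share both endpoints and therefore coincide; this geodesic meets the horosphere $\tS$ in exactly one point, whence $u_1 = u_2$. In the totally geodesic case, each $u_i$ is a unit normal to $\tS$ at $\pi(u_i)$ with forward endpoint $\eta$, so injectivity amounts to the statement that from any $\eta \in \partial\Hn \setminus \partial\tS$ there is a unique perpendicular geodesic to $\tS$. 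This can be proved by growing the horoballs based at $\eta$ from size zero: the first horoball that touches $\tS$ does so at a unique point $x^{*}$ (uniqueness comes from strict concavity along geodesics of $\tS$ of the Busemann function $\beta_\eta(x_0, \cdot)|_{\tS}$), and the radial geodesic from $x^{*}$ to $\eta$ is perpendicular to the limiting horosphere at $x^{*}$ and hence to $T_{x^{*}}\tS$.

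For surjectivity and smoothness I conjugate by an isometry and work in the upper half-space model. In the horosphere case, take $\xi = \infty$ so that $\tS = \R^{n-1} \times \{1\}$; then $\tE$ is the family of downward-pointing unit vertical vectors over $\tS$, and $\V|_{\tE}$ is just the vertical projection $(x, 1) \mapsto x$ onto $\R^{n-1} = \partial\Hn \setminus \{\infty\} = \partial\Hn \setminus \partial\tS$, a manifest diffeomorphism. In the totally geodesic case with $k \geq 1$, place $\tS = \R^{k-1} \times \{0\}^{n-k} \times \R_{>0}$ so that $\partial\tS = (\R^{k-1} \times \{0\}^{n-k}) \cup \{\infty\}$; a hyperbolic unit normal at $(x', 0, y) \in \tS$ has Euclidean components $(0, yv, 0)$ for some $v \in S^{n-k-1}$, and the geodesic starting there is a Euclidean semicircle with forward endpoint $(x', yv, 0) \in \R^{n-1}$. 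Hence $\V|_{\tE}$ becomes the map $(x', y, v) \mapsto (x', yv)$ from $\R^{k-1} \times \R_{>0} \times S^{n-k-1}$, which is a diffeomorphism onto $\R^{k-1} \times (\R^{n-k} \setminus \{0\}) = \partial\Hn \setminus \partial\tS$ via the standard polar decomposition. The degenerate case $k = 0$ reduces to the tautological identification of the unit tangent sphere at a point with $\partial\Hn$.

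The step I expect to require the most care is the uniqueness part of the perpendicular-foot argument in the totally geodesic case; once the strict concavity of $\beta_\eta(x_0,\cdot)|_{\tS}$ along geodesics in $\tS$ is invoked to pin this down, everything else is routine coordinate bookkeeping in the upper half-space model, with no conceptual surprises.
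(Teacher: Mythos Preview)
Your proof is correct and follows essentially the same route as the paper: both place $\infty\in\partial\tS$ in the upper half-space model and read off the map explicitly, with the paper writing down the inverse (orthogonal projection to $\partial\tS\setminus\{\infty\}$, then lift to the appropriate height) while you compute the forward map and recognize it as polar coordinates $(x',y,v)\mapsto(x',yv)$. Your abstract injectivity argument via strict concavity of the Busemann function is correct but redundant, since the coordinate computation already exhibits $\V|_{\tE}$ as a bijection.
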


\begin{proof}
The conclusion is obvious if $\tS$ is a point or a horosphere.

Now suppose that $\tS$ is a totally geodesic subspace of dimension $1\leq k\leq n-1$.
 Consider the upper-half space model for $\bH^n$:
\begin{equation} \label{eq:upper}
\bH^n=\{x+jy:x\in\R^{n-1},\,y>0,\,j=(0,\dots,0,1)\},
\end{equation}
and $\partial{\bH^n}\cong \R^{n-1}\cup\{\infty\}$.
Without loss of generality, we may assume that $\infty\in \partial (\tilde S)$ and hence
 $\partial\tS\setminus\{\infty\}$ is a $(k-1)$-dimensional affine subspace, say $F$, of $
\R^{n-1}$. For any $x\in \R^{n-1}\setminus L$, let $x_1$ be the orthogonal projection of $x
$ on $L$. Let $x_2=x_{1}+\norm{x-x_{1}}\cdot j\in \bH^{n}$. Let $v\in\T^{1}(\bH^{n})$ be the unit vector based at $x_2$ in the 
same direction as $x-x_1$. Then $v\in \tE$ and $v^+=x$. Now the conclusion of the lemma is straightforward to deduce.
\end{proof}

\subsubsection{Maps between $\tE$ and $\cH_{v}^{+}$}
\label{subsubsec:xiv}
For $v\in \T^1(\bH^n)$,
$-v$ is the vector with the same base point as $v$ but in the opposite direction.
For $v\in \tE$, let $\xi_v:\cH_v^+\setminus \V\inv(\partial \tS) \to \tE \setminus \{-v\}$ be the
map given by
\begin{equation} \label{eq:2.3.1}
\xi_v(u)=\V\inv(u^+)\cap \tE.
\end{equation}
Then $\xi_v$ is a diffeomorphism.
Its inverse,  $q_v: \tE\setminus \{-v\} \to \cH_v^+ \setminus \V\inv(\partial \tS)$ is the map given
by
\begin{equation} \label{eq:2.3.2}
q_v(w)=\V\inv(w^+)\cap \cH^+_v.
\end{equation}

\begin{proposition}  \label{prop:xiv}
There exist $C_1>0$ and $\epsilon_0>0$ such that:
\begin{enumerate}
\item \label{itm:xiv1} if $v, w\in \tE$ and $d(v,w)<\epsilon_0$, then
\[
 \abs{\beta_{w^+}(\pi(q_v(w)),\pi(w))}\leq d(q_v(w),w)  <C_1d(w,v);
 \]
\item \label{itm:xiv2} if $v\in \tE$ and $w\in\cH_v^+$ with $d(v,w)<\epsilon_0$, then
\[\abs{\beta_{w^+}(\pi(\xi_v(w)),\pi(w))}\leq d(\xi_v(w),w)<C_1d(v,w).
\]
\end{enumerate}
\end{proposition}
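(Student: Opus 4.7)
My plan is to split the conclusion into two independent pieces: the elementary bound of the Busemann function by the distance, and a uniform Lipschitz estimate for the maps $q_v$ and $\xi_v$ near their common fixed point $v$. For the Busemann piece I would observe that $q_v(w)\in\cH_v^+$ satisfies $q_v(w)^+=w^+$ by construction, so $\pi(q_v(w))$ and $\pi(w)$ lie on rays approaching the common endpoint $w^+$. The triangle-inequality estimate $\abs{\beta_\xi(x,y)}\le d(x,y)$, combined with the fact that the chosen metric on $\T^1(\bH^n)$ dominates the distance between base points, then gives the leftmost inequality in~(1) immediately, and an identical argument handles~(2).

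The heart of the proof is the Lipschitz bound $d(q_v(w),w)<C_1\, d(v,w)$. Lemma~\ref{lemma:diffeo} already asserts that $\V|_{\tE}$ is a diffeomorphism onto $\bh\setminus\partial\tS$, and by $G$-invariance (after conjugating $v^-$ to $\infty$ in the upper half-space model~\eqref{eq:upper}) the restriction $\V|_{\cH_v^+}$ is a diffeomorphism onto $\bh\setminus\{v^-\}$. Hence $q_v=(\V|_{\cH_v^+})^{-1}\circ\V|_{\tE}$ and its inverse $\xi_v$ are diffeomorphisms, and they fix $v$ because $v$ itself lies on both $\tE$ and $\cH_v^+$ with the same visual image $v^+$. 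So for one reference vector $v_0\in\tE$, smoothness supplies constants $\epsilon_{v_0},C_{v_0}>0$ such that the desired estimates hold on the $\epsilon_{v_0}$-neighborhood of $v_0$.

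To upgrade this to bounds uniform in $v$, I would invoke the transitive action of $G_{\tS}$ on $\tE$: given $v\in\tE$, choose $g\in G_{\tS}$ with $g v_0=v$; since $g$ permutes horospheres one has $g\,\cH_{v_0}^+=\cH_v^+$, and conjugation by $g$ therefore sends $q_{v_0}$ to $q_v$ (and likewise $\xi_{v_0}$ to $\xi_v$). The $G$-invariance of $d$ transports the local Lipschitz estimate without loss, so $C_1:=C_{v_0}$ and $\epsilon_0:=\epsilon_{v_0}$ do the job. The one technical obstacle I anticipate is verifying the transitivity of $G_{\tS}$ on $\tE$ when $\tS$ is a positive-dimensional totally geodesic subspace; this reduces to noting that $G_{\tS}$ contains an $(\op{Isom}\tS)\times\O(\codim\tS)$-subgroup, the first factor transitive on $\tS$ and the second transitive on the unit normal sphere at a base point.
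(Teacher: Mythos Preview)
Your proposal is correct and follows essentially the same approach as the paper: the Busemann inequality is handled by the $1$-Lipschitz property of $\beta$, and the distance bound comes from differentiability of $u\mapsto d(q_v(u),u)$ at its zero $u=v$, transported to all $v\in\tE$ by an isometry. The only cosmetic difference is that the paper normalizes via a general $g\in G$ in the upper half-space model rather than explicitly invoking $G_{\tS}$-transitivity on $\tE$.
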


\begin{proof} In each of the two statements, the first inequality follows directly from the
definition of Busemann function, so  we only need to prove the second inequality.

Consider the upper half space model of $\bH^n$ given by \eqref{eq:upper}. By applying an
isometry $g\in G$,
since $q_{g(v)}(gw)=g (q_v(w))$, we may assume that $v$ is the unit vector based at $j$ so that $v^+=\{\infty\}$.

Since $f(u):=d(q_v(u),u)$ is a differentiable function of $u\in \tE$, there exist $\epsilon_0>0$
and $C_1'>0$ such that  $\|{Df(u)}\|\leq C_1'$ for any $u$ with $d(v,u)<\epsilon_0$.
Therefore, since $f(v)=0$, there exists $C_1>0$ such that
$\abs{f(u)}=\abs{f(u)-f(v)}\leq C_1\cdot d(v,u)$ for all  $u\in\tE$ with $d(u,v)<\epsilon_0$.
This proves \eqref{itm:xiv1}. And \eqref{itm:xiv2} can be proved similarly.
\end{proof}

\begin{remark}
The following stronger form of statements in Proposition~\ref{prop:xiv} holds:
{\em There exist
$\epsilon_0>0$ and $C_1>0$ such that
\[
\abs{\beta_{w^+}(\pi(q_v(w)),\pi(w))}\leq C_1d(v,w)^2, \text{ for all $w\in \tE$ with $d(w,v)<
\epsilon_0$;}
\]
\[
\abs{\beta_{w^+}(\pi(\xi_v(w),\pi(w)))}\leq C_1d(v,w)^2, \text{ for all
$w\in \cH_v^+$ with $d(w,v)<\epsilon_0$.}
\]
}
We omit a proof as the stronger version will not be used in this article.
\end{remark}

\begin{notation}
Let $\G$ be a non-elementary torsion-free discrete subgroup of $G$ and set $X:=\G\ba \bH^n$.
Both the natural projection maps $\bH^n\to X$ and $\op{T}^1(\bH^n)\to \op{T}^1(X)$
will be denoted by $\mathbf p$.
\end{notation}

\subsection{Boxes, Plaques and Transversals} \label{subsec:box} Let $u\in \T^1(\bH^n)$.
Consider a relatively compact open set $P$ containing $u$ in $\cH^+_u$, and a relatively
compact open neighborhood $T$ of $u$ in $\V^{-1}(u^+)$. For each $t\in T$ and $p\in P$,
the horosphere $\cH^+_t$ intersects $\V^{-1}(p^+)$ at a unique
vector: we define
\[
tp:=\cH^{+}_{t}\cap \V\inv(p^{+})\in T^1(\bH^n).
\]
The map $(t,p)\to tp$ provides a local chart of a neighborhood of $u$ in $\T^1(\bH^n)$. Since $u\in P$, in this notation $tu=t$. 
We call the set
\[
B(u)=\{tp\in \Th: t\in T, p\in P\}
\]
a {\em box around $u$}  if some neighborhood of $B(u)$ {\em
injects\/} into $\T^1(X)$ under $\p$. We write $B=B(u)=TP$.

Note that  $P$ (resp. $T$) may be disconnected and of `large' diameter, and then the corresponding $T$ (resp. $P$) will be 
chosen to be of small diameter in order to achieve the required injectivity of $\p$ on a neighborhood of $B(u)$.

For any $t\in T$, the set
\[
tP:=\{tp:p\in P\}\subset \cH^+_t
\]
 is called a {\em plaque at $t$}; and for any $p\in P$, the set
 \[
 Tp:=\{tp:t\in T\}\subset  \V^{-1}(p^+)
 \]
 is called a {\em transversal at $p$}. The {\em holonomy map\/} between the transversals
 $Tp$ and $T{p'}$ is given by $tp\mapsto t p'$ for all $t\in T$.

\begin{remark} \label{rem:recenter}
If $v=tp\in B$, then $tP\subset \cH^{+}_{v}$, $Tp\subset \V\inv (v^{+})$ and
$B(v)=(Tp)(tP)$ is a box about $v$ and $TP=(Tp)(tP)$.  Also $B(u)$ and $B(v)$ have the same collections of plaques and 
transversals.
\end{remark}

For small $\epsilon>0$, let 
\begin{gather*}
T_{\epsilon+}=\{s\in \V \inv (u^+):d(s,T)<\epsilon\},\\
T_{\epsilon-}=\{t\in  T: d(t,\partial T)>\epsilon\}, \text{ and } 
B_{\epsilon\pm}=T_{\epsilon\pm}P.
\end{gather*}

Note that for any $\g \in G$, $\g P\subset \cH^+_{\g u}$, $\g T \subset \V\inv((\g u)^+)$, $\g (tp)=(\g t)(\g p)$ for any 
$(t,p)\in T\times P$, $\g(TP)=\g(B(u))=B(\g u)=(\g T)(\g P)$, $\g(tP)$ is a plaque at $\g t$ and $\g(Tp)$ is a transversal at 
$\g p$. Also $\g B_{\e\pm}=(\g T)_{\pm}(\g P)$.

For $r\in\R$, $\gr(B(u))=B(\gr (u))=(\gr(T))(\gr(P))$.

\subsection{} \label{subsec:2.6} For the rest of this section, let $B=TP\subset \T^1(\bH^n)$ denote a box such that $B_{\e_0+}$ 
injects into $\T^1(X)$ for some $\e_0>0$.  By choosing a smaller $\e_0$ if necessary, let  $C_1>0$ be such that 
Proposition~\ref{prop:xiv} holds. Let
\begin{equation} \label{eq:C2}
C_2=\max\{d(tp_{1}, tp_{2}):t\in T_{\epsilon_0+},\, p_{1},p_{2}\in P\}.
\end{equation}

In this section we will develop auxiliary results to understand the intersection of $\gr(E)$ with $\p(B)$ for $r\gg1$. First we will 
show that for any $\g\in \G$ if $\gr(\g \tE)\cap B$ is nonempty, then there exists a unique $t\in T_{\e_{0}+}\cap \gr(\g \tE)$ and 
the sets $\gr(\g \tE)$ and $\gr(tP)$ are contained in $C_1C_2e^{-r}$-tubular neighborhoods of each other.

%

\begin{lemma}\label{lemma:wer}
Let $r\in\R$ and $\g\in\G$. Suppose that $\gr(\g\tE)\ni tp$ for some $t\in T$, $p\in P$. Let $v=\grinv(\g\inv tp)\in \tE$. Let 
$p_{1}\in P$, $y=\grinv(\gamma\inv tp_{1})\in \cH^+_v$, and $w=\xi_v(y)\in \tE$. Then $w^{+}=y^{+}$,
\[
d(v,y) \le C_2 e^{-r} \text{ and } d(y,w)\leq C_1C_2 e^{-r}.
\]
\end{lemma}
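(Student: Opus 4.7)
The plan is to dispatch the three conclusions in order, leaning on two facts: the geodesic flow sends unstable horospheres to unstable horospheres and contracts distances along them by $e^{-r}$ when evolved by $\grinv$ (identity \eqref{eq:expand}), and the $\G$-action preserves both the horospheric foliation and the metric. The foundational observation is that, by construction of the box chart, the vectors $tp$ and $tp_{1}$ lie on the single unstable horosphere $\cH^{+}_{t}$. Applying the isometry $\g\inv$ places $\g\inv tp$ and $\g\inv tp_{1}$ on $\cH^{+}_{\g\inv t}$, and then $\grinv$ (which commutes with $\g\inv$) carries them to a common unstable horosphere, which must be $\cH^{+}_{v}$ since $v$ lies on it. This already confirms the assertion that $y\in\cH^{+}_{v}$.

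For the first distance estimate, I combine \eqref{eq:expand} applied to $(\g\inv tp,\g\inv tp_{1})\in \cH^{+}_{\g\inv t}$ with $\G$-invariance of $d$:
\[
d(v,y)=e^{-r}\,d(\g\inv tp,\g\inv tp_{1})=e^{-r}\,d(tp,tp_{1})\leq C_{2}e^{-r},
\]
the last step being the very definition \eqref{eq:C2} of $C_{2}$, since $t\in T\subset T_{\e_{0}+}$ and $p,p_{1}\in P$. The identity $w^{+}=y^{+}$ is then immediate from the definition \eqref{eq:2.3.1} of $\xi_{v}$, which sets $w=\V\inv(y^{+})\cap\tE$.

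Finally, for the bound on $d(y,w)$, I appeal to Proposition~\ref{prop:xiv}\eqref{itm:xiv2} applied to the pair $(v,y)\in\tE\times\cH^{+}_{v}$. The estimate just proved gives $d(v,y)\leq C_{2}e^{-r}<\e_{0}$ once $r$ is sufficiently large (and for small $r$ the inequality $d(y,w)\leq C_{1}C_{2}e^{-r}$ is either vacuous in the intended applications or else absorbed by enlarging $C_{1}$). Under this condition the proposition yields
\[
d(y,w)=d(y,\xi_{v}(y))<C_{1}\,d(v,y)\leq C_{1}C_{2}e^{-r},
\]
as desired. The argument is really bookkeeping, with no serious obstacle: the only subtlety is tracking the horosphere membership through the successive applications of $\g\inv$ and $\grinv$ and verifying that $\xi_{v}(y)$ is defined, i.e.\ that $y^{+}\notin\partial\tS$ --- but this is encoded in the hypothesis that $w=\xi_{v}(y)\in\tE$ is given.
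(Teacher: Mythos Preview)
Your proof is correct and follows essentially the same route as the paper's: both use that $tp,tp_{1}\in\cH^{+}_{t}$, apply $\G$-invariance of $d$ together with the horospherical contraction \eqref{eq:expand} to get $d(v,y)\le C_{2}e^{-r}$, read off $w^{+}=y^{+}$ from the definition of $\xi_{v}$, and finish with Proposition~\ref{prop:xiv}\eqref{itm:xiv2}. Your hedge about needing $d(v,y)<\epsilon_{0}$ before invoking Proposition~\ref{prop:xiv} is in fact more careful than the paper, which applies the proposition without comment; in practice the lemma is only used for $r>r_{\epsilon}$ (see Proposition~\ref{prop:eq9eq10}), where this hypothesis is automatically satisfied.
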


\begin{proof}
 By \eqref{eq:2.3.1}, $w^+=y^+$. Since  $tp,tp_{1}\in \cH_{t}^+$, by \eqref{eq:expand} and \eqref{eq:C2},
 \begin{align*}
 d(v,y)=d(\grinv(\g\inv tp), \grinv(\g\inv tp_{1})) &=d(\grinv(tp),\grinv(tp_{1})) \\
 & \leq d(tp,tp_{1})e^{-r}\leq C_2e^{-r}.
 \end{align*}
 By Proposition~\ref{prop:xiv},
$d(y,w)=d(y,\xi_v(y))\leq C_1d(v,y)\leq C_1C_2e^{-r}$.
 \end{proof}

\begin{lemma}\label{atmost}
For any $r\in\R$ and $\g\in \G$,
\[
\#\bigl(T\cap \gr(\g \tE)\bigr) = \#(\grinv(\g\inv T)\cap \tE) \le 1.
\]
\end{lemma}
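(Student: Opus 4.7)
The plan is as follows. The first equality $\#(T\cap g^r(\gamma\tE))=\#(g^{-r}(\gamma^{-1}T)\cap\tE)$ is immediate: the map $x\mapsto g^{-r}\gamma^{-1}x$ is a bijection of $\T^1(\bH^n)$ sending $T\cap g^r(\gamma\tE)$ onto $g^{-r}(\gamma^{-1}T)\cap\tE$. So the only real content is showing
\[
\#\bigl(g^{-r}(\gamma^{-1}T)\cap\tE\bigr)\le 1.
\]

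To prove this, I would exploit that $T$ lives in a single weak stable leaf and that weak stable leaves are preserved by the geodesic flow. Explicitly, $T\subset \V^{-1}(u^+)$ by the choice of $T$ in Section~\ref{subsec:box}, so $\gamma^{-1}T\subset \V^{-1}(\gamma^{-1}u^+)$ because $\V$ is $G$-equivariant ($(gv)^+=g(v^+)$). Moreover, for every $v\in\T^1(\bH^n)$ and every $s\in\R$ the vector $g^s(v)$ points to the same boundary point as $v$, so $\V(g^s(v))=\V(v)$. Applying this with $s=-r$ to each vector of $\gamma^{-1}T$ yields
\[
g^{-r}(\gamma^{-1}T)\subset \V^{-1}(\gamma^{-1}u^+).
\]

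Finally, by Lemma~\ref{lemma:diffeo}, $\V$ restricted to $\tE$ is a diffeomorphism onto $\partial\bH^n\setminus\partial\tS$; in particular it is injective. Therefore the fiber $\V^{-1}(\gamma^{-1}u^+)\cap\tE$ contains at most one element, and hence so does $g^{-r}(\gamma^{-1}T)\cap\tE\subset\V^{-1}(\gamma^{-1}u^+)\cap\tE$. This completes the argument.

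There is no real obstacle here: the lemma is essentially a consequence of the fact that transversals sit in weak stable leaves (which are $g^r$-invariant), combined with the injectivity of the visual map on $\tE$ established in Lemma~\ref{lemma:diffeo}. The only subtlety is to be careful about the $\G$- and $g^r$-equivariance of $\V$, which are both immediate from the definitions in \eqref{eq:u+-}.
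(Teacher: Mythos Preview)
Your proof is correct and is essentially the same as the paper's: both observe that $\V(\grinv(\g^{-1}T))=\g^{-1}\V(T)$ is a singleton (since $T\subset\V^{-1}(u^+)$ and $\V$ is $G$-equivariant and $\gr$-invariant), and then invoke the injectivity of $\V|_{\tE}$ from Lemma~\ref{lemma:diffeo}. The paper compresses this into one sentence, but the content is identical.
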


\begin{proof}
Since $\V ({\grinv}(\g\inv T))=\g\inv \V(T)$ is a singleton set and $\V$ restricted to $\tE$ is
injective, the conclusion follows.
 \end{proof}

\begin{notation} \label{note:Erg} For $r\in\R$ and $\g\in\G$, in view of Lemma~\ref{atmost}, define
\begin{equation} \label{eq:Ert}
\tE_{r,\g}=\begin{cases} \xi_{{\grinv}(\g\inv t)} ({\grinv}(\g\inv tP))\subset \tilde E & \text{if $T\cap \gr(\g \tE)=\{t\}$}\\
\emptyset & \text{if $T\cap \gr(\g \tE)=\emptyset$.}
\end{cases}
\end{equation}
\end{notation}

\begin{proposition} \label{prop:eq9eq10} 
For any $0<\e\le \e_0$, $r>r_{\e}:=\log(C_1(C_{1}+1)C_2/\e)$ and $\g\in \G$, we have
 \begin{equation}
\label{eq:910}
{\grinv} (\g\inv B_{\epsilon-}) \cap \tE\subset \  \tE_{r,\g}
\subset \
{\grinv}(\g\inv B_{\epsilon+})\cap  \tE.
\end{equation}
\end{proposition}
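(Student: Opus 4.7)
The plan is to prove the two inclusions separately, using explicit constructions derived from Lemma~\ref{lemma:wer} and Proposition~\ref{prop:xiv} together with the two dynamical ingredients already in place: contraction on unstable horospheres under $\grinv$ via \eqref{eq:expand}, and non-expansion on weak stable leaves under $\gr$ via \eqref{eq:stable}. The threshold $r_\e=\log(C_1(C_1+1)C_2/\e)$ is calibrated so that every auxiliary quantity of the form $C_2 e^{-r}$, $C_1 C_2 e^{-r}$, or $(C_1+1)C_2 e^{-r}$ that arises is strictly below $\e_0$, $\e/(C_1+1)$, and $\e/C_1$, respectively.

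\textbf{First inclusion.} Given $v\in\tE$ with $\gr(\g v)=t'p_0\in B_{\e-}$, so $t'\in T_{\e-}$ and $p_0\in P$, I would produce a point $t\in T\cap\gr(\g\tE)$ (unique by Lemma~\ref{atmost}) and verify that $v$ lies in the parametrization $\xi_w(\grinv(\g\inv tP))$ with $w=\grinv(\g\inv t)$. Set $y:=\grinv(\g\inv t')$. Because $t'$ and $t'p_0$ both lie on $\cH^+_{t'}$, the commuting isometries $\g\inv$ and $\grinv$ together with \eqref{eq:expand} place $y\in\cH^+_v$ with $d(v,y)\le C_2 e^{-r}<\e_0$. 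Proposition~\ref{prop:xiv}(\ref{itm:xiv2}) then yields $w:=\xi_v(y)\in\tE$ with $d(y,w)\le C_1 C_2 e^{-r}$. Since $w$ and $y$ share the visual $\g\inv u^+$, they lie on a common weak stable leaf, so \eqref{eq:stable} and the $\g$-isometry give
$d(\g\gr(w),t')=d(\g\gr(w),\g\gr(y))\le d(w,y)\le C_1 C_2 e^{-r}<\e$,
which puts $t:=\g\gr(w)\in T$ because $t'\in T_{\e-}$. Finally, $y'':=\grinv(\g\inv tp_0)\in\grinv(\g\inv tP)$ has visual $\g\inv p_0^+=v^+$, hence $\xi_w(y'')=v$ and $v\in\tE_{r,\g}$.

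\textbf{Second inclusion.} For $v=\xi_w(y')\in\tE_{r,\g}$ with $y'=\grinv(\g\inv tp_1)$ and $w=\grinv(\g\inv t)$, Lemma~\ref{lemma:wer} gives $d(w,y')\le C_2 e^{-r}$ and $d(y',v)\le C_1 C_2 e^{-r}$, so by the triangle inequality $d(w,v)\le(C_1+1)C_2 e^{-r}$. Since $v$ and $y'$ have the common visual $\g\inv p_1^+$, \eqref{eq:stable} combined with the $\g$-isometry gives $d(\g\gr(v),tp_1)\le d(v,y')\le C_1 C_2 e^{-r}$. To conclude $\g\gr(v)\in B_{\e+}$, write $\g\gr(v)=sq$ in box coordinates (the $P$-factor is forced to $q=p_1$ by visuals) and show $s\in T_{\e+}$, that is $d(s,t)<\e$. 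My plan is to apply Proposition~\ref{prop:xiv}(\ref{itm:xiv1}) inside the $\g$-translate $\g\tE$ (with reference $\g w$ and argument $\g v$): the horospherical projection satisfies $q_{\g w}(\g v)=\grinv(tp_1)$, and $d(\g w,\g v)\le(C_1+1)C_2 e^{-r}$ combined with a further application of \eqref{eq:stable} transfers the bound into the leaf $\V\inv(u^+)$ and yields $d(s,t)<\e$.

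\textbf{Main obstacle.} The crux of the argument is the last step of the second inclusion: converting the ambient $\Th$-distance bound into a leafwise bound on $d(s,t)$ with a constant small enough to absorb $\e$. An abstract bi-Lipschitz estimate for the box chart $(s,p)\mapsto sp$ on a neighborhood of $\cl{B}$ works, but brings in a Lipschitz constant of no intrinsic geometric meaning; the chained application of Proposition~\ref{prop:xiv} to $\tE$ and $\g\tE$ sketched above is designed to avoid this, and the factor $C_1+1$ built into $r_\e$ is precisely what it costs. A secondary subtlety concerns the edge case $u^+\in\partial(\g\tS)$: here $T\cap\gr(\g\tE)=\emptyset$ and $\tE_{r,\g}=\emptyset$, so the first inclusion requires that $\grinv(\g\inv B_{\e-})\cap\tE$ also be empty. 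This is immediate in the horospherical case, since $\gr(\g\tE)$ is then the unstable horosphere based at $u^+$ and no vector of $B$ can have both its forward and backward endpoint equal to $u^+$; in the totally geodesic case, the bound $d(v,y)\le C_2 e^{-r}$ forces $v^+$ close to $y^+=\g\inv u^+\in\partial\tS$, pushing $v$ to the visually singular locus of $\tE$ and eventually contradicting the relative compactness of $\cl{B_{\e-}}$ once $r>r_\e$.
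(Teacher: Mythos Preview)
Your argument for the first inclusion is correct and is essentially the paper's proof with the relabeling $t'\leftrightarrow t$, $t\leftrightarrow t_1$, $p_0\leftrightarrow p$.

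For the second inclusion there is a genuine gap. You apply Proposition~\ref{prop:xiv}(\ref{itm:xiv1}) with reference $\g w$ and argument $\g v$, obtaining $q_{\g w}(\g v)=\g y'=\grinv(tp_1)$; but this only bounds $d(\g v,\g y')$, and since $\g v$ and $\g y'$ share the visual $p_1^+$, applying \eqref{eq:stable} lands you in the leaf $\V\inv(p_1^+)$: you get $d(\gr(\g v),tp_1)<\e$, which you already had from Lemma~\ref{lemma:wer}. This does \emph{not} give $d(s,t)<\e$ for the $T$-coordinate $s$ of $\gr(\g v)$, because the holonomy $sp_1\mapsto s$ is not an isometry, and your ``further application of \eqref{eq:stable}'' cannot transfer the bound across leaves. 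The paper projects in the \emph{opposite} direction: in your notation, set $w_1:=q_v(w)\in\cH^+_v$, so $w_1^+=w^+=\g\inv u^+$ and, by Proposition~\ref{prop:xiv}(\ref{itm:xiv1}), $d(w,w_1)\le C_1 d(v,w)\le C_1(C_1+1)C_2e^{-r}<\e$. Then $t_1:=\gr(\g w_1)$ already lies in $\V\inv(u^+)$, and \eqref{eq:stable} applied to $w,w_1$ (same visual) gives $d(t,t_1)<\e$, so $t_1\in T_{\e+}$. Since $t_1p_1$ and $\gr(\g v)$ both lie on $\cH^+_{t_1}$ with visual $p_1^+$, they coincide, and $\gr(\g v)=t_1p_1\in B_{\e+}$. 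The point is that to land in $\V\inv(u^+)$ after flowing forward, you must project the vector whose visual is $\g\inv u^+$ (namely $w$), not the one whose visual is $\g\inv p_1^+$ (namely $v$).

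Your edge-case discussion is unnecessary: the first-inclusion construction itself produces a point $t=\gr(\g w)\in T\cap\gr(\g\tE)$ whenever $\grinv(\g\inv B_{\e-})\cap\tE$ is nonempty, so $\tE_{r,\g}=\emptyset$ forces the left-hand side to be empty as well. (The residual issue that $\xi_v(y)$ requires $y^+\notin\partial\tS$, i.e.\ $u^+\notin\partial(\g\tS)$, is a technicality the paper also leaves implicit.)
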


\begin{proof}[Proof of first inclusion in \eqref{eq:910}] Let $\g\in \G$ and $t\in T_{\e-}$ and $p\in P$ be such that 
$v:=\grinv(\g\inv tp)\in \tE$. 
Let $y=\grinv(\g\inv t)$ and $w=\xi_v(y)\in \tE$. 
By Lemma~\ref{lemma:wer},
\[
d(y,w)\leq C_1C_2e^{-r}<\e/(C_{1}+1)<\e.
\]
Let $t_1=\gr(\g w)$. Since $t=\gr(\g y)$ and $w^{+}=y^{+}$,  $t_{1}^{+}=t^{+}$. By \eqref{eq:stable}, 
\[
d(t,t_{1})=d(\gr(\g y)), \gr(\g w))\leq d(\g y,\g w)=d(y,w)<\e.
\]
Therefore $t_1\in T$, for $t\in T_{\e-}$. Since $(t_1p)^+=(tp)^+$, we have
\[
\grinv(\g\inv t_{1}p)^{+}=\grinv(\g\inv tp)^{+}=v^{+}.
\]
Since $w=\grinv(\g\inv t_{1})$, $\grinv(\g\inv t_{1}p)\in \cH^{+}_{w}$. Also $w, v\in\tE$. Therefore by \eqref{eq:2.3.1},
\[
v=\xi_w({\grinv}(\g\inv t_1p))\in \tE_{r,\g}.
\]
\end{proof}

\begin{proof}[Proof of  second inclusion in \eqref{eq:910}] By Lemma~\ref{atmost}, let
$\{t\}= T\cap \gr(\g \tE)$ for some $\g\in\G$. Let $v=\grinv(\g\inv t)\in\tE$, $p\in P$,
$y=\grinv(\g\inv tp)$, and $w=\xi_v(y)\in \tilde E_{r,\g}$. By Lemma~\ref{lemma:wer},
\[
d(v,w)\leq d(v,y)+d(y,w)\leq C_2e^{-r}+C_1C_2e^{-r}\leq \e/C_{1}.
\]
Put $v_{1}=q_{w}(v)\in \cH^{+}_{w}$. By \eqref{eq:2.3.2}, $v_{1}^{+}=v^{+}$, and by
Proposition~\ref{prop:xiv}(\ref{itm:xiv1}),
\[
d(v,v_{1})\leq C_{1}d(v,w)\leq\e.
\]
Put $t_{1}=\gr(\g v_{1})\in \cH^{+}_{\gr (\g w)} $. Since  $t=\gr(\g v)$, $t_{1}^{+}=t^{+}$. By \eqref{eq:stable},
\[
d(t,t_{1})=d(\gr(\g v), \gr(\g v_{1})) \leq d(\g v, \g v_{1})\leq \e.
\]
Hence $t_{1}\in T_{\e+}$. Now $\gr(\g w), t_{1}p\in \cH^{+}_{t_{1}}$. Since $w^{+}=y^{+}$, 
\[
(\gr(\g w))^{+}=(\gr(\g y))^{+}=(tp)^{+}=(t_{1}p)^{+}.
\]
Since $\V$ is injective on $\cH^+_{t_1}$, $\gr (\g w)=t_{1}p$. Hence $w\in \grinv(\g \inv B_{\e+})$.
\end{proof}

\subsection{Measure on $E$ corresponding to a conformal density on $\partial{\bH^n}$}
\label{subsec:muE}

Let $\{\mu_x:x\in \bH^n\}$ be a {\em $\G$-invariant conformal density\/}
 of dimension $\delta_\mu > 0$  on $\partial{\bH^n}$. That is, for each $x\in\bH^n$,
$\mu_x$ is a positive finite Borel measure on $\partial{\bH^n}$
such that for all $y\in \bH^n$, $\xi\in \partial{\bH^n}$ and $\gamma\in
\G$,
\begin{equation}\label{eq:conformal}
\gamma_*\mu_x=\mu_{\gamma x}\quad\text{and}\quad
 \frac{d\mu_x}{d\mu_y}(\xi)=e^{\delta_\mu \beta_{\xi} (y,x)},
\end{equation}
where $\gamma_*\mu_x(F):=\mu_x(\gamma^{-1}(F))$ for any Borel subset $F$ of $
\partial{\bH^n}$.

Fix $o\in\bH^n$. We consider the measure on $\tE$ given by
\begin{equation} \label{eq:mutE}
d \mu_{\tE}(v)=e^{\delta_\mu \beta_{v^+}(o, \pi(v))}\,d\mu_o(v^+).
\end{equation}
By \eqref{eq:conformal}, $\mu_{\tE}$ is independent of the choice of $o\in \bH^n$ and
$\gamma_*\mu_{\tE}=\mu_{\gamma\tE}$ for any $\gamma\in \G$. Let $\mu_{\G_{\tE}\bs\tE}$ be the locally finite 
Borel measure on $\G_{\tE}\bs\tE$ induced by $\mu_{\tE}$ as follows: For any
$f\in C_c(\tE)$,  let $\bar f(\G_{\tE} v)=\sum_{\gamma\in\G} f(\gamma v)$, for all $v\in \tE$.
Then $f\mapsto\bar f$ is a surjective map from $C_c(\tE)$ from to $C_c(\G_{\tE}\bs\tE)$, and
\begin{equation} \label{eq:muEbar}
\int_{\G_{\tE}\bs \tE} \bar f \,d\mu_{\G_{\tE}\bs\tE}:=\int_{\tE} f \,d\mu_{\tE}
\end{equation}
is well defined; see~\cite[Chapter 1]{Raghunathanbook} for a similar construction.

Now let $\mu_E$ be the measure on $E=\p(\tE)$ defined as the pushforward of
$\mu_{\G_{\tE}\bs\tE}$ from $\G_{\tE}\bs\tE$ to $\T^1(\G\bs\bH^n)$ under the map
$\G_{\tE} v\mapsto \G v$.  Thus
for any set $B\subset \T^{1}(\bH^{n})$ such that $\p$ is injective on $B$, and any measurable
nonnegative function $f$ on $E\cap\p(B)$,
\begin{equation} \label{eq:muPSE}
\begin{array}{ll}
\int_{E\cap\p(B)} f \, d\mu_{E}
&=\sum_{[\g]\in \G/\G_{\tE}} \int_{u\in \g\tE\cap B}f(\p(u))\,d\mu_{\g\tE}(u)\\
&=\sum_{[\g]\in \G/\G_{\tE}} \int_{u\in \tE\cap \g\inv B}f(\p(u))\,d\mu_{\tE}(u),
\end{array}
\end{equation}
where the integration over an empty set is defined to be $0$. Therefore by
Proposition~\ref{prop:eq9eq10} we obtain the following:

\begin{proposition} \label{prop:integral}
Let $0<\e\leq \e_{0}$ and $r>r_\e$. Then for all borel
measurable functions $\Psi\geq 0$ on $\T^{1}(X)$ with $\supp(\Psi)\subset \p(B_{\e-})$
and $f\geq 0$ on $E$, we have
\begin{equation*}
\label{eq:prop:muE}
\begin{array}{r}
\int_{u\in E} \Psi(\gr(u))f(u)\; d\mu_{E}(u)
=\int_{E\cap \p(\grinv(B_{\e\pm}))} \Psi(\gr(u))f(u)\,d\mu_{E}(u)\\
=\sum_{[\g]\in\G/\G_{\tE}} \int_{\grinv(\g\inv B_{\e\pm})\cap\tE} \Psi(\gr(u))f(u)\,d\mu_{\tE}(u)
\\
=\sum_{[\g]\in \G/\G_{\tE}} \int_{\tE_{r,\g}} \Psi(\gr(\p(u))) f(\p(u))
\,d\mu_{\tE}(u).
\end{array}
\end{equation*}
\end{proposition}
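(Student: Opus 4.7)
\emph{Proof proposal.} The proposition is a chain of three equalities, and my plan is to verify them in turn, using (i) the support condition on $\Psi$, (ii) the unfolding formula \eqref{eq:muPSE} defining $\mu_{E}$, and (iii) the sandwich inclusion supplied by Proposition~\ref{prop:eq9eq10}. Before starting I note that $\gr$ commutes with the left $\G$-action, since viewed on $G/M$ it is right multiplication by $a_{r}$; hence $\g\inv\grinv=\grinv\g\inv$ for every $\g\in\G$ and $\p\circ\gr=\gr\circ\p$. Consequently $\grinv(B_{\e\pm})$ still injects into $\T^{1}(X)$ under $\p$, because $B_{\e\pm}\subset B_{\e_{0}+}$ does.

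For the first equality I use only the support condition: $\supp(\Psi)\subset \p(B_{\e-})\subset \p(B_{\e+})$ forces the integrand $\Psi(\gr(u))f(u)$ to vanish off $E\cap \p(\grinv(B_{\e\pm}))$, so the domain may be restricted at will to either of these sets. For the second equality I apply \eqref{eq:muPSE} to the set $\grinv(B_{\e\pm})$, on which $\p$ is injective, with the nonnegative measurable integrand $\Psi(\gr(\cdot))f(\cdot)$. This unfolds the integral over $E$ into a sum over $[\g]\in\G/\G_{\tE}$ of integrals over $\tE\cap \g\inv\grinv(B_{\e\pm})=\tE\cap\grinv(\g\inv B_{\e\pm})$ against $d\mu_{\tE}$. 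The chain of equalities up to this point is valid for both choices of sign, so the $+$ and $-$ versions of the resulting sum coincide, both being equal to the original left-hand side.

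For the third equality, Proposition~\ref{prop:eq9eq10} provides the sandwich
\[
\grinv(\g\inv B_{\e-})\cap \tE\ \subset\ \tE_{r,\g}\ \subset\ \grinv(\g\inv B_{\e+})\cap \tE
\]
valid for every $\g\in\G$ and $r>r_{\e}$. Since $\Psi,f\geq 0$, summing term by term over $[\g]\in\G/\G_{\tE}$ sandwiches the sum over $\tE_{r,\g}$ between the $-$ and $+$ versions of the preceding sum. As those two were just shown to agree, the sum over $\tE_{r,\g}$ is squeezed into equality with them, completing the chain.

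There is no real obstacle in this argument; the only point to watch is that the injectivity of $\p$ on the enlarged box $B_{\e_{0}+}$ (rather than on a tiny $B_{\e-}$) is what makes both the support-vanishing step and the unfolding step legitimate. It is precisely this injectivity that lets us translate the condition $\supp(\Psi)\subset \p(B_{\e-})$ on the quotient $\T^{1}(X)$ into a pointwise vanishing statement on $\T^{1}(\bH^{n})$ locally inside $B_{\e_{0}+}$, and it is also what makes \eqref{eq:muPSE} directly applicable to $\grinv(B_{\e\pm})$.
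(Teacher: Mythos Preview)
Your proof is correct and follows exactly the approach the paper intends: the paper presents Proposition~\ref{prop:integral} as an immediate consequence of the unfolding formula \eqref{eq:muPSE} and the sandwich inclusion of Proposition~\ref{prop:eq9eq10}, and your three-step argument (support restriction, unfolding, squeeze) is precisely the way these ingredients combine. Your observation that the $+$ and $-$ versions of the unfolded sum both equal the left-hand side, and hence squeeze the $\tE_{r,\g}$ sum into agreement, is the intended mechanism.
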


\begin{remark}
(1) For the counting application in section \ref{cs}, we will use the results of this section only
for the case when the
map $\G_{\tE}\bs \tE\to \T^1(\G\bs \bH^n)$ is proper, in which case $\mu_E$ is a locally finite
Borel measure.

(2) In the general case, $\mu_E$ may not be $\sigma$-finite, but it is an
{\em $s$-finite measure\/}; namely, a countable sum of finite measures (with possibly non-disjoint supports).

(3) If the dimension of $\tS=\pi(\tE)$ in $\bH^n$ is $0$ or $n-1$, the
map $\G_{\tE}\bs\tE$ to $\T^1(\G\bs \bH^n)$ is {\em injective\/}, and hence $\mu_E$ is $
\sigma$-finite on $\T^1(\G\bs \bH^n)$.
\end{remark}

\subsubsection{Measures on horospherical foliation and semi-invariance under geodesic flow}
\label{subsec:horosphere}
The conformal density $\{\mu_x\}$ induces a $\G$-equivariant
family of measures $\{\mu_{\cH_u^+}:u\in\T^1(\bH^n)\}$ on the unstable horospherical
foliation on $\T^1(\bH^n)$:

\begin{equation} \label{eq:muH}
d\mu_{\cH^+_u}(v)=e^{\delta_\mu \beta_{v^+}(o,\pi(v))}\,d\mu_o(v^+).
\end{equation}

For any $r\in \br$, since $\gr(v)^+=v^+$ and
\[
\beta_{v^+}(o,\pi(\gr(v)))-\beta_{v^+}(o,\pi(v))=\beta_{v^+}(\pi(v),\pi(\gr(v)))=r,
\]
by \eqref{eq:conformal}, we get for all $\g\in \G$ and $r\in\R$,
\begin{equation} \label{eq:conf}
\g_{\ast}\mu_{\cH^{+}_{u}}=\mu_{\cH^{+}_{\g u}}  \quad \text{and} \quad
\gr_{\ast}\mu_{\cH^+_u} = e^{-\delta_\mu r} \mu_{\cH^+_{{\gr}(u)}}.
\end{equation}

\subsection{On transversal intersections of $\gr(\G\tE)$ with $B$}
\label{subsec:transversal}
Let a box $B$, $\e_0>0$, $C_1>0$ and $C_2>0$ be as described in the beginning of
\S\ref{subsec:2.6}.
For any $0<\e\leq\e_0$, we put
\begin{equation} \label{eq:re}
r_\e=\log((C_1+1)C_1C_2/\e).
\end{equation}

\begin{proposition}\label{prop:eq11}
Let  $0<\e\leq \e_{0}$, $r>r_\e$, and $\{t\}=T\cap \gr(\g \tE)$ for some $\g\in\G$. Then for all measurable functions 
$\Psi\geq 0$ on $B_{\e_{0}+}$ and $f\geq0$ on $\tE$,
\begin{align*}
&(e^{-\delta_{\mu}\epsilon})  f_\e^-({\grinv}(\g\inv t))  \int_{tP}\Psi_\e^-\,d\mu_{\cH^+_t}
\notag\\
\le & e^{\delta_\mu r} \int_{w\in \tE_{r,\gamma}} \Psi(\gr(\g w)) f(w) d\mu_{\tE}(w)\\
\le &(e^{\delta_{\mu}\epsilon})  f_\e^+({\grinv}(\g\inv t))
\int_{tP}\Psi_\e^+\,d\mu_{\cH^+_t},
\end{align*}
where $f_{\e}^{\pm}$ on $\tE$ and $\Psi_\e^{\pm}$ on $B_{\e+}$ are defined as
\begin{equation}
\label{fdef}
\begin{array}{lcl}
f_{\e}^+(u)&= &\sup_{\{u_1\in \tE:d(u_1,u)\le \e\}}f(u_1),\\
f_{\e}^-(u)&=&\inf_{\{u_1\in \tE:d(u_1,u)\le \e\}} f(u_1),\\
\Psi_{\e}^+(tp)&=&\sup_{\{t_{1}\in T_{\epsilon+}:d(t_1p,tp)\le \e\} }\Psi (t_1p),\\
\Psi_{\e}^-(tp)&=&\inf_{\{t_1\in T_{\epsilon+}:d(t_1p,tp)\le \e\}} \Psi(t_1p).
\end{array}
\end{equation}
\end{proposition}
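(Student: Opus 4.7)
The strategy is to carry out a change of variable on $\tE_{r,\g}$ realising it as a plaque inside the unstable horosphere $\cH^+_v$ through $v:=\grinv(\g\inv t)$, and then to transport that plaque onto $tP\subset \cH^+_t$ by $\gr\circ\g$, picking up the semi-invariance factor $e^{-\delta_\mu r}$ from \eqref{eq:conf} which cancels the $e^{\delta_\mu r}$ in front. By definition, $\tE_{r,\g}=\xi_v(\grinv(\g\inv tP))$, so I write each $w\in\tE_{r,\g}$ as $w=\xi_v(y)$ with $y=\grinv(\g\inv tp)\in\cH^+_v$. Both $\mu_{\tE}$ on $\tE$ and $\mu_{\cH^+_v}$ on $\cH^+_v$ are defined by \eqref{eq:mutE}, \eqref{eq:muH} as $e^{\delta_\mu\beta_{\cdot^+}(o,\pi(\cdot))}\,d\mu_o(\cdot^+)$, and by construction $w^+=y^+$. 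Hence the Radon--Nikodym derivative is simply $e^{\delta_\mu\beta_{y^+}(\pi(y),\pi(w))}$.

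Next I control all three pieces using $r>r_\e$. By Lemma~\ref{lemma:wer} (applied with $p_1=p$) we have $d(v,y)\leq C_2e^{-r}$ and $d(y,w)\leq C_1C_2e^{-r}$, and thus $d(v,w)\leq(C_1+1)C_2e^{-r}$. The choice $r>r_\e=\log((C_1+1)C_1C_2/\e)$ makes all three of these quantities smaller than $\e$, so Proposition~\ref{prop:xiv}(\ref{itm:xiv1}) forces $\abs{\beta_{y^+}(\pi(y),\pi(w))}\leq \e$, giving the bound $e^{\pm\delta_\mu\e}$ on the Radon--Nikodym factor. Since $d(v,w)\leq\e$, the pointwise bound $f_\e^-(v)\leq f(w)\leq f_\e^+(v)$ follows directly from \eqref{fdef}. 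To handle $\Psi(\gr(\g w))$, I use that $\g y$ and $\g w$ lie on the same weak stable leaf (both have $+$-endpoint $(tp)^+$); by \eqref{eq:stable} and $\gr(\g y)=tp$ (since the $G$-action commutes with $\gr$), I get $d(\gr(\g w),tp)\leq d(\g w,\g y)=d(w,y)<\e$. Writing the unique transversal decomposition $\gr(\g w)=t_1p$ with $t_1\in T_{\e+}$ then yields $\Psi_\e^-(tp)\leq \Psi(\gr(\g w))\leq \Psi_\e^+(tp)$ from \eqref{fdef}.

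With these uniform bounds in hand I apply the change of variable $y\mapsto\gr(\g y)=tp$ from $\grinv(\g\inv tP)\subset\cH^+_v$ to $tP\subset\cH^+_t$. By \eqref{eq:conf}, $(\gr\circ\g)_*\mu_{\cH^+_v}=e^{-\delta_\mu r}\mu_{\cH^+_t}$ on $tP$, so the pushforward produces exactly a factor $e^{-\delta_\mu r}$ which cancels the prefactor $e^{\delta_\mu r}$. Combining with the two-sided bounds above gives
\[
e^{-\delta_\mu\e}f_\e^-(v)\int_{tP}\Psi_\e^-\,d\mu_{\cH^+_t}
\;\leq\; e^{\delta_\mu r}\!\int_{\tE_{r,\g}}\!\Psi(\gr(\g w))f(w)\,d\mu_{\tE}(w)
\;\leq\; e^{\delta_\mu\e}f_\e^+(v)\int_{tP}\Psi_\e^+\,d\mu_{\cH^+_t},
\]
which is the desired statement.

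The only delicate step is the second one: carefully lining up the two measures $\mu_{\tE}$ and $\mu_{\cH^+_v}$ under the diffeomorphism $\xi_v$. Both pull back via the visual map to the same $\mu_o$, but the Busemann weights are evaluated at $\pi(w)$ versus $\pi(y)$, and the discrepancy must be bounded by $\e$ uniformly. This is exactly where Proposition~\ref{prop:xiv}(\ref{itm:xiv1}) is indispensable and where the explicit choice of $r_\e$ is forced. Once this ratio is controlled, the remainder of the argument is a book-keeping exercise combining the non-expansion \eqref{eq:stable} on the stable leaf with the semi-invariance \eqref{eq:conf} on the unstable leaf.
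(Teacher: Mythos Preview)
Your argument is correct and follows the paper's proof step for step: parametrize $\tE_{r,\g}$ via $\xi_v$, compare $\mu_{\tE}$ with $\mu_{\cH^+_v}$ through the Busemann cocycle (bounded using $d(y,w)<\e$), sandwich $f$ via $d(v,w)<\e$ and $\Psi$ via the weak-stable non-expansion \eqref{eq:stable}, and then push forward by $\gr\circ\g$ using \eqref{eq:conf}. The one place to tighten is the assertion that $\gr(\g w)=t_1p$ with $t_1\in T_{\e+}$: your estimate $d(\gr(\g w),tp)<\e$ is measured inside the transversal $\V^{-1}(p^+)$, whereas membership in $T_{\e+}$ is an $\e$-condition inside $\V^{-1}(u^+)$, and the holonomy $s\mapsto sp$ between these leaves is not an isometry---so this step does not follow from your distance bound alone, and you should invoke the second inclusion of Proposition~\ref{prop:eq9eq10} here, exactly as the paper does.
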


\begin{proof}
Let $v={\grinv}(\g\inv t)\in \tE$. Let $\phi:tP\subset\cH^{+}_{t}\to\tE_{r,\g}\subset\tE$ be the map given by 
$\phi(tp)=w:=\xi_{v}(y)$, where $p\in P$ and $y=\grinv(\g\inv tp)$.
By Lemma~\ref{lemma:wer},
\begin{equation} \label{eq:dvw}
d(y,w)<C_{1}C_{2}e^{-r}<\e, \quad d(v,w)<(C_{1}+1)C_{2}e^{-r}<\e
\end{equation}
and since $w^{+}=y^{+}$,
\[
d(\gr(\g y),\gr(\g w))=d(\gr(y),\gr(w))\leq d(y,w) < \e,
\]
and by Proposition~\ref{prop:eq9eq10}, $\gr(\g w)\in T_{\e+}p$.  Therefore
 \begin{gather}
 \label{eq:f+-}
 f_{\e}^{-}(v) \le f(w)\le f_{\e}^+(v); \\
 \Psi^{-}_{\e}(\gr(\g y)) \leq \Psi(\gr(\g w))\leq \Psi_{\e}^+(\gr(\g y)) \label{eq:Psi+}.
 \end{gather}

For the map $tp\mapsto y:=\grinv(\g\inv tp)$, by \eqref{eq:conf},
\begin{equation} \label{eq:growth}
e^{\delta_\mu r} d\mu_{\cH^+_{v}}(y)=d\mu_{\cH^+_{t}}(tp).
\end{equation}

For the map $y\mapsto w=\xi_{v}(y)$, by \eqref{eq:mutE} and \eqref{eq:muH}, since $w^{+}=y^{+}$,
 \begin{equation} \label{eq:tE-H}
 d\mu_{\tE}(w) =\frac{e^{\delta_\mu \beta_{w^+}(o,
\pi(w))}}{e^{\delta_\mu \beta_{y^+}(o,\pi(y))}}{d\mu_{\cH^+_{v}}(y)}
=e^{\delta_\mu \beta_{w^+}(\pi(y),\pi(w))}{d\mu_{\cH^+_{v}}(y)}.
 \end{equation}
By  \eqref{eq:dvw}, $\abs{\beta_{w^+}(\pi(y), \pi(w))}\leq d(\pi(y),\pi(w))\leq \e$. Therefore
\begin{equation} \label{eq:ratio}
e^{-\delta_{\mu}\e}
<d\mu_{\tE}(w)/d\mu_{\cH^+_{v}}(y) < e^{\delta_{\mu}\e}.
\end{equation}

Combining \eqref{eq:growth} and \eqref{eq:ratio}, for the map $w=\phi(tp)$ we get
\begin{equation} \label{eq:jacobian}
e^{-\delta_{\mu}\e} \leq e^{\delta_\mu r}\,\frac{ d\mu_{\tE}(w)}{d\mu_{\cH^{+}_{t}}(tp)}
\leq e^{\delta_{\mu}\e}.
\end{equation}
By noting that $\grinv(\g\inv t)=v$ and $tp=\gr(\g y)$, the conclusion of the proposition follows from \eqref{eq:f+-}, 
\eqref{eq:Psi+} and \eqref{eq:jacobian}.
\end{proof}

\newcommand{\Drt}{\bar\G_{r,t}}
\newcommand{\Nrt}{\#(\Drt)}

\begin{notation} \label{notn:Drt}
For $r\geq 0$ and $t\in T\cap \gr(\G \tE)$, in view of Lemma~\ref{atmost} let
\begin{equation} \label{eq:Nrt}
\Drt=\{[\g]\in \G/\G_{\tE}:\{t\}=T\cap \gr(\g  \tE)\}.
\end{equation}
\end{notation}

Since $\p$ is injective on $B_{\e_0+}$, for notational convenience,
 we identify $t\in T_{\e_0+}$ with its image $\p(t)\in\p(T)\subset X$. Therefore we have
\begin{equation} \label{eq:Drt}
\{[\g]\in \G/\G_{\tE}:\tE_{r,\g}\neq\emptyset\}=\bigcup_{t\in T\cap\gr(\G\tE)} \Drt
= \bigcup_{t\in T\cap \gr(E)} \Drt.
\end{equation}

Combining  Proposition~\ref{prop:integral} and Proposition~\ref{prop:eq11}, in view of \eqref{eq:Drt} we deduce the following:

\begin{corollary}\label{psie} Let $0<\e\leq \e_0$ and $r>r_\e$. For all measurable functions
$\Psi\geq 0$ on $B_{\e_{0}+}$ with $\supp(\Psi)\subset B_{\e-}$ and $f\geq0 $ on $E$, we have
\begin{align*}
&(e^{-\delta_\mu \e})\sum_{t\in T\cap \gr(E)}  \Nrt f_\e^-({\grinv}(t))\cdot \int_{tP}
\Psi_\e^{-}  \;d\mu_{\cH^+_t}\\
\le &e^{\delta_\mu r} \int_{E} \Psi(\gr(u))f(u) \;d\mu_{E}(u) \\
\le & (e^{\delta_{\mu}\e}) \sum_{t\in T\cap \gr(E)} \Nrt \cdot f_\e^+({\grinv}(t)) \cdot \int_{tP}
\Psi_\e^{+}  \;d\mu_{\cH^+_t},
\end{align*}
where $f_\e^\pm$ on $B_{\e+}$ and $\Psi_\e^\pm$ on $E$ are defined as in \eqref{fdef}.
\end{corollary}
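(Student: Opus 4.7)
The plan is to chain together the two propositions of this subsection --- Proposition~\ref{prop:integral}, which expresses $\int_{E}\Psi(\gr(u))f(u)\,d\mu_E(u)$ as a sum over $[\g]\in\G/\G_{\tE}$ of integrals on the fibers $\tE_{r,\g}$, and Proposition~\ref{prop:eq11}, which bounds each such fiber integral above and below by a plaque integral on $tP$ --- and then to regroup the $[\g]$-sum by transversal point $t\in T\cap\gr(E)$, introducing the multiplicities $\Nrt$ from Notation~\ref{notn:Drt}.

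First, since $\supp(\Psi)\subset \p(B_{\e-})$, Proposition~\ref{prop:integral} applies and converts $e^{\delta_\mu r}\int_{E}\Psi(\gr(u))f(u)\,d\mu_E(u)$ into $\sum_{[\g]\in\G/\G_{\tE}} e^{\delta_\mu r}\int_{\tE_{r,\g}}\Psi(\gr(\p(w)))f(\p(w))\,d\mu_{\tE}(w)$, and by \eqref{eq:Drt} only those cosets $[\g]$ belonging to some $\Drt$ with $t\in T\cap\gr(E)$ actually contribute. Second, for each such $[\g]$ there is by definition a unique transversal point $t=t(\g)$ with $\{t\}=T\cap\gr(\g\tE)$, and Proposition~\ref{prop:eq11} bounds the corresponding summand from above by $(e^{\delta_\mu\e})f_\e^{+}(\grinv(\g\inv t))\int_{tP}\Psi_\e^{+}\,d\mu_{\cH^+_t}$ and from below by the analogous expression with $-$ in place of $+$.

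Third, using the injectivity of $\p$ on $B_{\e_0+}$, I identify $\Psi(\gr(\p(w)))$ with $\Psi(\gr(\g w))$ (so that Proposition~\ref{prop:eq11} is directly applicable) and identify $f_\e^{\pm}(\grinv(\g\inv t))$ (a function on $\tE$) with $f_\e^{\pm}(\grinv(t))$ (the corresponding function on $E$). After these identifications, the Proposition~\ref{prop:eq11} bound no longer depends on the particular choice of $[\g]$ within $\Drt$, so summing over $[\g]\in\Drt$ simply multiplies it by $\Nrt$; regrouping the overall $[\g]$-sum by transversal point $t\in T\cap\gr(E)$ then yields the two asserted inequalities, after collecting upper and lower bounds separately. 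The only real obstacle is the routine bookkeeping of these identifications --- no new geometric or analytic input is required beyond what has already been established, as indicated by the paper's own remark that the result follows ``by combining'' Propositions~\ref{prop:integral} and \ref{prop:eq11} in view of \eqref{eq:Drt}.
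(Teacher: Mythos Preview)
Your proposal is correct and follows exactly the paper's approach: the paper's entire proof is the one-line remark that the corollary is deduced by combining Proposition~\ref{prop:integral} and Proposition~\ref{prop:eq11} in view of \eqref{eq:Drt}, and you have spelled out precisely that combination, including the routine identifications via the injectivity of $\p$ on $B_{\e_0+}$.
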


\subsection{Haar system and admissible boxes}

\begin{lemma}[\cite{Roblin2003}]
 \label{haar}
For a uniformly continuous $\Psi\in C(B)$, the map
$$t\in T\mapsto \int_{tP}\Psi \, d\mu_{\cH^+_t}$$
is uniformly continuous. In particular the map $t\mapsto \mu_{\cH^{+}_{t}}(tP)$ is uniformly continuous.
\end{lemma}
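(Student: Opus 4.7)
The plan is to transfer the integral $\int_{tP}\Psi\,d\mu_{\cH^{+}_{t}}$ to an integral over the fixed set $\V(P)\subset\partial\bH^{n}$ via the visual map, so that only the integrand depends on $t$, and then invoke a compactness argument.

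First, for each $t\in T$ the visual map $\V$ restricts to a homeomorphism from $tP\subset\cH^{+}_{t}$ onto $\V(P)=\{p^{+}:p\in P\}$, with inverse $\xi\mapsto \xi_{t}:=\V\inv(\xi)\cap\cH^{+}_{t}$; note that $\xi_{t}=tp$ when $p^{+}=\xi$. Applying the definition \eqref{eq:muH} of $\mu_{\cH^{+}_{t}}$ as a weighted pushforward of $\mu_{o}$ by $\V\inv$, I would rewrite
\[
I(t):=\int_{tP}\Psi\,d\mu_{\cH^{+}_{t}}
=\int_{\V(P)} \Psi(\xi_{t})\, e^{\delta_{\mu}\beta_{\xi}(o,\pi(\xi_{t}))}\,d\mu_{o}(\xi).
\]
So $I(t)=\int_{\V(P)}F(t,\xi)\,d\mu_{o}(\xi)$ with $F(t,\xi):=\Psi(\xi_{t})e^{\delta_{\mu}\beta_{\xi}(o,\pi(\xi_{t}))}$, and the $t$-dependence is now entirely inside the integrand.

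Next I would collect the relevant compactness/regularity facts. Since $P\subset\cH^{+}_{u}$ is relatively compact, $\overline{\V(P)}$ is a compact subset of $\partial\bH^{n}\setminus\{u^{-}\}$, and hence $\mu_{o}(\V(P))<\infty$. Since $T$ is relatively compact and the map $t\mapsto t^{-}$ is continuous, by shrinking $T$ (or working locally and then using the fact that local uniform continuity on a relatively compact set implies uniform continuity) I may assume $\{t^{-}:t\in\overline{T_{\e_{0}+}}\}$ is disjoint from $\overline{\V(P)}$. Then the parametrization map $(t,\xi)\mapsto \xi_{t}$ is continuous (in fact smooth) on the compact set $\overline{T_{\e_{0}+}}\times\overline{\V(P)}$. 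Combined with the continuity of the Busemann function $\beta$ and the uniform continuity of $\Psi$ on $B$ (which bounds it), $F$ is continuous on the compact set $\overline{T_{\e_{0}+}}\times\overline{\V(P)}$, hence uniformly continuous there.

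Finally, given $\e>0$, uniform continuity of $F$ supplies $\delta>0$ such that $d(t_{1},t_{2})<\delta$ implies $\abs{F(t_{1},\xi)-F(t_{2},\xi)}<\e$ for every $\xi\in\V(P)$; then
\[
\abs{I(t_{1})-I(t_{2})}\le \int_{\V(P)}\abs{F(t_{1},\xi)-F(t_{2},\xi)}\,d\mu_{o}(\xi)\le \e\cdot\mu_{o}(\V(P)),
\]
which gives the uniform continuity of $t\mapsto I(t)$. The particular case $t\mapsto\mu_{\cH^{+}_{t}}(tP)$ is $\Psi\equiv 1$. The only nontrivial step is the geometric preparation in the second paragraph---verifying that the visual parametrization $(t,\xi)\mapsto\xi_{t}$ really is jointly continuous on the required compact set; once this is in place the rest is standard dominated estimation.
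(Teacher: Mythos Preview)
Your argument is correct and is essentially the same as the paper's: both rewrite $\int_{tP}\Psi\,d\mu_{\cH^{+}_{t}}$ as $\int \phi(tp)\,d\mu_{o}(p^{+})$ over the fixed set $\V(P)$ (your $F(t,\xi)$ is exactly the paper's $\phi(tp)$ with $\xi=p^{+}$), and then use uniform continuity of the integrand together with $\mu_{o}(\V(P))<\infty$. The paper is simply terser about the regularity of $(t,p)\mapsto tp$ and of $\beta$, which you spell out more carefully.
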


\begin{proof}
Note that $(tp)^+=p^+$. Therefore by \eqref{eq:muH}
\[
\int_{tP} \Psi \, d\mu_{\cH^+_t}=\int_P \Psi(tp)
e^{\delta_\mu\beta_{p^+}(o,\pi(tp))}\,d\mu_o(p^+).
\]
Put $\phi(tp)=\Psi(tp) e^{\delta_\mu\beta_{p^+}(o,\pi(tp))}$. Since $\phi$ is uniformly
continuous on $B$,
\begin{align*}
\abs{\int_{t_1P} \Psi \, d\mu_{\cH^+_{t_1}}-\int_{t_2P} \Psi \, d\mu_{\cH^+_{t_2}}}
\leq \mu_o(\V(P))\cdot\sup_{p\in P}\abs{\phi(t_1p)-\phi(t_2p)}\to 0
\end{align*}
as $d(t_1,t_2)\to 0$.
\end{proof}

\begin{definition}  \label{def:admissible}
A box $B=TP$ as defined in subsection~\ref{subsec:box} is called
{\em admissible with respect to the conformal density $\{\mu_x\}$}, if every plaque of $B$ has a positive measure 
with respect to $\{\mu_{\cH^+}\}$; that is, $\mu_{\cH^+_{t}}(tP)>0$ for all $t\in T$, or equivalently,
\[
\mu_{x}(\V(tP))=\mu_{x}(P^+)>0 \text{ for some (and hence all) $x\in\bH^n$}.
\]
\end{definition}

\begin{lemma} \label{lemma:adm} Fix a conformal density $\{\mu_{x}\}_{x\in\bH^{n}}$ on $\partial\bH^{n}$. Then for any 
$u\in \op{T}^1(\bH^n)$, there exists an admissible box  around $u$ with respect to $\{\mu_{x}\}$.
\end{lemma}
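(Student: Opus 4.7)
By Definition~\ref{def:admissible}, it suffices to produce a relatively compact open neighborhood $P$ of $u$ in $\cH^+_u$ and a relatively compact open neighborhood $T$ of $u$ in $\V\inv(u^+)$ such that (a) some neighborhood of $B:=TP$ injects into $\T^1(X)$ under $\p$, and (b) the ``base'' $P^+:=\V(P)\subset \bh$ has $\mu_o(P^+)>0$. I would first build $P$. The case $\tS=\{\pi(u)\}$ of Lemma~\ref{lemma:diffeo} says that $\V$ restricts to a diffeomorphism $\cH^+_u \to \bh\setminus\{u^-\}$. Choose a relatively compact open neighborhood $P^+$ of $u^+$ in $\bh$ with $u^-\notin \cl{P^+}$ and $\mu_o(P^+)>0$; the latter is possible exactly when $u^+\in\supp(\mu_o)$, which is the implicit hypothesis on $u$ (otherwise no admissible box through $u$ can exist, and the lemma is vacuous for such $u$). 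Set $P:=\V\inv(P^+)\cap \cH^+_u$.

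Next I would produce $T$. Because $\G$ is torsion-free and discrete, it acts freely and properly discontinuously on $\T^1(\bH^n)$ (the stabilizer of any unit tangent vector in $G$ is compact, so its intersection with $\G$ is trivial), and hence $\p$ is a covering map, in particular a local homeomorphism. Thus there is an open neighborhood $W$ of $u$ on which $\p$ is injective. The chart $(t,p)\mapsto tp$ is continuous near $(u,u)$ and sends $(u,u)$ to $u\in W$, so by taking $T$ to be a sufficiently small relatively compact open neighborhood of $u$ in $\V\inv(u^+)$ (and, if necessary, shrinking $P$ slightly within $\V\inv(P^+)\cap\cH^+_u$ while keeping $\V(P)$ a neighborhood of $u^+$ of positive $\mu_o$-measure) I can arrange $\cl{T_{\e_0+}P}\subset W$ for some $\e_0>0$. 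This makes $B=TP$ a box around $u$ in the sense of \S\ref{subsec:box}.

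Finally, admissibility is immediate from the construction. Since $(tp)^+=p^+$ and $P=\V\inv(P^+)\cap\cH^+_u$, one has $\V(tP)=P^+$ for every $t\in T$, and \eqref{eq:muH} gives
\[
\mu_{\cH^+_t}(tP)=\int_{P^+} e^{\delta_\mu\beta_\xi(o,\pi(s_\xi))}\,d\mu_o(\xi)>0,
\]
where $s_\xi$ denotes the unique element of $\cH^+_t\cap\V\inv(\xi)$; positivity holds because $\mu_o(P^+)>0$ and the integrand is strictly positive. The only point that requires any care is the interplay of (a) and (b): one must be able to shrink $T$ (and possibly $P$) to secure injectivity on a full $\e_0$-neighborhood of $B$ without losing the positivity $\mu_o(P^+)>0$. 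This presents no real obstacle, since positivity of $\mu_o$ on open neighborhoods of $u^+$ is preserved under passage to any smaller open neighborhood of $u^+$ by the assumption $u^+\in\supp(\mu_o)$.
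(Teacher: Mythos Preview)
Your argument has a genuine gap: you assume $u^+\in\supp(\mu_o)$ and declare the lemma ``vacuous'' otherwise. This is incorrect. The lemma is asserted for \emph{every} $u\in\T^1(\bH^n)$, and this full generality is used later: in the proof of Theorem~\ref{m2} the authors invoke Lemma~\ref{lemma:adm} to conclude that admissible boxes form a \emph{basis} of open sets in $\T^1(X)$, and in the proof of Theorem~\ref{thm:transversal} they cover $\supp(\psi)$ for arbitrary $\psi\in C_c(T)$ by admissible boxes. Neither application restricts to vectors with $u^+\in\supp(\mu_o)$.

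The point you are missing is that the plaque $P$ is merely required to be a relatively compact open subset of $\cH^+_u$ \emph{containing} $u$; it need not be a small neighborhood of $u$ and may even be disconnected (the paper notes this explicitly in \S\ref{subsec:box}). Since $\G$ is non-elementary it does not fix $u^-$, so by $\G$-invariance $\supp(\mu_o)\neq\{u^-\}$; hence there is some $u_1\in\cH^+_u$ with $u_1^+\in\supp(\mu_o)$. After arranging (via $\G$-invariance of the support) that $u$ and $u_1$ lie in distinct $\G$-orbits or coincide, one chooses $P\subset\cH^+_u$ open, relatively compact, containing $\{u,u_1\}$, with $\p$ injective on a neighborhood $\Omega$ of $\cl{P}$. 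Then $\mu_o(\V(P))>0$ because $\V(P)$ is a neighborhood of $u_1^+$. Finally one shrinks $T$ (not $P$) so that $\cl{TP}\subset\Omega$; this is exactly what Lemma~\ref{haar} is invoked for in the paper's proof. Your strategy of shrinking $P$ to restore injectivity is precisely what fails when $u^+\notin\supp(\mu_o)$.
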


\begin{proof} Fix any $x\in\bH^{n}$. Since $\G_{u^-}$ is virtually abelian, and since we assume that $\G$ is non-elementary, 
$\G$ does not fix $u^-$. Therefore by the $\G$-invariance and the conformality of the density $\{\mu_x\}$, we have
$\supp(\mu_{x})\neq \{u^{-}\}$.  Since $\V:\cH^+_u \to \partial{\bH^n}\setminus \{u^-\}$ is a diffeomorphism, there exists 
$u_{1}\in \cH^{+}_{u}$ such that $u_{1}^{+}=\V(u_{1})\in\supp(\mu_{x})$. If $\g u=u_1$ for any $\g\in \G$, then by the 
conformality, $u\in\supp(\mu_{\cH^+_u})$ and we replace $u_1$ by $u$. Since $\p$ is injective on $\{u,u_1\}$, there exists a 
relatively compact open subset $P$ of $\cH_u^+$ containing  $\{u,u_1\}$ such that $\p$ is injective on an open set 
$\Omega$ of $\T^1(\bH^n)$ containing $\cl{P}$. Then $\mu_x(\V(P))>0$. By Lemma \ref{haar},
we can choose $T$ a enough ball in $\V\inv(u^{+})$ so that some neighborhood of the closure of $B=TP$ is contained in 
$\Omega$. Now $B=TP$ is an admissible box.
\end{proof}

\subsubsection{}  \label{subsec:281} 
Let $B=TP$ be an admissible box with respect to a conformal density $\{\mu_x\}$ such that 
$\p$ is injective on a neighborhood of the closure of $B_{\e_0+}$ for some $\e_0>0$. Let $C_1$, $C_2$ be 
as described in the beginning of \S\ref{subsec:2.6}.  For notational convenience, we will {\em identify\/}  $T_{\e_{0}+}$ and $B_{\e_{0}+}$  with their respective images in $\T^1(X)$ under $\p$.

\begin{proposition} \label{prop:trans+integral}
Let $0<\epsilon \leq \e_0$ and $r>r_\e$ (see~\eqref{eq:re}). Then for all measurable functions $\psi\geq 0$ on $T_{\e_{0+}}$ 
with $\supp(\psi)\subset T_{\e-}$
and $f\geq 0$ on $E$, we have
\begin{align*}
\notag
&(e^{-\delta_\mu \epsilon}) \int_{E} \Psi^-_\epsilon(\gr(w)) f^-_\epsilon(w)\,d\mu_{E}(w)
\\
\leq &  e^{-\delta_\mu r} \sum_{t\in T\cap \gr(E )} \#(\Drt) \cdot \psi(t)f({\grinv}(t))
\\
\leq  & (e^{-\delta_\mu \epsilon}) \int_{E} \Psi^+_{\epsilon}(\gr(w))f^+_\epsilon(w)d \mu_{E}(w),
\end{align*}
where the function $\Psi$ on $B_{\e_{0}+}$ is defined by
\[
\Psi(\p(tp)):={\psi(t)}/{\mu_{\cH_t^+}(tP)}, \text{ for all $(t,p)\in T_{\e_{0}+}\times P$},
\]
and $\Psi^{\pm}_{\e}$ on $B_{\e+}$ and $f_{\e}^{\pm}$ on $E$ are defined as in \eqref{fdef}.
\end{proposition}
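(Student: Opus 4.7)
The plan is to apply Corollary~\ref{psie} twice: once with the inflated pair $(\Psi^+_\e,f^+_\e)$ in place of $(\Psi,f)$ to obtain the upper bound, and once with the deflated pair $(\Psi^-_\e,f^-_\e)$ for the lower bound. The whole argument rests on two elementary observations.

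First, the specific function $\Psi(\p(tp))=\psi(t)/\mu_{\cH^+_t}(tP)$ defined in the statement is constant along each plaque $tP$ (as a function of $p$), so
\[
\int_{tP}\Psi\,d\mu_{\cH^+_t}=\psi(t).
\]
This is the normalization that converts the plaque integrals appearing in Corollary~\ref{psie} into the clean scalar $\psi(t)$ appearing in the proposition. Second, for any nonnegative function $g$ on a metric space, the iterated extrema satisfy the \emph{idempotency inequalities}
\[
(g^+_\e)^-_\e(u)\ge g(u)\quad\text{and}\quad (g^-_\e)^+_\e(u)\le g(u),
\]
as one sees by plugging $u_2=u$ into the inner extremum, which respects the outer constraint $d(u_1,u)\le \e$.

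For the upper estimate, apply the lower inequality of Corollary~\ref{psie} with $(\Psi^+_\e,f^+_\e)$ substituted for $(\Psi,f)$:
\[
e^{-\delta_\mu\e}\sum_{t\in T\cap\gr(E)}\Nrt\,(f^+_\e)^-_\e(\grinv(t))\int_{tP}(\Psi^+_\e)^-_\e\,d\mu_{\cH^+_t}\le e^{\delta_\mu r}\int_E\Psi^+_\e(\gr(u))f^+_\e(u)\,d\mu_E(u).
\]
By the idempotency inequalities, $(\Psi^+_\e)^-_\e\ge \Psi$ inside the plaque integral (giving $\int_{tP}(\Psi^+_\e)^-_\e\,d\mu_{\cH^+_t}\ge\psi(t)$) and $(f^+_\e)^-_\e(\grinv(t))\ge f(\grinv(t))$, so the left-hand side dominates $e^{-\delta_\mu\e}\sum_{t}\Nrt\psi(t)f(\grinv(t))$. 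Dividing through by $e^{\delta_\mu r}$ yields the desired upper bound on $e^{-\delta_\mu r}\sum_{t}\Nrt\psi(t)f(\grinv(t))$. The lower bound is completely symmetric, obtained from the upper inequality of Corollary~\ref{psie} applied to $(\Psi^-_\e,f^-_\e)$ together with the reversed idempotency inequalities $(\Psi^-_\e)^+_\e\le\Psi$ and $(f^-_\e)^+_\e\le f$.

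The one technical obstacle is verifying the support hypothesis of Corollary~\ref{psie} when applied to the inflated/deflated functions: one needs $\supp(\Psi^\pm_\e)\subset B_{\e'-}$ for some $\e'\le\e_0$. Since $\supp(\psi)\subset T_{\e-}$ by assumption, the support of $\Psi$ already sits inside $B_{\e-}$ with an $\e$-margin, and the $\e$-dilation used in forming $\Psi^\pm_\e$ enlarges it by at most $\e$, so $\supp(\Psi^\pm_\e)$ still fits inside a slightly larger sub-box strictly contained in $B_{\e_0+}$. This bookkeeping, handled by a harmless shrinking of $\e$ by a universal constant in the application of Corollary~\ref{psie}, is routine and does not affect the eventual $\e\to 0$ regime in which the proposition will be used.
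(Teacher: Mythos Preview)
Your proof is correct and is exactly the kind of argument the paper has in mind; the paper's own proof consists of the single sentence ``Since $\int_{tP}\Psi\,d\mu_{\cH^+_t}=\psi(t)$, the result is straightforward to deduce from Corollary~\ref{psie},'' and your substitution of $(\Psi^{\pm}_\e,f^{\pm}_\e)$ into Corollary~\ref{psie} together with the idempotency inequalities $(g^{+}_\e)^{-}_\e\ge g$ and $(g^{-}_\e)^{+}_\e\le g$ is the natural way to make that deduction precise. Your handling of the support bookkeeping is also appropriate.
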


\begin{proof}
Since  $\int_{tP} \Psi\,d\mu_{\cH^+_t}=\psi(t)$,
the result is straightforward to deduce from Corollary~\ref{psie}.
\end{proof}

In the section \ref{sec:mix}, Proposition~\ref{prop:trans+integral} will enable us to describe the limiting distribution of the 
transversal intersections $T\cap\gr(E)$ using
the mixing of the geodesic flow with respect to $m^{\BMS}$ (cf.\
Theorem~\ref{thm:transversal}).

\subsection{Some direct consequences}

The results proved in this subsection are also of independent interest. Let the notation be as in \S\ref{subsec:281}.

\begin{corollary}\label{finite}
Let $0<\e\leq\e_0$ and $f$ be a measurable function on $E$ such that $f_\e^+\in L^1(E,\mu_{E})$. Then  for any $r>r_\e$ 
and any measurable function $\psi$ on $T$:
\begin{equation*} \label{eq:sum-f-Nrt}
\sum_{t\in T\cap \gr( E )} \Nrt\cdot\abs{\psi(t) f({\grinv}(t))}<\infty.
\end{equation*}

In particular, if there exists a $\G$-invariant conformal density $\{\mu_x\}$ and
$\abs{\mu_E}<\infty$, then
\begin{equation*} \label{eq:sumNrt}
\sum_{t\in T\cap \gr E} \Nrt <\infty .
\end{equation*}
\end{corollary}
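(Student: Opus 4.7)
The plan is to derive this directly from the upper bound in Proposition~\ref{prop:trans+integral}, using the admissibility of the box $B$ to control the relevant denominators.

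First I replace $\psi$ with $\abs{\psi}$ and $f$ with $\abs{f}$, which only enlarges the sum, so it suffices to treat nonnegative $\psi, f$. Since Proposition~\ref{prop:trans+integral} imposes the support condition $\supp(\psi)\subset T_{\e-}$, while the statement allows $t$ to range over all of $T\cap\gr(E)$, I exhaust $T$ by a countable increasing family of relatively compact open sets $T_n$ and slightly enlarge the ambient admissible box $B=TP$ so that each $\overline{T_n}$ sits in the $\e$-interior of a new admissible box. By monotone convergence it suffices to establish a uniform bound on each partial sum
\[
\sum_{t\in T_n\cap\gr(E)}\Nrt\cdot \psi(t)\,f(\grinv(t)).
\]

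Applying the upper bound in Proposition~\ref{prop:trans+integral} with $\psi\mathbf{1}_{T_n}$ in place of $\psi$ yields
\[
e^{-\delta_\mu r}\!\!\sum_{t\in T_n\cap\gr(E)}\!\!\Nrt\,\psi(t)f(\grinv(t))
\;\leq\; e^{-\delta_\mu\e}\int_E \Psi_\e^+(\gr(w))\,f_\e^+(w)\,d\mu_E(w),
\]
where $\Psi(\p(tp))=\psi(t)\mathbf{1}_{T_n}(t)/\mu_{\cH^+_t}(tP)$. By admissibility of $B$, $\mu_{\cH^+_t}(tP)>0$ for every $t\in T$, and by Lemma~\ref{haar} the function $t\mapsto \mu_{\cH^+_t}(tP)$ is continuous, hence bounded below by a positive constant on the relatively compact set $\overline{T_n}$. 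Since $\psi$ is locally bounded on $T_n$, the function $\Psi_\e^+$ is bounded on its support. Combined with the hypothesis $f_\e^+\in L^1(E,\mu_E)$, the RHS is finite, so the partial sum is finite with a bound independent of $n$. Letting $n\to\infty$ produces the finiteness assertion.

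For the particular case, taking $\psi\equiv 1$ and $f\equiv 1$ gives $f_\e^+\equiv 1$ with $\|f_\e^+\|_{L^1(\mu_E)}=\abs{\mu_E}<\infty$, so $\sum_{t\in T\cap\gr E}\Nrt<\infty$. The only genuine bookkeeping obstacle is the reconciliation with the $\supp(\psi)\subset T_{\e-}$ requirement of Proposition~\ref{prop:trans+integral}, which the exhaustion/box-enlargement argument resolves; no new ergodic input is needed.
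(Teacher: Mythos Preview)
Your approach is essentially the paper's: apply the upper bound in Proposition~\ref{prop:trans+integral} after enlarging the transversal so that the support condition $\supp(\psi)\subset T_{\e-}$ is met, then use admissibility to bound $\Psi_\e^+$ and the hypothesis $f_\e^+\in L^1$ to control the integral. The paper does this in one stroke by replacing $T$ with $T_{\e+}$ (which still sits inside the box $B_{\e_0+}$ that injects under $\p$) and extending $\psi$ by zero; your exhaustion by $T_n$ is unnecessary, since $T$ is already relatively compact by the definition of a box in \S\ref{subsec:box}.

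One point to watch: you assert that ``$\psi$ is locally bounded on $T_n$'', but the statement allows $\psi$ to be an arbitrary measurable function, for which this need not hold. The paper's displayed bound $\le (1+\e)e^{\delta_\mu r}\|\Psi_\e^+\|_\infty\cdot\mu_E(|f_\e^+|)$ carries the same implicit assumption (if $\psi$ is unbounded then $\|\Psi_\e^+\|_\infty=\infty$ and the inequality is vacuous). In practice the corollary is only invoked with bounded $\psi$---indeed $\psi\equiv 1$ for the second claim and for the application in Theorem~\ref{thm:finite-closed}---so this does not affect anything downstream; but your argument does not justify the stated generality any more than the paper's does, and your claim of a bound ``independent of $n$'' would need the positive lower bound on $\mu_{\cH_t^+}(tP)$ to be taken over the full enlarged transversal rather than over each $\overline{T_n}$ separately.
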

\begin{proof} By Proposition~\ref{prop:trans+integral} with $T_{\e}$ in place of $T$ and declaring $\psi$ to be zero outside $T$, 
we obtain the first claim, because
\[
\sum_{t\in T\cap \gr(E )} \Nrt\cdot |\psi(t) f({\grinv}(t))|
\le (1+\epsilon)e^{\delta_\mu r}\|\Psi_\e^+\|_\infty \cdot \mu_{E}(|f^+_\e|).
\]
To deduce the second claim from the first one, we choose $f=1$ on $E$ and $\psi=1$ on $T$.
\end{proof}

\begin{definition}[Radial Limit points]

The limit set $\Lambda(\G)$ of $\G$ is the set of all
accumulation points of an orbit $\G (z)$ in $\overline{\bH}^n$ for $z\in \bH^n$. As
    $\G$ acts properly discontinuously on $\bH^n$, $\Lambda(\G)$
    is contained in $\partial{\bH^n}$.

A point $\xi\in \Lambda(\G)$ is called
 a {\em radial limit point\/} if for some (and hence every) geodesic ray
$\beta$ tending to $\xi$ and some (and hence every) point
$x\in \bH^n$, there is a sequence $\gamma_i\in \G$ with
$\gamma_i x\to \xi$ and $d(\gamma_i x, \beta)$ is bounded.

We denote by $\Lambda_{\rmr}(\G)$ the set of radial limit points
for $\G$.

If $\G$ is non-elementary, $\Lambda_{\rmr}(\G)$ is a nonempty $\G$-invariant subset of
$\Lambda(\G)$. Since $\Lambda(\G)$ is a $\G$-minimal closed subset of $\partial{\bH^n}$, we have that $
\overline{{\Lambda_{\rmr}}(\G)}=\Lambda(\G)$.
\end{definition}

\begin{theorem} \label{thm:finite-closed} Let $C$ denote the smallest subsphere of $\bH^{n}$ containing $\Lambda(\G)$. 
Suppose that $C=\partial \tS$ or $\dim(C)>\dim(\partial\tS)$.   If there exists a $\G$-invariant conformal density
$\{\mu_x:x\in \bH^n\}$  such that $\abs{\mu_E}<\infty$, then the natural map
$\bar\p:\Gamma_{\tE}\backslash \tE \to \Gamma\bs \T^1(\bH^n)$ is proper.
\end{theorem}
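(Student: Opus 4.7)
I would first dispatch the alternative $C=\partial\tS$: since $\partial\tS$ is the smallest subsphere containing the $\G$-invariant set $\Lambda(\G)$, the group $\G$ must preserve it (as smallest subspheres containing a $\G$-invariant set are themselves $\G$-invariant under the $\G$-action on $\partial\bH^{n}$), so $\G\tS=\tS$ and $\G=\G_{\tS}=\G_{\tE}$. Then $\bar\p$ is induced by the inclusion of the closed submanifold $\tE\subset\T^{1}(\bH^{n})$, and properness follows from the proper discontinuity of $\G$ on $\T^{1}(\bH^{n})$. For the remaining case $\dim C>\dim\partial\tS$, argue by contradiction: assume $\bar\p$ is non-proper and extract $v_i\in\tE$ and $\g_i\in\G$ with $\g_iv_i\to v_\infty\in\T^{1}(\bH^{n})$ and cosets $\{\g_i\G_{\tE}\}_i$ mutually distinct in $\G/\G_{\tE}$.

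\textbf{Main argument.} Choose an admissible box $B=TP$ around $v_\infty$ via Lemma~\ref{lemma:adm}, so that $\p|_{B_{\e_0+}}$ is injective and $\mu_o(P^{+})>0$. For $i$ large, $\g_iv_i\in B$, hence $\g_iv_i\in\g_i\tE\cap B\neq\emptyset$; by \eqref{eq:muPSE},
\[
|\mu_E|\geq \mu_E(\p(B))=\sum_{[\g]\in \G/\G_{\tE}}\mu_{\g\tE}(\g\tE\cap B)\geq \sum_i \mu_{\g_i\tE}(\g_i\tE\cap B),
\]
so it suffices to bound each term below uniformly. Pass to a further subsequence so that $\g_i\tS$ converges locally near $\pi(v_\infty)$ to a limit $\tS_\infty$ of the same type (totally geodesic of the same dimension, or a horosphere) with $v_\infty$ the appropriate normal/unstable vector; this uses compactness of the space of totally geodesic $\dim\tS$-planes through a bounded region (respectively of horospheres based near a boundary point). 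Then $\g_i\tE\to \tE_\infty$ locally near $v_\infty$, and by the visual diffeomorphism of Lemma~\ref{lemma:diffeo}, $\V(\g_i\tE\cap B)$ Hausdorff-converges to $\V(\tE_\infty\cap B)$, which is an open neighborhood of $v_\infty^{+}$ inside $P^{+}\setminus\partial\tS_\infty$.

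\textbf{Hypothesis input and main obstacle.} Because $\dim\partial\tS_\infty=\dim\partial\tS<\dim C$, the subsphere $\partial\tS_\infty$ is a proper subsphere of $C$, so $\Lambda(\G)=\supp(\mu_o)$ is not contained in $\partial\tS_\infty$. Combined with admissibility $\mu_o(P^{+})>0$, I expect $\mu_o(P^{+}\setminus\partial\tS_\infty)>0$; together with the bounded Busemann factor on $B$, this yields a uniform lower bound $\mu_{\g_i\tE}(\g_i\tE\cap B)\geq c>0$ for $i$ large, and summing produces $|\mu_E|=\infty$, contradicting the hypothesis. The main technical obstacle is justifying $\mu_o(P^{+}\setminus\partial\tS_\infty)>0$: one must rule out local concentration of $\mu_o$ on the proper subsphere $\partial\tS_\infty\subsetneq C$ inside $P^{+}$, by exploiting $\G$-minimality of $C$ together with the conformal quasi-invariance of $\{\mu_x\}$ (e.g.\ no $\G$-invariant null subset of $\Lambda(\G)$ is a full-measure complement of a subsphere of dimension $<\dim C$). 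A related subtlety arises when $v_\infty^{+}\notin\Lambda(\G)$, where the plaque $P$ from Lemma~\ref{lemma:adm} is centered off $\supp\mu_o$; one then uses the flexibility to enlarge $P$ to contain a point of $\supp\mu_o$ and the local convergence $\g_i\tE\to\tE_\infty$ across $B$ to transport positive $\mu_o$-mass into each transversal intersection $\g_i\tE\cap B$.
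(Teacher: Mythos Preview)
Your trivial case and overall contradiction scheme are fine. The genuine gap is the implication ``$\mu_o(P^+\setminus\partial\tS_\infty)>0\ \Rightarrow\ \mu_{\g_i\tE}(\g_i\tE\cap B)\ge c>0$.'' It fails because $\V(\g_i\tE\cap B)$ is \emph{not} all of $P^+\setminus\partial(\g_i\tS)$: it is only a small open neighbourhood of $v_\infty^+$ in $P^+$, of size governed by the thin transversal $T$, since the curved leaf $\g_i\tE$ drifts out of the box in the $T$-direction as one moves along the plaque (Proposition~\ref{prop:xiv} compares $\tE$ with $\cH^+$ only locally). Lemma~\ref{lemma:adm} achieves $\mu_o(P^+)>0$ precisely by enlarging $P$ to reach some point $u_1^+\in\supp\mu_o$ possibly far from $v_\infty^+$; enlarging $P$ further---your proposed remedy---does nothing, because the leaf has already left $T$ before reaching $u_1$. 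If $v_\infty^+\notin\Lambda(\G)$, the set $\V(\g_i\tE\cap B)$ may carry no $\mu_o$-mass whatsoever, and your lower bound collapses. (Your argument \emph{does} work verbatim when $\tE$ is horospherical: then each $\g_i\tE$ is itself a plaque $t_iP$ and $\V(\g_i\tE\cap B)=P^+$.) The obstacle you single out---concentration of $\mu_o$ on $\partial\tS_\infty$---is a side issue by comparison.

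The missing idea, and the point where the dimension hypothesis actually enters, is to \emph{recentre on the limit leaf}. Writing $v_i=h_ie_0$ with $h_i\in G_{\tE}$ and $\g_ih_i\to g$, one has $\V(g\tE)=\partial\bH^n\setminus\partial(g\tS)$; since $\dim\partial(g\tS)<\dim C$, the set $\Lambda(\G)\setminus\partial(g\tS)$ is open and nonempty in $\Lambda(\G)$, so there exists $h_0\in G_{\tE}$ with $w:=gh_0e_0\in g\tE$ satisfying $w^+\in\Lambda(\G)\subset\supp\mu_o$. Then $\g_ih_ih_0e_0\to w$, and any small $\p$-injective neighbourhood of $w$ has visual image of positive $\mu_o$-mass; from here your direct mass estimate goes through and the sum diverges. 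The paper performs exactly this recentring but chooses $w^+\in\Lambda_{\rmr}(\G)$ (using density of radial points in $\Lambda(\G)$), flows forward along the recurrent geodesic to an accumulation point $u$, builds an admissible box at $u$, and concludes via the finite transversal count $\sum_{t\in T\cap\gr(E)}\Nrt<\infty$ of Corollary~\ref{finite}---a more combinatorial endgame that recycles the machinery of \S\ref{subsec:2.6} rather than estimating intersection measures directly.
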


\begin{proof}
Note that $\G\subset G_{C}=\{g\in G:gC=C\}$, because if $\g\in \G$, then $\g C\cap C\supset\Lambda(\G)$, 
hence by minimality $\g C=C$.

Suppose $C=\partial\tS$
Then, since $G_{\tS}=G_{\partial\tS}$,
 $\G=\G\cap G_{C}=\G_{\tS}=\G_{\tE}$ and hence the properness of $\bar\p$ is obvious.

Now suppose that $\dim(C)>\dim(\tS)$ and that that $\bar\p$ is not proper.  Then there exist sequences
$\gamma_i\in \Gamma$ and $e_i\in \tE$
such that  $\gamma_i e_i$ converges to a vector $ v\in \T^1(\bH^n)$ as $i\to\infty$, and
\begin{equation} \label{eq:GE}
\gamma_i \G_{\tE}\neq \gamma_j \G_{\tE}, \text{ for all $i\neq j$.}
\end{equation}

Fix $e_0\in \tE$. Since $G_{\tE}$ acts transitively on $\tE$,  there exists $h_i\in G_{\tE}$ such
that $e_i=h_ie_0$. Then $\gamma_ih_ie_0$ converges to $v$. Therefore there exists $g\in G$ such that $
\gamma_ih_i\to g$ and $v=ge_0$.

Now $\V(g\tE)=\partial{\bH^n}-\partial (g\tS)$. Since $\dim(\partial(g\tS))=\dim(\tS)<\dim(C)$, we have that $\Lambda(\G)
\setminus \partial(g\tS)$ is a nonempty open subset of $\Lambda(\G)$. Since $\Lambda_{\rmr}(\G)$ is dense in $\Lambda(\G)$, 
it follows that
 \[
 \Lambda_{\rmr}(\G)\cap \V(g\tE)\neq\emptyset.
 \]

Therefore there exists $h_0\in G_{\tE}$ such that $\V(gh_0e_0)=(gh_0e_0)^+ \in
\Lambda_{\rmr}(\G)$. Hence there exist $r_i\to\infty$ such that $\bar\p(\gri(gh_0e_0))$ converges to a point in $\T^1(X)$. Then 
there exists a sequence $\{\g_i'\}\subset \G$ such that $\gri(\g_i'gh_0e_0)\to u$ for some $u\in \T^1(\bH^n)$.

Let $B=TP$ be an admissible box centered at $u$. Let $\e>0$ be such that
$u\in B_{3\e-}$. Fix $k\in \N$ such that $r_k>r_\e$ (see~\ref{eq:re}) such that for $\g'=\g_k'$, we have $\gr(\g' gh_0e_0)\in 
B_{2\e-}$.

Since $\gamma_ih_i\to g$, $\gr(\gamma' \gamma_i h_ih_0e_0)\in B_{\e-}$
for all $i\geq i_0$ for some $i_0$. Since $h_ih_0e_0\in \tE$,
by \eqref{eq:910} $t_i\in T\cap \gr(\gamma' \gamma_i \tE)$
for all $i\geq i_0$. Therefore
\begin{equation} \label{eq:GTE}
(\G T \cap \gr \tE)\supset \{(\gamma^{\prime}\gamma_{i})\inv t_i:i\ge i_0\}.
\end{equation}
We claim that for any $i\in \N$,
\begin{equation} \label{eq:Gtitj}
\G_{\tE}\g_{i}\inv (\g^{\prime})\inv t_i \neq \G_{\tE}\g_{j}\inv(\g^{\prime})\inv t_j, \text{ for all but  finitely many $j$}.
\end{equation}
To see this, since $\p$ is injective on $T$, if $t_i\neq t_j$, then $\G t_i\neq \G t_j$ and hence
\eqref{eq:Gtitj} holds. If $t_i=t_j$, then it follows from \eqref{eq:GE} as $\G\cap G_{(\g')\inv t_{i}}$ is finite.  Combining
\eqref{eq:GTE} and \eqref{eq:Gtitj}, we get that
\[
\#(\G_{\tE}\bs (\G T \cap \gr \tE))=\infty.
\]
We observe that if $t\in T\cap \gr(E)$, then $\G_{\tE}\bs(\G t\cap \gr\tE)=\Drt\inv t$.
If $\abs{\mu_E^{\PS}}<\infty$, then by \eqref{eq:sumNrt} of Corollary~\ref{finite}
\[
\#(\G_{\tE}\bs (\G T \cap \gr \tE))\leq \sum_{t\in T\cap\gr(E)} \Nrt<\infty,
\]
which is a contradiction.
\end{proof}

\begin{remark} \label{rem:proper}
(1) Theorem~\ref{thm:finite-closed} holds for $\G$ Zariski dense:
since $\G\subset G_C$ and $G_C$ is Zariski closed, we have $C=\partial\bH^{n}$ for $\G$ Zariski dense.

(2) Theorem~\ref{thm:finite-closed} holds in the case $\Lambda (\G_{\tS})=\partial\tS$; since $\tS \subset C$ in this case, and hence we have either 
$\tS=C$ or $\text{dim}(C)>\text{dim}(\tS)$. 
\end{remark}

\section{Equidistribution of $\gr_*\mu_E^{\Leb}$}

\label{sec:mix}

 \subsection{BMS-measure and BR-measure on $\T^1(X)$}\label{defbms}
As before, let $\G$ be a non-elementary torsion-free discrete subgroup of $G$ and set
$X:=\G\ba \bH^n$. Let $\{\mu_x\}$ and $\{\mu_x'\}$ be $\G$-invariant conformal densities on
$\partial{\bH^n}$ of dimension $\delta_\mu$ and $\delta_{\mu'}$ respectively.
 After Roblin~\cite{Roblin2003}, we define a measure $m^{\mu,\mu'}$ on
 $\T^1(X)$ associated to $\{\mu_x\}$ and $\{\mu_x'\}$ as follows. Fix $o\in \bH^n$. The
the map
\[
u \mapsto (u^+, u^-, \beta_{u^-} (o,\pi(u)))
\]
is a homeomorphism between $\op{T}^1(\bH^n)$ with
 \[
 (\partial{\bH^n}\times \partial{\bH^n} \setminus \{(\xi,\xi):\xi\in \partial{\bH^n}\})  \times \br.
 \]
Hence we can define a measure $\tilde m^{\mu,\mu'}$ on $\T^1(\bH^n)$ by
\begin{equation} \label{eq:m-mumu}
d \tilde m^{\mu,\mu'}(u) =
e^{\delta_\mu \beta_{u^+}(o, \pi(u))}\; e^{\delta_{\mu'} \beta_{u^-}(o,\pi(u)) }\;d\mu_o(u^+) d
\mu'_o(u^-) ds,
\end{equation}
where $s=\beta_{u^-}(o,\pi(u))$.
Note that $\tilde m^{\mu,\mu'}$ is $\G$-invariant. Hence
it induces a locally finite measure $m^{\mu,\mu'}$ on $\T^1(X)$ such that if $\p$ is
injective on $\Omega\subset \T^1(\bH^n)$, then
\[
m^{\mu,\mu'}(\p(\Omega))=\tilde m^{\mu,\mu'}(\Omega).
\]
This definition is independent of the choice of $o\in \bH^n$.

Two important conformal densities on $\bH^n$ we will consider are the Patterson-Sullivan
density and the $G$-invariant (Lebesgue) density.

\subsubsection{Critical exponent $\delta_{\G}$} \label{subsec:criticalexp}
We denote by $\delta_\G$ the critical exponent of $\G$ which is defined as the abscissa of convergence of a Poincare series 
$\sum_{\gamma\in \G} e^{-sd(o, \gamma (o))}$ for
some $o\in\bH^n$; that is, the series converges for $s>\delta_{\G}$ and diverges for
$s<\delta_{\G}$ and the convergence property is independent of the choice of $o\in\bH^n$.

As $\G$ is non-elementary, we have $\delta_\G>0$.
Generalizing the work of Patterson \cite{Patterson1976} for $n=2$,
Sullivan \cite{Sullivan1979} constructed a $\G$-invariant conformal density
$\{\nu_x: x\in \bH^n\}$ of dimension $\delta_\G$  supported on $\Lambda(\G)$, which is
unique up to homothety, and called the {\em Patterson-Sullivan density}. From now on, we will simply write $\delta$ instead of 
$\delta_{\G}$.

We denote by $\{m_x:x\in \bH^n\}$ a $G$-invariant conformal density on the boundary
$\partial{\bH^n}$ of dimension $(n-1)$, which is
unique up to homothety, and
each $m_x$ is invariant under the maximal compact subgroup $G_x$. It will be called the {\em
Lebesgue density}.

The measure $m^{\nu,\nu}$ on $\op{T}^1(X)$ is called
the {\em Bowen-Margulis-Sullivan measure\/} $m^{\BMS}$
associated with $\{\nu_x\}$  (\cite{Bowen1971}, \cite{Margulisthesis}, \cite{Sullivan1984}):
 \begin{equation} \label{eq:defBMS}
 dm^{\BMS}(u)= e^{\delta \beta_{u^+}(o,
\pi(u))}\;
 e^{\delta \beta_{u^-}(o, \pi(u)) }\; d\nu_o(u^+) d\nu_o(u^-) ds.
 \end{equation}

The measure $m^{\nu, m}$ is called the {\em Burger-Roblin measure\/} $m^{\BR}$
associated with $\{\nu_x\}$ and $\{m_x\}$
(\cite{Burger1990}, \cite{Roblin2003}):
 \begin{equation} \label{eq:defBR}
 dm^{\BR}(u)= e^{(n-1) \beta_{u^+}(o,
\pi(u))}\;
 e^{\delta \beta_{u^-}(o, \pi(u)) }\; dm_o(u^+) d\nu_o(u^-) ds.
 \end{equation}

We note that the support of $m^{\BMS}$ and  $m^{\BR}$ are given respectively
by $\{u\in \T^1(X): u^+, u^-\in
\Lambda(\G)\}$ and
$\{u\in \op{T}^1(X): u^-\in \LG\} .$

\subsection{Relation to classification of measures invariant under horocycles}
 Burger \cite{Burger1990} showed that for a convex cocompact hyperbolic surface $\Gamma\ba \bH^2$
with $\delta > 1/2$, $m^{\BR}$ is a unique ergodic horocycle invariant locally finite measure
 which is not supported on closed horocycles. Roblin extended
 Burger's result in much greater generality.
By identifying the space $\Omega_\cH$ of all unstable horospheres
with $\partial{\bH^n}\times \br$ by $\cH^+(u)\mapsto (u^-, \beta_{u^-}(o,\pi(u)))$,
one defines the measure $d\hat \mu(\cH)=d\nu_o(\xi)e^{\delta s}ds$
for $\cH=(\xi, s)$. Then Roblin's theorem  \cite[Thm. 6.6]{Roblin2003} says that
if $\abs{m^{\BMS}}<\infty$, then
$\hat \mu$ is the unique Radon $\G$-invariant measure
on $\Lambda_{\rmr}(\G)\times \br\subset \Omega_\cH$.
This important classification result is not used in this article, but
it suggests that the asymptotic distribution of expanding
horospheres should be described by $m^{\BR}$.

\subsection{Patterson-Sullivan and Lebesgue measures on $\tE$, $\cH^{+}_{u}$ and $E$}
Let $\tS$ and $\tE$ be as in the subsection \ref{name}.
The following measures are special cases of the measures defined in the subsection
\ref{subsec:muE}.

Fix $o\in\bH^n$. Define the Borel measure $\mu_{\tE}^{\Leb}$ on $\tE$ such that
\begin{equation} \label{eq:muELeb}
d \mu_{\tE}^{\Leb}(v)=
e^{(n-1)\beta_{v^+}(o, \pi(v))} dm_o(v^+) .
\end{equation}

Since $\{m_x\}$ is a $G$-invariant conformal density on $\partial\bH^n$, the measure
$\mu_{\tE}^{\Leb}$ is $G$-invariant; that is, $g_*\mu_{\tE}^{\Leb}=\mu_{g(\tE)}^{\Leb}$. In
particular, it is a $G_{\tE}$ invariant measure on $\tE$.

Define the Borel measure $\mu_{\tE}^{\PS}$ on $\tE$ such that
\begin{equation} \label{eq:muEPS}
d \mu_{\tE}^{\PS}(v) =
e^{\delta \beta_{v^+}(o, \pi(v))} d\nu_o(v^+) .
\end{equation}
We note that $\mu_{\tE}^{\PS}$ is a $\G$-invariant measure.

As described in the section \ref{subsec:muE}, we denote by
 $ \mu_{ E}^{\Leb}$ and $\mu_{E}^{\PS}$
 the measures on $E=\p(\tilde E)$ induced by $ \mu_{\tE}^{\Leb}$ and $\mu_{\tE}^{\PS}$
respectively. Each of them is a pushforward of the corresponding locally finite
measure on $\G_{\tE}\bs \tE$.

As in section \ref{subsec:horosphere}, we have families of measures
$\mu^{\PS}=\{\mu^{\PS}_{\cH^+}\}$
and $\mu^{\Leb}=\{\mu^{\Leb}_{\cH^+}\}$ on the unstable horospherical foliation satisfying
$$ \mu^{\PS}_{\gr(\cH^+)}(\gr(F)) = e^{\delta r}
\mu^{\PS}_{\cH^+}(F)\;\; \text{and}\;\; \quad  \mu^{\Leb}_{\gr(\cH^+)}(\gr(F)) = e^{(n-1)r}
\mu^{\Leb}_{\cH^+}(F)$$ for any Borel subset $F$ of $\p(\cH^+)$.

\subsection{Transverse measures for $m^{\BMS}$}
\label{subsec:Transverse}
For each measurable $T$ contained in a weak stable leaf of the geodesic flow on $\T^1(\bH^n)$, called a {\em transversal}, 
define a measure $\lambda_T$ on $T$ by
\begin{equation} \label{eq:LT}
d\lambda_T(t)=e^{-\delta s}\, d\nu_o(t^-) ds
\end{equation}
where $s=\beta_{t^-}(o,\pi(t))$. If $B=TP$ is any box and $p\in P$, then $(tp)^-=t^-$ and $\cH^
+_{tp}=\cH^+_{t}$,  and hence $\beta_{(tp)^-}(o,\pi(tp))=\beta_{t^-}(o,\pi(t))$. Hence
\[
d\lambda_{Tp}(tp)=d\lambda_T(t);
\]
that is, $\lambda_T$ is holonomy invariant, where the holonomy is given by $t\mapsto tp$.

Now for any $\Psi\in C(B)$, by \eqref{eq:defBMS}- \eqref{eq:LT}, we have
 \begin{align}
 \label{eq:BMS-product}
 \int_{B} f \, dm^{\BMS}& = \int_T\int_P \Psi(tp) \, d\mu^{\PS}_{\cH^+_{t}}(tp) d\lambda_T(t)\\
 \label{eq:Leb-product}
 \int_{B} f \, dm^{\BR}& = \int_{T}\int_{P}\Psi(tp) \, d\mu^{\Leb}_{\cH^+_{t}}(tp)d\lambda_{T}(t).
 \end{align}

\subsubsection{Backward admissible box}
\label{subsec:Backward admissible box}
\begin{lemma} \label{lemma:backadm}
For any $u\in \T^{1}(\bH^{n})$ and $\e>0$, there exists a box $B=TP$ about $u$ such that
\begin{enumerate}
\item $\abs{\lambda_{T}}>0$; or equivalently $\nu_{o}(\{t^{-}:t\in T\})>0$, and
\item $\limsup_{r\to\infty} d(\gr(tp),\gr(t^{\prime}p))<\epsilon$, for all $t,t^{\prime}\in T$ and $p
\in P$.
\end{enumerate}
\end{lemma}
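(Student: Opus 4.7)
The plan is to place the transversal $T$ inside a thin thickening of the stable horosphere $\cH^-_u$ within the weak stable leaf $\V\inv(u^+)$: this automatically makes condition~(2) reducible to a Busemann-function estimate, while letting $T$ still reach a point whose backward endpoint lies in $\supp(\nu_o)=\Lambda(\G)$, which is what condition~(1) needs.

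First, I would use non-elementarity of $\G$: the limit set $\Lambda(\G)=\supp(\nu_o)$ is a perfect infinite subset of $\partial\bH^n$, so $\Lambda(\G)\setminus\{u^+\}$ is nonempty. The map $v\mapsto v^-$ from $\cH^-_u$ onto $\partial\bH^n\setminus\{u^+\}$ is a diffeomorphism (the time-reversed analog of Lemma~\ref{lemma:diffeo}), so I pick $u_1\in\cH^-_u$ with $u_1^-\in\Lambda(\G)\setminus\{u^+\}$. Choose a relatively compact open subset $U\subset\cH^-_u$ containing both $u$ and $u_1$, and for a small $\delta>0$ set
\[
T:=\{\gs(v):v\in U,\ \abs{s}<\delta\}\subset\V\inv(u^+).
\]
Because $(\gs v)^-=v^-$, we have $\{t^-:t\in T\}=\{v^-:v\in U\}$, an open neighborhood of $u_1^-\in\supp(\nu_o)$ in $\partial\bH^n$; hence $\nu_o(\{t^-:t\in T\})>0$, which yields condition~(1) via~\eqref{eq:LT}.

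The key identity driving condition~(2) is that for any pair $v,v'$ in a common weak stable leaf $\V\inv(\xi)$,
\[
\lim_{r\to\infty} d(\gr v,\gr v')=\abs{\beta_\xi(\pi(v),\pi(v'))},
\]
as one verifies in the upper half-space model. Applied to $tp,t'p\in\V\inv(p^+)$, this reduces condition~(2) to bounding $\abs{\beta_{p^+}(\pi(tp),\pi(t'p))}$. At $p=u$ the holonomy is trivial: since $t\in\cH^+_t\cap\V\inv(u^+)$ and this intersection is a singleton, $tu=t$, so the estimate becomes $\abs{\beta_{u^+}(\pi(t),\pi(t'))}$. By the Busemann cocycle and the fact that $U\subset\cH^-_u$ is a level set of $\beta_{u^+}(o,\cdot)\circ\pi$, this is exactly $\abs{s-s'}\le 2\delta$ when $t=\gs(v)$ and $t'=\gs'(v')$. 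Choosing $\delta<\e/4$ and invoking uniform continuity of the map $(t,t',p)\mapsto\beta_{p^+}(\pi(tp),\pi(t'p))$ on the compact product $\cl{T}\times\cl{T}\times\cl{\cH^+_u}$ transfers the estimate from $p=u$ to every $p$ in a sufficiently small relatively compact open neighborhood $P$ of $u$ in $\cH^+_u$, yielding condition~(2).

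The main obstacle I anticipate is arranging that $\p$ injects on a neighborhood of $B=TP$, since $U$ may need to stretch a substantial hyperbolic distance from $u$ to $u_1$ whenever $u^-\notin\Lambda(\G)$. I would handle this by shrinking $P$ further: by proper discontinuity only finitely many $\gamma\in\G\setminus\{e\}$ satisfy $\gamma\cl{B}\cap\cl{B}\neq\emptyset$, and each such $\gamma$ would force $\gamma\V(P)\cap\V(P)\neq\emptyset$; those $\gamma$ with $\gamma u^+\neq u^+$ are ruled out by shrinking $\V(P)$ around $u^+$, while any $\gamma\in\G_{u^+}\setminus\{e\}$ requiring further care can be avoided by perturbing $u_1$ within the uncountable set $\Lambda(\G)\setminus\{u^+\}$ so that $\gamma\{t^-:t\in T\}\cap\{t^-:t\in T\}=\emptyset$.
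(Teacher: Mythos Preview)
Your construction for conditions~(1) and~(2) is correct and takes a genuinely different route from the paper. You build $T$ directly as a $\delta$-thickening (in the flow direction) of a piece $U\subset\cH^-_u$ and reduce~(2) to the identity $\lim_{r\to\infty}d(\gr v,\gr v')=\abs{\beta_{\xi}(\pi(v),\pi(v'))}$ for $v,v'\in\V\inv(\xi)$, combined with a continuity argument in $p$. The paper instead flows forward: it pushes a piece $P^-\subset\cH^-_u$ by $\gro$ with $r_0$ chosen so that $\diam(\gro(P^-))=\e/4$, builds a small box $T_1P_1$ around $\gro(u)$ inside an $\e/2$-ball, and pulls back by ${\mathcal G}^{-r_0}$ to obtain $B=TP$; property~(2) then drops out of the weak-stable contraction~\eqref{eq:stable} applied at the $T_1P_1$ level. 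The paper's push-forward trick avoids both your limit identity and the continuity step, and it makes injectivity of $\p$ nearly automatic: $T_1P_1$ sits in a small ball and inherits injectivity from the neighborhood of $\cl{P^-}$ already obtained as in Lemma~\ref{lemma:adm}. (Minor slip: your uniform-continuity argument should run over $\cl{T}\times\cl{T}\times\cl{P_0}$ for a relatively compact $P_0\subset\cH^+_u$, not over $\cl{\cH^+_u}$, which is not compact.)

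Your injectivity paragraph, however, has a concrete gap. If some $\gamma\in\G_{u^+}\setminus\{e\}$ is hyperbolic with axis equal to the geodesic through $u$, then $\gamma u^-=u^-$; since $u\in T$ forces $u^-\in\{t^-:t\in T\}$, the disjointness $\gamma\{t^-:t\in T\}\cap\{t^-:t\in T\}=\emptyset$ you aim for is impossible no matter how you perturb $u_1$. The repair is easy but uses a different mechanism: such a $\gamma$ shifts the $s$-parameter in $T=\{\gs(v):v\in U,\ \abs{s}<\delta\}$ by its translation length $\ell\neq 0$, so $\gamma T\cap T=\emptyset$ once $2\delta<\abs{\ell}$. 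Together with your $\V(P)$-shrinking for $\gamma\notin\G_{u^+}$, and taking $U$ to be a disjoint union of two small balls around $u$ and $u_1$ to handle parabolic $\gamma\in\G_{u^+}$, this completes the argument your approach needs.
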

Such a box $B$ as above will be called a {\it backward admissible box with asymptotically $\e$-thin transversals.}

\begin{proof} As in the proof of Lemma~\ref{lemma:adm}, there exists a relatively compact
open neighborhood $P^{-}$ of $u$ in $\cH^{-}_{u}$ such that $\nu_{o}(\{t^{-}:t\in P^{-}\})>0$,
and $\p$ is injective on a neighborhood of the closure of $P^{-}$. Let $r_0= -\log(\epsilon/
4\diam(P^{-}))$. Then $\diam(\gro(P^{-}))=\epsilon/4$ and $\p$ is injective on a neighborhood
of the closure of $\gro(P^{-})$. Let $T_{1}$ be an open relatively compact neighborhood of $
\gro(P^{-})$ in $\V\inv(u^{+})$ and $P_{1}$ be an open relatively compact neighborhood of $
\gro(u)$ in $\cH^{+}_{\gro(u)}$ such that $T_{1}P_{1}$ is a box about $\gro(u)$ contained in a
ball of
radius $\epsilon/2$ about $u$. Let $T={\mathcal G}^{-r_0}(T_{1})$ and $P={\mathcal G}^{-
r_0}(P_{1})$. Then $B=TP$ has
the required properties. The property (1) holds because
\[
\{t^{-}:t\in T\}=\{t^{-}:t\in T_{1}\}\supset \{t^{-}:t\in \gro(P^{-})\}=\{t^{-}:t\in P^{-}\}.
\]
For the property (2), let $t_{1}=\gro(t)$ and $t_{1}^{\prime}=\gro(t^{\prime})$ in $T_{1}$ and
$p_{1}=
\gro(p)\in P_{1}$. Since $(t_{1}p_{1})^{+}=(t_{1}'p_{1})^{+}$, for any $r>r_0$,
 \[
 d({\mathcal G}^{r}(tp),{\mathcal G}^{r}(t^{\prime}p))=d({\mathcal G}^{r-r_0}(t_{1}
p_{1}),{\mathcal G}^{r-r_0}(t_{1}^{\prime}p_{1}))\leq d(t_{1}p_{1},t_1' p_{1})\leq \epsilon.
 \]
\end{proof}

\subsection{Mixing of geodesic flow}
We assume that $\abs{m^{\BMS}}<\infty$ for the rest of this section. This implies that $\G$
is of divergent type,
that is, $\sum_{\gamma\in \G} e^{-\delta d(o, \gamma o)}=\infty$ and that
the $\G$-invariant conformal density of dimension $\delta$ is unique up to
homothety (see \cite[Coro.1.8]{Roblin2003}).

Hence, up to homothety, $\nu_x$ is the weak-limit as $s\to \delta^+$
of the family of measures
$$\nu_{x,o}(s):=\frac{1}{\sum_{\gamma\in \G} e^{-sd(o, \gamma o)}}
\sum_{\gamma\in\G} e^{-sd(x, \gamma o)} \delta_{\gamma o},$$ where $\delta_{\g o}$ denotes the unit mass at $\g o$ 
for some $o\in \bH^n$.

The most crucial ergodic theoretic result involved in this work is the mixing of geodesic flow
which was obtained by Rudolph for $\G$ geometrically finite and by Babillot in general:

\begin{theorem}[Rudolph~\cite{Rudolph1982}, Roblin~\cite{Roblin2003}, Babillot~\cite{Babillot2002}]
\label{thm:mixing}
For any $ \Psi_1\in L^2(\op{T}^1(X), m^{\BMS})$ and $\Psi_2 \in L^2(\op{T}^1(X), m^{\BMS})$,
\[
\lim_{r\to\infty}\int_{\op{T}^1(X)} \Psi_1(\gr(x)) \Psi_2(x)\; d m^{\BMS}(x) =
\frac{1}{|m^{\BMS}|}\,  m^{\BMS}(\Psi_1)\cdot
 m^{\BMS}(\Psi_2).
 \]
\end{theorem}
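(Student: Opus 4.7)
The plan is to follow Babillot's strategy \cite{Babillot2002}, which derives mixing from three ingredients: ergodicity of the geodesic flow with respect to $m^{\BMS}$, the local product structure of $m^{\BMS}$ along the stable, unstable and flow directions, and non-arithmeticity of the length spectrum of $\G$. After normalizing so that $\abs{m^{\BMS}}=1$, it suffices by an $L^2$-density argument to show that for any $\Psi_1,\Psi_2\in C_c(\T^1(X))$ with zero $m^{\BMS}$-mean the correlation $c(r):=\int \Psi_1(\gr u)\Psi_2(u)\,dm^{\BMS}(u)$ tends to $0$. As a first input I would invoke the Hopf--Tsuji--Sullivan theorem: since $\abs{m^{\BMS}}<\infty$ forces $\G$ to be of divergent type (as noted just before the theorem), the Patterson--Sullivan density gives full mass to the radial limit set, whence the flow is conservative and ergodic on $(\T^1(X),m^{\BMS})$.

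Next I would exploit the local product decomposition \eqref{eq:BMS-product}. Cover $\supp(m^{\BMS})$ by backward admissible boxes $B=TP$ with asymptotically thin transversals, which are available by Lemma~\ref{lemma:backadm}. The stable contraction \eqref{eq:stable} lets one replace $\Psi_1(\gr(tp))$ by a plaque-average along $tP$ up to an error controlled by the modulus of continuity, while the unstable expansion \eqref{eq:expand} equidistributes the translated plaques across transversals via the holonomy. Combined with the holonomy invariance of $\lambda_T$, this reduces $c(r)$, up to $o(1)$ error, to averages over $T$ of transversal integrals of $\Psi_2$ against translates by $\gr$ of the $\mu^{\PS}_{\cH^+_t}$-weighted plaque integrals of $\Psi_1$. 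If everything behaved like a genuine Bernoulli product, ergodicity on the unstable direction would immediately yield convergence of $c(r)$ to $m^{\BMS}(\Psi_1)\cdot m^{\BMS}(\Psi_2)/\abs{m^{\BMS}}$.

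The main obstacle is ruling out residual oscillation of $c(r)$ as $r\to\infty$. This is the content of Babillot's dichotomy: for an ergodic flow with local product structure, failure of mixing forces the existence of a measurable eigenfunction $\varphi\colon \T^1(X)\to S^1$ satisfying $\varphi\circ\gr=e^{i\alpha r}\varphi$ for some $\alpha\neq 0$, and such a $\varphi$ must be essentially constant on stable and unstable leaves. Evaluating this functional equation along closed orbits then traps the length spectrum of $\G$ inside the discrete subgroup $(2\pi/\alpha)\Z$ of $\R$. The remaining and most delicate step is thus to establish non-arithmeticity of this length spectrum for any non-elementary $\G$ with $\abs{m^{\BMS}}<\infty$; using the density of pairs of hyperbolic fixed points in $\Lambda(\G)\times\Lambda(\G)$, I would exhibit two closed geodesics in $\G\bs\bH^n$ whose length ratio is irrational. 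This contradiction with the existence of $\varphi$ completes the proof.
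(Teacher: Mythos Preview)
The paper does not supply its own proof of this theorem: it is stated with attribution to Rudolph, Roblin, and Babillot and then invoked as a black box (``From this theorem, we derive the following result\ldots''). So there is no in-paper argument to compare against.

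As an outline of Babillot's proof your plan is broadly on target: the three ingredients---ergodicity of the geodesic flow on $(\T^1(X),m^{\BMS})$ via Hopf--Tsuji--Sullivan (divergent type, hence $\nu_o$ gives full mass to radial limit points), the local product structure of $m^{\BMS}$, and non-arithmeticity of the length spectrum---are exactly the right ones, and the eigenfunction dichotomy in your third paragraph is the heart of Babillot's argument.

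Two points where the sketch is loose. First, your middle paragraph appeals to the paper's backward admissible boxes (Lemma~\ref{lemma:backadm}) and the plaque/transversal calculus; in the paper these are devices developed \emph{downstream} of mixing in order to exploit it, not to prove it. Babillot's actual route is more abstract: she passes directly to the spectral dichotomy (failure of mixing $\Rightarrow$ existence of a nontrivial $L^2$-eigenfunction for the flow), so the box discussion is a detour rather than a step. Second, the non-arithmeticity step is more delicate than ``exhibit two closed geodesics with irrational length ratio using density of pairs of hyperbolic fixed points.'' The standard argument (e.g.\ Dal'bo, Kim) relates \emph{differences} of translation lengths to cross-ratios of attracting/repelling fixed points on $\partial\bH^n$, and then uses that the limit set of a non-elementary group is perfect to show these cross-ratios cannot all lie in a fixed lattice $e^{c\Z}$; density of hyperbolic fixed-point pairs alone does not give you the irrationality you need.
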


From this theorem, we derive the following result, which generalizes the corresponding result
for PS-measures on unstable horospheres due to
Roblin~\cite[Corollary~3.2]{Roblin2003}.

\begin{theorem} \label{thm:grmuE}
For any $\Psi\in C_c(\T^1(X))$ and $f\in L^{1}(E,\mu_{E}^{\PS})$,
\begin{equation} \label{eq:grmuE}
\lim_{r\to\infty} \int_{x\in E} \Psi(\gr(x))f(x)\,d\mu^{\PS}_{E}(x) =\frac{\mu^{\PS}_{E}(f)}
{\abs{m^{\BMS}}} \cdot m^{\BMS}(\Psi).
\end{equation}
\end{theorem}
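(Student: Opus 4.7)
The plan is to derive Theorem~\ref{thm:grmuE} from the mixing theorem~\ref{thm:mixing} using the transversal intersection machinery built in Section~2. The bridge between these two is the product decomposition~\eqref{eq:BMS-product}: on an admissible box $B=TP$ one has $dm^{\BMS} = d\mu^{\PS}_{\cH^+_t}(tp)\,d\lambda_T(t)$, which locally exhibits $\mu_{\cH^+_t}^{\PS}$ as a transverse slice of $m^{\BMS}$. The strategy is to pass from the lower-dimensional integral against $\mu_E^{\PS}$ to an integral against $m^{\BMS}$ by a transverse thickening, then to apply mixing.

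First I would approximate: using Corollary~\ref{finite} to dominate tails, reduce to $f \in C_c(E)$ and then, via a partition of unity on the support of $\Psi$ subordinate to a cover of $\T^1(X)$ by admissible boxes (Lemma~\ref{lemma:adm}), reduce to the case that $\Psi \in C_c(\T^1(X))$ is supported in a single box $B=TP$, taken to be backward admissible by Lemma~\ref{lemma:backadm} so that $|\lambda_T|>0$. Fix a small $0<\e\le\e_0$. Corollary~\ref{psie} (with $\mu=\nu$, $\delta_\mu=\delta$) then bounds $e^{\delta r}\int_E \Psi(\gr u)f(u)\,d\mu_E^{\PS}(u)$ between $(1\pm O(\e))$-multiples of the transversal sum
\[
\Sigma_r := \sum_{t\in T\cap\gr(E)} \#(\bar\G_{r,t})\, f(\grinv(t))\, \int_{tP}\Psi\,d\mu^{\PS}_{\cH^+_t}.
\]
Next I would construct a dual test function $\Phi\in C_c(\T^1(X))$ supported in $\p(B_{\e_0+})$ of the separable form $\Phi(\p(tp)):=\widetilde f(t)\chi(p)/\mu^{\PS}_{\cH^+_t}(tP)$, where $\chi\in C_c(P)$ with $\int_P \chi\,d\mu^{\PS}_{\cH^+_v}=1$ and $\widetilde f$ is a continuous lift to $T$ of the restriction of $f$ to $E$ via the plaque identification $\xi_v$ of \S\ref{subsubsec:xiv}. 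By \eqref{eq:BMS-product}, $m^{\BMS}(\Phi)=\int_T \widetilde f\,d\lambda_T$, which converges to $\mu_E^{\PS}(f)$ as the supports shrink, using Proposition~\ref{prop:xiv} to match the PS-factor on plaques $\cH^+_t$ with the corresponding factor on $E$ up to $(1+O(\e))$. On the other hand, combining the product structure of $m^{\BMS}$ with Proposition~\ref{prop:eq11} and Proposition~\ref{prop:trans+integral} identifies $\int_{\T^1(X)} \Psi(\gr y)\Phi(y)\,dm^{\BMS}(y)$ with $e^{-\delta r}\Sigma_r$ up to $(1+O(\e))$. The mixing theorem~\ref{thm:mixing} gives the limit of this $m^{\BMS}$-integral as $m^{\BMS}(\Psi)\cdot m^{\BMS}(\Phi)/|m^{\BMS}|$; matching both sides and letting $\e\to 0$ yields~\eqref{eq:grmuE}.

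\textbf{Main obstacle.} The principal technical difficulty, beyond the horospherical case already treated by Roblin~\cite[Corollary~3.2]{Roblin2003}, arises when $\tS$ is a totally geodesic subspace of positive codimension: then $E$ is not a union of leaves of the unstable horospherical foliation, and $\mu_E^{\PS}$ coincides with a plaque measure $\mu^{\PS}_{\cH^+_v}$ only up to multiplicative $(1+O(\e))$-errors controlled by Proposition~\ref{prop:xiv}. A second delicate point is the $L^1$-hypothesis on $f$: since $f$ need not be bounded, uniform control of $\Sigma_r$ requires Corollary~\ref{finite}, and the commutation of the limits $\e\to 0$ and $r\to\infty$ must be argued by a $\limsup/\liminf$ sandwich using $f_\e^\pm$ and $\Psi_\e^\pm$ from \eqref{fdef}. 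The role of Section~2 is precisely to provide these uniform estimates, making the mixing asymptotic transferable from $m^{\BMS}$-integrals to $\mu_E^{\PS}$-integrals.
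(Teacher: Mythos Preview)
Your overall strategy is on the right track --- thicken $f$ to a function $\Phi$ on a box, apply mixing to $\int \Psi(\gr y)\Phi(y)\,dm^{\BMS}(y)$, and compare $m^{\BMS}(\Phi)$ with $\mu_E^{\PS}(f)$ --- but the geometry of your construction is inverted, and this creates a genuine gap.

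You place the box $B=TP$ around $\supp(\Psi)$ (the \emph{target}) and then try to define $\widetilde f$ on the transversal $T\subset B$ as a ``lift of $f$ via $\xi_v$''. But $f$ lives on $E$ at the \emph{source}, while $T$ sits inside the target box; the map $\xi_v$ of \S\ref{subsubsec:xiv} identifies a horospherical plaque $\cH_v^+$ with $\tE$, both at the source, and has nothing to do with $T$. There is no $r$-independent identification of $T$ with a piece of $E$, so no $r$-independent $\Phi$ supported in $B$ can encode $f$. Consequently your claimed identity $\int\Psi(\gr y)\Phi(y)\,dm^{\BMS}(y)\approx e^{-\delta r}\Sigma_r$ fails: if $\supp(\Phi)\subset B$ and $\supp(\Psi)\subset B$, the integrand is nonzero only where $y\in B$ \emph{and} $\gr y\in B$, which for large $r$ concerns recurrence of $B$ to itself, not intersections $\gr(E)\cap T$. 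More structurally, the Section~2 machinery (Corollary~\ref{psie}, Proposition~\ref{prop:trans+integral}) is what the paper uses to go \emph{from} Theorem~\ref{thm:grmuE} \emph{to} the transversal equidistribution Theorem~\ref{thm:transversal} and then to Theorem~\ref{m2}; invoking it here to prove Theorem~\ref{thm:grmuE} reverses the logical order.

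The paper's fix is simple but essential: cover $\supp(f)$ (not $\supp(\Psi)$) by backward admissible boxes with asymptotically $\e$-thin transversals (Lemma~\ref{lemma:backadm}), and in each such box $B=TP$ at the source define $\Phi$ by thickening $\phi=\phi_\gamma$ along the transversal via the map $\rho(tp)=\xi_v(t_1p)$. Then $\Phi$ is $r$-independent, mixing applies to $\int_B\Psi_\e^{\pm}(\gr x)\Phi(x)\,dm^{\BMS}$, the product formula~\eqref{eq:BMS-product} computes this integral as $\abs{\lambda_T}$ times $\int_{\gamma\tE\cap B}\Psi(\gr w)\phi(w)\,d\mu^{\PS}_{\gamma\tE}(w)$ (using the asymptotic thinness of transversals to control the error), and the same formula gives $m^{\BMS}(\Phi)=\abs{\lambda_T}\cdot\mu^{\PS}_{\gamma\tE}(\phi)$, so that the factor $\abs{\lambda_T}>0$ cancels. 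The reduction to compactly supported bounded $f$ is by $L^1$-density, exactly as you say; Corollary~\ref{finite} is not needed here.
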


We will deduce the above statement from its following version.

\begin{proposition} \label{prop:grmuE}
Let  $\Psi\in L^{1}(\T^1(X),m^{\BMS})$ and $f\in L^{1}(E,\muPS_{E})$, both nonnegative, bounded and vanish outside 
compact sets. Then for any $\e>0$,
\begin{align} \label{eq:grmuE1}
\limsup_{r\to\infty} \int_{x\in E} \Psi(\gr(x))f(x)\,d\mu^{\PS}_{E}(x)
&\leq \frac{\mu^{\PS}_{E}(f)}{\abs{m^{\BMS}}} \cdot m^{\BMS}(\Psi^+_\e) \\
\liminf_{r\to\infty} \int_{x\in E} \Psi(\gr(x))f(x)\,d\mu^{\PS}_{E}(x)
&\geq \frac{\mu^{\PS}_{E}(f)}{\abs{m^{\BMS}}} \cdot m^{\BMS}(\Psi^-_\e),
\label{eq:grmuE2}
\end{align}
where, for any $u\in \T^1(\bH^n)$,
\begin{equation} \label{eq:Psi-e}
\begin{array}{ll}
\Psi^+_\e(\p(u))&:=\sup\{\Psi(\p(v)):d(v,u)<\e,\; v\in\V\inv(u^+)\}, \\
\Psi^-_\e(\p(u))&:=\inf\{\Psi(\p(v)):d(v,u)<\e,\; v\in \V\inv(u^+)\}.
\end{array}
\end{equation}
\end{proposition}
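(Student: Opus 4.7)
The plan is to combine Corollary~\ref{psie} (which bounds the integral $\int_E \Psi(\gr u) f(u)\,d\muPS_E$ by a sum over transversal intersections $T \cap \gr(E)$) with the mixing of the geodesic flow (Theorem~\ref{thm:mixing}). Using compactness of the supports of $\Psi$ and $f$ and a partition of unity, I first reduce to the case where $\Psi$ is supported in $\p(B_{\e-})$ for a single box $B=TP$ that is admissible with respect to $\{\nu_x\}$ (Lemma~\ref{lemma:adm}) and has $|\lambda_T|>0$ (Lemma~\ref{lemma:backadm}). Corollary~\ref{psie} then yields, for $r > r_\e$,
\begin{equation*}
e^{\delta r} \int_E \Psi(\gr u) f(u)\,d\muPS_E(u) \leq e^{\delta\e} \sum_{t\in T\cap \gr(E)} \Nrt\, f^+_\e(\grinv t)\, \phi^+_\e(t),
\end{equation*}
where $\phi^+_\e(t) := \int_{tP} \Psi^+_\e\,d\muPS_{\cH^+_t}$ is uniformly continuous in $t$ by Lemma~\ref{haar}, and the analogous lower bound follows symmetrically from the same corollary.

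The main work is the following transversality limit: for continuous $F \in C_c(E)$ and continuous $\Phi$ on $T$,
\begin{equation*}
 \lim_{r \to \infty} e^{-\delta r} \sum_{t \in T\cap \gr(E)} \Nrt\, F(\grinv t)\, \Phi(t) = \frac{\muPS_E(F)}{|m^{\BMS}|} \int_T \Phi\,d\lambda_T.
\end{equation*}
To prove this, define $\chi(tp) := \Phi(t)/\muPS_{\cH^+_t}(tP)$ on $B$, so that $m^{\BMS}(\chi)=\int_T\Phi\,d\lambda_T$ by the Bowen-Margulis product decomposition \eqref{eq:BMS-product}. Next, realize the lower-dimensional measure $F\,d\muPS_E$ as a weak limit of Radon $m^{\BMS}$-densities $\Psi_2^\eta\in C_c(\T^1(X))$ supported in thin transverse thickenings of $E$: using the local product structure of $m^{\BMS}$ in a backward admissible box together with Lemma~\ref{lemma:wer} (which controls the distance between a transversal intersection $t$ and the corresponding plaque over $\grinv t$), one arranges both $m^{\BMS}(\Psi_2^\eta)\to \muPS_E(F)$ and $\int \chi(\gr u)\Psi_2^\eta(u)\,dm^{\BMS}(u)$ approximately equal to $e^{-\delta r}$ times the transversal sum. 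Applying Theorem~\ref{thm:mixing} to $\chi$ and $\Psi_2^\eta$, and then letting $\eta\to 0$, yields the claim.

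Combining the above with $F=f^+_\e$ and $\Phi=\phi^+_\e$, and noting that
\begin{equation*}
\int_T \phi^+_\e\,d\lambda_T = \int_T\int_{tP}\Psi^+_\e\,d\muPS_{\cH^+_t}\,d\lambda_T(t)=m^{\BMS}(\Psi^+_\e)
\end{equation*}
by \eqref{eq:BMS-product}, yields the required $\limsup$ bound with $\muPS_E(f^+_\e)$ in place of $\muPS_E(f)$; the latter is then justified by approximating $f$ from above/below by continuous compactly supported functions on $E$ and using dominated convergence (since $f$ is bounded and $\muPS_E$ is a locally finite Radon measure on the support of $f$). The symmetric argument gives the $\liminf$ bound. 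The main obstacle is constructing the thickenings $\Psi_2^\eta$: in the horospherical case $\tE$ is a leaf of the unstable foliation and $\muPS_E$ is the corresponding leafwise measure, so the thickening $B_\eta = T_\eta P$ by a small backward-admissible transversal of size $\eta$ gives $m^{\BMS}(\cdot)/|\lambda_{T_\eta}|\to \muPS_E(\cdot)$ directly; in the totally geodesic case $\tE$ is transverse to the horospherical foliation in the normal-bundle directions, and the thickening must be built more delicately using weak-stable coordinates adapted to the product structure $dm^{\BMS} = d\muPS_{\cH^+}\,d\lambda_T$ so as to correctly reproduce $\muPS_E$ in the limit.
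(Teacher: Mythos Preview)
Your route inverts the paper's logical order: you pass through Corollary~\ref{psie} to reduce $\int_E \Psi(\gr u)f(u)\,d\muPS_E$ to a transversal sum, then try to evaluate that sum via a thickening-plus-mixing argument. The paper does the opposite: it proves Proposition~\ref{prop:grmuE} first by a direct thickening, and only afterwards deduces the transversal equidistribution (Theorem~\ref{thm:transversal}) from it via Proposition~\ref{prop:trans+integral}. Concretely, the paper covers $\supp(f)$ (not $\supp(\Psi)$) by backward admissible boxes $B=TP$ with asymptotically $\e$-thin transversals (Lemma~\ref{lemma:backadm}), and for each piece $\phi$ of $f$ supported in $\gamma\tE\cap B$ constructs an explicit $\Phi\in L^2(m^{\BMS})$ on $B$ by $\Phi(tp)=\phi(\rho(tp))e^{\delta\beta_{p^+}(\pi(tp),\pi(\rho(tp)))}$, where $\rho(tp)=\xi_v(t_1p)$ parametrizes $\gamma\tE\cap B$ by $p\in P_1$. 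One then checks $m^{\BMS}(\Phi)=|\lambda_T|\cdot\muPS_{\gamma\tE}(\phi)$ and, using the $\e$-thinness of $\gr(Tp)$, that $\int\Psi^+_\e(\gr(\cdot))\Phi\,dm^{\BMS}\ge|\lambda_T|\cdot\limsup_r\int\Psi(\gr(\cdot))\phi\,d\muPS_{\gamma\tE}$. Mixing finishes the job, and the bound comes out with $\muPS_E(f)$ exactly, not $\muPS_E(f^+_\e)$.

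Your plan has a genuine gap at the step where you assert that $\int \chi(\gr u)\Psi_2^\eta(u)\,dm^{\BMS}(u)$ is ``approximately $e^{-\delta r}$ times the transversal sum.'' The thickening construction you sketch (which is, in effect, the paper's $\rho$ and $\Phi$) does not produce the transversal sum directly; what it produces is an approximation to $c\int_E \chi(\gr w)F(w)\,d\muPS_E(w)$ for a constant $c$ depending on the transversal of the thickening box. Bridging that integral to the transversal sum is precisely Proposition~\ref{prop:trans+integral}, which is the inverse of the Corollary~\ref{psie} step you already took. So steps cancel and you are left with the paper's direct argument, but now carrying extra $f^+_\eta$ and $e^{\delta\eta}$ factors from the round trip. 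The detour through transversal intersections is therefore both unjustified at the crucial point and, once justified, redundant: the thickening you need already proves the proposition outright when applied with $\chi=\Psi$ and $F=f$.
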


\begin{proof}
By Lemma~\ref{lemma:backadm}, there exists a finite
open cover $\cB$ of $\supp(f)\subset E\subset \T^{1}(X)$ consisting of backward admissible
boxes $B$ with asymptotically $\e$-thin transversals; we identify $B\subset \T^{1}(\bH^{n})$ with $\p(B)$. By considering a 
partition of unity subordinate to this cover, $f=\sum_{B\in\cB} \phi_{B}$, where $ \phi_{B}\in L^{1}(E,\muPS_{E})$ is a 
non-negative function whose support is contained in $ \p(B)$. Therefore it is enough to prove
\eqref{eq:grmuE1} and \eqref{eq:grmuE2} for $\phi_B$ in place of $f$ for each $B\in \cB$.

Fix any $B\in\cB$. For each $[\g]\in \G/\G_{\tE}$, let $\phi_{\g}(w)=\phi_{B}(w)$
 for all $w\in\g\tE$. By
\eqref{eq:muPSE},
\begin{align*}
\muPS_{E}(\phi_{B})&=\sum_{[\gamma]\in\G/\G_{\tE}} \muPS_{\g\tE}(\phi_{\g}), \text{ and }
\\
\int_{x\in E} \Psi(\gr(x))\phi_{B}(x)\, d\muPS_{E}(x)
&=\sum_{[\g]\in \G/\G_{\tE}} \int_{w\in \g\tE\cap B}
\Psi(\gr(w))\phi_{\g}(w)\,d\muPS_{\g\tE}(w).
\end{align*}
Therefore to prove  \eqref{eq:grmuE1} and \eqref{eq:grmuE2} for $\phi_B$ in place of $f$, 
it is enough to prove the following: for any $\g\in\G$ and
$\phi:=\phi_\g\in L^{1}(\g\tE,\muPS_{\g\tE})$ vanishing outside $\g\tE\cap B$, we have
\begin{align}
\label{eq:grmuE+}
\limsup_{r\to\infty} \int_{w\in \g\tE\cap B} \Psi(\gr(w))\phi(w)\,d\mu^{\PS}_{\g\tE}(w)
\leq \frac{\mu^{\PS}_{\g\tE}(\phi)}{\abs{m^{\BMS}}} m^{\BMS}(\Psi^{+}_{\e});
\\
\label{eq:grmuE-}
\liminf_{r\to\infty} \int_{w\in \g\tE\cap B} \Psi(\gr(w))\phi(w)\,d\mu^{\PS}_{\g\tE}(w)
\geq \frac{\mu^{\PS}_{\g\tE}(\phi)}{\abs{m^{\BMS}}} m^{\BMS}(\Psi^{-}_{\e}).
\end{align}

Now we express $B=TP$. If $\g\tE\cap B=\emptyset$, then both sides of \eqref{eq:grmuE+} are zero and hence the claim
is true. Otherwise, there exists $(t_1,p_1)\in T\times P$ such that $v:=t_1p_1\in\g\tE$. We recall that as in 
\S\ref{subsubsec:xiv}, $\xi_{v}:\cH^{+}_{v}\setminus  (\g\cdot\V\inv(\partial\tS))
\to \g\tE\setminus\{-v\}$ and $q_{v}:\g\tE\setminus\{-v\}\to \cH^{+}_{v}\setminus (\g\cdot\V\inv
(\partial\tS))$ are differentiable inverses of each other.

 Letting
\begin{equation*} 
P_{1}=\{p\in P: \xi_{v}(t_{1}p)\in Tp\},
\end{equation*}
we claim that
\begin{equation} \label{eq:3.16}
\g\tE\cap B = \{\xi_{v}(t_{1}p):p\in P_{1}\}.
\end{equation}
To see this, if $tp\in \g\tE$ for some $(t,p)\in T\times P$, then
\[
q_v(tp)=\cH_v^+\cap \V\inv((tp)^+)=\cH_{t_1}^+\cap \V\inv(p^+)=t_{1}p.
\]
Hence $\xi_{v}(t_{1}p)=tp$, and so $p\in P_{1}$. The opposite inclusion is obvious.

We define a map $\rho:TP\to\g\tE$ as follows:
\[
\rho(tp)=\xi_{v}(t_{1}p), \text{ for all $(t,p)\in T\times P$}.
\]

For any $t\in T$, for the restricted map $\rho:tP\to \g\tE$, by \eqref{eq:mutE} and \eqref{eq:muH}, since 
$(tp)^{+}=p^{+}=\rho(tp)^{+}$, we have
\begin{equation}
\label{eq:3.11}
d\mu^{\PS}_{\g\tE}(\rho(tp))/d\mu^{\PS}_{\cH^{+}_{t}}(tp)
=e^{\beta_{p^{+}}(\pi(tp),\pi(\rho(tp)))}.
\end{equation}
In view of this, we define $\Phi\in L^2(\T^{1}(X),\mu^{\BMS})$ as follows: $\Phi(x)=0$ if $x\in X\setminus B$ and
\begin{equation} \label{eq:3.112}
\Phi(tp)=\phi(\rho(tp))e^{\beta_{p^{+}}(\pi(tp),\pi(\rho(tp)))}, \text{ if $x=tp\in B$.}
\end{equation}
We note that
\begin{equation} \label{eq:P1}
\Phi(tp)\neq 0\Rightarrow \rho(tp)\in B\Rightarrow p\in P_{1}.
\end{equation}

And for $t\in T$ and $p\in P_{1}$, we have
$\{\rho(tp),tp\}\subset Tp$. Since $\gr(B)$ has $\e$-thin transversals as $r\to\infty$ (see Lemma~\ref{lemma:backadm}(2)):
\begin{equation} \label{eq:313}
\limsup_{r\to\infty} d(\gr(\rho(tp)),\gr(tp))\leq \e \text{ for all $p\in P_{1}$.}
\end{equation}

By Theorem~\ref{thm:mixing},
\begin{align}
\label{eq:3.13}
&\frac{1}{|m^{\BMS}|}\,  m^{\BMS}(\Psi^{+}_{\e})\cdot m^{\BMS}(\Phi)
\\
\label{eq:3.14}
&=\lim_{r\to\infty}\int_{B} \Psi^{+}_{\e}(\gr(x))\Phi(x)\,dm^{\BMS}(x)
\\
\label{eq:3.12}
&=\lim_{r\to\infty}\int_{t\in T}\left(\int_{p\in P_{1}} \Psi^{+}_{\e}(\gr(tp))
\Phi(tp)\,d\mu^{\PS}_{\cH^{+}_{t}}(tp)\right) d\lambda_{T}(t)
\\
\label{eq:tp-p}
&= \lim_{r\to\infty}\int_{t\in T}\left(\int_{p\in P_{1}}
\Psi^{+}_{\e}(\gr(tp))\phi(\rho(tp))\,d\mu^{\PS}_{\g\tE}(\rho(tp))\right) d\lambda_{T}(t)
\\
\label{eq:tp-p2}
&\geq \abs{\lambda_{T}} \cdot \limsup_{r\to\infty}\int_{w\in \g\tE\cap B} \Psi(\gr(w))\phi(w)\,d\mu^{\PS}_{\g\tE}(w)\end{align}
where \eqref{eq:3.12} follows from \eqref{eq:BMS-product}  and \eqref{eq:P1},  \eqref{eq:tp-p} follows from \eqref{eq:3.16}, 
\eqref{eq:3.11} and \eqref{eq:3.112}, and to justify \eqref{eq:tp-p2} we put $w=\rho(tp)$ and use \eqref{eq:Psi-e} and 
\eqref{eq:313}.

By putting $\Psi(x)=1=\Psi^+_\e(x)$ in \eqref{eq:3.14}--\eqref{eq:tp-p2} with equality in \eqref{eq:tp-p2}, we
get
\begin{equation} \label{eq:Psi2}
m^{\BMS}(\Phi)=\abs{\lambda_{T}}\cdot \muPS_{\g\tE}(\phi)<\infty.
\end{equation}

Now \eqref{eq:grmuE+} is deduced by comparing \eqref{eq:3.13}, \eqref{eq:tp-p2} and
\eqref{eq:Psi2}, and  noting that $\abs{\lambda_{T}}\neq 0$ by the backward admissibility of
$B$.  Similarly we can deduce \eqref{eq:grmuE-}.
 \end{proof}

 \begin{proof}[Proof of Theorem~\ref{thm:grmuE}]
Since both the sides of \eqref{eq:grmuE} are linear in $\Psi$, it is enough to prove it for $\Psi\geq 0$. Since $\Psi$ is uniformly 
continuous and $\abs{m^{\BMS}}<\infty$,
\[
\lim_{\e\to0}m^{\BMS}(\Psi^{+}_{\e}-\Psi^{-}_{\e})=0.
\]
Therefore by Proposition~\ref{prop:grmuE}, we have that \eqref{eq:grmuE} holds for all nonnegative bounded measurable $f$ 
with compact support on $E$. Since the set of such $f$'s is dense in $L^1(E,\muPS_E)$ and both sides of \eqref{eq:grmuE} are 
linear and continuous in $f\in L^{1}(E,\muPS_{E})$, and  \eqref{eq:grmuE} holds for all $f\in L^{1}(E,\muPS_{E})$.
\end{proof}

The following result is one of the basic tools developed in this article.

\begin{theorem}[Transversal equidistribution] \label{thm:transversal}
Let $f \in L^1(E,\muPS_{E})$ such that $\muPS_{E}(f^+_\epsilon-f_\e^-)\to 0$ as $\epsilon\to 0$. Let $\psi\in C_{c}(T)$ for a 
transversal $T$ of a box $B$ (\S\ref{subsec:box}). Then
\begin{equation} \label{eq:transversal-equi}
\lim_{r\to \infty}
e^{-\delta r} \sum_{t\in T\cap \gr( E )} \Nrt\cdot\psi(t) \cdot f({\grinv}(t)) =\frac{\muPS_{E}(f) }{|
{m^{\BMS}}|} \cdot \lambda_T(\psi),
\end{equation}
where
\[
f^{+}_{\e}(x)=\sup_{\{y\in E:d(y,x)<\e\}} f(y) \text{ and }
f^{-}_{\e}(x)=\inf_{\{y\in E:d(y,x)<\e\}}f(y),
\]
the transverse measure $\lambda_{T}$ is defined by \eqref{eq:LT} and $\Drt$ is defined by \eqref{eq:Drt}.
\end{theorem}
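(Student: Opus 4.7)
The plan is to combine the sandwiching estimate of Proposition~\ref{prop:trans+integral} with the equidistribution result Proposition~\ref{prop:grmuE}, and then pass $\e\to 0$. Taking $\{\mu_x\}=\{\nu_x\}$ in Proposition~\ref{prop:trans+integral} so that $\delta_\mu=\delta$ and $\mu_E=\muPS_E$, a preliminary step is to reduce to the case that $B$ is admissible with respect to $\{\nu_x\}$. By Lemma~\ref{lemma:adm} every point of $T$ has an admissible box neighborhood; a partition of unity subordinate to a finite cover of $\supp(\psi)$ splits $\psi$ into pieces supported in transversals of admissible sub-boxes, and the general case follows by summation. With $B$ admissible, set $\Psi(\p(tp)):=\psi(t)/\muPS_{\cH^+_t}(tP)$; Lemma~\ref{haar} together with admissibility guarantees that $t\mapsto\muPS_{\cH^+_t}(tP)$ is continuous and strictly positive on $\supp(\psi)$, so $\Psi$ is continuous and compactly supported. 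Proposition~\ref{prop:trans+integral} then yields, for $0<\e\leq\e_0$ and $r>r_\e$, the two-sided bound
\[
e^{-\delta\e}\int_E \Psi^-_\e(\gr(w))f^-_\e(w)\,d\muPS_E(w) \ \leq\ e^{-\delta r}\sum_{t\in T\cap \gr(E)}\Nrt\,\psi(t)\,f(\grinv(t)) \ \leq\ e^{\delta\e}\int_E \Psi^+_\e(\gr(w))f^+_\e(w)\,d\muPS_E(w).
\]

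For each fixed $\e>0$, Proposition~\ref{prop:grmuE} applied to $\Psi^{\pm}_\e$ and $f^{\pm}_\e$ (all bounded, nonnegative, and compactly supported) shows that as $r\to\infty$ the sandwiching integrals converge to $\abs{m^{\BMS}}^{-1}\muPS_E(f^{\pm}_\e)\,m^{\BMS}(\Psi^{\pm}_\e)$. The key identification is $m^{\BMS}(\Psi)=\lambda_T(\psi)$, which is immediate from the product decomposition \eqref{eq:BMS-product}:
\[
m^{\BMS}(\Psi) = \int_T\Bigl(\int_P \frac{\psi(t)}{\muPS_{\cH^+_t}(tP)}\,d\muPS_{\cH^+_t}(tp)\Bigr)\,d\lambda_T(t) = \int_T \psi(t)\,d\lambda_T(t) = \lambda_T(\psi).
\]
Now take $\e\to 0$. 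The hypothesis $\muPS_E(f^+_\e-f^-_\e)\to 0$ forces $\muPS_E(f^{\pm}_\e)\to \muPS_E(f)$, while uniform continuity of $\Psi$ on $\supp(\Psi)$ (again from Lemma~\ref{haar}) gives $\Psi^{\pm}_\e\to\Psi$ uniformly and hence $m^{\BMS}(\Psi^{\pm}_\e)\to\lambda_T(\psi)$. Both sandwiching quantities therefore converge to $\muPS_E(f)\,\lambda_T(\psi)/\abs{m^{\BMS}}$, giving the claimed limit.

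The real content sits in Proposition~\ref{prop:trans+integral}, which converts a transverse sum over $T\cap\gr(E)$ (concentrated on the weak-stable leaf of the flow) into an integral against $\muPS_E$ (supported along unstable horospheres); its proof rests on the geometric transversality estimates of Section~2, namely Proposition~\ref{prop:xiv} and Lemma~\ref{lemma:wer}, which control how closely $\gr(\g\tE)$ hugs a plaque $tP$ near each intersection point $t$. Two secondary points would need to be handled carefully: (i) the functions $\Psi^{\pm}_\e$ are only upper/lower semi-continuous, but since they are bounded, nonnegative and compactly supported they fall within the scope of Proposition~\ref{prop:grmuE}; and (ii) the reduction from a general box $B$ to a PS-admissible sub-box via the partition of unity described above. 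Both are routine once the main sandwich and the product-decomposition identity $m^{\BMS}(\Psi)=\lambda_T(\psi)$ are in hand.
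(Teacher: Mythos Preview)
Your proposal follows essentially the same route as the paper: reduce to an admissible box via a partition of unity, define $\Psi(tp)=\psi(t)/\muPS_{\cH^+_t}(tP)$, sandwich the transverse sum via Proposition~\ref{prop:trans+integral}, control the sandwiching integrals by Proposition~\ref{prop:grmuE}, identify $m^{\BMS}(\Psi)=\lambda_T(\psi)$ via \eqref{eq:BMS-product}, and send $\e\to 0$.

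Two small imprecisions are worth flagging. First, Proposition~\ref{prop:grmuE} does not assert that $\int_E \Psi^{\pm}_\e(\gr(w))f^{\pm}_\e(w)\,d\muPS_E$ \emph{converges}; it only gives $\limsup$ and $\liminf$ bounds in terms of $m^{\BMS}((\Psi^{\pm}_\e)^{\pm}_{\e'})$ for an auxiliary $\e'>0$. That is enough for the sandwich (and is exactly how the paper uses it in \eqref{eq:3.29}), but your sentence overstates what is available. Second, $f^{\pm}_\e$ need not be bounded or compactly supported under the stated hypothesis $f\in L^1(E,\muPS_E)$; the paper handles this by first reducing to $f,\psi\geq 0$ and then appealing to Proposition~\ref{prop:grmuE} together with the $L^1$ extension implicit in the proof of Theorem~\ref{thm:grmuE}. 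Neither point is a genuine gap, but you should phrase the limiting step in terms of $\limsup/\liminf$ bounds rather than claimed limits.
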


\begin{proof}
Since both sides of \eqref{eq:transversal-equi} are linear in $f$ and in $\psi$, without loss of generality we may assume that $f
\geq 0$ and $\psi\geq 0$. By Lemma~\ref{lemma:adm},
$\supp(\psi)$ can be covered by finitely many admissible boxes. By a partition of unity argument, in view of Remark~
\ref{rem:recenter}, we may assume  without loss of generality that $T$ is a transversal of an admissible box $B$.

Let $\e_{0}>0$ be such that $\p$ is injective on $B_{\e_{0+}}$ and that $\psi$ vanishes outside $T_{\e_{0}-}$. We extend $\psi$ 
to a continuous function on  $T_{\e_{0}+}$ by putting $\psi=0$ on $T_{\e_{0}+}\setminus T$. Since $B$ is admissible, due to 
Lemma~\ref{haar}, if we define
\[
\Psi(tp)=\psi(t)/\muPS_{\cH^+_t}(tP), \text{ for all $(t,p)\in T_{\e_{0}+}\times P$},
\]
then $\Psi$ is a bounded continuous function on $B_{\e_{0}+}$ vanishing outside $B_{\e_{0}-}$. 
If $\Psi^{\pm}_{\e}\in C(B_{\e+})$ are defined as in \eqref{eq:Psi-e} for $0<\e\leq\e_{0}$, then
\begin{equation} \label{eq:Psi-cpt}
\lim_{\e\to 0}\, \norm{\Psi_{\e}^{+}-\Psi_{\e}^{-}}_{\infty}=0.
\end{equation}

By Proposition~\ref{prop:grmuE},
\begin{equation} \label{eq:3.29}
\begin{array}{ll}
\limsup_{r\to\infty} \int_{E} \Psi^+_\epsilon(\gr(v)) f^+_\epsilon(v)\,d\muPS_{E}(v)
&\leq \frac{ \muPS_{E}(f^+_\epsilon) m^{\BMS}(\Psi^+_{\epsilon})}{|m^{\BMS}|}, \\
\liminf_{r\to\infty} \int_{E} \Psi^-_{\epsilon}(\gr(v)) f^-_\epsilon(v)\,d\muPS_{E}(v)
&\geq \frac{ \muPS_{E}(f^-_\epsilon) m^{\BMS}(\Psi^-_{\epsilon})}{|m^{\BMS}|}.
\end{array}
\end{equation}

Since $m^{\BMS}(B_{\e_{0}+})<\infty$, by \eqref{eq:Psi-cpt}, we have that $m^{\BMS}(\Psi^{+}_{\e}-\Psi^{-}_{\e})\to 0$ 
as $\e\to 0$. By our assumption, $\muPS_{E}(\abs{f^\pm_\epsilon-f})\to 0$ as $\epsilon\to0$. Therefore by 
Proposition~\ref{prop:trans+integral} and \eqref{eq:3.29},
\begin{equation*}
\lim_{r\to\infty} e^{-\delta r} \sum_{t\in T\cap \gr(E)} \Nrt\cdot \psi(t) \cdot f({\grinv}(t)) =
\frac{\muPS_E(f) m^{\BMS}(\Psi)}{|m^{\BMS}|}.
\end{equation*}
And
\[
m^{\BMS}(\Psi)=\int_T d\mu_T(t)\left(\int_{tP} \Psi(tp) d\muPS_{\cH^+_t}\right)=
\lambda_T(\psi).
\]
\end{proof}

Now we state and prove the main equidistribution result of this article which is more general than Theorem~\ref{mainergint}.

\begin{theorem}  \label{m2}
Let $f\in L^1(E,\mu_E^{\PS})$ such that
$\muPS_{E}(f^+_\epsilon-f^-_\e)\to 0$ as $\epsilon\to 0$. Let $\Psi\in C_{c}(\T^{1}(X))$.
Then
\begin{equation*} \label{eq:m2}
\lim_{r\to\infty} e^{(n-1-\delta)r}
\int_{u\in E} \Psi(\gr(u))f(u)\; d\mu_{E}^{\op{Leb}}(u)
= \frac{\mu_{E}^{\PS}(f)}{ |m^{\BMS}|} m^{\BR}(\Psi).
\end{equation*}

In particular, the result applies to $f=\chi_{F}$ for a Borel measurable $F\subset E$ such that
$\muPS_{E}(F_{\e_{1}})<\infty$ for some $\e_{1}>0$ and $\muPS_{E}(\partial F)=0$.
\end{theorem}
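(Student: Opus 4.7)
The plan is to derive the theorem from the transverse equidistribution Theorem~\ref{thm:transversal} by converting the left-hand horospherical integral into a transversal sum via Corollary~\ref{psie} applied to the \emph{Lebesgue} conformal density $\{m_{x}\}$ (of dimension $n-1$). Multiplying the resulting sandwich by $e^{-\delta r}$ produces the prefactor $e^{(n-1-\delta)r}$, since Theorem~\ref{thm:transversal} provides the asymptotics of transversal sums precisely at rate $e^{-\delta r}$.

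By linearity and positivity I first reduce to $\Psi,f\geq 0$; then, applying Lemma~\ref{lemma:adm} to $\{m_{x}\}$ together with a partition of unity, I may assume $\Psi$ is supported in the interior $\p(B_{\e-})$ of a single $\{m_{x}\}$-admissible box $B=TP$ on whose $\e_{0}$-thickening $\p$ is injective. Set
\[
\psi_{\e}^{\pm}(t):=\int_{tP}\Psi_\e^{\pm}\,d\mu^{\Leb}_{\cH^{+}_{t}},\quad t\in T_{\e_{0}+},
\]
which by Lemma~\ref{haar} are continuous and compactly supported on $T$, and by uniform continuity of $\Psi$ converge uniformly to $\psi(t):=\int_{tP}\Psi\,d\mu^{\Leb}_{\cH^{+}_{t}}$ as $\e\to 0$; the product decomposition \eqref{eq:Leb-product} of $m^{\BR}$ then gives $\lambda_{T}(\psi_\e^{\pm})\to m^{\BR}(\Psi)$. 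Corollary~\ref{psie} with $\delta_\mu=n-1$ yields, for $r>r_\e$,
\begin{align*}
e^{-(n-1)\e}\sum_{t\in T\cap\gr(E)}\Nrt\,\psi_\e^{-}(t)f_\e^{-}(\grinv(t))
&\;\leq\; e^{(n-1)r}\int_E\Psi(\gr(u))f(u)\,d\mu^{\Leb}_E(u)\\
&\;\leq\; e^{(n-1)\e}\sum_{t\in T\cap\gr(E)}\Nrt\,\psi_\e^{+}(t)f_\e^{+}(\grinv(t)).
\end{align*}
Decomposing $f_\e^{\pm}=f+(f_\e^{\pm}-f)$ and multiplying throughout by $e^{-\delta r}$, the main term $e^{-\delta r}\sum_{t}\Nrt\psi_\e^{\pm}(t)f(\grinv(t))$ tends to $\muPS_E(f)\lambda_T(\psi_\e^{\pm})/\abs{m^{\BMS}}$ by Theorem~\ref{thm:transversal} (applied with the continuous compactly supported $\psi_\e^{\pm}$ and with $f$ itself, which by assumption satisfies the oscillation hypothesis). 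Sending $\e\to 0$ then produces the claimed limit $\muPS_E(f)m^{\BR}(\Psi)/\abs{m^{\BMS}}$.

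The main obstacle is controlling the residual $e^{-\delta r}\sum_{t}\Nrt\psi_\e^{\pm}(t)(f_\e^{\pm}-f)(\grinv(t))$, because the semicontinuous envelopes $f_\e^{\pm}$ need not themselves satisfy the $\muPS_E$-oscillation hypothesis required to feed them directly into Theorem~\ref{thm:transversal}. I would handle this by a second application of Corollary~\ref{psie}, now using the Patterson--Sullivan density, which sandwiches this residual sum by $\muPS_E$-integrals of the form $\int_{E}\tilde\Psi_{\e'}^{+}(\gr w)(f_\e^{+}-f_\e^{-})_{\e'}^{+}(w)\,d\muPS_E(w)$. The elementary two-scale bounds $(f_\e^{\pm})_{\e'}^{+}\leq f_{\e+\e'}^{+}$ and $(f_\e^{\pm})_{\e'}^{-}\geq f_{\e+\e'}^{-}$, combined with Proposition~\ref{prop:grmuE} (which requires no oscillation hypothesis on its $f$-slot, only boundedness and compact support), dominate this by a constant multiple of $\muPS_E(f_{\e+\e'}^{+}-f_{\e+\e'}^{-})$, which vanishes as $\e,\e'\to 0$ by the hypothesis on $f$. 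Finally, for the concluding assertion with $f=\chi_{F}$, the difference $(\chi_F)_\e^{+}-(\chi_F)_\e^{-}$ is the indicator of the $\e$-annulus about $\partial F$: it is $\muPS_E$-integrable for $\e<\e_{1}$ by $\muPS_E(F_{\e_{1}})<\infty$ and shrinks pointwise to $\chi_{\partial F}$, so dominated convergence gives $\muPS_E((\chi_F)_\e^{+}-(\chi_F)_\e^{-})\to\muPS_E(\partial F)=0$, verifying the hypothesis of the theorem.
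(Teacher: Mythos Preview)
Your approach is essentially the paper's: localize $\Psi$ to a box, sandwich the Lebesgue integral via Corollary~\ref{psie} with $\delta_\mu=n-1$, rescale by $e^{-\delta r}$, and appeal to Theorem~\ref{thm:transversal} together with the identity $\lambda_T(\psi)=m^{\BR}(\Psi)$ from \eqref{eq:Leb-product}. Two small points of difference are worth noting. First, the paper takes boxes admissible for the \emph{Patterson--Sullivan} density rather than $\{m_x\}$: Lebesgue-admissibility is vacuous since $m_o$ has full support, whereas PS-admissibility is exactly what Proposition~\ref{prop:trans+integral} requires---and that proposition (not Corollary~\ref{psie}, which goes the other way) is what you are really invoking when you ``sandwich the residual sum by $\mu_E^{\PS}$-integrals'', since defining its auxiliary $\tilde\Psi(tp)=\psi(t)/\mu^{\PS}_{\cH_t^+}(tP)$ needs $\mu^{\PS}_{\cH_t^+}(tP)>0$. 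Second, the paper dispenses with your main-term/residual decomposition altogether: it applies Theorem~\ref{thm:transversal} directly to the pairs $(\psi_\e^\pm,f_\e^\pm)$, using only the one-sided $\limsup$/$\liminf$ conclusions together with $(f_\e^\pm)_{\e'}^\pm=f_{\e+\e'}^\pm$, so that the hypothesis $\mu_E^{\PS}(f_\eta^+-f_\eta^-)\to 0$ closes the sandwich immediately without a separate residual estimate.
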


\begin{proof} By Lemma~\ref{lemma:adm}, the boxes admissible with respect to
$\{\mu_{\cH^{+}}^{\PS}\}$ form a basis of open sets in $\T^1(X)$. By a partition of
unity argument, without loss of generality we may assume that $\supp(\Psi)\subset B$ for an
admissible box $B=TP$. Let $\e_{0}>0$ be such that $\Psi=0$ outside $B_{\e_{0}-}$. For $0<\e\leq \e_{0}$, let $\Psi^{\pm}_{\e}$ 
be defined as in \eqref{fdef}. Then
\begin{equation} \label{eq:Psi-e-0}
\lim_{\e\to 0} \, \norm{\Psi_{\e}^{+}-\Psi_{\e}^{-}}_{\infty}=0.
\end{equation}

For $t\in T_{\e_{0}}$, and $\e>0$, define
$\psi_\e^{\pm}(t)=\int_{tP} \Psi_\e^{\pm} \;d\mu^{\op{Leb}}_{\cH^+_t}$. By
Lemma~\ref{haar}, $\psi_\e^{\pm}\in C_{c}(T)$ for any $0<\e<\e_{0}/2$.

For the conformal density, $\{\mu_x\}=\{m_x\}$, we have $\delta_{\mu}=n-1$,
and by multiplying all the terms in the conclusion of Corollary~\ref{psie} by $e^{-\delta r}$,  
for $r>r_{\e}$ (see \eqref{eq:re}), we get
\begin{align*}
&(e^{-(n-1)\e})e^{-\delta r} \sum_{t\in T\cap \gr( E)} \Nrt \cdot \psi_\e^{-}(t) \cdot f_\e^-({\grinv}(t))
\\
\le & e^{(n-1-\delta) r}  \int_{E} \Psi(\gr(u))f(u) \;d\mu^{\op{Leb}}_{E}(u)
\\
\le &(e^{(n-1)\e})e^{-\delta r} \sum_{t\in T\cap \gr( E)} \Nrt \cdot \psi_\e^{+}(t)  \cdot f_\e^+({\grinv}(t)).
\end{align*}

Define $\psi(t):= \int_{tP} \Psi (tp) \,d\mu^{\op{Leb}}_{\cH^+_t}$ for all $t\in T$. Then
$\lambda_T(\psi)=m^{\BR}(\Psi)$ and $\lambda_T(\psi_\e^{\pm})=m^{\BR}(\Psi_\e^{\pm})$.
Since $m^{\BMS}(B_{\e_{0}+})<\infty$, by \eqref{eq:Psi-e-0},
\[
\lambda_{T}(\psi^{+})-\lambda_{T}(\psi^{-})=m^{\BR}(\Psi_\e^{+}-\Psi_\e^-)\to 0,\text{ as $\e\to
0$}.
\]
And since $\muPS_{E}(f^+_\epsilon-f^-_\e)\to 0$, by Theorem~\ref{thm:transversal}
\[
\lim_{r\to\infty} e^{(n-1-\delta) r} \int_{E} \Psi(\gr(u))f(u) \;d\mu^{\op{Leb}}_{E}(u) =
\frac{\muPS_E(f)\lambda_{T}(\psi)}{|m^{\BMS}|}.
\]
Since $\lambda_T(\psi)=m^{\BR}(\Psi)$, we prove the claim.

In the particular case of $f=\chi_{F}$, we have
\[
\inf_{\e>0} f_{\e}^{+}=\chi_{\cl{F}}\quad\text{and}\quad \sup_{\e>0}f_{\e}^{-}=\chi_{\Int(F)}, \text { and }
\]
if $\muPS_{E}(f_{\e_{1}}^{+})=\muPS_{E}(F_{\e_{1}})<\infty$, then
$\lim_{\e\to 0}\muPS_{E}(f_{\e}^{+}-f_{\e}^{-})=\muPS_{E}(\partial F)$.
\end{proof}

The idea of the above proof was influenced by the work of Schapira~\cite{Schapira2005}.

Our proof also yields the following variation of  Theorem~\ref{m2}.

\begin{theorem} \label{thm:subset}
Let $\tilde F\subset \tilde E$ be a Borel subset such that
 $\mu_{\tE}^{\PS}(\tilde F_{\e}) <\infty$
for some $\e>0$ and $\mu_{\tilde E}^{\PS}(\partial \tilde F)=0$.
Then for any $\psi\in C_c(\T^1(\G\ba \bH^n))$,
\begin{equation*}
\lim_{t\to +\infty} e^{(n-1-\delta)t}\cdot
\int_{\tilde F}\psi({\gt}(v))\; d\mu^{\Leb}_{\tilde E}(v)
=\frac{ \mu_{\tilde E}^{\PS}(\tilde F)}{\abs{m^{\BMS}}}
 \cdot m^{\BR} (\psi).
\end{equation*}
\end{theorem}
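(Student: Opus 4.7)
The proof parallels that of Theorem~\ref{m2}, lifted to work directly with the subset $\tilde F \subset \tilde E \subset \T^1(\bH^n)$ rather than with a subset of the quotient $E \subset \T^1(X)$. First, using Lemma~\ref{lemma:adm} and a partition of unity, I would reduce to the case that $\supp(\psi) \subset \p(B)$ for a single box $B = TP$ admissible with respect to the Patterson--Sullivan density $\{\mu^{\PS}_{\cH^+}\}$, with $\p$ injective on a neighborhood of $\overline{B_{\e_0+}}$. Since the $\G$-translates $\{\g\inv B\}_{\g\in\G}$ are then pairwise disjoint in $\T^1(\bH^n)$, the lifted function $\psi\circ\p$ decomposes as $\sum_{\g\in\G}(\psi\circ\p)|_{\g\inv B}$, which gives
\[
\int_{\tilde F}\psi(\gr(v))\,d\mu^{\Leb}_{\tE}(v) \;=\; \sum_{\g \in \G} \int_{\tilde F \cap \grinv(\g\inv B)} \psi(\p(\gr(v)))\,d\mu^{\Leb}_{\tE}(v).
\]
This is precisely the analog of the $[\g]\in\G/\G_{\tE}$ decomposition~\eqref{eq:muPSE} used in the proof of Theorem~\ref{m2}, except now summed over all of $\G$ because $\tilde F$ need not be $\G_{\tE}$-invariant.

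Next, I would apply the termwise transversal analysis of \S\ref{subsec:2.6}: for each $\g\in\G$ with $T\cap\gr(\g\tE)=\{t_\g\}$, Propositions~\ref{prop:eq9eq10} and~\ref{prop:eq11} (with the $G$-invariant conformal density $\{m_x\}$, i.e.\ $\delta_\mu=n-1$) describe the integral over $\tE\cap\grinv(\g\inv B)$ as a horospherical Lebesgue integral over $t_\g P$ up to factors $1+O(\e)$, with the restriction to $\tilde F$ recorded by a factor $\chi_{\tilde F}(\grinv(\g\inv t_\g))$. Summing over $\g$ and multiplying by $e^{-\delta r}$ yields an analog of Corollary~\ref{psie}, sandwiching $e^{(n-1-\delta)r}\int_{\tilde F}\psi(\gr(v))\,d\mu^{\Leb}_{\tE}(v)$ between the quantities
\[
e^{\pm(n-1)\e}\,e^{-\delta r}\sum_{\g\in\G}\psi_\e^\pm(t_\g)\,(\chi_{\tilde F})_\e^\pm(\grinv(\g\inv t_\g)),
\]
where $\psi_\e^\pm(t):=\int_{tP}\Psi_\e^\pm\,d\mu^{\Leb}_{\cH^+_t}$ and $(\chi_{\tilde F})_\e^\pm$ are the outer/inner $\e$-approximations of $\chi_{\tilde F}$.

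Finally, I would pass to the limit via an analog of the transversal equidistribution Theorem~\ref{thm:transversal} for $\tilde F$. The argument of Proposition~\ref{prop:grmuE} --- whose $\g=e$ instance furnishes the per-coset building block at the level of $\tE$ --- is rerun with the sum over $[\g]\in\G/\G_{\tE}$ replaced by the sum over $\g\in\G$, and with $\mu^{\PS}_{E}$ replaced by $\mu^{\PS}_{\tE}$. This produces
\[
\lim_{r\to\infty} e^{-\delta r}\sum_{\g\in\G}\psi(t_\g)\chi_{\tilde F}(\grinv(\g\inv t_\g)) = \frac{\mu^{\PS}_{\tE}(\tilde F)\,\lambda_T(\psi)}{|m^{\BMS}|},
\]
and combining with $\lambda_T(\psi) = m^{\BR}(\psi)$ (as in the final step of the proof of Theorem~\ref{m2}) yields the theorem. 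The main obstacle is that the sum over $\g\in\G$ is a priori infinite, in contrast to the finite multiplicity $\Nrt$ controlling the analogous sum in Theorem~\ref{m2}; uniform convergence in $r>r_\e$ must therefore be established. The hypothesis $\mu^{\PS}_{\tE}(\tilde F_\e) < \infty$ provides exactly this, via an $\tE$-analog of Corollary~\ref{finite}, while $\mu^{\PS}_{\tE}(\partial\tilde F)=0$ ensures tightness of the $\e$-approximations so that the upper and lower sandwich bounds coalesce as $\e\to 0$.
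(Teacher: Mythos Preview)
Your proposal is correct and matches the paper's intent: the paper offers no separate proof, simply noting that ``Our proof also yields the following variation of Theorem~\ref{m2}.'' You have accurately identified the required modification --- replacing the coset sum over $\G/\G_{\tE}$ by a sum over all of $\G$, since $\tilde F$ need not be $\G_{\tE}$-invariant --- and correctly observed that the hypotheses $\mu^{\PS}_{\tE}(\tilde F_\e)<\infty$ and $\mu^{\PS}_{\tE}(\partial\tilde F)=0$ supply respectively the summability (via the $\tE$-analog of Corollary~\ref{finite}) and the tightness of the $\e$-approximations.
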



\subsection{Integrability of the base eigenfunction $\phi_0$}
 \label{subsec:eigen}
\begin{proof}[Proof of theorem~\ref{thm:mbf}]
We want to prove equivalence of the following:
\begin{enumerate}
\item[\eqref{mbf3}] $\phi_0\in L^1(\Gamma\ba\bH^n,d\Vol_{\Riem})$;
\item[\eqref{mbf2}] $\abs{m^{\BR}}<\infty$;
\item[\eqref{mbf1}] $\Gamma$ is a lattice in $G$.
\end{enumerate}

The pushforward of $m^{\BR}$ from $\G\bs\T^1(\bH^1)$ to $\Gamma \ba \bH^n$ is the measure corresponding to
$\phi_0\,d\Vol_{\Riem}$ (see~\cite[Lemma~6.7]{KontorovichOh}). Therefore \eqref{mbf3} and \eqref{mbf2} are equivalent.

To prove that \eqref{mbf2} implies \eqref{mbf1}, suppose that $\abs{m^{\BR}}<\infty$. Since the  left $G$-action on $\T^1(\bH^n)
$ is transitive, we may identify $\T^1(\G\bs \bH^n)$ with
$\G\bs G/M$ for a compact subgroup $M$. We lift the measure $m^{\BR}$ to a measure $m$ on $\Gamma\ba G$ as follows: for 
any $f\in C_{c}(\G\bs G)$, we define
$m(f)=m^{\BR}(\bar f)$, where $\bar f(\G gM)=\int_{x\in M} f(\G gx)\,dx$, where $dx$ is the probability Haar measure on $M$.  
Denote by $U$ the horospherical subgroup of $G$ whose orbits in $G$ projects to the unstable horospheres in $\T^1(\bH^n)$. 
Then $M$ normalizes $U$ and any unimodular proper closed subgroup of $G$ containing $U$ is contained in the subgroup 
$MU$. As $m$ is invariant under $G_{\cH^+}$ for any unstable
horosphere $\cH^+$, it follows that $m$ is a
$U$-invariant {\em finite\/} measure on $\Gamma\ba G$. By Ratner's theorem \cite{Ratner1991}, any ergodic component, say, $
\lambda$, of $m$ is a homogeneous measure in the sense that $\lambda$ is a $H$-invariant finite measure supported on a 
closed orbit $x_0 H$ for some $x_0\in \G\ba G$ and a unimodular closed subgroup $H$ of $G$ containing $U$. If $H\ne G$, 
then $H\subset MU$ and $\G\cap H$ is co-compact in $H$. It follows by a theorem of Bieberbach (\cite[Theorem~2.25]
{Bowditch1993}) that $\G \cap U$ is co-compact in $U$. Hence $H=U$. Thus we can write $m=m_1+ m_2$, where $m_1$ is $G
$-invariant and $m_2$ is supported on a union of compact $U$-orbits.

If $m_{1}=0$, then $m=m_{2}$, and hence the projection of the support of $m^{\BR}$ in
$\T^1(\bH^n)$ is a union of compact unstable horospheres. It follows that the Patterson-Sullivan density is concentrated on the 
set of parabolic fixed points of $\G$, which is a contradiction.

If $m_{1}\neq 0$, then $m_{1}$ is a finite $G$-invariant measure on $\G\bs G$; that is,
$\G$ is a lattice in $G$. Hence \eqref{mbf2} implies \eqref{mbf1}.

If $\G$ is a lattice, then $\{\nu_x\}=\{m_x\}$ up to a constant multiple.
Hence $m^{\BR}$ is the projection of a finite $G$-invariant measure of $\Gamma\ba G$ to
$\T^1(\Gamma\ba \bH^n)$. Hence \eqref{mbf1} implies \eqref{mbf2}.
\end{proof}

\section{Geometric finiteness of closed totally geodesic immersions}
\label{sec:4}

for obtaining the criterion for the finiteness of $\muPS_{E}$ in \S\ref{sec:5}.

\subsection{Parabolic fixed points and minimal subspaces} \label{subsec:bieberbach}

\newcommand{\Fix}{\operatorname{Fix}}
Let $\G$ be a torsion free discrete
subgroup of $G$.
\begin{definition}
An element $g\in G$ is called
{\em parabolic\/} if $\Fix(g):=\{\xi\in\partial{\bH^n}:g\xi=\xi\}$ is a singleton set. An element $
\xi\in\partial{\bH^n}$ is called a {\em parabolic fixed point of $\G$} if there exists a parabolic
element $\g\in\G$ such that $\Fix(\g)=\{\xi\}$. Note that if $\xi$ is a parabolic fixed point for 
$\G$, then $\xi\in \Lambda(\G)$. Let $\Lambda_{\rmp}(\G)$ denote the set of parabolic fixed points of $\G$.
\end{definition}

Let $\xi\in \Lambda_{\rmp}(\G)$.
In order to analyze the action of $\G_{\xi}$
on $\partial{\bH^n}\setminus \{\xi\}$, it is convenient to use the upper half space model
$\br^n_+=\{(x,y):x\in \br^{n-1}, y>0\}$ for $\bH^n$, where $\xi$ corresponds to $\infty$ and
$\partial{\bH^n}\setminus\{\xi\}$ corresponds to $\partial \br^n_+=\{(x,0):x\in \br^{n-1}\}$.
The subgroup $\G_\infty$ acts
properly discontinuously via affine isometries on $\partial{\bH^n}\setminus\{\infty\}\cong \R^{n-1}$; at
this stage we will treat $\R^{n-1}$
only as an affine space, and we will choose its origin $0$ later. Moreover the action of
$\G_{\infty}$ preserves every horosphere $\R^{n-1}\times \{y\}$, where $y>0$, based at
$\infty$.

By a theorem of Bieberbach (\cite[2.2.5]{Bowditch1993}), $\G_\infty$ contains a normal abelian subgroup of finite index, 
say $\G'_{\infty}$. By \cite[2.1.5]{Bowditch1993}, any (nonempty) $\G'_\infty$-invariant affine subspace of $\R^{n-1}$ contains a (nonempty) minimal $\G'_\infty$-invariant affine subspace, we call such an affine subspace a {\em $\G'_{\infty}$-minimal subspace}. By \cite[2.2.6]{Bowditch1993}, $\G'_{\infty}$ acts cocompactly via translations on any $\G'_{\infty}$-minimal subspace. Moreover, any two $\G'_{\infty}$-minimal subspaces are parallel, and if $v_{1}$ and $v_{2}$ belong to any two $\G'_{\infty}$-minimal subspaces, then $\g v_{1} - \g v_{2}=v_{1}-v_{2}$ for all $\g\in \G'_{\infty}$.  Let $\rank(\G_\infty)$ denote the rank of the (torsion free) $\z$-module $\G'_{\infty}$; it is independent of the choice of $\G_{\infty}'$, and it equals the dimension of a $\G'_{\infty}$-minimal subspace.

\begin{definition}\label{defbb} A parabolic fixed point $\xi\in \Lambda_{\rmp}(\G)$ is said to be
 {\em bounded\/} if $\G_{\xi}\ba (\Lambda(\G)\setminus\{\xi\})$ is compact. Denote by
 $\Lambda_{\bdp}(\G)$ the set of all bounded
parabolic fixed points for $\G$. Therefore $\infty\in \Lambda_{\bdp}(\G)$ {\em if and only if\/} $\infty\in\Lambda_{\rmp}(\G)$ and\begin{equation} \label{eq:bddpar}
\Lambda(\G)\setminus\{\infty\} \subset \{x\in \br^{n-1}: d_{\Euc}(x, L)\le r_0\},
\end{equation}
for some $r_0>0$, where $L$ is a $\G'_{\infty}$-minimal subspace.
\end{definition}

\subsection{On geometric finiteness of $\G_{\tS}$} \label{subsec:geomfinite}
For the rest of this section, let $\tS$ be a proper connected totally geodesic subspace
of $\bH^n$ such that the natural projection map $\G_{\tS} \ba  \tS \to X=\G\ba \bH^n$ is
proper, or equivalently, the map $\G_{\tS}\bs G_{\tS}\to \G \bs G$ is proper, or equivalently 
$
\G G_{\tS}$ is closed in $G$. Since $\tS$ is totally geodesic,  the geometric boundary
$\partial \tS$ is the intersection of $\partial{\bH^n}$ with the closure
of $\tS$ in $\overline{\bH^n}$.

\begin{proposition} \label{prop:parallel}
Let $\infty\in \Lambda_{\rmp}(\G)\cap \partial \tS$. Let $L$ be a $\G'_\infty$-minimal subspace of  
$\partial\bH^n \setminus \{\infty\}\cong\R^{n-1}$ and choose the origin $0\in L$. Then the intersection of $L$ with the (parallel) translate of the affine subspace $\partial\tS\setminus\{\infty\}$ through $0$ is a $(\G'_{\infty}\cap G_{\tS})$-minimal subspace.
\end{proposition}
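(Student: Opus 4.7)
}

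Set $H:=\G'_\infty\cap G_{\tS}$ and $M:=L\cap W$, where $W$ is the linear subspace of $\R^{n-1}$ parallel to $F=\partial\tS\setminus\{\infty\}$ and passing through the chosen origin $0\in L$. Note that $H$ is abelian as a subgroup of $\G'_\infty$, and $0\in M$. The plan is to show that $H$ preserves $M$, acts on $M$ by translations, and that this translation action is cocompact.

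\textbf{Step 1 (Set-up and decomposition).} Write any $g\in H$ as $g(x)=A_gx+b_g$ with $A_g\in O(n-1)$ and $b_g\in\R^{n-1}$. Because $g\in\G'_\infty$ acts on $L$ as the translation $x\mapsto x+b_g$ (with $b_g=g(0)\in \vec L$, using $0\in L$), the orthogonal part $A_g$ fixes $\vec L$ pointwise. Because $g\in G_{\tS}$, the isometry $g$ preserves $F$; using that $\tS$ is totally geodesic with $\infty\in\partial\tS$, $A_g$ also preserves the orthogonal decomposition $\R^{n-1}=W\oplus W^\perp$. Letting $p_0$ denote the foot of the perpendicular from $0$ onto $F$ (so that $p_0\in W^\perp$ and $F=p_0+W$), the condition $g(F)=F$ forces
\[
b_g^\perp=(I-A_g|_{W^\perp})\,p_0,
\]
where $b_g=b_g^W+b_g^\perp$ is the orthogonal decomposition with respect to $W\oplus W^\perp$.

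\textbf{Step 2 (Translation vectors lie in $W$).} The formula $\bar g(y):=A_g|_{W^\perp}y+b_g^\perp$ defines an action of $H$ on $W^\perp$ fixing $p_0$; thus the image $\bar H\subset\operatorname{Isom}(W^\perp)$ is contained in the stabilizer of $p_0$, a maximal compact subgroup. Combined with the discreteness of $\G_\infty$ in $\operatorname{Isom}(\R^{n-1})$ (a consequence of $\G$ being discrete in $G$ with $\infty$ parabolic), $\bar H$ is a discrete subgroup of a compact group and is therefore finite. Let $H_0\subset H$ be the finite-index kernel of $H\to\bar H$; for $g\in H_0$ one has $A_g|_{W^\perp}=I$ and $b_g^\perp=0$, hence $b_g=b_g^W\in W$. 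Combined with $b_g\in\vec L$, this gives $b_g\in\vec L\cap W=\vec M$. Consequently $H_0$ preserves $M$ and acts on $M$ by translations in $\vec M$.

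\textbf{Step 3 (Cocompactness and minimality).} The remaining and main step is to show that the lattice of translations $\{b_g:g\in H_0\}\subset\vec L\cap W$ is of full rank, i.e.\ that $H_0$ acts cocompactly on $M$. Here the properness hypothesis (equivalently, the closedness of $\G G_{\tS}$ in $G$) is used: the $\G'_\infty$-orbit of $F$ in the space $\R^{n-1}/W\cong W^\perp$ of parallel translates of $F$ is discrete and closed. This identifies $\G'_\infty/H$ with a discrete subset of $W^\perp$, so the quotient $\G'_\infty\cap(\text{pure translations along }W)\subset\G'_\infty$ contains a lattice of full rank in $\vec L\cap W$; transporting the cocompact action of $\G'_\infty$ on $L$ through this description yields cocompactness of $H_0$ on $M$. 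Hence $M=L\cap W$ is $H_0$-minimal, and since $[H:H_0]<\infty$ and $M$ is $H$-invariant, $M$ is also $H$-minimal.

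\textbf{Main obstacle.} The delicate point is Step 3: showing that $H_0$ has enough translations to act cocompactly on all of $L\cap W$, rather than on a proper subspace. The identification of $\bar H$ as finite in Step 2 is straightforward from discreteness and compactness, but establishing that the induced $\G'_\infty$-orbit of $F$ in $W^\perp$ is genuinely discrete (so that cocompactness on $L$ passes to cocompactness on $L\cap W$) requires the closedness assumption on $\G G_{\tS}$ and is the substantive use of the properness hypothesis.
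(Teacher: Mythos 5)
Your Step 2 contains a genuine error. You claim that $\bar H$ (the image of $H$ under $g\mapsto A_g|_{W^\perp}$, up to conjugation by the translation by $p_0$) is discrete, and hence finite, because $\G_\infty$ is discrete in $\operatorname{Isom}(\R^{n-1})$. This does not follow: the image of a discrete group under a continuous homomorphism is in general \emph{not} discrete when the kernel is noncompact, and here the homomorphism is restriction from $\operatorname{Isom}(W)\times\operatorname{Isom}(W^\perp)$ to the second factor, whose kernel $\operatorname{Isom}(W)$ is noncompact. Concretely, an element of $H$ may be a screw motion $(x,y,z)\mapsto(x+1,R_\theta(y,z))$ with $\theta/\pi$ irrational; its rotational part on $W^\perp$ generates an infinite dense subgroup of $\operatorname{SO}(W^\perp)$. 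In that situation $\bar H$ is infinite, $H_0=\ker(H\to\bar H)$ is trivial, and Step~3 has no chance of producing a full-rank lattice of translations from $H_0$. The properness hypothesis does not save you: $\G\tS$ can perfectly well be closed while $\bar H$ is dense in a circle (take $\G_{\tS}=\G$ containing such a screw motion). So the assumption that one can reduce to the ``pure translation'' subgroup $H_0$ at finite index is unfounded, and this is exactly the kind of degenerate behavior the argument must rule out.

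There is also a conceptual problem in Step~3 as phrased: the ``$\G'_\infty$-orbit of $F$ in $\R^{n-1}/W\cong W^\perp$'' is not well-defined, because for a general $g\in\G'_\infty$ one need not have $A_gW=W$, so $gF$ need not be a parallel translate of $F$. The action on the space of parallel translates of $F$ only makes sense for $g\in H$, and for those $g$ the orbit of $F$ is a single point, giving no information.

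The paper sidesteps both issues by never trying to split off a finite rotational part. Instead it passes to the identity component $M_LU$ of the Zariski closure of $\G'$ in $G$ --- here $U$ is the full group of translations along $L$ and $M_L$ is a \emph{connected compact} factor acting trivially on $L$ --- and then analyzes the closed subgroup $\cl{\G'U_0}$, where $U_0$ is the translations along $L_0=L\cap W$. Since $\G G_{\tS}$ is closed and $G_{\tS}$ is its identity component, the identity component $M_1U_1$ of $\cl{\G'U_0}$ lies in $G_{\tS}$; then $U_1\subset G_{\tS}$ forces its translation vectors into $W$, hence $L_1\subset L_0$ and so $L_1=L_0$, while $\G'\cap M_1U_1$ is automatically cocompact in $M_1U_1$ and therefore on $L_0$. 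The possibly infinite rotational part is carried along inside the compact factor $M_1$ rather than quotiented away, which is precisely what your argument fails to accommodate.
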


\begin{proof}
Let $\G'=\G'_{\infty}$, $\Delta=\G'\cap G_{\tS}$, and the affine subspace $F=\partial\tS\setminus\{\infty\}$. Since $\Delta F=F$, 
let $v$ belong to a $\Delta$-minimal subspace of $F$. Since $v$ and $0$ belong to two $\Delta$-minimal subspaces,  $\g v - \g 
0 = v-0=v$. 
Since $\g v\in F$, we have $\g 0 =\g v - v\in F-v$. Since $0\in F-v$, we have $\g 0\in \g(F-v)\cap (F-v)$. Now $\g(F-v)$ and $\g 
F=F$ are parallel. Therefore $F-v$ and $\g (F-v)$ are parallel, and since they intersect, $\g(F-v)=F-v$. Thus $\Delta(F-v)=F-v$. 
Therefore $\Delta$-action preserves $L_{0}:=L\cap (F-v)$. We want to prove that $\G' \cap G_{\tS}$ acts cocompactly on 
$L_{0}$.

Since $\infty\in\Lambda_{\rmp}(\G)$,  by \cite[Lemma~3.2.1]{Bowditch1993} $\G_\infty$ consists of parabolic elements of 
$G_{\infty}$; that is $\G_{\infty}\subset MN$, where $N$ is the maximal unipotent subgroup of $G$ which acts transitively on $
\R^{n-1}=\partial\bH^{n}\setminus\{\infty\}$ via translations and $M$ is a compact subgroup of $G$ normalizing $N$ and acts on 
$\R^{n-1}$ by Euclidean isometries fixing $0$.  Let $U=\{g\in N: gL=L\}$. Then $U$ acts transitively on $L$ by translations.
Since $0\in L$ and $\G'$ acts cocompactly on $L$ via translations, 
the connected component of the Zariski closure of $\Gamma'$ in $G$ is a connected abelian
subgroup of the form $M_LU$, where $M_L\subset M$ and $M_L$ acts trivially on $L$. 

Since $\G'\ba L$ is a compact Euclidean torus, the closure of the image of $L_0$ in $\G'\bs L$ equals the image of an affine  
subspace, say
$L_1$, of $L$. Thus $\cl{\Gamma' L_0}=\Gamma' L_1$.
 For $i=0,1$, let $U_i=\{u\in U:uL_i=L_i\}$. Then $U_i$ acts transitively on $L_i$, and  
 $\cl{\Gamma' M_L U_0}=\Gamma'M_LU_1$. Therefore the identity component of 
 $\cl{\Gamma' U_0}$ is of the form $M_1U_1$, where $M_1\subset M_L$ and 
 $(\G'\cap M_1U_1)\bs M_1U_1$ is compact. In particular, $\G'\cap M_1U_1$ acts cocompactly on $L_1$.

By our assumption $\Gamma G_{\tS}$
is a closed subset of $G$. Therefore $\cl{\G' U_0}\subset \G G_{\tS}$.
 Since $G_{\tS}$ is the identity component in $\Gamma G_{\tS}$, we have
$M_1U_1 \subset G_{\tS}$. It follows that $U_1$ preserves $L_0$.
Since $U_1$ acts transitively on $L_1$, $L_1\subset L_0$; hence $L_1=L_0$.
 In particular, $\G'\cap M_1U_1$ acts cocompactly on $L_0$.
Therefore $\Delta=\G'\cap G_{\tS}$ acts cocompactly on $L_0$.
\end{proof}

\begin{proposition}\label{sstar}
Let $\infty\in  \Lambda_{\bdp}(\Gamma) \cap \partial \tS$ and 
$\G_{\tS}:=\G\cap G_{\tS}$. Then 
\begin{equation*}
\begin{cases}
\infty \in\Lambda_{\bdp}(\G_{\tS}) & \text{if
$\G_{\infty}\cap \G_{\tS}$ is infinite}; \\
\infty\notin \Lambda(\G_{\tS}) &\text{if $\G_{\infty}\cap \G_{\tS}$ is finite, hence trivial.}
 \end{cases}\end{equation*}
 \end{proposition}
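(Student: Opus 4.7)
The plan is to work in the upper half-space model with $\infty$ as the chosen parabolic fixed point, so that $\G_\infty$ acts on $\R^{n-1}=\partial\bH^n\setminus\{\infty\}$ by affine isometries. Let $\G'_\infty\subset\G_\infty$ be the finite-index normal abelian subgroup produced by Bieberbach, and set $\Delta:=\G'_\infty\cap G_{\tS}=\G'_\infty\cap\G_{\tS}$. Since $\G'_\infty$ has finite index in $\G_\infty$, the subgroup $\Delta$ is a finite-index normal abelian subgroup of $(\G_{\tS})_\infty:=\G_\infty\cap\G_{\tS}$; in particular it is infinite precisely when $\G_\infty\cap\G_{\tS}$ is infinite.

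\textbf{Case 1: $\G_\infty\cap\G_{\tS}$ infinite.} Every nontrivial element of $\G_\infty$ is parabolic with unique fixed point $\infty$ (by \cite[Lemma~3.2.1]{Bowditch1993}); since $\Delta\subset\G_\infty\cap\G_{\tS}$ is infinite and torsion-free, it contains such a parabolic element lying in $\G_{\tS}$, so $\infty\in\Lambda_{\rmp}(\G_{\tS})$. For boundedness, I choose the origin $0$ inside a $\G'_\infty$-minimal subspace $L$ and apply Proposition~\ref{prop:parallel}: picking $v$ in a $\Delta$-minimal subspace $M_0\subset F:=\partial\tS\setminus\{\infty\}$, the subspace $L_0:=L\cap(F-v)$ is $\Delta$-minimal, so $M_0=v+V$ with $V$ the direction of $L_0$. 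By \eqref{eq:bddpar},
\[
\Lambda(\G_{\tS})\setminus\{\infty\}\subset \bigl(\Lambda(\G)\setminus\{\infty\}\bigr)\cap F\subset F\cap N_{r_0}(L),
\]
where $N_{r_0}(L)$ is the Euclidean $r_0$-neighborhood of $L$. I then invoke the following linear-algebraic observation: decompose orthogonally $L=V\oplus L'$ and $F-v=V\oplus F'$ with $L',F'\subset V^\perp$; the identity $L_0=L\cap(F-v)$ forces $L'\cap F'=\{0\}$, so the orthogonal projection $P\colon V^\perp\to (L')^\perp$ restricts to an injection on $F'$. Writing $x=v+v_x+f_x\in F$ with $v_x\in V$, $f_x\in F'$, a direct computation gives $d(x,L)=|P(v^{V^\perp}+f_x)|$, so $d(x,L)\leq r_0$ forces $|f_x|$ bounded by a constant independent of $x$. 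Consequently $d(x,M_0)=|f_x|$ is bounded, yielding $\infty\in\Lambda_{\bdp}(\G_{\tS})$.

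\textbf{Case 2: $\G_\infty\cap\G_{\tS}$ finite.} Since $\G$ is torsion-free, $\G_\infty\cap\G_{\tS}=\{e\}$. Because $\infty\in\Lambda_{\bdp}(\G)$, a standard Margulis-lemma argument for bounded parabolic fixed points produces a \emph{precisely invariant} horoball $B_\infty$ based at $\infty$, i.e., $\gamma B_\infty\cap B_\infty\neq\emptyset$ if and only if $\gamma\in\G_\infty$. Suppose for contradiction that $\infty\in\Lambda(\G_{\tS})$. Then there exist distinct $\g_i\in\G_{\tS}$ and a base point $o$ with $\g_i o\to\infty$. For all sufficiently large $i$, $\g_i o\in B_\infty$; for any two such indices $i,j$, the point $\g_i o$ lies in $B_\infty\cap\g_i\g_j^{-1}B_\infty$, hence $\g_i\g_j^{-1}\in\G_\infty\cap\G_{\tS}=\{e\}$, forcing $\g_i=\g_j$ and contradicting distinctness.

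The main obstacle is the transverse-boundedness step in Case~1: one must exploit the precise statement $L_0=L\cap(F-v)$ from Proposition~\ref{prop:parallel} (which in turn is where the hypothesis that $\G G_{\tS}$ is closed enters). Without the equality $L'\cap F'=\{0\}$ inside $V^\perp$, the set $F\cap N_{r_0}(L)$ could extend unboundedly in directions transverse to $M_0$ inside $F$, and the boundedness of the parabolic fixed point for $\G_{\tS}$ would fail.
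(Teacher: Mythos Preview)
Your proof is correct. In Case~1 you follow the same route as the paper---invoke Proposition~\ref{prop:parallel} to identify $L_0=L\cap(F-v)$ as a $\Delta$-minimal subspace, then use \eqref{eq:bddpar} to trap $\Lambda(\G_{\tS})\setminus\{\infty\}$ in a bounded neighborhood of the parallel copy $M_0=v+L_0$ inside $F$---only you spell out the linear-algebra step (the injectivity coming from $L'\cap F'=\{0\}$) that the paper compresses into the single sentence ``and hence in a bounded neighborhood of $L_0$ as well.''

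In Case~2 your argument is genuinely different from the paper's. The paper reuses the Case~1 machinery: when $\G_\infty\cap\G_{\tS}$ is trivial, $L_0$ is a point, so $\Lambda(\G_{\tS})\setminus\{\infty\}$ lies in a bounded subset of $F$, whence $\infty$ is isolated in $\Lambda(\G_{\tS})$; it then finishes by a case split on whether $\G_{\tS}$ is elementary, using perfectness of the limit set in the non-elementary case and ruling out the parabolic and loxodromic elementary cases separately. Your horoball argument is more direct: it uses only discreteness of $\G$ (via the precisely invariant horoball at a parabolic fixed point, which is a standard consequence of the Margulis lemma and does not actually require boundedness of the parabolic point) together with $\G_\infty\cap\G_{\tS}=\{e\}$, and avoids Proposition~\ref{prop:parallel} entirely for this case. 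The paper's approach has the virtue of extracting both cases from a single geometric picture; yours has the virtue of making Case~2 self-contained and independent of the properness hypothesis on $\G_{\tS}\backslash\tS\to\G\backslash\bH^n$.
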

\begin{proof}
Let the notation be as in Proposition~\ref{prop:parallel}. 
Since $\infty\in  \Lambda_{\bdp}(\Gamma)$, by \eqref{eq:bddpar},
$\Lambda(\G)\setminus\{\infty\}$ is contained in a bounded neighborhood of $L$, 
and hence in a bounded neighborhood of $L+v$. Therefore $(\Lambda(\G)\setminus\{\infty\})\cap \partial \tS$ is
contained in a bounded neighborhood of $L+v$ intersected with $F=\partial \tS\setminus\{\infty\}$, and hence in a bounded neighborhood of $L_0=L\cap (F-v)$ as well. By Proposition~\ref{prop:parallel},  $L_0$ is a $(\G_{\tS}\cap \G')$-minimal subspace. Now if $\G_\infty\cap \G_{\tS}$ is infinite, or equivalently $\infty\in \Lambda_{\rmp}(\G_{\tS})$, then $\infty\in \Lambda_{\bdp}(\G_{\tS})$.

Suppose that $\G_\infty\cap \G_{\tS}$ is finite. Then $L_0$ is a singleton set. Therefore $\Lambda({\G_{\tS}})\setminus\{\infty\}$ 
is contained in a bounded subset of $\partial \tS\setminus\{\infty\}$. Then $\infty\in\partial \tS$ is isolated from $\Lambda 
(\G_{\tS})$. Since the limit set of a non-elementary hyperbolic group is perfect, it follows that $\G_{\tS}$ is 
elementary, and hence $\G_{\tS}$ is either parabolic or loxodromic. Now suppose that $\infty\in \Lambda(\G_{\tS})$. In the 
parabolic case $\Lambda(\G_{\tS})=\{\infty\}=\Lambda_{\rmp}(\G_{\tS})$, contradicting the assumption that 
$\G_{\infty}\cap \G_{\tS}=\{e\}$. In the loxodromic case, $\infty\in\Lambda_{\rmr}(\G_{\tS})\subset \Lambda_{\rmr}(\G)$, contradicting the assumption that $\infty\in \Lambda_{\rmp}(\G)$.
\end{proof}

\begin{lemma}\label{gor} We have
$$\Lambda_{\rmr} (\G) \cap \partial \tS=\Lambda_{\rmr}(\G_{\tilde S}) .$$
\end{lemma}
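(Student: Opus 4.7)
The plan is to prove the two inclusions separately. The easy direction is
$\Lambda_{\rmr}(\G_{\tS}) \subset \Lambda_{\rmr}(\G) \cap \partial\tS$: if $\xi$ is a radial limit point of $\G_{\tS}$, then picking $x\in\tS$ one gets a sequence $\g_i\in\G_{\tS}\subset\G$ with $\g_i x\to\xi$ staying within bounded distance of some geodesic ray; since $\G_{\tS}\tS=\tS$, the orbit $\{\g_i x\}$ lies in $\tS$, so $\xi\in\partial\tS$, and trivially $\xi\in\Lambda_{\rmr}(\G)$.

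For the nontrivial direction, I take $\xi\in\Lambda_{\rmr}(\G)\cap\partial\tS$, fix a point $x\in\tS$, and let $\beta\subset\tS$ be the geodesic ray from $x$ to $\xi$. By definition of radial limit, there exist $\g_i\in\G$ and a constant $R>0$ with $\g_i x\to\xi$ and $d(\g_i x,\beta)\leq R$. Let $y_i\in\tS$ be the nearest-point projection of $\g_i x$ onto the complete totally geodesic subspace $\tS$; then $d(\g_i x,y_i)\leq R$ and $y_i\to\xi$. The key input is now the properness hypothesis: the points $\p(y_i)$ lie in the closed $R$-ball around $\p(x)$ in $\G\bs\bH^n$, hence in a compact set; so the cosets $\G_{\tS}y_i$ remain in a compact subset of $\G_{\tS}\bs\tS$. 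Passing to a subsequence, there exist $\g_i'\in\G_{\tS}$ such that $(\g_i')^{-1}y_i$ converges in $\tS$; in particular $(\g_i')^{-1}y_i$ is bounded.

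The final step is to convert this into radial convergence from inside $\G_{\tS}$. Since $d((\g_i')^{-1}\g_i x,(\g_i')^{-1}y_i)=d(\g_i x,y_i)\leq R$, the points $(\g_i')^{-1}\g_i x$ stay in a bounded region of $\bH^n$, and proper discontinuity of $\G$ forces $\{(\g_i')^{-1}\g_i\}$ to be a finite set. Passing to a further subsequence, $(\g_i')^{-1}\g_i=\g$ is constant, and writing $x'=\g x$ we obtain $\g_i' x'=\g_i x\to\xi$ with $d(\g_i' x',\beta)\leq R$. Since $\{\g_i'\}\subset\G_{\tS}$, this exhibits $\xi$ as a radial limit point of $\G_{\tS}$.

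I expect the only real obstacle to be the middle step: turning the hypothesis that $\G_{\tS}\bs\tS\to\G\bs\bH^n$ is proper into a statement about the orbit $\{\g_i x\}$ of $\G$ in $\bH^n$. The right viewpoint is that properness is exactly what guarantees that we can adjust by elements of $\G_{\tS}$ to bring the projections $y_i$ back into a compact piece of $\tS$; once this is done, the discreteness of $\G$ makes the matching between the original elements $\g_i$ and the new elements $\g_i'$ automatic.
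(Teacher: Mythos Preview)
Your proof is correct and follows essentially the same approach as the paper: both use the properness of $\G_{\tS}\backslash\tS\to\G\backslash\bH^n$ to transfer recurrence of the geodesic ray from $\G\backslash\bH^n$ to $\G_{\tS}\backslash\tS$. The paper's version is slightly more direct in that it works with points $\beta(t_i)$ on the ray (which already lie in $\tS$) rather than projecting orbit points $\g_i x$ onto $\tS$, so the final discreteness argument matching $\g_i$ with $\g_i'\in\G_{\tS}$ is not needed there.
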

\begin{proof}
  Let $\xi \in \Lambda_{\rmr} (\G) \cap \partial{\tS}$. As $\tS$ is
totally geodesic, there exists a geodesic ray, say, $\beta$, lying
in $ \tS$ pointing toward  $\xi$. Since $\xi$ is a radial limit
point, $\G \beta$ accumulates on a compact subset of $\bH^n$. By
the assumption that the natural projection map
$\G_{\tS} \ba  \tS \to X$ is  proper,
$\G_{\tS}\beta$ accumulates on a compact subset of $\tS$. This
implies $\xi\in \Lambda_{\rmr}(\G_{\tS})$. The other direction for the inclusion
is clear.
\end{proof}

In \cite{Bowditch1993}, Bowditch proved the equivalence of
several definitions of geometrically finite hyperbolic groups. In particular, we have:
\begin{theorem}[\cite{Beardon1983}, \cite{Bowditch1993}, \cite{Maskit1988}]
\label{bo}
$\G$ is geometrically finite if and only if
$\Lambda(\G)=\Lambda_{\rmr}(\G)\cup \Lambda_{\bdp}(\G)$. 
\end{theorem}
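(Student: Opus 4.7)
The plan is to prove both directions using the Margulis thick--thin decomposition of $\G\bs\bH^n$ combined with the structure of horoball quotients by parabolic stabilizers; recall that geometric finiteness means $\op{Vol}_{\Riem}(\mathcal{N}_1(C_\G))<\infty$, where $\mathcal{N}_1(C_\G)$ is the unit neighborhood of the convex core.

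For the forward direction, I would invoke the Margulis lemma to fix finitely many conjugacy classes $\{[\G_{\xi_i}]\}$ of maximal parabolic subgroups with pairwise disjoint embedded horoball neighborhoods $\{H_i\}\subset\bH^n$ such that the truncated core $\mathcal{N}_1(C_\G)\setminus\bigcup_i \G H_i$ projects to a relatively compact subset of $\G\bs\bH^n$; this relies on the finite-volume hypothesis combined with exponential volume growth in a horoball. Given $\xi\in\Lambda(\G)$, lift a geodesic ray from a fixed base point to $\xi$, which lies in the convex hull of $\Lambda(\G)$, and project it to $\G\bs\bH^n$. Either the projection recurs infinitely often to the compact truncated core (so $\xi\in\Lambda_{\rmr}(\G)$), or it eventually stays inside a single cusp $\G H_i/\G$. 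In the latter case, the lifted geodesic ultimately remains inside one $\G$-translate of $H_i$, forcing $\xi$ to coincide with that translate's parabolic apex. Boundedness of $\xi_i$ then follows because the $\G_{\xi_i}$-action on a horospherical cross-section of the convex hull at $\xi_i$ is cocompact, which translates into compactness of $\G_{\xi_i}\bs(\Lambda(\G)\setminus\{\xi_i\})$.

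For the backward direction, assume $\Lambda(\G)=\Lambda_{\rmr}(\G)\cup\Lambda_{\bdp}(\G)$. First, show there are only finitely many $\G$-orbits of bounded parabolic fixed points: each such orbit carves out a positive-measure chunk of $\G\bs\Lambda(\G)$ (with a uniform lower bound coming from cocompactness of $\G_{\xi_i}$ on a horosphere cross-section), and the radial limit set is disjoint from a fixed open horoball neighborhood, so only finitely many chunks can fit. Pick pairwise disjoint embedded horoball cusp neighborhoods $H_i$, one per orbit. Truncate $\mathcal{N}_1(C_\G)$ by removing $\bigcup_i \G H_i$; the main claim is that the resulting set is compact. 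Suppose not: extract an escaping sequence $\p(x_n)$ with $x_n$ in the convex hull of $\Lambda(\G)$ and avoiding all cuspidal horoballs; apply $\G$-translates $\g_n x_n$ to return to a fixed fundamental domain, and pass to a subsequential boundary limit $\xi\in\Lambda(\G)$ of suitable iterates. Such $\xi$ would be neither radial (since $\p(x_n)$ leaves every compact set) nor a bounded parabolic fixed point (by the uniform separation from all $\G H_i$), contradicting the hypothesis. Finally, each truncated cuspidal end $H_i\cap \mathcal{N}_1(C_\G)/\G_{\xi_i}$ has finite Riemannian volume by direct computation, since the hyperbolic volume integrand decays exponentially with depth into the horoball while the horosphere base is compact modulo $\G_{\xi_i}$.

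The principal obstacle is the compactness step in the backward direction: ruling out convex-hull points that drift arbitrarily deep into $\bH^n$ while simultaneously avoiding every cuspidal horoball and every compact region requires a genuine interplay between the Bieberbach-type structure of $\G_\xi$ at each parabolic fixed point (as used in \S\ref{subsec:bieberbach}) and the geometry of the convex hull near $\Lambda(\G)$. In particular, one must leverage the cocompactness built into the definition of $\Lambda_{\bdp}(\G)$--rather than mere parabolicity--to prevent escaping sequences from producing spurious ``new'' limit points. This is precisely the technical core of Bowditch's treatment in \cite{Bowditch1993}, and the several equivalent definitions discussed there (GF$_1$ through GF$_5$) are essentially different ways of packaging this compactness.
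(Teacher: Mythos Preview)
The paper does not prove this theorem; it is stated with attribution to \cite{Beardon1983}, \cite{Bowditch1993}, and \cite{Maskit1988} and used as a black box. Your sketch is a reasonable outline of the standard argument (essentially Bowditch's equivalence of GF$_1$--GF$_5$), so there is nothing to compare against in the paper itself.
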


Hence, for geometrically finite $\G$, we have 
$\Lambda_{\rmp}(\G)=\Lambda_{\bdp}(\G)$.

\begin{theorem}\label{gff}
If $\G$ is geometrically finite, then $\G_{\tS}$ is geometrically finite.
\end{theorem}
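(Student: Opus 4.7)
The plan is to invoke Bowditch's characterization (Theorem~\ref{bo}) at both ends: use the geometric finiteness of $\G$ to split $\Lambda(\G)$ as $\Lambda_{\rmr}(\G)\cup\Lambda_{\bdp}(\G)$, and then show that every limit point of $\G_{\tS}$ is either radial or bounded parabolic \emph{for $\G_{\tS}$}. The matching of the two kinds of limit points is already done: Lemma~\ref{gor} handles the radial ones, and Proposition~\ref{sstar} handles the bounded parabolic ones.

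First I would note the obvious inclusion $\Lambda(\G_{\tS})\subseteq\Lambda(\G)\cap\partial\tS$: accumulation points of $\G_{\tS}$-orbits are in $\Lambda(\G)$, and they lie in $\partial\tS$ because $\G_{\tS}\tS=\tS$. Given $\xi\in\Lambda(\G_{\tS})$, Theorem~\ref{bo} applied to $\G$ places $\xi$ either in $\Lambda_{\rmr}(\G)\cap\partial\tS$ or in $\Lambda_{\bdp}(\G)\cap\partial\tS$. In the first case Lemma~\ref{gor} gives $\xi\in\Lambda_{\rmr}(\G_{\tS})$ immediately. In the second case, after conjugating by an isometry to move $\xi$ to $\infty$ in the upper half space model, Proposition~\ref{sstar} leaves only two possibilities: either $\G_\infty\cap\G_{\tS}$ is infinite and $\xi\in\Lambda_{\bdp}(\G_{\tS})$, or $\G_\infty\cap\G_{\tS}$ is trivial and $\xi\notin\Lambda(\G_{\tS})$. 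The second alternative is excluded by the choice $\xi\in\Lambda(\G_{\tS})$, so $\xi\in\Lambda_{\bdp}(\G_{\tS})$.

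Combining the two cases yields $\Lambda(\G_{\tS})=\Lambda_{\rmr}(\G_{\tS})\cup\Lambda_{\bdp}(\G_{\tS})$ (the reverse inclusion is automatic), and a second application of Theorem~\ref{bo} concludes that $\G_{\tS}$ is geometrically finite. There is no substantive obstacle left to the proof itself: all the geometric content --- in particular the use of the properness of $\G_{\tS}\ba\tS\to\G\ba\bH^n$, which guarantees via Bieberbach's theorem that a $\G'_\infty$-minimal subspace intersects the translated $\partial\tS$ in a $(\G'_\infty\cap G_{\tS})$-minimal subspace --- was absorbed upstream into Proposition~\ref{prop:parallel}, Proposition~\ref{sstar} and Lemma~\ref{gor}. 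The role of the theorem is simply to package these into Bowditch's criterion.
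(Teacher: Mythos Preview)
Your proof is correct and follows essentially the same route as the paper: invoke Theorem~\ref{bo} for $\G$, feed each limit point of $\G_{\tS}$ through Lemma~\ref{gor} or Proposition~\ref{sstar} according to whether it is radial or bounded parabolic for $\G$, and conclude via Theorem~\ref{bo} for $\G_{\tS}$. The paper's proof is just the one-sentence version of what you wrote.
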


\begin{proof}
 Since $\Lambda(\G)=\Lambda_{\rmr}(\G)\cup \Lambda_{\bdp}(\G)$,
it follows from Proposition \ref{sstar} and Lemma \ref{gor}
that $\Lambda(\G_{\tS})=\Lambda_{\bdp}(\G_{\tS})\cup \Lambda_{\rmr}(\G_{\tS})$,
proving the claim by Theorem~\ref{bo}.
\end{proof}

\subsection{Compactness of $\supp(\mu_E^{\PS}$) for Horospherical $E$}
\begin{theorem}[Dal'bo \cite{Dalbo2000}]\label{do} Let $\G$ be geometrically finite.
For a horosphere $\cH$ in $\T^1(\bH^n)$ based at
 $\xi\in \partial{\bH^n}$,  $E:=\p(\cH)$ is
closed in $\op{T}^1(X)$ if and only if either $\xi\notin \Lambda(\G)$ or  $\xi\in
\Lambda_{\rmp}(\G)$.
\end{theorem}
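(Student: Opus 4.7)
The plan is to treat the two implications separately, noting that $E=\p(\cH)$ is closed in $\T^1(X)$ if and only if the $\G$-orbit $\G\cH$ is closed in $\T^1(\bH^n)$, and invoking Theorem~\ref{bo} to write $\Lambda(\G)=\Lambda_{\rmr}(\G)\cup\Lambda_{\rmp}(\G)$ under the geometric finiteness hypothesis.

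For the backward implication with $\xi\notin\Lambda(\G)$, given a convergent $v_i=\g_ih_i\to w$ with $h_i\in\cH$ and $\g_i\in\G$, I would split on whether $\{\g_i o\}$ stays bounded in $\bH^n$. If bounded, discreteness of $\G$ lets me pass to a constant subsequence $\g_i=\g$, whence $h_i\to\g^{-1}w$, which lies in the closed set $\cH$, giving $w\in\g\cH\subset\G\cH$. If instead $\g_i o\to\infty$, the identity $d(\g_i^{-1}\pi(w),\pi(h_i))=d(\pi(w),\g_i\pi(h_i))\to 0$, combined with the fact that $\xi$ is the only $\partial\bH^n$-accumulation point of the horosphere $\pi(\cH)$, forces $\pi(h_i)\to\xi$ and hence $\g_i^{-1}\pi(w)\to\xi$; this identifies $\xi$ as an accumulation point of a $\G$-orbit of an interior point, contradicting $\xi\notin\Lambda(\G)$.

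For the backward implication with $\xi\in\Lambda_{\rmp}(\G)$, geometric finiteness upgrades $\xi$ to a bounded parabolic fixed point, so in the upper half space model with $\xi=\infty$ the Margulis/Shimizu lemma for cusps produces $H_0>0$ with the horoballs $\{\g\{y>H_0\}:[\g]\in\G/\G_\xi\}$ pairwise disjoint. Since $\{\gr\}$ commutes with $\G$, I would reduce to the case $\cH=\{y=H_0\}$. Given $v_i=\g_ih_i\to w$, after a subsequence either $\g_i\in\G_\xi$ for all $i$ (so $v_i\in\cH$ and $w\in\cH$), or $\g_i\notin\G_\xi$ for all $i$ with $p_i:=\g_i\infty\in\R^{n-1}$; in the latter case $\pi(v_i)$ lies on the Euclidean sphere $\pi(\g_i\cH)$ tangent at $p_i$ of Euclidean diameter $d_i\leq H_0$, so $|\pi(v_i)-p_i|_{\eucl}\leq d_i$. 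The options $p_i\to\infty$ in $\partial\bH^n$ (which would force $\pi(v_i)\to\infty$) and $p_i\to p\in\R^{n-1}\setminus\G\infty$ (where pairwise disjointness of the horoballs combined with the Euclidean estimate $|p_i-p_j|\geq\sqrt{d_id_j}$ forces $d_i\to 0$, hence $\pi(v_i)\to p\in\partial\bH^n$) both contradict $\pi(w)\in\bH^n$. Hence $p_i\to p=\g\infty\in\G\infty$ for some $\g\in\G$; applying the same argument to $\g^{-1}\g_i$, which must then lie in $\G_\xi$ eventually, one concludes $\g_i\in\g\G_\xi$ eventually, giving $w\in\g\cH\subset\G\cH$.

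For the forward implication, suppose $\xi\in\Lambda(\G)\setminus\Lambda_{\rmp}(\G)$. Geometric finiteness places $\xi\in\Lambda_{\rmr}(\G)$, a fortiori a horospherical limit point; Sullivan's theorem gives $\abs{m^{\BMS}}<\infty$, so the geodesic flow is mixing (Theorem~\ref{thm:mixing}), and \cite[Thm.~A and Prop.~B]{Dalbo2000} then yield that $\G\cH$ is dense in $\pi(\{u\in\T^1(\bH^n):u^-\in\Lambda(\G)\})$, a set that strictly contains the countable union of horospheres $\G\cH$; hence $\G\cH$ is not closed. The main obstacle is the bounded parabolic case of the backward direction, where the three-way subcase analysis must combine the Euclidean-geometric inequality $|p_i-p_j|\geq\sqrt{d_id_j}$ for pairwise disjoint horoballs with the continuity of the Busemann function to rule out any limit of $\g_i\infty$ outside $\G\infty$; bounded parabolicity (Definition~\ref{defbb}) is essential here.
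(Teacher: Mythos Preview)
The paper does not give its own proof of this statement; it is quoted as a result of Dal'bo~\cite{Dalbo2000} and then used as input to the proof of Theorem~\ref{hstar}. So there is nothing in the paper to compare your argument against.

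Your proof is correct. The backward direction is handled cleanly: for $\xi\notin\Lambda(\G)$ the dichotomy on boundedness of $\{\g_i o\}$ works exactly as you describe, and in the bounded parabolic case your use of the disjoint-horoball packing together with the elementary Euclidean inequality $|p_i-p_j|\geq\sqrt{d_id_j}$ is the standard and efficient route. One small point worth making explicit in that case: when you pass to the subcase $p_i\to p\notin\G\infty$, you should first reduce (by passing to a further subsequence) to the situation where the $p_i$ are pairwise distinct, since equal $p_i$'s correspond to $\g_i$ in a common coset of $\G_\xi$ and are already handled.

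For the forward direction you invoke \cite[Thm.~A, Prop.~B]{Dalbo2000} via mixing, which is precisely the argument the present paper uses in the remark following Theorem~\ref{m11} (item~(5)). This is not circular in any formal sense, since those are separate density statements in Dal'bo's paper and the theorem you are proving is a corollary of them; but it does mean you are reassembling the result from its own source rather than giving an independent argument. If you want a more self-contained forward direction, you can argue directly: with $\xi=\infty$ in the upper half space model and $\xi\in\Lambda_{\rmr}(\G)$, choose $\g_n\in\G$ with $\g_n o=(x_n,y_n)$, $y_n\to\infty$, $|x_n|\le Cy_n$; then $\g_n\notin\G_\xi$ eventually, the horospheres $\g_n^{-1}\cH$ are Euclidean spheres whose topmost points converge to a point of $\T^1(\bH^n)$ not lying on any single $\G$-translate of $\cH$, exhibiting a limit point of $\G\cH$ outside $\G\cH$.
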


\begin{theorem}\label{hstar} Let $\G$ be geometrically finite.
If $E:=\p(\cH)$ is a closed  horosphere in $\T^1(X)$, then
$\op{supp}(\mu_E^{\PS})$ is compact.
\end{theorem}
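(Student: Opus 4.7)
The plan is to split into cases according to Theorem~\ref{do}. Let $\xi\in\partial\bH^n$ be the basepoint of $\cH$; since $E=\p(\cH)$ is closed, either $\xi\notin \Lambda(\G)$ or $\xi\in \Lambda_{\rmp}(\G)$. In both cases I would transfer the support of $\mu^{\PS}_{\tE}$ to the boundary using the diffeomorphism $\V\colon\tE\to\partial\bH^n\setminus\{\xi\}$ from Lemma~\ref{lemma:diffeo}. Since $\nu_o$ is supported on $\Lambda(\G)$ and $d\mu^{\PS}_{\tE}(v)=e^{\delta\beta_{v^+}(o,\pi(v))}\,d\nu_o(v^+)$, we have $\supp(\mu^{\PS}_{\tE})=\V^{-1}(\Lambda(\G)\setminus\{\xi\})$.

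In the first case, when $\xi\notin\Lambda(\G)$, the set $\Lambda(\G)$ is closed in the compact space $\partial\bH^n$ and disjoint from $\xi$, hence it is a compact subset of the open set $\partial\bH^n\setminus\{\xi\}$. Pulling back by the diffeomorphism $\V$ shows that $\supp(\mu^{\PS}_{\tE})$ is compact in $\tE$, and its continuous image under $\p$ gives a compact set in $\T^1(X)$ containing $\supp(\mu^{\PS}_E)$.

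In the second case, when $\xi\in\Lambda_{\rmp}(\G)$, I would first identify the stabilizer: $\G_{\tE}=\G_\xi$. The inclusion $\G_{\tE}\subset\G_\xi$ is immediate because any $\g\in\G_{\tE}$ preserves the common backward endpoint $v^-=\xi$ for $v\in\tE$; for the reverse inclusion, the Bieberbach-type fact recalled in \S\ref{subsec:bieberbach} (cf.\ \cite[Lemma~3.2.1]{Bowditch1993}) says that for $\xi\in\Lambda_{\rmp}(\G)$ every element of $\G_\xi$ is parabolic with fixed point $\xi$, hence preserves every horosphere based at $\xi$, and so preserves the unstable horosphere $\tE$ as well. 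Now since $\G$ is geometrically finite, Theorem~\ref{bo} gives $\Lambda_{\rmp}(\G)=\Lambda_{\bdp}(\G)$, so $\xi$ is bounded parabolic and $\G_\xi\ba(\Lambda(\G)\setminus\{\xi\})$ is compact. Because $\V$ is $\G_\xi$-equivariant and descends to a diffeomorphism $\G_\xi\ba\tE\to\G_\xi\ba(\partial\bH^n\setminus\{\xi\})$, the quotient $\G_{\tE}\ba\supp(\mu^{\PS}_{\tE})$ is compact in $\G_{\tE}\ba\tE$.

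To finish, I would invoke the continuous map $\G_{\tE}\ba\tE\to\T^1(X)$ (which is well-defined and makes $\mu^{\PS}_E$ the pushforward of $\mu^{\PS}_{\G_{\tE}\ba\tE}$); the image of the compact set from the previous step is compact in $\T^1(X)$ and contains $\supp(\mu^{\PS}_E)$. The main obstacle I anticipate is the identification $\G_{\tE}=\G_\xi$ together with the correct use of the bounded-parabolic property; once that is in place, the compactness follows formally from the equivariant diffeomorphism $\V$ and the pushforward description of $\mu^{\PS}_E$.
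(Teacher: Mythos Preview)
Your proposal is correct and follows essentially the same approach as the paper: split into the two cases of Theorem~\ref{do}, use the visual map to identify $\supp(\mu^{\PS}_{\tE})$ with $\V^{-1}(\Lambda(\G)\setminus\{\xi\})$, and in the parabolic case use $\G_{\tE}=\G_\xi$ together with bounded-parabolicity to descend to a compact quotient. You are in fact slightly more explicit than the paper in justifying $\G_{\tE}=\G_\xi$ (via the Bowditch lemma that all elements of $\G_\xi$ are parabolic) and in invoking Theorem~\ref{bo} to pass from $\Lambda_{\rmp}$ to $\Lambda_{\bdp}$, but these are exactly the points the paper uses implicitly.
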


\begin{proof} Let $\xi\in \bh$ be the base point for $\cH$.
The restriction of the visual map $\text{Vis}: v\mapsto v^+$
induces a homeomorphism $\psi:\cH\to \bh \setminus\{\xi\}$. As
$E$ is closed, by Theorem \ref{do}, either $\xi\notin
\Lambda(\G)$ or $\xi$ is a bounded parabolic fixed point.
If $\xi\notin \Lambda(\G)$,
 then $\Lambda(\G)$ is a
compact subset of $\bh \setminus \{\xi\}$. Since
$\op{supp}(\mu_E^{\PS})=\p(\psi^{-1}(\Lambda(\G)))$, it follows that $\op{supp}(\mu_E^{\PS})
$ is compact.

Suppose now that $\xi$
is a bounded parabolic fixed point.  By Definition~\ref{defbb},
 $\G_{\xi} \ba (\Lambda(\G) \setminus \{\xi\})$ is
compact.
Since $\G_{\xi}$ is discrete, it preserves the horosphere $\cH$ based at $\xi$, and  $\G_{\xi}=\G_{\cH}$. Therefore
 $\psi$ induces a homeomorphism between $\G_{\cH}\ba \cH$ and
 $\G_{\cH}\ba (\bh \setminus\{\xi\})$. It follows that
 $\G_{\cH}\ba \psi^{-1}( \Lambda(\G)\setminus \{\xi\})$
is compact and is equal to $\op{supp}(\mu_E^{\PS})$.
\end{proof}

\newcommand{\PrM}{P_{W^{\perp}}}

\section{On the cuspidal neighborhoods of $\Lambda_{\bdp}(\G)\cap \partial\tS$}\label{not:model}

\subsection{} 
Throughout this section, let $\G$ be a torsion-free discrete subgroup of $G$ and $\tS$
 a connected complete totally geodesic subspace
of $\bH^n$ such that the natural projection $\G_{\tS} \ba  \tS \to \G\ba \bH^n$ is a
proper map. 

\newcommand{\cDa}{\cD(a,\G_{\tS})}

The {\em Dirichlet domain} for $\G_{\tS}$ attached to some $a\in\tS$ is defined by
 \begin{equation} \label{eq:cDa}
 \cDa:=\{s\in \tS: d(s,a)\le d(s, \g a) \text{ for all }\g\in\G_{\tS}\}.
 \end{equation}

\begin{proposition} \label{prop:Dir-radial}
$\Lambda_{\rmr}(\G)\cap \partial\cDa=\emptyset$. 
\end{proposition}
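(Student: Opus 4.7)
The plan is to argue by contradiction: suppose that $\xi\in \Lambda_{\rmr}(\G)\cap\partial\cDa$. Since $\cDa\subset\tS$, the point $\xi$ lies in $\partial\tS$, and the standing assumption of this section (properness of $\G_{\tS}\bs\tS\to\G\bs\bH^n$) lets me apply Lemma~\ref{gor} to conclude that $\xi$ is in fact a radial limit point of $\G_{\tS}$. Thus, writing $\beta$ for the unique geodesic ray in $\tS$ from $a$ to $\xi$, I obtain $R>0$ and a sequence $\g_i\in\G_{\tS}$ with $\g_i a\to \xi$ and $d(\g_i a,\beta)\leq R$ for all $i$.

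The main geometric input is that $\cDa$ is a closed convex subset of $\tS$ containing $a$. Indeed, by its very definition,
\[
\cDa=\bigcap_{\g\in\G_{\tS},\,\g a\neq a}\{s\in\tS:d(s,a)\leq d(s,\g a)\},
\]
an intersection of closed totally geodesic half-spaces each containing $a$. Consequently, if $s_n\in\cDa$ is any sequence converging to $\xi$ in the visual topology, the geodesic segments $[a,s_n]\subset\cDa$ converge pointwise to $\beta$, and closedness of $\cDa$ forces $\beta\subset\cDa$.

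Now let $p_i\in\beta$ denote the nearest-point projection of $\g_i a$ onto $\beta$. Then $p_i\in\cDa$ and $d(\g_i a,p_i)\leq R$. Applying the defining inequality of $\cDa$ at $p_i$ against $\g_i$ yields $d(p_i,a)\leq d(p_i,\g_i a)\leq R$, so $\{p_i\}$ stays inside a bounded ball around $a$. On the other hand, $\g_i a\to\xi\in\partial\bH^n$ forces $d(a,\g_i a)\to\infty$, and the triangle inequality then gives $d(a,p_i)\geq d(a,\g_i a)-R\to\infty$, a contradiction.

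The only non-routine step is the implication $\xi\in\partial\cDa\Rightarrow\beta\subset\cDa$, which rests on exactly the convexity and closedness built into the Dirichlet construction. Everything else is a one-line nearest-point projection combined with the defining inequality for $\cDa$ and a triangle inequality, so I do not expect any serious obstacle.
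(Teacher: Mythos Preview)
Your proof is correct and follows essentially the same approach as the paper: both use convexity of $\cDa$ to place the geodesic ray $\beta$ from $a$ to $\xi$ inside $\cDa$, invoke Lemma~\ref{gor} to upgrade $\xi$ to a radial limit point of $\G_{\tS}$, and then derive a contradiction between the Dirichlet defining inequality and the fact that orbit points $\g_i a$ (equivalently $\g_i^{-1}a$) come within bounded distance of points on $\beta$ that escape to infinity. The only cosmetic difference is that the paper phrases the radial condition as $d(\g_i\xi_{t_i},a)$ bounded and contradicts $\xi_{t_i}\in\cDa$ directly, whereas you project $\g_i a$ onto $\beta$ and contradict boundedness of $d(p_i,a)$; these are equivalent.
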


\begin{proof}
Let $\xi\in  \Lambda_r(\Gamma)\cap \partial{\mathcal D}(a, \Gamma_{\tS})$.
 As 
 \[
 \overline{\cDa}=\cDa\cup(\partial\cDa\cap\partial\bH^{n})
 \]
is convex in $\mathbb H^n$, there exists a geodesic $\{\xi_t\}\subset \cDa$ such that $\xi_0=a$ and $\xi_\infty=\xi$. As 
$\xi\in\Lambda_r(\Gamma_{\tS})$ by Lemma~\ref{gor},
there exist sequences $t_i\to \infty$ and $\gamma_i\in \Gamma_{\tS}$ such that
$ d(\gamma_i \xi_{t_i},a)$ is uniformly bounded for all i.
Since $d(\xi_{t_i}, a)\to \infty$, it follows that for all large $i$,
$d(\xi_{t_i}, \gamma_i^{-1}a) <d(\xi_{t_i}, a)$,
yielding that $\xi_{t_i}\notin \cDa$, a contradiction.
\end{proof}

Let $\tE\subset \Th$ denote the set of all normal vectors to $\tS$.
Given $U\subset\partial{\bH^n}$, we define
\begin{equation}
\label{eq:cEU}
\cE_U =\{v\in \tilde E:\pi(v)\in \cDa,\ v^+\in U\cap\Lambda(\G)\}.
\end{equation}

\begin{remark} \label{rem:cEr}  If $\xi\in \Lambda_{r}(\G)\cap \partial\tS$, then there exists a neighborhood $U$ of $\xi$ in 
$\partial\bH^{n}$ such that $\cE_{U}=\emptyset$; to see this, note that if there exists a sequence $\{v_{i}\}\subset \tE$ such that 
$v_{i}^{+}\to \xi$, then $\pi(v_{i})\to \xi$, and hence by Proposition~\ref{prop:Dir-radial} $\pi(v_{i})\not\in \cDa$ for all large $i$. 
\end{remark}

In view of Theorem~\ref{bo} and Remark~\ref{rem:cEr}, the main goal of this section is to describe the structure of $\cE_U$ for a 
neighborhood $U$ of a point in $\Lambda_{\bdp}(\G)\cap \partial\tS$ and to compute the measure $\mu_{\tE}^{\PS}(\cE_U)$.

In this section, we will use the upper half space model $\bH^n=\R^{n-1}\times \R_{>0}$ and first we assume that
\[
\infty\in\partial{\tS}\cap \Lambda_{\rmp}(\G).
\]
Here $\R^{n-1}$ is to be treated as an affine space till we make a choice of the origin. Hence $\tilde S$ is a vertical plane over the 
affine subspace $\partial{\tS}\setminus\{\infty\}$ of $\R^{n-1}$.  For any affine subspace $F$ of $\R^{n-1}$, let 
$P_F:\R^{n-1}\to F$ denote the orthogonal projection. Let
\begin{equation} \label{eq:bh}
\bfb:\R^{n-1}\times \R_{>0}\to \R^{n-1} \text{ and } \bfh:\R^{n-1}\times \R_{>0}\to \R_{>0}
\end{equation}
denote the natural projections. 

Let $\G'=\G'_\infty$ be a normal abelian subgroup of $\G_\infty$ with finite index, as in section \ref{subsec:bieberbach} and fix a 
$\G'$-minimal subspace $L$ of $\R^{n-1}$.
Noting that $\bfb(a)\in \partial{\tS}\setminus \{\infty\}$, we choose $0:=P_{L}(b(a))$, the origin of $\R^{n-1}$. This choice of $0$ 
makes $L$ a linear subspace. Set
$W:=\{v-\bfb(a): v\in \partial{\tS}\setminus\{\infty\}\}$, a linear subspace of $\R^{n-1}$, and $\Delta:=\G_{\tS}\cap \G'$. By 
Proposition~\ref{prop:parallel}, $L_0:=L\cap W$ is a $\Delta$-minimal (linear) subspace.

Let $V$ be the largest affine subspace of $\R^{n-1}$ such that $\Delta$ acts by translations on $V$. Then $0\in L\subset V$ 
and $V$ is the union of all (parallel) $\Delta$-minimal subspaces of $\R^{n-1}$. There exist group homomorphisms 
$\tau:\Delta \to L_0\subset \R^{n-1}$ and $\theta:\Delta\to O(n-1)$ such that for any $\g\in \Delta$,
\begin{equation} \label{eq:lambda}
\g(x)=\theta(\g)(x)+\tau(\g),  \text{ for all $x\in\R^{n-1}$}.
\end{equation}

We note that $V=\{x\in\R^{n-1}:\theta(\Delta)x=x\}$, and $V^\perp$ is the sum of all nontrivial (two-dimensional) 
$\theta(\Delta)$-irreducible subspaces of $\R^{n-1}$. 

 \begin{lemma}
\label{eq:WV} 
\begin{enumerate}
\item $W=(W\cap V)+(W\cap V^{\perp})$;
 \item $W^\perp=(W^\perp\cap V)+(W^\perp\cap V^\perp)$.
\end{enumerate}
\end{lemma}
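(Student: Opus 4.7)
The plan is to recognize that $V$ and $V^{\perp}$ are the $\theta(\Delta)$-isotypic components of $\R^{n-1}$---respectively the fixed subspace and the sum of all nontrivial real irreducible $\theta(\Delta)$-subrepresentations---and to deduce both decompositions from two ingredients: first, that $W$ (and hence $W^{\perp}$) is $\theta(\Delta)$-invariant; and second, that every $\theta(\Delta)$-invariant linear subspace $U \subset \R^{n-1}$ splits as $U = (U \cap V) \oplus (U \cap V^{\perp})$.

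For the first ingredient, we observe that $W$ is precisely the direction space of the affine subspace $F := \partial{\tS} \setminus \{\infty\}$, so $W = F - F$. Since $\Delta F = F$ and $\gamma x = \theta(\gamma) x + \tau(\gamma)$ for $\gamma \in \Delta$, the translation parts cancel in differences:
\[
\theta(\gamma)(v_{1} - v_{2}) = \gamma v_{1} - \gamma v_{2} \in F - F = W, \qquad \text{for all } v_{1}, v_{2} \in F.
\]
Hence $\theta(\gamma) W \subset W$, and because $\theta(\Delta) \subset \O(n-1)$, the orthogonal complement $W^{\perp}$ is $\theta(\Delta)$-invariant as well.

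For the second ingredient, we pass to the compact closure $K := \overline{\theta(\Delta)} \subset \O(n-1)$; every $\theta(\Delta)$-invariant linear subspace of $\R^{n-1}$ is automatically $K$-invariant by closedness, and $V$ coincides with the $K$-fixed subspace by continuity of the action. Consequently, the orthogonal projection $P_{V}$ onto $V$ is the Haar average $P_{V}(u) = \int_{K} k u\, dk$. If $u$ lies in a $K$-invariant subspace $U$, the integrand stays in $U$, so $P_{V}(u) \in U \cap V$ and $u - P_{V}(u) \in U \cap V^{\perp}$, yielding the splitting. Applying this to $U = W$ and $U = W^{\perp}$ gives both parts of the lemma. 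The only substantive step is the first ingredient, which reduces to the identity $\theta(\gamma)(v_{1} - v_{2}) = \gamma v_{1} - \gamma v_{2}$ combined with $\Delta$-invariance of $F$; the second ingredient is a standard compact-group averaging argument.
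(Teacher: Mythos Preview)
Your proof is correct and follows the same overall strategy as the paper: establish that $W$ (hence $W^{\perp}$) is $\theta(\Delta)$-invariant, then use the isotypic splitting $\R^{n-1}=V\oplus V^{\perp}$ to decompose any invariant subspace. Your route to the first step is slightly more direct than the paper's: you observe that $W=F-F$ is the direction space of $F=\partial\tS\setminus\{\infty\}$ and that translation parts cancel in differences, giving $\theta(\Delta)W\subset W$ immediately, whereas the paper first argues (via Proposition~\ref{prop:parallel} and a choice of $0'\in L_{\tS}\subset F\cap V$) that $\Delta(W)=W$ as an affine subspace, and then uses $0\in W$ to pass to $\theta(\Delta)$-invariance. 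For the second step, your Haar-averaging argument over $\overline{\theta(\Delta)}$ and the paper's appeal to the decomposition of $W$ into its $\theta(\Delta)$-fixed part and the sum of nontrivial irreducibles are equivalent formulations of the same representation-theoretic fact.
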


\begin{proof}
Put $F=\partial\tS\setminus\{\infty\}$. Then $\Delta F=F$, and there exists a $\Delta$-minimal affine subspace 
$L_{\tS}\subset F$. Choose $0'\in L_{\tS}\subset F\cap V$. Since $W$ is a parallel translate of $F$ through $0$, we have 
$W=F-0'$. As in the proof of Proposition~\ref{prop:parallel}, $\Delta(W)=W$. Since $0\in W$, we have $\theta(\Delta)(W)=W$, and hence $\theta(\Delta)(W^\perp)=W^\perp$. Thus $W\cap V$ is the set of fixed points of $\theta(\Delta)$ in $W$, and
its ortho\-complement in $W$ is the sum of all nontrivial $\theta(\Delta)$-irreducible subspaces of $W$ which is same as 
$W\cap V^\perp$. Therefore (1) follows. And (2) is proved similarly.
\end{proof}

For any $v\in \tE$, $\pi(v)\in\tS$. By abuse of notation,
we write $\bfb(v):=\bfb(\pi(v))\in \partial{\tS}\setminus\{\infty\}$ and $\bfh(v):=\bfh(\pi(v))\in \R_{>0}$.
We denote by $\sigma(v)\in W^{\perp}$ the unique element in $W^\perp$ of norm one satisfying
\begin{equation} \label{eq:v+}
v^+:=\V(v)=\bfb(v)+\bfh(v)\sigma(v).
\end{equation}


\subsubsection*{Bounded parabolic assumption} For the rest of this section we will further assume that $\infty\in\partial(\tS)\cap
\Lambda_{\bdp}(\G)$. Hence there exists $R_0>0$ such that
 for all $x\in \Lambda(\G)\cap\R^{n-1}$,
\begin{equation} \label{eq:R0}
\norm{P_{L^\perp}(x)}\leq R_0,
\end{equation}
where $\norm{\cdot}$ denotes the Euclidean norm.

\begin{lemma} \label{eq:Vperp} For any $v\in \tilde E$ with
$v^+\in \Lambda(\G)$,
\begin{equation*}
\norm{P_{V^{\perp}}(\bfb(v))}\leq R_0.
\end{equation*}
\end{lemma}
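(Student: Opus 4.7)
The plan is to apply the bounded parabolic estimate \eqref{eq:R0} to $v^+$ and transfer the resulting bound on $\norm{P_{L^\perp}(v^+)}$ to a bound on $\norm{P_{V^\perp}(\bfb(v))}$. The key tool will be Lemma~\ref{eq:WV}, which provides an orthogonal compatibility between the $W$/$W^\perp$ and $V$/$V^\perp$ decompositions of $\R^{n-1}$.

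First, I want to exhibit a point of $\partial\tS$ lying in $V$ so that after translating $\bfb(v)$ by it, the result will actually lie in $W$ rather than only in the affine subspace $\bfb(a)+W$. Since $\Delta\subset G_{\tS}$ preserves $F:=\partial\tS\setminus\{\infty\}$, the same Bieberbach argument used in the proof of Proposition~\ref{prop:parallel} produces a point $v_0\in F$ with $v_0+L_0\subset F$ a $\Delta$-minimal subspace. Because $V$ is, by definition, the union of all $\Delta$-minimal subspaces, $v_0\in V$. Consequently $P_{V^\perp}(\bfb(v))=P_{V^\perp}(\bfb(v)-v_0)$, and $\bfb(v)-v_0\in F-v_0=W$.

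Next, I would decompose $v^+=\bfb(v)+\bfh(v)\sigma(v)$ with $\sigma(v)\in W^\perp$ and apply $P_{V^\perp}$. By Lemma~\ref{eq:WV}(1)--(2), one has
\[
P_{V^\perp}(\bfb(v))\in W\cap V^\perp,\qquad P_{V^\perp}(\sigma(v))\in W^\perp\cap V^\perp.
\]
Since $W\cap V^\perp\subset W$ and $W^\perp\cap V^\perp\subset W^\perp$ lie in orthogonal subspaces of $\R^{n-1}$, they are orthogonal inside $V^\perp$, and the Pythagorean identity
\[
\norm{P_{V^\perp}(v^+)}^2=\norm{P_{V^\perp}(\bfb(v))}^2+\bfh(v)^2\norm{P_{V^\perp}(\sigma(v))}^2
\]
will yield $\norm{P_{V^\perp}(\bfb(v))}\leq\norm{P_{V^\perp}(v^+)}$.

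To finish, since $L\subset V$ (noted already in the setup), $V^\perp\subset L^\perp$, hence $\norm{P_{V^\perp}(v^+)}\leq\norm{P_{L^\perp}(v^+)}\leq R_0$ by \eqref{eq:R0} applied to $v^+\in\Lambda(\G)$, and the lemma will follow. The one nontrivial point is the orthogonality observation in the third paragraph: without Lemma~\ref{eq:WV} the unknown height $\bfh(v)$ could not be discarded from $P_{V^\perp}(v^+)$, and isolating the $\bfb(v)$-contribution would be impossible.
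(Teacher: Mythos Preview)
Your proof is correct and follows essentially the same approach as the paper. Your point $v_0\in F\cap V$ plays exactly the role of the paper's $0'$ (chosen in $L_{\tS}\subset F\cap V$), and from there both arguments invoke Lemma~\ref{eq:WV} to place $P_{V^\perp}(\bfb(v))$ and $P_{V^\perp}(\sigma(v))$ in the orthogonal subspaces $W\cap V^\perp$ and $W^\perp\cap V^\perp$, then conclude via $V^\perp\subset L^\perp$ and \eqref{eq:R0}; your write-up simply makes the Pythagorean step explicit where the paper leaves it implicit.
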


\begin{proof}
Let $0'\in V$ be as in the proof of Lemma~\ref{eq:WV}. Since $\bfb(v)-0'\in W$ and $0'\in V$, we have $P_{V^{\perp}}(0')=0$ and 
by Lemma \ref{eq:WV},
\[
P_{V^{\perp}}(\bfb(v))=P_{V^{\perp}}(\bfb(v)-0')\in W \text{ and }
P_{V^\perp}(\sigma(v))\in W^\perp.
\]
Therefore by \eqref{eq:v+}, 
$\norm{P_{V^\perp}(\bfb(v))}\leq \norm{P_{V^\perp}(v^+)}$. Since $L\subset V$, we have $V^\perp\subset L^\perp$, and hence 
by \eqref{eq:R0}, $\norm{P_{V^\perp}(v^+)}\leq\norm{P_{L^\perp}(v^+)}\leq R_0$.
\end{proof}

\newcommand{\hyp}{{\rm hyp}}

\begin{proposition}  \label{prop:P_LS}\label{eq:R1}
There exists $R_1>0$ such that for all $v\in \cE_{\partial{\bH^n}}$,
\begin{equation*}
 \norm{P_{L_0}(v^+)}\leq R_1.
\end{equation*}
\end{proposition}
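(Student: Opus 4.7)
The plan is to exploit the Dirichlet condition $\pi(v)\in\cDa$ only through the abelian subgroup $\Delta:=\G_{\tS}\cap\G'$, and convert it into a quadratic inequality that bounds $P_{L_0}(\bfb(v))$ via the cocompactness of $\tau(\Delta)$ in $L_0$. I would first reduce to bounding $\norm{P_{L_0}(\bfb(v))}$: since $\sigma(v)\in W^\perp$ and $L_0\subset W$, \eqref{eq:v+} gives $P_{L_0}(v^+)=P_{L_0}(\bfb(v))$; and the choice $0=P_L(\bfb(a))$ together with $L_0\subset L$ yields $P_{L_0}(\bfb(a))=0$, so any bound on $P_{L_0}(\bfb(v)-\bfb(a))$ suffices.

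For $\gamma\in\Delta\subset\G_\infty$, $\gamma$ preserves every horosphere based at $\infty$, so $\bfh(\gamma a)=\bfh(a)$. The hyperbolic distance formula in the upper half space then shows that $d(\pi(v),a)\le d(\pi(v),\gamma a)$ is equivalent to the Euclidean inequality $\norm{\bfb(v)-\bfb(a)}\le\norm{\bfb(v)-\bfb(\gamma a)}$. Writing $p=\bfb(v)$, $q=\bfb(a)$, and $\gamma q=\theta(\gamma)q+\tau(\gamma)$ with $\tau(\gamma)\in L_0\subset V$ and $\theta(\gamma)|_V=\text{id}$, I would decompose $\R^{n-1}=V\oplus V^\perp$ and expand. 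Using orthogonality of $V$ and $V^\perp$, the orthogonality of $\theta(\gamma)$ (which cancels $\norm{\theta(\gamma)q_{V^\perp}}^2-\norm{q_{V^\perp}}^2$), and $\tau(\gamma)\in L_0$, the inequality collapses to
\begin{equation*}
\norm{\tau(\gamma)}^2-2\langle P_{L_0}(\bfb(v)-\bfb(a)),\tau(\gamma)\rangle\ge 2\langle P_{V^\perp}(\bfb(v)),\theta(\gamma)q_{V^\perp}-q_{V^\perp}\rangle,
\end{equation*}
where $q_{V^\perp}=P_{V^\perp}(q)$. By Lemma~\ref{eq:Vperp}, $\norm{P_{V^\perp}(\bfb(v))}\le R_0$, and $\norm{q_{V^\perp}}$ is a fixed constant, so the right-hand side is bounded in absolute value by some constant $K=K(a,R_0)$, uniformly in $\gamma\in\Delta$.

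Setting $x:=P_{L_0}(\bfb(v))\in L_0$ and $y:=\tau(\gamma)\in L_0$, the above rewrites as $\norm{y-x}^2-\norm{x}^2\ge -K$ for every $y\in\tau(\Delta)$. By Proposition~\ref{prop:parallel}, $\Delta$ acts cocompactly on $L_0$ by translations, so $\tau(\Delta)$ is a lattice in $L_0$ with some finite covering radius $D$. Picking $\gamma\in\Delta$ with $\norm{y-x}\le D$ gives $\norm{x}^2\le D^2+K$, so $R_1:=\sqrt{D^2+K}$ works. The main bookkeeping obstacle is verifying that every cross term in the expansion of the Dirichlet inequality either vanishes by orthogonality of $V$ and $V^\perp$ or is absorbed into a bounded error controlled by Lemma~\ref{eq:Vperp}, leaving only the $L_0$-piece that we actually need to bound.
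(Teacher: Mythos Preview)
Your proof is correct and follows essentially the same route as the paper: convert the Dirichlet inequality for $\gamma\in\Delta$ into a Euclidean inequality on the base $\R^{n-1}$, split into $V$- and $V^\perp$-components, invoke Lemma~\ref{eq:Vperp} to bound the $V^\perp$-contribution, and use the cocompactness of $\tau(\Delta)$ in $L_0$ to finish. The paper organizes the algebra a little differently---it decomposes $W$ (rather than all of $\R^{n-1}$) into the three orthogonal $\theta(\Delta)$-invariant pieces $(V^\perp\cap W)\oplus L_0\oplus(W\cap V\cap L_0^\perp)$ and compares $\norm{P_{L_0}(\bfb(v))}^2$ directly with $\norm{P_{L_0}(\bfb(v))+\tau(\gamma)}^2$ plus a bounded error---but your rearrangement $\norm{y-x}^2-\norm{x}^2\ge -K$ is algebraically equivalent and the constants match.
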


\begin{proof}
Let $v\in \cE_{\partial{\bH^n}}$.
 Then for all $\g\in \Delta\subset \G_{\tS}$,
\begin{multline}
d_\hyp(\pi(v),a)\leq d_{\hyp}(\g \pi(v), a) \\
\Rightarrow\  d_{\eucl}(\bfb(v),\bfb(a))\leq d_{\eucl}(\g \bfb(v),\bfb(a)). \label{eq:dirichlet}
\end{multline}
Now $\bfb(v)-\bfb(a)\in W$, $L_0\subset W\cap V$, and $P_{L_0}(\bfb(a))=0$. As
\[
W=(V^\perp\cap W)+ L_0 + (W\cap V\cap L_0^\perp),
\]
which is a sum of $\theta(\Delta)$-invariant orthogonal subspaces of $W$, we get
\begin{multline*}
\g\bfb(v)-\bfb(a)=\left[\theta(\g)P_{V^\perp}(\bfb(v))-P_{V^\perp}(\bfb(a))\right]
+ \\  \left[P_{L_0}(\bfb(v))+\tau(\g)\right]
+ P_{V\cap W\cap L_0^\perp}(\bfb(v)-\bfb(a)).
\end{multline*}
Comparing this with \eqref{eq:dirichlet}, for any $\g\in \Delta$  we get
\begin{multline} \label{eq:PLSR1}
\norm{P_{L_0}(\bfb(v))}^2\leq \norm{\theta(\g)P_{V^\perp}(\bfb(v))-P_{V^\perp}(\bfb(a))}^2 + \\
\norm{P_{L_0}(\bfb(v))+\tau(\g)}^2.
\end{multline}
Since $\tau(\Delta)$ is a lattice in  $L_0$,
the radius of the smallest ball containing a fundamental domain of $\tau(\Delta)$ in $L_0$
 is finite, which we denote by $R_2$. Then by \eqref{eq:Vperp} and \eqref{eq:PLSR1}, we conclude that
\[
\norm{P_{L_0}(v^+)}^2=\norm{P_{L_0}(\bfb(v))}^2 \leq (R_0+\norm{\bfb(a)})^2+R_2^2.
\]
By setting $R_1=( (R_0+\norm{\bfb(a)})^2+R_2^2 )^{1/2}$, we finish the proof.
\end{proof}

\subsection{Co-rank at $\infty$ and the structure of $\mathcal E_U$} Set
\[
r_\infty:=\rank(\G_{\infty})-\rank(\G_{\infty}\cap \G_{\tS}).
\]
More precisely,
$r_\infty=\rank(\G')-\rank(\Delta)=\dim(L)-\dim(L_{0})$.
\begin{proposition} \label{prop:internal}
If $r_\infty=0$, then there exists a neighborhood $U$ of $\infty$ in $\partial{\bH^n}$ such that $\cE_U=\emptyset$, where $
\cE_{U}$ is defined in \eqref{eq:cEU}.
\end{proposition}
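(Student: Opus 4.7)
The plan is to show that when $r_\infty = 0$, the forward endpoints $v^+$ of vectors $v\in\cE_{\partial\bH^n}$ remain in a bounded region of $\R^{n-1}$, so that any neighborhood of $\infty$ corresponding to the complement of a large ball avoids them entirely. The argument combines the Dirichlet‐domain bound of Proposition~\ref{prop:P_LS} with the bounded parabolic hypothesis \eqref{eq:R0}.

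First, I would observe that $r_\infty=0$ is equivalent to $L_0=L$. Indeed, $L_0=L\cap W\subseteq L$ by definition, and $r_\infty=\dim(L)-\dim(L_0)$, so vanishing forces equality.

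Next, let $v\in\cE_{\partial\bH^n}$ be arbitrary. Since $v\in\tE$ and $\V$ restricted to $\tE$ is a diffeomorphism onto $\partial\bH^n\setminus\partial\tS$ (Lemma~\ref{lemma:diffeo}), and since $\infty\in\partial\tS$, we have $v^+\in\R^{n-1}$. By Proposition~\ref{prop:P_LS},
\[
\norm{P_{L_0}(v^+)}\leq R_1.
\]
On the other hand, $v^+\in\Lambda(\G)\cap\R^{n-1}$, and the bounded parabolic assumption at $\infty$ together with \eqref{eq:R0} gives
\[
\norm{P_{L^\perp}(v^+)}\leq R_0.
\]
Since $L_0=L$, the projections $P_{L_0}$ and $P_L$ coincide, and the Pythagorean identity yields
\[
\norm{v^+}^2 = \norm{P_L(v^+)}^2+\norm{P_{L^\perp}(v^+)}^2 \leq R_1^2+R_0^2.
\]

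Finally, setting $R:=\sqrt{R_0^2+R_1^2}$ and
\[
U:=\{x\in\R^{n-1}:\norm{x}>R\}\cup\{\infty\},
\]
we get an open neighborhood of $\infty$ in $\partial\bH^n$ such that $U\cap\R^{n-1}$ contains no point of the form $v^+$ with $v\in\cE_{\partial\bH^n}$; hence $\cE_U=\emptyset$, as required. There is essentially no obstacle here: the main content sits in Proposition~\ref{prop:P_LS} and in the bounded‐parabolic inequality \eqref{eq:R0}, and the hypothesis $r_\infty=0$ simply lets these two orthogonal bounds be combined to control all of $v^+$.
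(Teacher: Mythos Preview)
Your proof is correct and follows essentially the same approach as the paper: both use $r_\infty=0$ to identify $L_0=L$, then combine the Dirichlet-domain bound $\norm{P_{L_0}(v^+)}\le R_1$ from Proposition~\ref{prop:P_LS} with the bounded-parabolic bound $\norm{P_{L^\perp}(v^+)}\le R_0$ from \eqref{eq:R0} via the Pythagorean identity, and take $U$ to be the complement of the resulting ball together with $\infty$.
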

\begin{proof}
As $r_\infty=0$, we have $L=L_0$. Therefore, for all $x\in \Lambda(\G)\cap\R^{n-1}$,
\[
\norm{P_{L_0^\perp}(x)}\leq R_0.
\]
Hence for any $v\in \cE_{\partial\bH^n}$, by Proposition~\ref{eq:R1},
\[
\norm{v^+}^2=\norm{P_{L_0}(v^+)}^2 + \norm{P_{L_0^\perp}(v^+)}^2
\leq R_1^2+R_0^2.
\]
Let $U=\{x\in\R^{n-1}:\norm{x}^2>R_0^2+R_1^2\}\cup\{\infty\}$. Then $\cE_U=\emptyset$.
\end{proof}

In the rest of this section, we now consider the case when
$$r:=r_\infty\ge 1.$$

\begin{notation}\rm
 For any $\bss=(s_{1},\ldots,s_{r})\in \br^{r}$ and an ordered $r$-tuple $(w_1,\ldots, w_r)$
 of vectors in $\R^{n-1}$,
  we set $\bss
\cdot \bw:=s_{1}w_{1}+\cdots+s_{r}w_{r}\in\R^{n-1}$, $\R^{r}\bw:=\{\bss\cdot \bw:\bss\in\br^{r}\}$
and $\abs{\bss}=\max(\abs{s_{1}},\ldots,\abs{s_{r}})$.
   For $\bk\in\Z^{r}$ and an ordered $r$-tuple $\bg=(\g_{1},\ldots,\g_{r})$ of elements of $G$,
   we write $\bg^{\bk}=\g_{1}^{k_{1}}\cdots\g_{r}^{k_{r}}\in G$.
   \end{notation}

 Fix an ordered $r$-tuple $\bg=(\g_{1},\ldots,\g_{r})$ of elements of $\G'=\G'_{\infty}$ such that the subgroup generated by $\bg
\cup \Delta$ is of finite index in $\G'$. For each $\g_i$, there exists $w_{i}\in L$ and $\sigma_{i}\in O(n-1)$ such that
 for all $x\in\R^{n-1}$, $$ \gamma_i( x)=\sigma_{i}(x)+w_{i}.$$
Moreover $\sigma_i$ and the translation by $w_i$ commutes, and hence
for any $k\in \z$, $\gamma_i^k(x)=\sigma_i^k(x) + kw_i$.

Setting $\bw=(w_1, \ldots, w_r)$ and ${\bf{\sigma}}=(\sigma_1, \cdots, \sigma_r)$,
we have that for any $x=y+z\in \R^{n-1}$ with $y\in L^\perp$ and $z\in L$ and $\bk=(k_1, \cdots, k_r) \in\Z^{r}$,
\begin{equation}  \label{eq:bgk}
\bg^{\bk} (x)= {\bf{\sigma}}^{\bk}(y) + z +\bk \cdot \bw.
\end{equation}

Let $R_{0}$ and $R_1$ be as in \eqref{eq:R0} and in Proposition~\ref{prop:P_LS} respectively. Set 
\[
B_{0}:=\{x\in L^\perp: \|x\|\le R_0\} \quad \text{and}\quad
   B_{1}:=\{x\in L_0: \|x\|\le R_1\}.
\]
   
Let $M_{1}:=L\cap L_0^{\perp}$. Then $\Z^{r}\cdot P_{M_{1}}(\bw)$ is a lattice in $M_{1}=\R^{r}P_{M_{1}}(\bw)$, which admits a 
relatively compact fundamental domain, say $F_{1}$.
Let $F_{2}$ be a relatively compact fundamental domain for the lattice $\tau(\Delta)$ in $L_0$. We define the following relatively 
compact subset of $\R^{n-1}$:
\begin{equation}\label{cf} 
\cF:=B_{0}+(B_{1}+F_{2})+F_{1}\subset L^\perp +L_0 +(L\cap L_0^\perp).
\end{equation}
By \eqref{eq:bgk}, 
\[
\bg^{\bk}\cF=\cF+\bk\cdot\bw.
\] 

For related variable quantities $x\geq 0$ and $y\geq 0$,
the symbol $x\gg y$ means that there exists a constant
$C>0$ such that  for all related $x$ and $y$,
$x \geq C y $, and the symbol $x\asymp y$ means that
$x\gg y$ and $y\gg x$.

\begin{proposition}  \label{eq:v+=kw} \label{eq:E_U} \label{eq:UT}
There exists  $c_0\ge 1$ such that for all sufficiently large $N\ge 1$,
\begin{equation}
\V(\cE_{U_{c_0N}})\subset \cup_{\abs{\bk}\geq N} \Delta\bg^{\bk}(\cF)
\end{equation} where $U_{c_{0}N}=\{x\in\R^{n-1}:\norm{x}\ge c_0N\}.$
\end{proposition}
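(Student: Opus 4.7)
The approach is to read off $\bk\in\mathbb Z^{r}$ and $\delta\in\Delta$ directly from the orthogonal decomposition
\[
v^{+}=P_{L^{\perp}}(v^{+})+P_{L_{0}}(v^{+})+P_{M_{1}}(v^{+})\in L^{\perp}\oplus L_{0}\oplus M_{1},
\]
and then verify that $\delta^{-1}\bg^{-\bk}(v^{+})\in\cF$. Since $\G'_{\infty}$ is abelian, this is equivalent to $v^{+}=\delta\bg^{\bk}(f)$ for some $f\in\cF$, which is exactly the desired containment.

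First I would exploit the bounds already in hand. By \eqref{eq:R0} and Proposition~\ref{prop:P_LS}, $\norm{P_{L^{\perp}}(v^{+})}\le R_{0}$ and $\norm{P_{L_{0}}(v^{+})}\le R_{1}$ for every $v\in\cE_{\partial\bH^{n}}$. Hence if $\norm{v^{+}}\ge c_{0}N$, then $\norm{P_{M_{1}}(v^{+})}\ge\sqrt{c_{0}^{2}N^{2}-R_{0}^{2}-R_{1}^{2}}$. Since $\{\bk\cdot P_{M_{1}}(\bw):\bk\in\mathbb Z^{r}\}$ is a lattice in $M_{1}$ with relatively compact fundamental domain $F_{1}$, there is a unique $\bk\in\mathbb Z^{r}$ with $f_{1}:=P_{M_{1}}(v^{+})-\bk\cdot P_{M_{1}}(\bw)\in F_{1}$; the comparability $\norm{\bk\cdot P_{M_{1}}(\bw)}\asymp\abs{\bk}$ together with compactness of $F_{1}$ then forces $\abs{\bk}\ge N$ once $c_{0}$ is chosen large enough, depending only on $R_{0}$, $R_{1}$, $F_{1}$, and the lattice. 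Next, since $\tau(\Delta)$ is a lattice in $L_{0}$ with fundamental domain $F_{2}$, one can pick $\delta\in\Delta$ with $f_{2}:=P_{L_{0}}(v^{+})-\bk\cdot P_{L_{0}}(\bw)-\tau(\delta)\in F_{2}$.

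Finally I would compute $\delta^{-1}\bg^{-\bk}(v^{+})$ component-by-component. The key geometric input is that $L\subset V$ and $V$ is the fixed space of $\theta(\Delta)$, so $\theta(\delta)^{-1}$ acts trivially on $L$ and, being orthogonal and preserving $V^{\perp}\subset L^{\perp}$, preserves $L^{\perp}$; moreover each $\sigma_{i}$ fixes $L$ pointwise (since $\G'_{\infty}$ acts on $L$ by translations) and therefore also preserves $L^{\perp}$. Combining \eqref{eq:bgk} with $\tau(\delta^{-1})=-\tau(\delta)$ (valid because $\tau(\delta)\in L_{0}\subset V$ is fixed by $\theta(\delta)^{-1}$), one gets
\[
\delta^{-1}\bg^{-\bk}(v^{+})=\theta(\delta)^{-1}\sigma^{-\bk}\bigl(P_{L^{\perp}}(v^{+})\bigr)+f_{2}+f_{1},
\]
whose first summand lies in $L^{\perp}$ with norm at most $R_{0}$, hence in $B_{0}$. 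This places the whole expression in $B_{0}+F_{2}+F_{1}\subset\cF$, completing the proof. The principal obstacle is the bookkeeping of which subspaces among $L^{\perp}$, $L_{0}$, $M_{1}$ are preserved by $\sigma^{\bk}$ and $\theta(\delta)^{-1}$; this is exactly where Lemma~\ref{eq:WV} and the characterization of $V$ are invoked, and once that is settled, the three components assemble unambiguously into an element of $\cF=B_{0}+(B_{1}+F_{2})+F_{1}$.
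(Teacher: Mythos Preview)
Your proof is correct and follows essentially the same route as the paper: decompose $v^{+}$ along $L^{\perp}\oplus L_{0}\oplus M_{1}$, use the a priori bounds \eqref{eq:R0} and Proposition~\ref{prop:P_LS} on the first two components, read off $\bk$ from the $M_{1}$-component via the fundamental domain $F_{1}$, and then translate by an element of $\Delta$ to land in $\cF$. The only cosmetic difference is that the paper chooses its $\Delta$-element $\lambda_{\bk}$ depending only on $\bk$ (absorbing $P_{L_{0}}(\bk\cdot\bw)$ into $\tau(\lambda_{\bk})+F_{2}$ and using $P_{L_{0}}(v^{+})\in B_{1}$ to land in $B_{1}+F_{2}$), whereas you choose $\delta$ depending on $v^{+}$ as well, landing directly in $F_{2}\subset B_{1}+F_{2}$; your explicit computation of $\delta^{-1}\bg^{-\bk}(v^{+})$ is equivalent to the paper's set-containment argument via $\bg^{\bk}\cF=\cF+\bk\cdot\bw$.
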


\begin{proof}
 Since $\R^{n-1}=L^{\perp}+L_0+M_{1}$ for $M_{1}=L_0^{\perp}\cap L$, we have
for any $v\in \R^{n-1}$, \begin{equation} \label{eq:LSv+}
v^{+}=P_{L^{\perp}}(v^{+})+P_{L_{0}}(v^{+})+P_{M_{1}}(v^{+}).
\end{equation}

Let $v\in \cE_{\partial\bH^{n}}$. By \eqref{eq:R0} and Proposition \ref{eq:R1},
\begin{equation} \label{eq:v+B0B1}
P_{L^{\perp}}(v^{+})\in B_{0} \text { and } P_{L_{0}}(v^{+})\in B_{1}.
\end{equation}

In order to control $P_{M_{1}}(v^{+})$,  let $\bk=\bk(v^+)\in\Z^{r}$ be
 such that $P_{M_{1}}(v^{+})\in \bk\cdot P_{M_{1}}(\bw)+F_{1}$, where $\bk$ is uniquely determined.
Let $\lambda_{\bk}\in \Delta$  be such that
\begin{equation*} \label{eq:M1}
P_{L_0}(\bk \cdot \bw)\in  \tau(\lambda_{\bk})+F_{2}. \end{equation*}

Since $ \bk\cdot P_{M_{1}}(\bw)-\bk\cdot\bw =P_{L_0}(\bk \cdot \bw)$,
 $$P_{M_1}(v^+) \in (\bk\cdot P_{M_{1}}(\bw)-\bk\cdot\bw) +(F_1 +\bk\cdot\bw)
 \in  \tau(\lambda_{\bk})+F_{2} + (F_1 +\bk\cdot\bw).$$

Therefore by \eqref{eq:LSv+}, for $\bk=\bk(v^+)$, we have
\begin{equation}\label{c1}
v^{+}\in \cF+\bk\cdot\bw+\tau(\lambda_{\bk})=\lambda_{\bk}\bg^{\bk}(\cF).
\end{equation}

Since $P_{M_{1}}:\R^{r}\bw\to M_{1}$ is a linear isomorphism,
there exists $N_1\ge 1$ such that for all $\bk\in \Z^r$ with $|\bk|>N_1$,
\begin{equation} \label{eq:PM1}
\norm{P_{M_{1}}(\bk\cdot\bw)}\asymp \abs{\bk}.
\end{equation}

 By \eqref{eq:LSv+} and \eqref{eq:v+B0B1}, $\norm{P_{M_{1}}(v^{+})-v^{+}}\leq R_{0}+R_{1}$ and
$P_{M_{1}}(v^{+})-P_{M_{1}}(\bk\cdot\bw)\in F_{1}$ for $\bk=\bk(v^+)$. It follows that there exists a constant $B>0$ such that for 
all  $v\in \cE_{\partial\bH^{n}}$,
\[
\norm{P_{M_{1}}(\bk\cdot\bw)}- B  \le \norm{v^{+}}\le  \norm{P_{M_{1}}(\bk\cdot\bw)}+B
\]
where $\bk=\bk(v^+)$.
Hence by \eqref{eq:PM1}, there exists $N_2\ge 1$ such that for all $v\in \cE_{\partial\bH^{n}}$
 with $|\bk(v^+)|>N_2$,
$$\norm{v^{+}}\asymp  \abs{\bk (v^+)}.$$
In view of \eqref{c1}, this finishes the proof.
\end{proof}

\begin{lemma} \label{claim3}\label{eq:hv-v+} There exists $N_0\ge 1$ such that
for all $\bk\in \Z^r$ with $|\bk|>N_0$, the following hold:
\begin{enumerate}
\item For $\xi\in \bg^{\bk}(\cF)$,
$ \norm{\xi}\asymp \abs{\bk}$.
\item For $v\in \tE$ with $v^+\in \bg^{\bk}(\cF)$,
$\bfh(v)\asymp \abs{\bk}$.
\end{enumerate}
\end{lemma}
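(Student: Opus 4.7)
The plan is to expand $\bg^{\bk}(x)$ for $x \in \cF$ using \eqref{eq:bgk}, exploiting the following observation: since each $\g_i \in \G'$ acts on the $\G'$-minimal subspace $L$ by the translation $x \mapsto x + w_i$ with $w_i \in L$, the rotation factor $\sigma_i$ must fix $L$ pointwise, and hence preserves $L^\perp$. Writing a generic $x \in \cF$ as $x = y + z_1 + z_2$ with $y \in B_0 \subset L^\perp$, $z_1 \in B_1 + F_2 \subset L_0$, and $z_2 \in F_1 \subset M_1$, equation \eqref{eq:bgk} then gives
\[
\bg^{\bk}(x) = \sigma^{\bk}(y) + z_1 + z_2 + \bk \cdot \bw,
\]
where $\sigma^{\bk}(y) \in L^\perp$ has norm $\|y\| \le R_0$, while $z_1$, $z_2$, and $\bk\cdot\bw$ all lie in $L$.

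For part (1), the triangle inequality immediately yields $\|\bg^{\bk}(x)\| \le C|\bk|$ for a constant $C$ depending only on $\cF$ and $\bw$. To obtain the matching lower bound, I would project onto $M_1$: since $\sigma^{\bk}(y) \in L^\perp$ and $z_1 \in L_0$ are both orthogonal to $M_1$, the projection reduces to $P_{M_1}(\bg^{\bk}(x)) = z_2 + P_{M_1}(\bk \cdot \bw)$, whose norm is $\asymp |\bk|$ for $|\bk| \ge N_0$ by \eqref{eq:PM1} and boundedness of $F_1$.

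For part (2), I would first derive the identity $\bfh(v) = \|P_{W^\perp}(v^+) - P_{W^\perp}(\bfb(a))\|$ by combining $v^+ = \bfb(v) + \bfh(v)\,\sigma(v)$ with $\sigma(v) \in W^\perp$ and $\bfb(v) - \bfb(a) \in W$ (since $\partial\tS\setminus\{\infty\} = \bfb(a) + W$). Because $P_{W^\perp}(\bfb(a))$ is a fixed constant, this reduces the claim to $\|P_{W^\perp}(v^+)\| \asymp |\bk|$. The upper bound is inherited from part (1). For the lower bound, $z_1 \in L_0 \subset W$ and $P_{L_0}(\bk\cdot\bw) \in L_0 \subset W$ give
\[
P_{W^\perp}(\bg^{\bk}(x)) = P_{W^\perp}(\sigma^{\bk}(y)) + P_{W^\perp}(z_2) + P_{W^\perp}(P_{M_1}(\bk \cdot \bw)),
\]
where the first two terms are uniformly bounded.

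The main obstacle is establishing a uniform lower bound on the restriction $P_{W^\perp}|_{M_1}\colon M_1 \to W^\perp$. By finite-dimensionality, injectivity of this map suffices, and injectivity follows from a short linear-algebra check: any $u \in M_1 \cap W$ lies in $L\cap W = L_0$ while $u \in M_1 = L\cap L_0^\perp$ forces $u \perp L_0$, so $u = 0$. This yields $\|P_{W^\perp}(u)\| \ge c\|u\|$ for some $c > 0$ and all $u \in M_1$; combined with $\|P_{M_1}(\bk\cdot\bw)\| \asymp |\bk|$ from \eqref{eq:PM1}, this gives $\|P_{W^\perp}(\bk \cdot \bw)\| \ge c'|\bk|$ and completes the proof.
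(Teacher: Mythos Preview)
Your proof is correct and follows essentially the same approach as the paper: both parts reduce to the injectivity of a linear projection restricted to an $r$-dimensional subspace of $L$ complementary to $L_0$. The paper's argument is slightly more streamlined in that for (1) it uses the identity $\bg^{\bk}\cF=\cF+\bk\cdot\bw$ directly (so $\|\xi-\bk\cdot\bw\|\le\diam(\cF)$ and $\|\bk\cdot\bw\|\asymp|\bk|$), and for (2) it checks injectivity of $P_{W^\perp}$ on $\R^r\bw$ rather than on $M_1$, but the linear-algebra content is the same as yours.
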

\begin{proof}
If $\xi\in \bg^{\bk}\cF$, then
$\norm{\xi-\bk\cdot\bw}\le \diam_{\eucl}(\cF)$. Hence
$\norm{\xi}\asymp \norm{\bk\cdot\bw}\asymp \abs{\bk}$,
proving (1).

For $v\in \tE$ such that $v^+\in \bg^{\bk}(\cF)$,
by \eqref{eq:v+},
\begin{equation*}
 \bfh(v)\asymp\norm{P_{W^{\perp}}(v^{+})} \asymp P_{W^{\perp}}(\bk\cdot\bw).
\end{equation*}

Since $W\cap L=L_{0}$ and $L=L_{0}\oplus\R^{r}\bw$, the map
$P_{W^{\perp}}:\R^{r}\bw\to W^{\perp}$ is injective. Therefore
\begin{equation*} \label{eq:PWperp}
\norm{P_{W^{\perp}}(\bk\cdot\bw)}\asymp \norm{\bk\cdot\bw}\asymp \abs{\bk},
\end{equation*}
from which (2) follows.
\end{proof}

Let $o=(0,1)\in \R^{n-1}\times \R_{>0}$. For $T\ge 1$, put
\begin{equation}\label{eq:bt3}
\cB_{T}=\{v\in\tE: \beta_{\infty}(o,\pi(v)) \ge \log T\},
\end{equation}
that is, $\cB_T$ is the intersection with $\tE$ of a horoball based at $\infty$.
We note that for $v\in \tE$, $\beta_{\infty}(o,\pi(v)) =\log h(v)$.
Hence in the vertical plane model of $\tS$, $\cB_T$ consists of vectors $v\in \tE$
whose base points have the Euclidean height at least $T$.

\begin{proposition}\label{prop:cB} Let $\cF_0:=B_0+F_2+F_1$.
Then  $\nu_o(\cF_0)> 0$, and
for all sufficiently large $T$, there exists $N\asymp T$ such that
\begin{equation} \label{eq:cBT}
\V (\cB_T) \supset \cup_{\abs{\bk}\geq N} \bg^{\bk}(\cF_0).
\end{equation}
\end{proposition}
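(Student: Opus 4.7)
\textbf{Proof plan for Proposition~\ref{prop:cB}.}

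\emph{Positivity of $\nu_o(\cF_0)$.} First I will show $\Lambda(\G)\setminus\{\infty\}\subset \G'\cdot\cF_0$ and then extract positivity via the conformality of $\{\nu_x\}$. By the bounded-parabolic condition \eqref{eq:R0}, $\Lambda(\G)\cap\R^{n-1}\subset L+B_0$, so it suffices to check that every $y\in L+B_0$ has an image in $\cF_0$ under some element of the finite-index subgroup $\Delta\cdot\bg^{\Z^r}\subset\G'$. I pick $\bk\in\Z^r$ placing the $M_1$-component of $\bg^{\bk}y$ in $F_1$, then $\lambda\in\Delta$ placing the $L_0$-component of $\lambda\bg^{\bk}y$ in $F_2$. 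The consistency of this two-step construction rests on the fact that each element of $\G'$ acts on $\R^{n-1}$ as $x\mapsto\sigma(x)+w$ with $\sigma|_L=\mathrm{id}$, $\sigma(L^\perp)=L^\perp$ and $w\in L$, while $\theta(\lambda)|_L=\mathrm{id}$ and $\tau(\lambda)\in L_0$; thus the $L^\perp$-coordinate stays inside $B_0$ (rotated orthogonally) and the $M_1$-coordinate is not disturbed by $\lambda$. Since $\nu_o$ has no atoms for non-elementary $\G$, $\nu_o(\Lambda(\G)\setminus\{\infty\})=\abs{\nu_o}>0$, so by countable subadditivity some $\g\in\G'$ satisfies $\nu_o(\g\cF_0)>0$, and the conformal relation
\[
\nu_o(\g\cF_0)=\int_{\cF_0}e^{\delta\beta_\xi(o,\g\inv o)}\,d\nu_o(\xi)
\]
then forces $\nu_o(\cF_0)>0$.

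\emph{The inclusion.} Since $0\in B_1$, we have $\cF_0\subset\cF$, so it is tempting to invoke Lemma~\ref{claim3}(2) directly; however that lemma presupposes the existence of $v\in\tE$ with $v^+=\xi$, equivalently $\xi\notin\partial\tS$, which must itself be secured for all large $|\bk|$. I will therefore compute the height directly. A short calculation with unit normals to the vertical plane $\tS$ in upper half-space gives $\bfh(v)=\norm{P_{W^\perp}(v^+-\bfb(a))}$. Decomposing $\bg^{\bk}(x)=\sigma^{\bk}(x)+\bk\cdot\bw$ and using that $P_{L_0}(\bw)\in L_0\subset W$ is killed by $P_{W^\perp}$, I obtain
\[
P_{W^\perp}(\bg^{\bk}(x)-\bfb(a))=P_{W^\perp}(\sigma^{\bk}(x)-\bfb(a))+\bk\cdot P_{W^\perp}(\bw),
\]
in which the first summand is uniformly bounded for $x\in\cF_0$ and $\bk\in\Z^r$ (orthogonality of $\sigma^{\bk}$), while the second has norm $\asymp|\bk|$: the linear map $\bk\mapsto \bk\cdot P_{W^\perp}(\bw)$ from $\R^r$ to $W^\perp$ is injective because its kernel forces $\bk\cdot\bw\in L\cap W=L_0$, equivalently $\bk\cdot P_{M_1}(\bw)=0$, whence $\bk=0$ as $P_{M_1}(\bw)$ is an $\R$-basis of $M_1$. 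Hence $\bfh(v)\asymp|\bk|$ for all large $|\bk|$, which simultaneously secures $\xi\notin\partial\tS$ (so that $v$ exists by Lemma~\ref{lemma:diffeo}) and lets me choose $N\asymp T$ large enough that $\bfh(v)\ge T$, i.e., $v\in\cB_T$ and $\xi\in\V(\cB_T)$.

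\emph{Main obstacle.} The critical geometric input behind both steps is the identity $M_1\cap W=L\cap L_0^\perp\cap W=L_0\cap L_0^\perp=\{0\}$, which is what transports the lower bound $\norm{\bk\cdot\bw}\asymp|\bk|$ valid on $L$ (Lemma~\ref{claim3}) down to $W^\perp$ via $P_{W^\perp}$, and thence to $\bfh(v)$. It rests on $L_0$ being the \emph{full} intersection $L\cap W$, i.e., on Proposition~\ref{prop:parallel}; keeping the simultaneous orthogonal decompositions $L=L_0\oplus M_1$, $\R^{n-1}=L\oplus L^\perp$, $\R^{n-1}=W\oplus W^\perp$ consistent with the action of $\sigma^{\bk}$ and $\theta(\lambda)$ is the only real book-keeping required.
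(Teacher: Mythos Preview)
Your proposal is correct and follows essentially the same route as the paper. For positivity, the paper likewise shows $\Delta\bigl(\cup_{\bk}\bg^{\bk}(\cF_0)\bigr)=B_0+L\supset\Lambda(\G)\setminus\{\infty\}$ and concludes via conformality and $\G$-invariance (using that $\G$ does not fix $\infty$ rather than atomlessness, but the effect is the same); for the inclusion, the paper simply invokes Lemma~\ref{claim3}(2) together with $\cF_0\subset\cF$, whereas you unfold that lemma's height computation directly and add the observation that $\bg^{\bk}(\cF_0)\cap\partial\tS=\emptyset$ for large $|\bk|$, which the paper handles implicitly. The key geometric point you isolate, injectivity of $P_{W^\perp}$ on $\R^r\bw$ via $L\cap W=L_0$, is exactly the content of the paper's argument in Lemma~\ref{claim3}.
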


\begin{proof}
Since $\Delta F_{2}=L_{0}$ and $\bg^{\Z^{r}}F_{1}=M_{1}$,
\[
\Delta(\cup_{\bk\in\Z^r} \bg^\bk(\cF_{0}))=B_{0}+L_0+M_1=B_{0}+L\supset \Lambda(\G)\setminus\{\infty\}.
\]
Therefore if $\nu_{o}(\cF_0)=0$, then by the conformality, it follows that $\nu_{o}(\Lambda(\G)\setminus\{\infty\})=0$. Since $\G$
does not fix $\infty$, by the $\Gamma$-invariance of $\{\nu_x\}$, we get
$\nu_{o}(\Lambda(\G))=0$, which is a contradiction, proving the first claim.

If $v\in \tE$ and $v^+\in \bg^{\bk}(\cF_0)$, then by Lemma \ref{claim3},
 $h(v)\asymp \abs{\bk}$. If $h(v)>T$, then $v\in\cB_{T}$. Therefore \eqref{eq:cBT} holds for suitable $N\asymp T$.
\end{proof}

\subsection{Estimation of $\mu_{\tE}^{\PS}(\mathcal E_U)$}
Let $\Vinv:\R^{n-1}\setminus\partial\tS\to \tE$ be the inverse of the restriction of the visual map $\V:\tE\to \partial\bH^{n}\setminus
\partial\tS=\R^{n-1}\setminus \partial\tS$.
\begin{lemma}\label{eq:e13} There exists $N_1\ge 1$ such that
for all $\bk\in \Z^r$ with $\abs{\bk}>N_1$,
\begin{equation*}
\int_{\xi\in \bg^{\bk}\cF }e^{\delta\beta_{\xi}(o,\pi(\Vinv(\xi)))}\;d  \nu_o(\xi)\asymp
 \abs{\bk}^{-\delta} .
 \end{equation*}
\end{lemma}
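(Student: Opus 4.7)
The plan is to factor the integrand into an exponential Busemann factor and the Patterson--Sullivan mass $\nu_o(\bg^\bk\cF)$, and to estimate each factor by direct upper-half-space computation.

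First, I will derive the explicit formula
\[
\beta_\xi\bigl(o,\pi(\Vinv(\xi))\bigr)=\log\frac{\norm{\xi}^{2}+1}{2\,\bfh(\Vinv(\xi))}
\]
for $\xi\in\R^{n-1}\setminus\partial\tS$. Writing $v=\Vinv(\xi)\in\tE$ based at $(\bfb(v),\bfh(v))\in\tS$, the hyperbolic unit normal $v$ corresponds to a horizontal Euclidean vector of length $\bfh(v)$ in $W^\perp$; hence the forward geodesic from $\pi(v)$ is a Euclidean half-circle of radius $\bfh(v)$ centered at $\bfb(v)$, and \eqref{eq:v+} gives $\norm{\pi(v)-\xi}_{\eucl}^{2}=\bfh(v)^{2}+\bfh(v)^{2}=2\bfh(v)^{2}$. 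Substituting this together with $\norm{o-\xi}^{2}=\norm{\xi}^{2}+1$ into the standard upper-half-space identity $\beta_\xi(x,y)=\log\bigl(\bfh(y)\norm{x-\xi}^{2}/(\bfh(x)\norm{y-\xi}^{2})\bigr)$ at $x=o$ and $y=\pi(v)$ yields the displayed formula. By Lemma~\ref{claim3}, for $\xi\in\bg^\bk\cF$ with $\abs{\bk}>N_0$ both $\norm{\xi}\asymp\abs{\bk}$ and $\bfh(\Vinv(\xi))\asymp\abs{\bk}$, so uniformly on $\bg^\bk\cF$,
\[
e^{\delta\beta_\xi(o,\pi(\Vinv(\xi)))}\asymp\abs{\bk}^{\delta}.
\]

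Second, I will show $\nu_o(\bg^\bk\cF)\asymp\abs{\bk}^{-2\delta}$. By the $\G$-equivariance of $\{\nu_x\}$ we have $\nu_o(\bg^\bk\cF)=\nu_{\bg^{-\bk}o}(\cF)$, and \eqref{eq:conformal} gives
\[
\nu_o(\bg^\bk\cF)=\int_{\cF}e^{\delta\beta_\eta(o,\bg^{-\bk}o)}\,d\nu_o(\eta).
\]
From \eqref{eq:bgk} (and since $\sigma^{-\bk}$ fixes $0\in L^\perp$), $\bg^{-\bk}o=(p_\bk,1)\in\bH^n$ with $\norm{p_\bk}=\norm{\bk\cdot\bw}\asymp\abs{\bk}$. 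For $\eta$ in the fixed bounded set $\cF$ and $\abs{\bk}$ large, $\norm{p_\bk-\eta}^{2}+1\asymp\abs{\bk}^{2}$ while $\norm{\eta}^{2}+1$ is bounded away from $0$ and $\infty$, so
\[
\beta_\eta(o,\bg^{-\bk}o)=\log\frac{\norm{\eta}^{2}+1}{\norm{p_\bk-\eta}^{2}+1}
\]
equals $-2\log\abs{\bk}$ up to a bounded additive term, uniformly in $\eta\in\cF$. Exponentiating gives $\nu_o(\bg^\bk\cF)\asymp\nu_o(\cF)\cdot\abs{\bk}^{-2\delta}$.

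Finally, $\nu_o(\cF)$ is a fixed positive finite constant independent of $\bk$: finiteness follows from the relative compactness of $\cF$ in $\R^{n-1}$, and positivity from $\cF\supset\cF_0$ (since $0\in B_1$, so $F_2\subset B_1+F_2$) combined with $\nu_o(\cF_0)>0$ from Proposition~\ref{prop:cB}. Multiplying the two uniform estimates then yields
\[
\int_{\bg^\bk\cF}e^{\delta\beta_\xi(o,\pi(\Vinv(\xi)))}\,d\nu_o(\xi)\asymp\abs{\bk}^{\delta}\cdot\abs{\bk}^{-2\delta}=\abs{\bk}^{-\delta}.
\]
The main technical hurdle will be the first step, namely setting up the upper-half-space Busemann identity and identifying the Euclidean half-circle geometry of the normal geodesic to the vertical plane $\tS$; with that in hand, the mass estimate in the second step is a one-line application of conformality together with the fact that $\bg^{-\bk}o$ drifts to Euclidean distance $\asymp\abs{\bk}$ from $o$ while remaining at height $1$.
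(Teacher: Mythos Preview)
Your proof is correct and follows essentially the same approach as the paper: estimate the Busemann factor $e^{\delta\beta_\xi(o,\pi(\Vinv(\xi)))}\asymp|\bk|^{\delta}$ via Lemma~\ref{claim3}, estimate $\nu_o(\bg^{\bk}\cF)\asymp|\bk|^{-2\delta}$ via conformality and the drift $\bg^{-\bk}o=(-\bk\cdot\bw,1)$, and multiply. Your Busemann computation is in fact slightly more careful (the horosphere through $\pi(\Vinv(\xi))$ has Euclidean diameter $2\bfh(v)$ rather than $\bfh(v)$), though this factor of $2$ is of course irrelevant for the $\asymp$ conclusion.
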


\begin{proof}
We have $\norm{P_{W^{\perp}}(\bk\cdot\bw)}\asymp \abs{\bk}$. Hence for sufficiently large $\abs{\bk}$, we have that $\bg^{\bk}
\cF\cap \partial\tS=\emptyset$. Note that the Euclidean diameter of the horosphere based at $\xi$ and passing through
$o=(0,1)\in\R^{n-1}\times\R_{>0}$ is $1+\norm{\xi}^2$. And the diameter of the horosphere
based at $\xi$ and passing through  $\pi(\Vinv(\xi))$ is $\bfh(\pi(\Vinv(\xi)))$. Therefore the
signed hyperbolic distance of the segment cut by these two horospheres on the vertical
geodesic ending in $\xi$ is 
\begin{equation*} \label{eq:beta-xi}
\beta_{\xi}(o,\pi(\Vinv(\xi)))=\log(1+\norm{\xi}^{2})-\log(\bfh(\pi(\Vinv(\xi)))).
\end{equation*}
Hence by Lemma \ref{claim3},
\begin{equation} \label{eq:e111}
e^{\delta\beta_{\xi}(o,\pi(\Vinv(\xi)))}=\Bigl(\frac{1+\norm{\xi}^{2}}{\bfh(\pi(\Vinv(\xi)))}
\Bigr)^{\delta}\asymp \abs{\bk}^{\delta}.
\end{equation}

By conformality and $\G$-invariance of Patterson-Sullivan density $\{\nu_{x}\}$,
\begin{align}
 \nu_o(\bg^{\bk}\cF)=(\bg^{-\bk}\nu_o)(\cF)=\nu_{\bg^{-\bk}\cdot o}(\cF)
& =  \int_{\xi\in \cF} \frac{d\nu_{\bg^{-\bk}\cdot o}}{d\nu_{o}}(\xi)\, d\nu_o(\xi)
\notag \\
&= \int_{\xi\in \cF} e^{-\delta\beta_{\xi}(\bg^{-\bk} o,o)} d\nu_o(\xi).
\label{eq:nuvol}
\end{align}

We note that the horosphere based at $\xi$ passing through $\bg^{-\bk}o=(-\bk\cdot\bw,1)\in
 \R^{n-1}\times \R_{>0}$ has diameter $1+\norm{\xi+\bk\cdot\bw}^{2}$. Therefore
\[
\beta_{\xi}(\bg^{-\bk}o,o)=\log(1+\norm{\xi+\bk\cdot\bw}^{2})-\log(1+\norm{\xi}^2),
\]
and hence, since $\norm{\bk\cdot\bw}\asymp \abs{\bk}$ for all large $|\bk|$,
  we have, for any $\xi\in \cF$,
\[
e^{-\delta\beta_{\xi}(\bg^{\bk}o,o)}=\Bigl(\frac{1+\norm{\bk\cdot\bw-\xi}^{2}}{1+\norm{\xi}^{2}}
\Bigr)^{-\delta}\asymp \abs{\bk}^{-2\delta},
\]
Since $\nu_o(\cF)>0$ by Proposition~\ref{prop:cB},
we deduce from \eqref{eq:nuvol} that 
\begin{equation*} \label{e12}
\nu_{o}(\bg^{\bk}\cF)\asymp \abs{\bk}^{-2\delta}\nu_{o}(\cF)\asymp \abs{\bk}^{-2\delta}.
\end{equation*}
Together with \eqref{eq:e111}, this proves the claim.
\end{proof}

Let $\p:\tE\to \G_{\tE}\bs \tE$ be the natural quotient map. We note that $\G_{\tE}=\G_{\tS}$.
From \S\ref{subsec:muE}, we recall that the measure $\muPS_{\tE}$, which is $\G_{\tE}$ invariant, naturally induces a measure 
on $\G_{\tE}\bs \tE$. The pushforward of this measure from $\G_{\tE}\bs \tE$ to $E=\p(\tE)$ is $\muPS_{E}$.

Recall the definition of $c_0>0$ and $U_{c_0N}$
from Proposition \ref{eq:UT}.
\begin{proposition}\label{prop:mupsEbounds}
\begin{enumerate}
 \item  For all sufficiently large $N\ge 1$, we have
\begin{equation*} \label{eq:mupsE1}
\muPS_{E}(\p({\cE}_{U_{c_0N}} ))\ll \sum_{\bk\in \z^r\setminus\{0\}} \abs{\bk}^{-\delta}.
\end{equation*}

\item  For all sufficiently large $T\ge 1$, we have
\begin{equation*}\label{eq:mupsE2}
\muPS_E(\p(\cB_T))\gg \sum_{\bk\in\Z^r\setminus\{0\}} \abs{\bk}^{-\delta}
\end{equation*}
\end{enumerate}
 \end{proposition}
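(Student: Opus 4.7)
The proposition has two parts: an upper bound on $\muPS_{E}(\p(\cE_{U_{c_0N}}))$ and a lower bound on $\muPS_{E}(\p(\cB_T))$. Both will follow by converting the covering/containment statements of Propositions~\ref{eq:UT} and~\ref{prop:cB} into measure estimates via the pointwise bound $\muPS_{\tE}(\Vinv(\bg^\bk(\cF)))\asymp\abs\bk^{-\delta}$ supplied by Lemma~\ref{eq:e13}.

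For (1), the key point is that each $v\in\cE_{U_{c_0N}}$ has base point $\pi(v)\in\cDa$, so $\p$ is essentially injective on $\cE_{U_{c_0N}}$ and $\muPS_{E}(\p(\cE_{U_{c_0N}}))\leq\muPS_{\tE}(\cE_{U_{c_0N}})$. Inspecting the proof of Proposition~\ref{eq:UT} more closely, one sees that for each $v\in\cE_{U_{c_0N}}$ the vector $v^+$ lies in $\lambda_\bk\bg^\bk(\cF)$ for a single element $\lambda_\bk\in\Delta$ depending only on $\bk=\bk(v^+)$. Combining this with the $\G$-invariance of $\muPS_{\tE}$ and Lemma~\ref{eq:e13} gives
\[
\muPS_{E}(\p(\cE_{U_{c_0N}}))\leq\sum_{\abs\bk\geq N}\muPS_{\tE}(\Vinv(\bg^\bk(\cF)))\ll\sum_{\abs\bk\geq N}\abs\bk^{-\delta}\leq\sum_{\bk\in\Z^r\setminus\{0\}}\abs\bk^{-\delta}.
\]

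For (2), Proposition~\ref{prop:cB} yields $N\asymp T$ with $\cB_T\supset\bigsqcup_{\abs\bk\geq N}\Vinv(\bg^\bk(\cF_0))$, the pieces being disjoint for $N$ large since $\bg^\bk(\cF_0)=\cF_0+\bk\cdot\bw$ has bounded diameter while the centers $\bk\cdot\bw$ diverge. To convert this into a lower bound on $\muPS_{E}(\p(\cB_T))$, I need to bound the multiplicity of $\p$ on this union uniformly. For $\gamma\in\Delta$, using $\theta(\gamma)|_L=\mathrm{id}$ and $\tau(\gamma)\in L_0$, any overlap $\gamma\bg^\bk(\cF_0)\cap\bg^{\bk'}(\cF_0)\neq\emptyset$ forces the lattice element $P_{M_1}((\bk-\bk')\cdot\bw)\in\Z^{r}P_{M_1}(\bw)$ to lie in the bounded set $F_1-F_1$, hence to vanish; by the injectivity of $P_{M_1}$ on $\R^r\bw$ this gives $\bk=\bk'$. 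For $\gamma$ in a finite system of coset representatives of $\Delta$ in $\G_\infty\cap\G_{\tE}$, the multiplicity is at most $[\G_\infty\cap\G_{\tE}:\Delta]<\infty$. For $\gamma\in\G_{\tE}\setminus\G_\infty$, the image $\gamma\bg^\bk(\cF_0)$ lies in a bounded Euclidean neighborhood of $\gamma(\infty)\neq\infty$, so for $\abs{\bk'}$ sufficiently large it cannot meet $\bg^{\bk'}(\cF_0)$. Hence the multiplicity is bounded by a uniform constant and, using Lemma~\ref{eq:e13},
\[
\muPS_{E}(\p(\cB_T))\gg\sum_{\abs\bk\geq N}\muPS_{\tE}(\Vinv(\bg^\bk(\cF_0)))\asymp\sum_{\abs\bk\geq N}\abs\bk^{-\delta},
\]
which is comparable to $\sum_{\bk\in\Z^r\setminus\{0\}}\abs\bk^{-\delta}$ in the sense needed for the subsequent finiteness criterion (the tail and the full series are simultaneously finite or infinite, while the head contributes only a bounded constant).

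The main technical obstacle is the multiplicity estimate in (2): one must rule out identifications between distinct pieces $\Vinv(\bg^\bk(\cF_0))$ under the full $\G_{\tE}$-action, not merely the $\Delta$-action. This rests on the orthogonal decomposition $L=L_0\oplus M_1$ from Lemma~\ref{eq:WV}, the injectivity of $P_{M_1}$ on $\R^r\bw$, and the proper discontinuity of $\G_{\tE}$ in the cuspidal regime where $\abs\bk$ is large.
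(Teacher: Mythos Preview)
Your argument for part (1) is correct and matches the paper's: one passes from $\muPS_{E}(\p(\cE_{U_{c_0N}}))$ to a sum of $\muPS_{\tE}(\Vinv(\bg^{\bk}(\cF)))$ over $\abs{\bk}\geq N$ (the $\Delta$-translates collapse under $\p$), and then applies Lemma~\ref{eq:e13}.

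For part (2), your strategy is also the paper's, but your treatment of the case $\gamma\in\G_{\tE}\setminus\G_\infty$ has a gap. You assert that $\gamma\bg^{\bk}(\cF_0)$ lies in a bounded Euclidean neighborhood of $\gamma(\infty)$, so $\bg^{\bk'}(\cF_0)$ misses it for $\abs{\bk'}$ large. The problem is that both the neighborhood and the threshold on $\abs{\bk'}$ depend on $\gamma$, and there are infinitely many such $\gamma$: for instance, $\delta\gamma_0$ with $\delta\in\Delta$ produces points $\delta(\gamma_0(\infty))$ spread throughout $\R^{n-1}$, so no single large $N$ rules out all overlaps. Your closing remark about ``proper discontinuity in the cuspidal regime'' points in the right direction but does not supply the missing uniformity.

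The paper handles this cleanly by invoking the standard precise-invariance property of horoballs at bounded parabolic points: since $\infty\in\Lambda_{\bdp}(\G)$, there exists $T_0$ such that $\gamma\cB_{T_0}\cap\cB_{T_0}=\emptyset$ for every $\gamma\in\G\setminus\G_\infty$, hence in particular for every $\gamma\in\G_{\tE}\setminus\G_\infty$. Equivalently, the map $\p_\infty:(\G_{\tE}\cap\G_\infty)\bs\tE\to\G_{\tE}\bs\tE$ is injective on $(\G_{\tE}\cap\G_\infty)\bs\cB_T$ for all $T\geq T_0$. Once this is in place, the multiplicity reduces to the finite index $[\G_{\tE}\cap\G_\infty:\Delta]$ together with the disjointness of the pieces $\Vinv(\bg^{\bk}(\cF_0))$ modulo $\Delta$, which you handled correctly via the fundamental domains $F_1$ and $F_2$.
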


\begin{proof}
By  Proposition \ref{eq:v+=kw} and by Lemma~\ref{eq:e13},
 for all large $N\geq 1$,
\begin{align*}
\muPS_E(\p(\cE_{U_{c_0N}}))& \leq \sum_{\abs{\bk}\geq N} \muPS_{\tE}(\Vinv(\bg^{\bk}(\cF)))
\notag \\
&=\sum_{\abs{\bk}\ge N}\int_{\xi\in \bg^{\bk}\cF} e^{\delta\beta_{\xi}(o,\pi(\Vinv(\xi)))}\;d  \nu_o(\xi) \\&
\ll \sum_{\bk\geq N} \abs{\bk}^{-\delta},
\end{align*}
proving (1).

Consider the natural quotient map
\begin{equation} \label{eq:pinfty}
\p_{\infty}:(\G_{\tE}\cap\G_{\infty})\bs \tE \to \G_{\tE} \bs \tE.
\end{equation}

Since $\infty\in \Lambda_{\bdp}(\G)$, there exists $T_{0}>0$ such that $\p_\infty$ restricted to $(\G_{\tE}\cap\G_{\infty})\bs 
\cB_{T}$ is proper and injective for all $T\geq T_{0}$.

Now since $F_2$ is a fundamental domain for $\Delta$ action on $L_{\tS}$ and $F_1$ is a fundamental domain for the action of 
$\{\bg^{\bk}:\bk\in\Z^r\}$ on $M_1$, the quotient map $\tE\to \Delta\bs \tE$ is injective on $\cup_{\abs{\bk}\geq N} \Vinv(\bg^{\bk}
(\cF_0))$.
Since $[\G_{\tS}\cap\G_{\infty}:\Delta]<\infty$ and
 $\p_{\infty}$ is injective on $(\G_{\tS}\cap \G_{\infty})\bs \cB_{T}$, for all sufficiently large $T\gg1$,
\begin{equation*}
\begin{array}{ll}
\muPS_{E}(\p(\cB_{T}))& =\muPS_{\G_{\tE}\bs\tE}(\p_{\infty}(\cB_{T})); \quad \text{see \eqref{eq:muEbar}}\\
& \gg \sum_{\abs{\bk}\geq N} \muPS_{\tE}(\Vinv(\bg^{\bk}(\cF_0))); \quad \text{by Proposition~\ref{prop:cB}}\\
& \gg \sum_{\bk\geq N} \abs{\bk}^{-\delta};  \quad \text{by Lemma \eqref{eq:e13}.}
\end{array}
\end{equation*}
This proves (2).
\end{proof}



\section{Parabolic co-rank and Criterion for finiteness of $\mu^{\PS}_E$} \label{sec:5}\label{subsec:corank}
Let $\G$ be non-elementary torsion free discrete subgroup of $G$. Let $\tS$, $\tE$ and $E$ be as in section  \ref{not:model}. In 
particular, $\tS$ is totally geodesic and the map $\G_{\tS}\bs \tS\to \G\bs \bH^{n}$ is proper.

\begin{definition}[Parabolic corank]
Define
\begin{equation*}
 \parcorank(\G_{\tS})=\max_{\xi\in\Lambda_{\rmp}(\G)\cap \partial (\tS)} \left(\rank(\G_\xi)-\rank(\G_\xi \cap \G_{\tS})\right).
 \end{equation*} 
 When $\Lambda_{\rmp}(\G)\cap \partial ( \tS)=\emptyset$, we set $\parcorank(\G_{\tS})=0$.
\end{definition}

\begin{lemma}[Corank Lemma]\label{corank} 
$\parcorank(\G_{\tS}) \leq \codim(\tS)$.
\end{lemma}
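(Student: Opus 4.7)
\medskip

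The plan is to reduce the lemma to a simple linear algebra inequality by working in the upper half space model. If $\Lambda_{\rmp}(\G)\cap\partial\tS=\emptyset$ the claim is trivial, so fix $\xi\in \Lambda_{\rmp}(\G)\cap\partial\tS$ and move it to $\infty$ via an isometry, adopting all the notation of Section \ref{not:model}. Let $k=\dim(\tS)$, so that $\tS$ is a vertical half-space sitting over an affine subspace $\partial\tS\setminus\{\infty\}$ of $\R^{n-1}$ of dimension $k-1$. Choosing $a\in\tS$ and the origin $0=P_{L}(\bfb(a))$ as in Section \ref{not:model} turns $W=\partial\tS\setminus\{\infty\}-\bfb(a)$ into a linear subspace of $\R^{n-1}$ of dimension $k-1$.

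First I would identify $\rank(\G_\xi)$ with $\dim(L)$, where $L$ is a $\G'_\infty$-minimal subspace: this is the content of Bieberbach's theorem recalled in Section \ref{subsec:bieberbach}, since $\G'_\infty$ has finite index in $\G_\xi=\G_\infty$ and acts as a cocompact lattice of translations on $L$. Next I would identify $\rank(\G_\xi\cap\G_{\tS})$ with $\dim(L_0)$, where $L_0:=L\cap W$; this is exactly Proposition~\ref{prop:parallel}, which says that $L_0$ is a $(\G'_\infty\cap G_{\tS})$-minimal subspace, combined with the observation that $\Delta:=\G'_\infty\cap G_{\tS}=\G'_\infty\cap\G_{\tS}$ has finite index in $\G_\xi\cap\G_{\tS}$ and acts cocompactly by translations on $L_0$.

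With those two identifications the lemma reduces to the elementary subspace dimension inequality: since $L$ and $W$ are linear subspaces of $\R^{n-1}$,
\[
\dim(L)-\dim(L\cap W)=\dim(L+W)-\dim(W)\leq (n-1)-(k-1)=n-k=\codim(\tS).
\]
Taking the maximum over $\xi\in\Lambda_{\rmp}(\G)\cap\partial\tS$ yields $\parcorank(\G_{\tS})\leq\codim(\tS)$.

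I do not expect any real obstacle here: the hard work has already been done in Proposition~\ref{prop:parallel}, which pins down the $\Delta$-minimal subspace inside $L$. Everything else is bookkeeping, the key point being only that once $\xi$ is moved to $\infty$ and the origin is chosen correctly, both $L$ and $W$ become linear subspaces of the same ambient $\R^{n-1}$, so that $\dim(L)-\dim(L\cap W)$ is bounded by $\codim_{\R^{n-1}}(W)=\codim_{\bH^n}(\tS)$.
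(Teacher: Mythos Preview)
Your proof is correct and follows essentially the same approach as the paper: fix $\xi$, move it to $\infty$, invoke Proposition~\ref{prop:parallel} to identify $\rank(\G_\xi\cap\G_{\tS})$ with $\dim(L_0)=\dim(L\cap W)$, and finish with the linear-algebra bound $\dim(L)-\dim(L\cap W)\le (n-1)-\dim(W)=\codim(\tS)$. Your write-up is in fact slightly more explicit than the paper's, spelling out the dimension formula $\dim(L)-\dim(L\cap W)=\dim(L+W)-\dim(W)$ and the finite-index identifications needed to pass between $\G_\xi$, $\G'_\infty$, and $\Delta$.
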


\begin{proof}
Suppose $\infty\in \Lambda_{\rmp}(\G)\cap\partial\tS$. Let $L$ be a $\G'_{\infty}$-minimal subspace of $\partial\bH^n\setminus
\infty$ and let $W$ be the intersection of a translate of $\partial\tS \setminus\{\infty\}$ through a point in $L$ . Then by 
Proposition~\ref{prop:parallel},
$\rank(\G'_{\infty})-\rank(\G_{\tS}\cap\G'_{\infty})=\dim(L)-\dim(W)\leq (n-1)-\dim(\partial\tS)
=n-\dim\tS$.
\end{proof}

\subsection{Finiteness criterion for geometrically finite $\G$}  For the rest of this section we further assume that $\G$ is 
geometrically finite.

\begin{theorem}\label{cstar-sb} \label{sb}
$\parcorank(\G_{\tS})=0 \Leftrightarrow \op{supp}(\mu_E^{\PS})$ is
compact.
\end{theorem}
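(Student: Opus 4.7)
The plan is to show that $\supp(\muPS_E)$ is compact in $E$ if and only if the set $\cE_{\partial\bH^n}$ from \eqref{eq:cEU} is relatively compact in $\tE$. Indeed, the fibers of $\tE\to\tS$ are spheres (hence compact), $\G_{\tE}=\G_{\tS}$, and $\cDa$ is a fundamental domain for the $\G_{\tS}$-action on $\tS$, so $\{v\in\tE:\pi(v)\in\cDa\}$ is a fundamental set for the $\G_{\tE}$-action on $\tE$. The standing properness assumption on $\G_{\tS}\bs\tS\to\G\bs\bH^n$ forces $\G_{\tE}\bs\tE\to E$ to be proper; combined with $\supp(\nu_o)=\Lambda(\G)$, the reduction is immediate.

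For the forward direction, assume $\parcorank(\G_{\tS})=0$ and suppose for contradiction that $\cE_{\partial\bH^n}$ is not relatively compact; let $v_i\in\cE_{\partial\bH^n}$ be a sequence with no convergent subsequence in $\tE$. By compactness of the fibers of $\tE\to\tS$, the sequence $\pi(v_i)$ has no convergent subsequence in $\tS$; after extraction, $\pi(v_i)\to\xi\in\partial\bH^n$, and $\xi\in\partial\tS$ since $\pi(v_i)\in\tS$. The geodesic through $v_i$ passes through $\pi(v_i)\to\xi$, so $v_i^+\to\xi$, and closedness of $\Lambda(\G)$ gives $\xi\in\Lambda(\G)\cap\partial\tS$. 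Proposition~\ref{prop:Dir-radial} excludes $\xi\in\Lambda_{\rmr}(\G)$, and geometric finiteness of $\G$ together with Theorem~\ref{bo} then forces $\xi\in\Lambda_{\bdp}(\G)\cap\partial\tS$. Placing $\xi$ at $\infty$ in the upper half space model, the hypothesis $\parcorank(\G_{\tS})=0$ reads $r_\infty=0$, so Proposition~\ref{prop:internal} produces a neighborhood $U$ of $\xi$ with $\cE_U=\emptyset$, contradicting $v_i^+\to\xi$ and $v_i\in\cE_{\partial\bH^n}$.

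For the reverse direction, argue by contrapositive: assume $\parcorank(\G_{\tS})\geq 1$ and fix $\xi\in\Lambda_{\rmp}(\G)\cap\partial\tS$ with $\rank(\G_\xi)-\rank(\G_\xi\cap\G_{\tS})\geq 1$. Place $\xi$ at $\infty$ in the upper half space model so that $r_\infty=r\geq 1$, matching the setup of Section~\ref{not:model}. By Proposition~\ref{prop:mupsEbounds}(2),
\[
\muPS_E(\p(\cB_T))\gg \sum_{\bk\in\Z^r\setminus\{0\}}\abs{\bk}^{-\delta}>0\quad\text{for all }T\gg 1,
\]
so $\supp(\muPS_E)\cap\p(\cB_T)\neq\emptyset$. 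Since $\infty$ is a parabolic fixed point of $\G$, the region $\{x\in\bH^n:\bfh(x)\geq T\}$ projects into the genuine cusp of $X=\G\bs\bH^n$ at the $\G$-orbit of $\infty$; hence $\p(\cB_T)$ eventually exits every compact subset of $\T^1(X)$ as $T\to\infty$. Therefore $\supp(\muPS_E)$ is not contained in any compact set.

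The heavy geometric content is already absorbed into Section~\ref{not:model} via Propositions~\ref{prop:internal} and \ref{prop:mupsEbounds}; the remaining work is the packaging argument above. The only subtle point is excluding escape of $\cE_{\partial\bH^n}$ through radial limit points in the forward direction, which is exactly the content of Proposition~\ref{prop:Dir-radial} combined with the dichotomy $\Lambda(\G)=\Lambda_{\rmr}(\G)\cup\Lambda_{\bdp}(\G)$ of Theorem~\ref{bo}.
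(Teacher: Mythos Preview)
Your argument is correct and follows essentially the same route as the paper: both directions hinge on Propositions~\ref{prop:Dir-radial}, \ref{prop:internal}, and \ref{prop:mupsEbounds}(2) exactly as you use them, and your reduction to relative compactness of $\cE_{\partial\bH^n}$ is what the paper does implicitly.

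One step deserves a better justification. In the forward direction you write ``the geodesic through $v_i$ passes through $\pi(v_i)\to\xi$, so $v_i^+\to\xi$''; but a geodesic based near a boundary point need not have its forward endpoint near that point, so this reasoning as stated is insufficient. The paper sidesteps this by extracting first on the compact set $\Lambda(\G)$ so that $v_m^+\to\xi$, and then reading off $\pi(v_m)\to\xi$ from \eqref{eq:v+}. If you prefer your order, argue directly in the upper half space model with $\xi=\infty$: since $\bfb(v_i)-\bfb(a)\in W$ and $\sigma(v_i)\in W^\perp$ are orthogonal, $\lVert v_i^+-\bfb(a)\rVert^2=\lVert \bfb(v_i)-\bfb(a)\rVert^2+\bfh(v_i)^2$, so $v_i^+\to\infty$ whenever $\pi(v_i)=(\bfb(v_i),\bfh(v_i))\to\infty$.
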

\begin{proof}
Suppose that $\supp(\muPS_{E})$ is not compact. Fix a Dirichlet domain $\cDa$ for the $\G_{\tS}$ action on $\tS$. Since the 
projection of $\G_{\tE}\bs\tE$ into $\G\bs\T^{1}(\bH^{n})$ is proper, there exists an unbounded sequence $v_m\in \tilde E$ with 
$\pi(v_m)\in \cDa$ and $v_m^+\in \Lambda(\G)$. Since $\Lambda(\G)$ is compact, by passing to a subsequence, we assume 
that $v_m^+\to \xi$ for some $\xi\in \Lambda(\G)$. Thus for any neighborhood $U$ of $\xi$ in $\partial\bH^{n}$, we have 
$v_{m}\in \cE_{U}$ for all large $m$. 

Consider the upper half space model $\bH^n=\R^{n-1}\times \R_{>0}$ with $\xi$ identified with $\infty$ as in \S\ref{not:model}. 
As $v_{m}^{+}\to\xi=\infty$, by \eqref{eq:v+} we have $\norm{\bfb(v_{m})}\to \infty$ or
 $\bfh(v_{m})\to\infty$ (see \eqref{eq:bh} for notation) and hence $\pi(v_m)\to \infty=\xi$.
Therefore $\xi\in \partial(\cDa)$. By Proposition~\ref{prop:Dir-radial}, $\xi\not\in \Lambda_{\rmr}(\G)$. Since $\G$ is 
geometrically finite, by Theorem~\ref{bo}, $\xi \in \Lambda_{\bdp}(\G)\cap \partial\cDa$. Now by Proposition~\ref{prop:internal}, 
 $\parcorank(\G_{\tS})\ne 0$.

To prove the converse, suppose that there exists $\xi\in \Lambda_{\bdp}(\G)\cap\partial{\tS}$ such that $r=\rank(\G_{\xi})-
\rank(\G_{\xi}\cap \G_{\tS})\geq 1$. Without loss of generality, we may assume $\xi=\infty$.
 Fix $T_{0}>1$. The map $\p_\infty$ as in \eqref{eq:pinfty} restricted to $(\G_{\tE}\cap \G_\infty)\bs \cB_{T_{0}}$ is proper 
 (see \eqref{eq:bt3} for notation).
 Therefore for any compact subset $\Omega$ of $\G_{\tE}\backslash \tE$, we have
 $\p_\infty(\cB_{T})\cap \Omega=\emptyset$ for all sufficiently large $T>T_0$.
  By Proposition \ref{prop:mupsEbounds}(2),
\[
\muPS_E(\p(\cB_{T}))\gg \sum_{\bk\in\Z^r\setminus\{0\}} \abs{\bk}^{-\delta}>0.
\]
Therefore $\supp(\muPS_E)$ intersects $\p(\cB_{T})$ for all large $T\gg 1$. Since the projection of $\G_{\tE}\bs\tE$ into 
$\G\bs \T^{1}(\bH^{n})$ is proper, $\supp(\muPS_E)$ is noncompact.
\end{proof}

\begin{theorem}  \label{stfinite}
$\parcorank(\G_{\tS})<\delta \Leftrightarrow \abs{\muPS_E}<\infty$.
\end{theorem}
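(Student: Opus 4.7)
My plan is to deduce both directions of the equivalence from Proposition~\ref{prop:mupsEbounds}, using part (2) as a lower bound at a ``bad'' cusp for the converse and part (1) as a summable upper bound at a ``good'' cusp for the forward direction. For the implication $(\Leftarrow)$ I argue by contrapositive. Assume $\parcorank(\G_{\tS})\ge\delta$, and pick $\xi\in\Lambda_{\rmp}(\G)\cap\partial\tS$ attaining the maximum, so that $r:=\rank(\G_\xi)-\rank(\G_\xi\cap\G_{\tS})\ge\delta$. Geometric finiteness of $\G$ and Theorem~\ref{bo} give $\Lambda_{\rmp}(\G)=\Lambda_{\bdp}(\G)$, so we may conjugate to place $\xi=\infty$ in the upper half-space model; the setting of \S\ref{not:model} then applies with $r_\infty=r$, and Proposition~\ref{prop:mupsEbounds}(2) gives, for all sufficiently large $T$,
$$\muPS_E(\p(\cB_T))\gg\sum_{\bk\in\z^r\setminus\{0\}}\abs{\bk}^{-\delta}=+\infty.$$
Hence $\abs{\muPS_E}=\infty$, as required.

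For $(\Rightarrow)$, assume $\parcorank(\G_{\tS})<\delta$ and fix a Dirichlet domain $\cDa$ for $\G_{\tS}=\G_{\tE}$ acting on $\tS$. Since $\pi\colon\tE\to\tS$ is a $\G_{\tE}$-equivariant fibration with compact fiber, $\pi^{-1}(\cDa)\cap\tE$ is a measurable fundamental domain for $\G_{\tE}$ on $\tE$, and since $\supp(\nu_o)=\Lambda(\G)$ we obtain
$$\abs{\muPS_E}=\muPS_{\tE}\bigl(\{v\in\tE:\pi(v)\in\cDa,\ v^+\in\Lambda(\G)\}\bigr)=\muPS_{\tE}(\cE_{\partial\bH^n}),$$
with $\cE_U$ as in \eqref{eq:cEU}. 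By Proposition~\ref{prop:Dir-radial} together with Theorem~\ref{bo}, accumulation of $\cE_{\partial\bH^n}$ onto $\partial\bH^n$ can only occur at points of $\Lambda_{\bdp}(\G)\cap\partial\cDa$. Combining geometric finiteness of $\G_{\tS}$ (Theorem~\ref{gff}) to control the internal cusps with the finiteness of $\G$-conjugacy classes of parabolic subgroups to control the external ones (those $\xi$ with $\G_\xi\cap\G_{\tS}$ trivial, cf.\ Proposition~\ref{sstar}), one reduces to a finite list of points $\xi_1,\ldots,\xi_m\in\Lambda_{\bdp}(\G)\cap\partial\cDa$.

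For each $\xi_j$, placing $\xi_j=\infty$ by conjugation and applying Proposition~\ref{prop:mupsEbounds}(1) with $r_j:=\rank(\G_{\xi_j})-\rank(\G_{\xi_j}\cap\G_{\tS})\le\parcorank(\G_{\tS})<\delta$ gives
$$\muPS_E\bigl(\p(\cE_{U^{(j)}_{c_0N_j}})\bigr)\ll\sum_{\bk\in\z^{r_j}\setminus\{0\}}\abs{\bk}^{-\delta}<\infty;$$
the complement $\cE_{\partial\bH^n}\setminus\bigcup_j\cE_{U^{(j)}_{c_0N_j}}$ has relatively compact image in $E$ and thus finite $\muPS_E$-mass by local finiteness of $\muPS_{\tE}$ on compact sets. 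Summing yields $\abs{\muPS_E}<\infty$. The main obstacle I anticipate is the precise control of \emph{external} parabolic fixed points on $\partial\cDa$: these points are not cusps of $\G_{\tS}$ at all, yet they contribute to $\parcorank(\G_{\tS})$ through $\rank(\G_\xi)$ and appear along $\partial\cDa$ as ``pinches'' through which $\cDa$ escapes to infinity in a direction that is non-cuspidal for $\G_{\tS}$. Verifying that only finitely many such pinches occur and that the corank bound $r_j\le\parcorank(\G_{\tS})$ is actually attained by them requires a double-coset finiteness argument for $\G_{\tS}\bs\G/\G_\xi$ applied to each $\G$-orbit of parabolic fixed points meeting $\partial\tS$, together with a careful identification of $r_j$ with the parabolic co-rank of the corresponding $\G$-orbit.
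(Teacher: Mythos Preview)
Your contrapositive argument for $(\Leftarrow)$ is essentially identical to the paper's: pick $\xi$ realising the maximal corank, place it at $\infty$, and invoke Proposition~\ref{prop:mupsEbounds}(2) to get a divergent lower bound. No issues there.

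For $(\Rightarrow)$ your overall shape is also the paper's---localise to a Dirichlet domain $\cDa$ and control each cusp neighborhood via Proposition~\ref{prop:mupsEbounds}(1)---but the ``main obstacle'' you single out is a phantom. You try to \emph{enumerate} the points of $\Lambda_{\bdp}(\G)\cap\partial\cDa$ and then worry about proving the list is finite, proposing a double-coset argument on $\G_{\tS}\backslash\G/\G_\xi$. The paper bypasses all of this with a one-line compactness argument: $\Lambda(\G)\cap\partial\cDa$ is a closed subset of the compact space $\partial\bH^n$, hence compact. It then shows that \emph{every} $\xi$ in this compact set admits a neighborhood $U$ with $\muPS_E(\p(\cE_U))<\infty$ (using Proposition~\ref{prop:Dir-radial} and Theorem~\ref{bo} to conclude $\xi\in\Lambda_{\bdp}(\G)$, then Proposition~\ref{prop:internal} if $r=0$ or Proposition~\ref{prop:mupsEbounds}(1) if $1\le r<\delta$). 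A finite subcover of $\Lambda(\G)\cap\partial\cDa$ by such $U$'s exists automatically, and the complement $\cE_{\partial\bH^n}\setminus\bigcup\cE_{U_j}$ sits over a compact piece of $\tE$ and so has finite mass. No enumeration of parabolic points, no double-coset finiteness, no distinction between internal and external cusps is needed at this stage.

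So your proof would go through once you replace the attempted enumeration by this compactness step; the double-coset argument you sketch is neither needed nor obviously easier to complete.
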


\begin{proof}
Suppose that $\parcorank(\G_{\tS})\ge \delta>0$. Then there exists $\xi\in \Lambda_{\bdp}(\G)\cap\tS$ such that 
$r:=\rank(\G_{\xi})-\rank(\G_{\xi}\cap \G_{\tS})\geq \max\{\delta,1\}$. Without loss of generality, we may assume $\xi=\infty$. By the 
second part of the proof of Theorem \ref{sb}, for all sufficiently large $T\gg 1$, since $r\geq \delta$,
\[
\abs{\muPS_{E}}\geq \muPS_E(\p(\cB_{T}))\gg \sum_{\bk\in\Z^r\setminus\{0\}} \abs{\bk}^{-\delta}=\infty.
\]

Now suppose that $\parcorank(\G_{\tS})<\delta$.
By the compactness of $\Lambda(\G)\cap\partial(\cDa)$,
where $\cDa$ is a fixed Dirichlet domain for $\G_{\tS}$,
to prove finiteness of $\muPS_E$, it suffices to show that
for every $\xi\in \Lambda(\G)\cap \partial(\cDa)$,
 there exists a neighborhood $U$ of $\xi$ in $\partial{\bH^n}$ such that
$\muPS_E(\p(\cE_U))<\infty$ with $\cE_U$ defined as in \eqref{eq:cEU}. By Proposition~\ref{prop:Dir-radial} and 
Theorem~\ref{gff}, $\xi\in\Lambda_{\bdp}(\G)$. Let $r:=\rank(\G_{\xi})-\rank(\G_{\xi}\cap \G_{\tS})$.
If $r=0$ then by Proposition~\ref{prop:internal}, there exists a neighborhood 
$U$ of $\xi$ such that $\cE_U=\emptyset$. Therefore we assume that $\delta>r\geq 1$. 
 By Proposition~\ref{prop:mupsEbounds}(1),  there exists a neighborhood $U$ of $\xi$ such that
\[
\muPS_E(\p(\cE_U))\ll \sum_{\bk\in\Z^r\setminus\{0\}} \abs{\bk}^{-\delta}<\infty.
\]
\end{proof}

%
\subsection{Finiteness of $\abs{\mu_E^{\Leb}}$ and $\abs{\mu_E^{\PS}}$.} 
\begin{theorem} \label{infvol}
Let $\tS$ be any totally geodesic immersion in $\bH^{n}$.
Suppose that $\dim(\tS)\geq (n+1)/2$ and $\abs{\mu_E^{\Leb}}<\infty$. Then $ \abs{\mu_E^{\PS}}<\infty$.
\end{theorem}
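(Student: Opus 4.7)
The plan is to invoke the finiteness criterion Theorem~\ref{stfinite} and reduce the claim to showing $\parcorank(\G_{\tS})<\delta$. The dimensional assumption $\dim(\tS)\ge(n+1)/2$ is precisely what is needed to combine the Corank Lemma~\ref{corank} with the Dal'bo--Otal--Peign\'e inequality $\delta>\tfrac{1}{2}\rank(\G_\xi)$ to bound the parabolic co-rank by $\delta$.

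First I would translate the hypothesis $|\mu_{E}^{\Leb}|<\infty$ into the group-theoretic statement that $\G_{\tS}$ is a lattice in $G_{\tS}$. Since $\{m_{x}\}$ is the $G$-invariant conformal density of dimension $n-1$, the measure $\mu_{\tE}^{\Leb}$ defined in \eqref{eq:muELeb} is $G$-invariant, and in particular $G_{\tS}$-invariant. Because $G_{\tS}$ acts transitively on $\tE$ with compact isotropy $M_{\tE}$, this $G_{\tS}$-invariant measure coincides, up to scale, with a Haar-induced measure on $G_{\tS}/M_{\tE}$. The standing properness of $\G_{\tS}\backslash\tS\to\G\backslash\bH^n$ also implies the injectivity and properness of the induced map $\G_{\tS}\backslash\tE\to\T^{1}(\G\backslash\bH^n)$, so the pushforward $\mu_{E}^{\Leb}$ has the same total mass as $\mu_{\G_{\tS}\backslash\tE}^{\Leb}$, which is finite iff $\G_{\tS}$ has finite covolume in $G_{\tS}$. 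In that case $\Lambda(\G_{\tS})=\partial\tS$, and every parabolic fixed point of $\G_{\tS}$ (if any exist) has full rank $d-1$, where $d=\dim\tS$.

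Next, I fix any $\xi\in\Lambda_{\rmp}(\G)\cap\partial\tS$. Since $\G$ is geometrically finite, Theorem~\ref{bo} gives $\xi\in\Lambda_{\bdp}(\G)$. As $\xi\in\partial\tS=\Lambda(\G_{\tS})$, the dichotomy in Proposition~\ref{sstar} rules out the trivial-intersection branch, forcing $\G_\xi\cap\G_{\tS}$ to be infinite and $\xi\in\Lambda_{\bdp}(\G_{\tS})$; hence $\rank(\G_\xi\cap\G_{\tS})=d-1$. Setting $r=\rank(\G_\xi)-(d-1)$, the Corank Lemma~\ref{corank} gives $r\le n-d$. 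The hypothesis $d\ge(n+1)/2$ then yields $d-1\ge n-d\ge r$, so $\rank(\G_\xi)=(d-1)+r\ge 2r$, and the strict DOP inequality $\delta>\rank(\G_\xi)/2$ delivers $\delta>r$. Taking the maximum over $\xi\in\Lambda_{\rmp}(\G)\cap\partial\tS$ (a finite set modulo $\G$ since $\G$ is geometrically finite) gives $\parcorank(\G_{\tS})<\delta$, and Theorem~\ref{stfinite} closes the argument.

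The main obstacle is the first step: identifying $|\mu_{E}^{\Leb}|$ with the covolume of $\G_{\tS}$ in $G_{\tS}$. Although morally clear, this requires carefully tracking the $G_{\tS}$-transitivity on $\tE$, matching the conformally defined $\mu_{\tE}^{\Leb}$ with a Haar-induced measure on $G_{\tS}/M_{\tE}$, and verifying that the pushforward $\mu_{\G_{\tS}\backslash\tE}^{\Leb}\to\mu_{E}^{\Leb}$ preserves total mass under the properness hypothesis. Once this lattice conclusion on $\G_{\tS}$ is secured, the remainder is a routine combination of Proposition~\ref{sstar}, Theorem~\ref{bo}, the Corank Lemma, and DOP, exploiting the dimensional slack in $d\ge(n+1)/2$ that lets the DOP half-rank bound dominate the co-rank.
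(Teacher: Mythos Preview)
Your proposal is correct and follows essentially the same route as the paper: deduce that $\G_{\tS}$ is a lattice in $G_{\tS}$ from $|\mu_E^{\Leb}|<\infty$, use that cusps of a lattice have full rank $d-1$, combine the Corank Lemma with the Dal'bo--Otal--Peign\'e inequality $\delta>\tfrac12\rank(\G_\xi)$ to get $\parcorank(\G_{\tS})<\delta$, and conclude via Theorem~\ref{stfinite}.

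One small point: you invoke properness of $\G_{\tS}\backslash\tS\to\G\backslash\bH^n$ as a standing hypothesis of the section, whereas the paper (reading the phrase ``any totally geodesic immersion'' as relaxing that hypothesis) re-derives it from the lattice conclusion $\Lambda(\G_{\tS})=\partial\tS$ via Theorem~\ref{thm:finite-closed} and Remark~\ref{rem:proper}(2). Either reading is fine, but note that the properness is genuinely needed downstream, since Theorem~\ref{stfinite} and Proposition~\ref{sstar} are stated under that assumption. Also, your remark that properness/injectivity is needed to match $|\mu_E^{\Leb}|$ with the covolume is slightly overcautious: by the definition \eqref{eq:muEbar}, $\mu_E^{\Leb}$ is the pushforward of $\mu_{\G_{\tE}\backslash\tE}^{\Leb}$, and pushforward of a positive measure always preserves total mass.
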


\begin{proof} Since $\G_{\tS}$ is a lattice in $G_{\tS}$,  $\Lambda(\G_{\tS})=\partial{\tS}$.  Hence by 
Theorem~\ref{thm:finite-closed}, the natural map $\p:\G_{\tS}\bs \tS\to \G\bs\bH^{n}$ is proper.

Let $k:=\op{dim}(\tS)\geq \lceil (n+1)/2\rceil\geq 2$. By a property of a lattice in rank one Lie group $G_{\tS}$,  
$\rank(\G_{\tS}\cap \G_{\xi})=k-1$ (cf.~\cite[\S13.8]{Raghunathanbook}). Therefore by Lemma~\ref{corank},
\[
r:=\parcorank(\G_{\tS})\leq n-k\leq n-(n+1)/2\leq (n-1)/2.
\]
Let $\xi\in \partial (\tilde{S})\cap\Lambda_{\bdp}(\G_{\tS})$ be such that $\rank(\G_{\tS}\cap\G_{\xi})=r$. Then 
$\rank(\G_{\xi})\geq (k-1)+r$. By  a result of Dalbo, Otal and Peign~\cite[Proposition~2]{DalboOtalPeign},
\[
\delta> \rank(\G_{\xi})/2\geq ((k-1)+r)/2\geq (k-1+(n-k))/2=(n-1)/2\geq r.
\]
Hence by Theorem~\ref{stfinite}(2), $|\muPS_{E}|$ is finite.
\end{proof}

As an immediate corollary, we state:
\begin{corollary}
Let $n=2, 3$. Then $\abs{\mu_E^{\Leb}}<\infty$ implies that $\abs{\mu_E^{\PS}}<\infty$.
\end{corollary}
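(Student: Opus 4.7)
The plan is a case analysis on $k := \dim(\tS)$. Since $\tS$ is a proper totally geodesic subspace, $0 \le k \le n-1$, which for $n=2,3$ gives $k\in\{0,1\}$ and $k\in\{0,1,2\}$ respectively. I would split into the ``large dimension'' range $k \geq (n+1)/2$ (which for $n=2,3$ occurs only when $n=3$ and $k=2$) and the ``small dimension'' range $k \in \{0,1\}$. The large range is handled immediately by Theorem~\ref{infvol}. The small range I would handle by showing $\parcorank(\G_{\tS}) = 0$ and then invoking Theorem~\ref{stfinite}, using that $\G$ non-elementary gives $\delta > 0$.

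In the small range: if $k = 0$, then $\tS$ is a point, $\partial\tS = \emptyset$, and $\parcorank(\G_{\tS}) = 0$ by definition. If $k = 1$, then $\tS$ is a complete geodesic with $\partial\tS = \{p_1, p_2\}$. The hypothesis $\abs{\mu_E^{\Leb}} < \infty$ translates into finite Riemannian volume of $\G_{\tS}\bs\tS$ (since the normal sphere fiber $S^{n-2}$ in $\tE\to\tS$ is compact), so $\G_{\tS}$ is a lattice in $G_{\tS}$. Such a lattice must contain a hyperbolic element whose axis is $\tS$, so both $p_1$ and $p_2$ lie in $\Lambda_{\rmr}(\G_{\tS}) \subset \Lambda_{\rmr}(\G)$. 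Since the radial and parabolic limit sets of any discrete group are disjoint, $\Lambda_{\rmp}(\G) \cap \partial\tS = \emptyset$, and hence $\parcorank(\G_{\tS}) = 0$. Theorem~\ref{stfinite} then gives $\abs{\muPS_E} < \infty$.

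The only step requiring some care is the ``lattice equivalence'' in the $k=1$ case, i.e.\ the passage from $\abs{\mu_E^{\Leb}} < \infty$ to ``$\G_{\tS}$ is a lattice in $G_{\tS}$.'' This is routine because $G_{\tS}$ is $\R$ for $n=2$ and $\R \times M$ with $M\cong S^1$ compact for $n=3$, acting on $\tS$ through the $\R$-factor by translations with compact kernel, so finite Riemannian volume on $\G_{\tS}\bs \tS$ is equivalent to finite Haar covolume of $\G_{\tS}$ in $G_{\tS}$. Once this observation is in place, the case split above exhausts all configurations for $n=2,3$ and the corollary follows.
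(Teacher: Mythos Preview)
Your proof is correct. The paper presents this as an immediate corollary of Theorem~\ref{infvol} without further argument, but that theorem requires $\dim(\tS)\ge(n+1)/2$, which for $n=2$ excludes every proper totally geodesic $\tS$ and for $n=3$ only covers $k=2$. Your case split supplies the remaining cases $k\in\{0,1\}$: for $k=0$ one has $\partial\tS=\emptyset$ so $\parcorank(\G_{\tS})=0$ trivially, and for $k=1$ you correctly observe that $\abs{\mu_E^{\Leb}}<\infty$ forces $\G_{\tS}$ to contain a hyperbolic element with axis $\tS$, so both endpoints of $\tS$ lie in $\Lambda_{\rmr}(\G)$ and hence not in $\Lambda_{\rmp}(\G)$ (your claim that the radial and parabolic limit sets of any discrete group are disjoint is in fact correct in general; in any case it certainly holds in the geometrically finite setting assumed throughout this subsection). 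Then $\parcorank(\G_{\tS})=0<\delta$ and Theorem~\ref{stfinite} finishes. So your argument uses Theorem~\ref{infvol} where it literally applies and carefully fills in the low-dimensional cases the paper leaves implicit.
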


To deduce that $\op{sk_\G}(w_0)>0$, when $w_0\G$ is
infinite in Theorem \ref{m11} we need the following. Here $\G$ need not be geometrically finite. 

\begin{proposition}\label{nempty}
 If $[\G:\G_{\tS}]=\infty$, then
$\Lambda(\G)\not\subset  \partial _\infty (\tS)$, and  $\abs{\mu_E^{\PS}}>0$.
\end{proposition}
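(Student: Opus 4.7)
My plan is to establish the contrapositive of the first assertion: assuming $\Lambda(\G)\subset\partial\tS$, I will derive $[\G:\G_{\tS}]<\infty$, contradicting the hypothesis. Granting this, the second assertion follows quickly, since $\nu_{o}$ has full support on the $\G$-minimal set $\Lambda(\G)$, so nonemptiness of the relatively open subset $\Lambda(\G)\setminus\partial\tS$ gives $\nu_{o}(\Lambda(\G)\setminus\partial\tS)>0$; combined with $\V(\tE)=\partial\bH^{n}\setminus\partial\tS$ (Lemma~\ref{lemma:diffeo}) and the defining integral \eqref{eq:muEPS}, this yields $|\mu^{\PS}_{\tE}|>0$, which passes to $|\mu^{\PS}_{E}|>0$ since the pushforward preserves total mass.

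For the contrapositive, let $C\subset\partial\bH^{n}$ be the smallest subsphere containing $\Lambda(\G)$ and let $\tilde{C}\subset\bH^{n}$ be the totally geodesic subspace with $\partial\tilde{C}=C$. Mimicking the opening of the proof of Theorem~\ref{thm:finite-closed}, for each $\gamma\in\G$ the set $\gamma C$ is a subsphere containing $\gamma\Lambda(\G)=\Lambda(\G)$, so minimality forces $\gamma C=C$ and therefore $\G\subset G_{C}=G_{\tilde{C}}$. The assumption $\Lambda(\G)\subset\partial\tS$ gives $C\subset\partial\tS$ and hence $\tilde{C}\subset\tS$.

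The heart of the argument is to recognize that the $\G$-orbit of $[\tS]$ in $G/G_{\tS}$ lives inside a compact Grassmannian. Since $\G\subset G_{\tilde{C}}$, we have $\G\cdot[\tS]\subset G_{\tilde{C}}\cdot[\tS]\cong G_{\tilde{C}}/(G_{\tilde{C}}\cap G_{\tS})$; this quotient parametrizes the totally geodesic subspaces of dimension $\dim\tS$ that contain $\tilde{C}$, equivalently, the position of the $(\dim\tS-\dim\tilde{C})$-plane of $\tS$-directions inside the $(n-\dim\tilde{C})$-dimensional normal of $\tilde{C}$. Since the identity component of $G_{\tilde{C}}$ factors as $\SO^{0}(\dim\tilde{C},1)\times\SO(n-\dim\tilde{C})$ with the first factor contained in $G_{\tilde{C}}\cap G_{\tS}$, this quotient reduces to a Grassmannian sitting inside the compact group $\SO(n-\dim\tilde{C})$, and is therefore compact; being the continuous image of a compact space in the Hausdorff space $G/G_{\tS}$, it also embeds closedly there. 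The standing properness assumption of Section~\ref{not:model} gives $\G G_{\tS}$ closed in $G$, so $\G\cdot[\tS]$ is closed in $G/G_{\tS}$, and hence is a closed, therefore compact, subset of the compact Grassmannian $G_{\tilde{C}}/(G_{\tilde{C}}\cap G_{\tS})$.

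To finish, observe that $\G\cdot[\tS]$ is countable since $\G$ is, so applying the Baire category theorem to the countable compact Hausdorff space $\G\cdot[\tS]$ yields an isolated point; transitivity of the $\G$-action on its own orbit then promotes this to every point being isolated, so the orbit is discrete, and a compact discrete set is finite. Hence $[\G:\G_{\tS}]=|\G\cdot[\tS]|<\infty$, the desired contradiction. I expect the main subtlety in carrying out this plan to be the clean identification of $G_{\tilde{C}}/(G_{\tilde{C}}\cap G_{\tS})$ as a compact Grassmannian closedly embedded in $G/G_{\tS}$, since this is what translates the abstract properness hypothesis into genuine ambient compactness; once that setup is in place, closedness of $\G G_{\tS}$ plus Baire reduces the remaining work to elementary topology.
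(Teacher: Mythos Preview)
Your argument is correct, but it takes a substantially more elaborate route than the paper's proof. The paper's argument for the contrapositive is a three-line observation: since $\G$ is non-elementary, pick two distinct points $\xi_1,\xi_2\in\Lambda(\G)\subset\partial\tS$ and let $L\subset\tS$ be the geodesic joining them; for any $\gamma\in\G$, the geodesic $\gamma L$ joins $\gamma\xi_1,\gamma\xi_2\in\Lambda(\G)\subset\partial\tS$, so $\gamma L\subset\tS$ as well; fixing $x_0\in L$, this gives $\G x_0\subset\tS$, and properness of $\G_{\tS}\backslash\tS\to\G\backslash\bH^n$ applied to the single point $\G x_0$ forces the fibre $\G_{\tS}\backslash\G x_0\cong\G_{\tS}\backslash\G$ to be finite.

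Your approach replaces this pointwise argument by a global one: you pass to the minimal subsphere $C\supset\Lambda(\G)$, observe $\G\subset G_{\tilde C}$, and then interpret the orbit $\G\cdot[\tS]$ inside the compact Grassmannian $G_{\tilde C}/(G_{\tilde C}\cap G_{\tS})$ of $\dim\tS$-dimensional totally geodesic subspaces containing $\tilde C$; closedness of $\G G_{\tS}$ plus Baire then finishes. This is valid, and the identification of the Grassmannian and its closed embedding in $G/G_{\tS}$ are correct as you outline them, but none of this extra structure is needed: the key point is simply that a single $\G$-orbit of a \emph{point} already lies in $\tS$, which properness converts directly into finiteness of the index. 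Your use of Baire on a countable compact orbit is a nice trick but is doing the same work as ``compact plus discrete implies finite'' in the paper's version. Your treatment of the second assertion ($|\mu_E^{\PS}|>0$ via $\nu_o(\Lambda(\G)\setminus\partial\tS)>0$ and Lemma~\ref{lemma:diffeo}) is correct and matches what the paper leaves implicit.
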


\begin{proof}
Suppose on the contrary that $\Lambda(\G)\subset \partial _\infty( \tS)$.
Let $L$ be a geodesic joining two distinct points say $\xi_1,\xi_2\in \Lambda(\G)$. Then $L
\subset \tS$. For any $\g\in\G$, we have $\g L$ is the geodesic joining $\g\xi_1$ and $\g
\xi_2$, and hence $\g L\subset \tS$. Now fix $x_0\in L$. Then $\G x_0\subset \tS$. Since $
\G_{\tS}\bs\tS\to \G\bs\bH^n$ is a proper map, we get that $\G_{\tS}\bs \G$ is finite, a
contradiction to our assumption.
\end{proof}


\section{Orbital counting for discrete hyperbolic groups} \label{cs}
As before, let $G=\SO(n,1)^\circ$ for $n\ge 2$ and $\G$ a torsion-free, non-elementary, discrete subgroup of $G$.

\subsection{Computation with $\tilde m^{\BR}$}\label{cmbr}
Let $K$ be a maximal compact subgroup of $G$. Let $o\in\bH^{n}$ be such that $K=G_{o}$.
Then $G/K\cong \bH^{n}$. Let $X_{0}\in \T^{1}_{o}(\bH^{n})$ and $M=G_{X_{0}}$. Then $G/M\cong \T^{1}(\bH^{n})$, where 
$g[M]=gX_{0}$. Let $A=\{a_{r}:r\in\R\}\subset Z_{G}(M)$ be a
one-parameter subgroup of $G$ consisting of diagonalizable elements such that $
\gr(X_{0})=a_{r}[M]$. Via the map $k\mapsto kX_{0}^{+}$, we have $K/M\cong \partial{\bH^n}$.

Let $N<G$ be the expanding horospherical subgroup with respect to the right $a_r$-action;
that is,
\begin{equation} \label{eq:Nhoro}
N:=\{g\in G: a_r g a_r^{-1} \to e\quad\text{as $r\to \infty$}\}.
\end{equation}
The $N$-leaves $gNM/M$ correspond to unstable horospheres $\cH^{+}_{gX_{0}}$
in $\T^1(G/K)=G/M$ based at $gX_{0}^{-}$.  The map $N\ni z\mapsto zX_0^+\in \partial{\bH^n}\setminus\{X_0^-\}$ is a 
diffeomorphism.

As before let $m_{o}$ denote the $G$-invariant (Lebesgue) conformal density $\{m_x\}_{x\in\bH^n}$ on $\partial\bH^n$. 
We normalize it so that ${m_{o}}$ (and hence every $m_{x}$) is a probability measure. 
Here $m_{o}$ is $K$-invariant.

\begin{lemma} \label{lemma:N-Leb}
 For any $g\in G$, consider the measure $\lambda_g$ on $N$ given by
\[
d\lambda_g(z)=e^{(n-1)\beta_{gzX_0^+} (o, gz(o))} dm_o(gzX_0^+), \text{ where $z\in N$};
\]
Then $\lambda_{g}=\lambda_{e}$. In particular $\lambda_{e}$ is a Haar measure on $N$ which we shall denote by the integral 
$dn=d\lambda_{e}(n)$ on $N$
\end{lemma}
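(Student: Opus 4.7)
The argument is a direct change of variables using the three structural properties of the Lebesgue conformal density $\{m_x\}$: the $G$-equivariance $g_* m_x = m_{gx}$, the conformal Radon--Nikodym identity $dm_x/dm_y(\xi) = e^{(n-1)\beta_\xi(y,x)}$, and the $G$-invariance $\beta_{g\eta}(gx,gy) = \beta_\eta(x,y)$ of the Busemann function together with its cocycle relation. There is no classification or analytic input; everything is functorial bookkeeping.

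First I will interpret $\lambda_g$ as the pullback of a boundary measure under the diffeomorphism $\psi_g: N \to \partial{\bH^n}\setminus \{gX_0^-\}$ given by $\psi_g(z) = gzX_0^+$, so that
\[
d\lambda_g(z) = e^{(n-1)\beta_{\psi_g(z)}(o, gz(o))}\, d\psi_g^*m_o(z).
\]
Writing $\psi_g = L_g \circ \psi_e$, where $L_g$ denotes the action of $g$ on $\partial{\bH^n}$, the $G$-equivariance $(L_{g^{-1}})_* m_o = m_{g^{-1}o}$ yields $\psi_g^*m_o = \psi_e^*\, m_{g^{-1}o}$. The conformal Radon--Nikodym formula then produces the multiplier
\[
d\psi_g^*m_o(z) = e^{-(n-1)\beta_{zX_0^+}(g^{-1}o,\, o)}\, d\psi_e^* m_o(z).
\]

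Simultaneously, I rewrite the exponent of the integrand by $G$-invariance and the cocycle identity for $\beta$:
\[
\beta_{gzX_0^+}(o, gz(o)) = \beta_{zX_0^+}(g^{-1}o, z(o)) = \beta_{zX_0^+}(g^{-1}o,\, o) + \beta_{zX_0^+}(o, z(o)).
\]
The two occurrences of $\beta_{zX_0^+}(g^{-1}o, o)$ appear with opposite signs and cancel, leaving
\[
d\lambda_g(z) = e^{(n-1)\beta_{zX_0^+}(o, z(o))}\, d\psi_e^*m_o(z) = d\lambda_e(z).
\]
This is the only delicate point of the proof: the base-point shift from $o$ to $g^{-1}o$ occurs once in the Busemann cocycle and once in the conformal factor, and the two precisely annihilate.

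Finally, for the ``in particular'' claim, applying the identity $\lambda_{n_0}=\lambda_e$ with $n_0\in N$ and performing the translation $z\mapsto n_0 z$ directly in the defining integral gives $\lambda_{n_0} = (L_{n_0^{-1}})_*\lambda_e$; combined with $\lambda_{n_0}=\lambda_e$ this shows $\lambda_e$ is left $N$-invariant, and since $N\cong \R^{n-1}$ is abelian $\lambda_e$ is a (bi-invariant) Haar measure on $N$.
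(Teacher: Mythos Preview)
Your proof is correct and follows essentially the same route as the paper's own proof. Both arguments combine the $G$-equivariance of the Lebesgue density, the conformal Radon--Nikodym relation, and the $G$-invariance of the Busemann function to reduce $d\lambda_g(z)$ to $e^{(n-1)\beta_{zX_0^+}(o,z(o))}\,dm_o(zX_0^+)=d\lambda_e(z)$; the paper simply writes this cancellation in two lines rather than introducing the pullback notation $\psi_g$, and deduces $N$-invariance from the observation $d\lambda_e(gz)=d\lambda_g(z)=d\lambda_e(z)$ for $g\in N$, which is precisely your final step.
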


\begin{proof}
Since $\{m_{x}\}$ is a $G$-invariant conformal density,
\[
dm_o(gzX_0^+)=dm_{g^{-1}(o)} (zX_0^+)=
e^{(n-1)\beta_{zX_0^+}(o, g^{-1}(o))} dm_o(zX_0^+).
\]
Since $\beta_{gzX_0^+} (o, gz(o))= \beta_{zX_0^+} (g^{-1}(o), z(o))$,
\begin{equation} \label{eq:dlambda}
d\lambda_g(z)=e^{(n-1)\beta_{zX_0^+}(o, z(o))} dm_o(zX_0^+)=d\lambda_e(z).
\end{equation}
For any $g\in N$, $d\lambda_{e}(gz)=d\lambda_{g}(z)=d\lambda_{e}(z)$. Therefore $\lambda_{e}$ is $N$-invariant.
\end{proof}

\begin{notation} \label{note:kX0-}
Note that $G_{X_{0}^{-}}=ANM$ and $K\cap G_{X_{0}^{-}}=M$. For $\psi\in C(K)$ and a measure $\lambda$ on 
$\partial\bH^{n}=KX_{0}^{-}\cong K/M$, we define
\begin{equation} \label{eq:kX0-}
\int_{k\in K}\psi(k)\,d\lambda(kX_{0}^{-}):=\int_{K/M} \bigl(\int_{m\in M}\psi(km)\,dm\bigr)
d\lambda(kM).
\end{equation}
\end{notation}

We also fix a Patterson-Sullivan density $\{\nu_{x}\}$ on $\partial\bH^{n}$ and consider $\tilde m^{\BR}$ defined as
in subsection \ref{defbms} with respect to $\{m_x\}$ and $\{\nu_x\}$.

\begin{proposition} \label{prop:BR-Iwasawa}\label{mbr}
For any $\phi\in C_c( \T^{1}(\bH^{n}))=C_c(G)^M$, 
\begin{equation*}
 \tilde m^{\BR}(\phi)=\int_{k\in K}\int_{r\in\R} \int_{n\in N} \phi(ka_rn) e^{-\delta r} \, dn\, dr\,
 d\nu_o(kX_0^-).
 \end{equation*}
\end{proposition}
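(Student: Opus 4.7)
The plan is to change coordinates from the Hopf parametrization $(u^+,u^-,s)$ of $\T^1(\bH^n)=G/M$ to the Iwasawa coordinates $(k,r,n)\in K\times\R\times N$ via $g=ka_rn$. The key preliminary is to use that $A$ normalizes $N$ to rewrite
\[
ka_rn = kn'a_r,\qquad n':=a_rna_r^{-1}\in N,
\]
so that, since $a_r$ fixes $X_0^+$ and every element of $N$ fixes $X_0^-$, one reads off the clean formulas $u^+=kn'X_0^+$ and $u^-=kX_0^-$. The Busemann functions follow by routine manipulations: using the $G$-invariance of $\beta$ and the fact that $a_r$ moves $o$ along the $X_0$-geodesic by signed distance $r$ toward $X_0^+$, and that $n'$ preserves horospheres based at $X_0^-$, one gets
\[
\beta_{u^-}(o,\pi(u)) = -r, \qquad \beta_{u^+}(o,\pi(u)) = r + \beta_{n'X_0^+}(o,n'o).
\]
In particular $s=-r$, so $ds=dr$ as measures.

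Next I will rewrite the reference measures. By the $K$-invariance of $\{m_x\}$ together with Lemma~\ref{lemma:N-Leb} applied to $n'$,
\[
dm_o(u^+) = dm_o(n'X_0^+) = e^{-(n-1)\beta_{n'X_0^+}(o,n'o)}\,dn',
\]
while $d\nu_o(u^-)=d\nu_o(kX_0^-)$ in the sense of Notation~\ref{note:kX0-}. Since the paper's $N$ is defined by \eqref{eq:Nhoro} so that $\Ad(a_r)$ contracts $N$ on each of its $(n-1)$ dimensions, the map $n\mapsto n'=a_rna_r^{-1}$ has Jacobian $e^{-(n-1)r}$; hence $dn'=e^{-(n-1)r}\,dn$. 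Substituting all this into \eqref{eq:defBR}, the density
\[
e^{(n-1)\beta_{u^+}(o,\pi(u))}\,e^{\delta\beta_{u^-}(o,\pi(u))}=e^{(n-1)(r+\beta_{n'X_0^+}(o,n'o))}\,e^{-\delta r}
\]
multiplied by the transformed reference measure
\[
e^{-(n-1)\beta_{n'X_0^+}(o,n'o)}\,e^{-(n-1)r}\,dn\,d\nu_o(kX_0^-)\,dr
\]
exhibits two clean cancellations (the Busemann exponentials, and the two $e^{\pm(n-1)r}$ factors), leaving $d\tilde m^{\BR}(u)=e^{-\delta r}\,dn\,dr\,d\nu_o(kX_0^-)$, which is the claim after integrating against $\phi$ (using that $\phi\in C_c(G)^M$ descends to $\T^1(\bH^n)$).

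The main obstacle is recognizing that in the $g=ka_rn$ ordering, $u^+$ is $kn'X_0^+$ rather than the naive $knX_0^+$, where $n'=a_rna_r^{-1}$; this forces the $e^{-(n-1)r}$ Jacobian from the conjugation action of $A$ on $N$ to enter the calculation, and it is exactly this factor -- combined with Lemma~\ref{lemma:N-Leb} applied to $n'$ rather than $n$ -- that converts what would otherwise be an $e^{(n-1-\delta)r}$ exponent into the correct $e^{-\delta r}$. Everything else is bookkeeping: in particular, the $M$-averaging implicit in Notation~\ref{note:kX0-} is harmless because $\phi$ is already $M$-invariant.
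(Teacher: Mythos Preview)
Your proof is correct, but it takes an unnecessary detour compared to the paper. The paper's argument computes $u^-=kX_0^-$ and $s=\beta_{u^-}(o,\pi(u))=-r$ exactly as you do, but for the $u^+$ factor it applies Lemma~\ref{lemma:N-Leb} directly with $g=ka_r$ and $z=n$, obtaining in one stroke
\[
e^{(n-1)\beta_{u^+}(o,\pi(u))}\,dm_o(u^+)=d\lambda_{ka_r}(n)=d\lambda_e(n)=dn.
\]
There is no need to introduce $n'=a_rna_r^{-1}$, no need to split $\beta_{u^+}(o,\pi(u))$ into $r+\beta_{n'X_0^+}(o,n'o)$, no separate appeal to the $K$-invariance of $m_o$, and no conjugation Jacobian to track: all of that is already packaged into the statement $\lambda_g=\lambda_e$. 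Your route unpacks the mechanism behind that lemma (which has pedagogical value), but the ``main obstacle'' you identify is an artifact of your choice to reduce to $g=e$ rather than a genuine feature of the problem.
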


\begin{proof}
 By definition,
\[
\tilde m^{\BR}(\phi) =\int \phi(u) e^{(n-1)\beta_{u^+} (o, \pi(u))}e^{\delta\beta_{u^-} (o, \pi(u))} dm_o(u^+)d\nu_o(u^-)dt,
\]
where $t= \beta_{u^-} (o, \pi(u))$. Let $u=ka_rnX_0$.
Then, since $G_{X_0^-}=MAN$, we have $u^-=ka_rnX_0^-=kX_0^-$ and
\begin{equation*}
\begin{array}{ll}
 t=&\beta_{u^-}(o, \pi(u))=\beta_{kX_0^-}(o, ka_r no)=\beta_{X_0^-}(o, a_rn o)
 \\ & =\lim_{t\to \infty}d(o, a_{-t}o)-d(a_rno, a_{-t}o)
 \\ & =\lim_{t\to \infty} t-d(a_{t+r}n a_{-t-r} (a_{t+r}o), o)
 \\ &=\lim_{t\to\infty}t -d(a_{t+r}o,o)=-r .
\end{array}
 \end{equation*}
 Therefore
$e^{\delta \beta_{u^-} (o, \pi(u))} d\nu_o(u^-) = e^{-\delta r} d\nu_o(kX_0^-).$
And by Lemma~\ref{lemma:N-Leb} for  fixed $g=ka_r$ and variable $z=n\in N$,
\[
e^{(n-1)\beta_{ka_rnX_0^+} (o, ka_rn\pi(o))} dm_o(ka_rnX_0^+)=d\lambda_{ka_r}(n)=d\lambda_e(n)=dn.
\]
Putting together, this proves the claim. \end{proof}

\newcommand{\cR}{\mathcal R}

\begin{notation} \label{not:approxid}
(1) Let $dk$ denote the probability Haar measure on $K$. Since $m_{o}$ is a $K$-invariant probability measure on $\partial
\bH^{n}=K/M$, 
we have that $dk=dm_{o}(kX_{0}^{-})$ (and similarly  $dk=dm_{o}(kX^{+})$).
 We fix the Haar measure $dg$ on $G$ given as follows: for $g=ka_{r}n\in KAN$, 
 $dg=e^{-(n-1)r}\,dn\,dr\,dk$.
Since $G$ is unimodular, $dg=dg\inv$. Therefore if we express $g=na_{r}k$, then
 $dg=e^{(n-1)r}dn\,dr\,dk$.
And if we express $g=a_{r}nk$, then $dg=dr\,dn\,dk$.

\medskip
(2) For $\e>0$, let $U_\e$ denote the $\e$-neighborhood of $e$ in $G$. By an {\em approximate identity}
on $G$, we mean a family of nonnegative continuous functions
$\{\psi_\e\}_{\e>0}$ on $G$ with $\supp(\psi_\e)\subset U_\e$ and $\int_G\psi_\e(g) dg =1$.

\medskip
(3) For $\xi\in C(M\ba K)$ and $\psi\in C_c(G)$ and
a measurable $\Omega\subset K$ with $M\Omega=\Omega$, we define 
 a function $\xi\ast_\Omega \psi\in C_c(G/M)$ by
 \begin{equation} \label{eq:astOmega}
 \xi\ast_\Omega \psi(g):=\int_{k\in \Omega}\xi(k)\psi(gk)\, dk.
\end{equation}
For $\psi\in C_c(\G\ba G)$,  we define $\xi\ast_\Omega \psi\in C_c(\G\ba G/M)$ similarly.
\end{notation}

\begin{proposition}\label{corpt}\label{eq:BRformula}
Let $\{\psi_\e\}_{\e>0}$ be an approximate identity on $G$. Let $f\in C(M\ba K)$ and $\Omega\subset K$ be such that
$M\Omega=\Omega$ and $\nu_o(\partial(\Omega^{-1})X_{0}^{-})=0$. Then
\begin{equation*} 
\lim_{\e\to 0}
 \tilde m^{\BR}(f*_\Omega \psi_\e)=
 \int_{k\in \Omega^{-1}} f(k^{-1})\, d\nu_o(kX_0^-).
 \end{equation*}
\end{proposition}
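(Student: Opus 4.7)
The plan is to use Proposition~\ref{mbr} to expand $\tilde m^{\BR}(f*_\Omega \psi_\e)$ in $KAN$ coordinates, apply Fubini, and obtain
\[
\tilde m^{\BR}(f *_\Omega \psi_\e)=\int_K F_\e(k)\,d\nu_o(kX_0^-),
\]
where $F_\e(k):=\int_\Omega\int_\R\int_N f(k')\,\psi_\e(k a_r n k')\,e^{-\delta r}\,dn\,dr\,dk'$. Note that $f*_\Omega\psi_\e$ is right $M$-invariant (using the left $M$-invariance of $f$ and the condition $M\Omega=\Omega$), so Proposition~\ref{mbr} indeed applies.

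Next, the plan is to rewrite $F_\e(k)$ as an integral over $G$. For fixed $k\in K$, the map $(r,n,k')\mapsto k a_r n k'$ is a bijection $\R\times N\times K\to G$ whose inverse is encoded by the $ANK$-Iwasawa decomposition of $k^{-1}g$: writing $k^{-1}g=a_{r'(k^{-1}g)}\,n'(k^{-1}g)\,\kappa(k^{-1}g)$, the triple $(r,n,k')$ corresponds to $(r'(k^{-1}g),\,n'(k^{-1}g),\,\kappa(k^{-1}g))$. Since $dr\,dn\,dk'=dg$ in the $ANK$ order by Notation~\ref{not:approxid}(1), and $d(k^{-1}g)=dg$ by left invariance, this substitution yields
\[
F_\e(k)=\int_G \psi_\e(g)\,(f\cdot\chi_\Omega)\bigl(\kappa(k^{-1}g)\bigr)\,e^{-\delta\,r'(k^{-1}g)}\,dg.
\]

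Since $\{\psi_\e\}$ is an approximate identity, as $\e\to 0$ we expect $F_\e(k)$ to converge to the value of the integrand at $g=e$. At $g=e$, the $ANK$ decomposition of $k^{-1}\in K$ is the trivial one, so $r'(k^{-1})=0$ and $\kappa(k^{-1})=k^{-1}$; thus the putative limit is $f(k^{-1})\chi_{\Omega^{-1}}(k)$. The integrand is continuous in $g$ at $g=e$ precisely when $\chi_\Omega$ is continuous at $\kappa(k^{-1})=k^{-1}$, i.e., when $k\notin\partial(\Omega^{-1})$; the hypothesis $\nu_o(\partial(\Omega^{-1})X_0^-)=0$ thus yields $\nu_o$-a.e.\ pointwise convergence. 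Together with a uniform bound $|F_\e(k)|\le C\|f\|_\infty$ (derived from compactness of $K$ and of $\cl{U_\e}$, which controls $e^{-\delta\,r'(k^{-1}g)}$) and the finiteness of $\nu_o$ on the compact space $K$, the dominated convergence theorem delivers
\[
\lim_{\e\to 0}\tilde m^{\BR}(f*_\Omega\psi_\e)
=\int_K f(k^{-1})\chi_{\Omega^{-1}}(k)\,d\nu_o(kX_0^-)
=\int_{\Omega^{-1}} f(k^{-1})\,d\nu_o(kX_0^-).
\]

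The crux of the argument is the change of variables in the second step: switching from the $KAN$ order (used to define $\tilde m^{\BR}$) to the $ANK$ order (in which the Haar Jacobian is trivial, by Notation~\ref{not:approxid}(1)) is what eliminates any residual exponential weight and allows $F_\e(k)$ to converge cleanly to a $\delta$-independent limit. The remaining ingredients—the Fubini swap, the identification of the pointwise limit, and the dominated convergence step—should all be routine once this substitution has been correctly set up.
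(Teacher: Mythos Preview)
Your proof is correct and follows essentially the same route as the paper: expand via Proposition~\ref{mbr}, pull the $K$-integral against $d\nu_o(kX_0^-)$ outside by Fubini, change variables $(r,n,k')\mapsto g$ using $dg=dr\,dn\,dk'$ in the $ANK$ order (Notation~\ref{not:approxid}(1)), and then pass to the limit using that $\psi_\e$ concentrates at $e$. The paper handles this last step with explicit upper and lower bounds via thickened sets $\Omega_{\e+}=\Omega K_\e$ and $\Omega_{\e-}=\cap_{k\in K_\e}\Omega k$ together with uniform continuity of $f$, rather than your pointwise-a.e.\ convergence plus dominated convergence, but this is a cosmetic difference.
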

\begin{proof}
Note that for some uniform constants $\ell_1, \ell_2>0$, we have for all $k\in K$ and for all
small $\e>0$,
\begin{equation} \label{eq:small}
k\inv U_\e \subset U_{\ell_1 \e}k\inv \subset
(A\cap U_{\ell_2 \e})(N\cap U_{\ell_2 \e})k\inv (K\cap U_{\ell_2 \e}).
\end{equation}

Set $K_\e:= (K\cap U_{\ell_2 \e})$,
 $\Omega_{\e +}=\Omega K_\e$ and $\Omega_{\e-}=\cap_{k\in K_\e} \Omega k$.

In view of the decomposition $G=ANK$, for a function $\phi$ on $K$,
we define a function $\cR_{\phi}$ on $G$ by $\cR_{\phi}(g)=\phi(k)$ for $g=ank\in ANK$.
 For any $\eta>0$, there exists $\e>0$ such that for all $k\in K $ and $g\in U_\e$,
\begin{equation} \label{eq:unifcont}
{\cR}_{f\cdot \chi_{\Omega_{\e{-}}}}(k^{-1})- \eta\leq {\cR}_{f\cdot \chi_{\Omega}}(k^{-1} g) \leq {\cR}_{f\cdot \chi_{\Omega_{\e
+}}}(k^{-1})+ \eta.
\end{equation}

Now by Proposition \ref{mbr},
\begin{equation*}
\begin{array}{ll}
&\ \tilde m^{\BR} (f*_\Omega\psi_\e)
\\
&= \int_{g\in G }\int_{k'\in  \Omega} \psi_\e(gk') f(k')\, dk'
d\tilde m^{\BR}(g)
\\
&= \int_{(k,a_r,n)\in K\times A\times N}
\int_{k'\in  \Omega} \psi_\e(ka_r n k') f(k') e^{-\delta r} \,dk'
dn dr d\nu_o(kX_0^{-})
\\
&= \int_{k\in K} \int_{(a_r,n,k')\in A\times N\times K}\psi_\e(k a_r n k') f(k')\chi_{\Omega}(k') e^{-\delta r} \, drdndk' d
\nu_o(kX_0^{-})
\\
&= \int_{k\in K} \int_{g\in G}\psi_\e(k g)  {\cR}_{f\cdot \chi_\Omega}(g)e^{-\delta r_{g}}\, dg d\nu_o(kX_0^{-}), \quad \text{if 
$g=a_{r_{g}}nk'$}
\\
&\leq e^{\delta \ell_{2} \e}\int_{k\in K} \int_{g\in G}\psi_\e(g) {\cR}_{f\cdot \chi_\Omega}(k^{-1} g)\, dg
d\nu_o(kX_0^{-}), \quad \text{by \eqref{eq:small}}
\\
&\leq  e^{\delta\ell_{2}\e}\int_{k\in K} \int_{g\in G}\psi_\e(g) ({\cR}_{ f\cdot \chi_{\Omega_{\e+ }}}
(k^{-1} )+\eta) \, dg d\nu_o(kX_0^{-})
\\
&= e^{\delta\ell_{2}\e}\bigl(\int_{k\in K}  {\cR}_{ f\cdot \chi_{\Omega_{\e+ }}}(k^{-1} )  \,  d\nu_o(k
X_0^{-}) + \eta\abs{\nu_{o}}\bigr), \text{ as $\int_G\psi_\e(g) \, dg=1$.}
\\
&= e^{\delta\ell_{2}\e}\bigl(\int_{k\in \Omega_{\e+}^{-1}} f(k^{-1}) \, d\nu_o(kX_0^{-}) + \eta\abs{\nu_{o}}\bigr).
\end{array}
\end{equation*}
Since $\cap_{\e>0} \Omega_{\e+}=\cl{\Omega}$, and since $\eta>0$ was arbitrarily chosen,
\[
\limsup_{\e\to 0}\tilde m^{\BR} (f*_\Omega\psi_\e)\leq \int_{k\in \cl{\Omega\inv}} f(k^{-1}) \, d\nu_o(kX_0^{-}).
\]
Similarly,
$\liminf_{\e\to 0}\tilde m^{\BR} (f*_\Omega\psi_\e)\geq \int_{k\in \Int{\Omega\inv}} f(k^{-1}) \, d\nu_o(kX_0^{-})$. Since $\nu_{o}
(\partial(\Omega\inv)X_{0}^{-})=0$, we obtain \eqref{eq:BRformula}.
\end{proof}

\subsection{Setup for counting results}\label{setup}
 Till the end of this section, let $V$ be a finite
dimensional vector space on which $G$ acts linearly from the
right and let $w_0\in V$. We set
$H:=G_{w_{0}}$.

\subsubsection{When $H$ is a symmetric subgroup of $G$}  \label{subsec:symmetric} Let $H<G=\SO(n,1)^
\circ$ be a  symmetric subgroup, i.e., there is a
non-trivial involution $\sigma$ of $G$ such that $H^\circ=(G^{\sigma})^\circ$
 where $G^{\sigma}=\{g\in G: \sigma(g)=g\}$. There exists a Cartan involution $\theta$ of $G$
such that $\theta\circ\sigma=\sigma\circ\theta$. Let $K=G^{\theta}$. It turns out that $H^\circ$
is a subgroup of finite index in its normalizer $N_{G}(H^\circ)$,
 and up to a conjugation of $G$, $H^\circ=(\SO(k,1)\times \SO(n-k))^\circ$ for some $0\leq k
\leq n-1$ and $K=\SO(n)$. Choose $o\in\bH^{n}$ such that $G_{o}=K$. Then $\tS=H\cdot o$
is an isometric imbedding of $\bH^{k}$ in $\bH^{n}$. Let $\tE$ be the unit normal bundle over
$\tS$.

 \subsubsection{When $G_{\br w_0}$ is a parabolic subgroup of $G$} \label{subsec:parabolic}
Suppose that $G_{\R w_0}$ is a parabolic subgroup of $G$. Let $\theta$ be any Cartan involution
of $G$ and let $K=G^{\theta}$. Then $G=G_{\R w_0}K$. Let $N$ be the unipotent radical of
$G_{\br w_0}$. Let $o\in\bH^n$ be such that $G_o=K$. Then $\tS:=N\cdot o$ is a horosphere. Let $\tE
\subset\T^{1}(\bH^{n})$ be the unstable horosphere such that $\pi(\tE)=\tS$. Let $H=(G_{\br w_0}\cap
K)N$.

\subsubsection{Common structure in both cases}
Let the notation be as in any of the above section \ref{subsec:symmetric} or
\ref{subsec:parabolic}. Let $X_0\in \T^1_{o}(\bH^{n})\cap \tE$. Let $\tE^\ast=H\cdot X_0$.
If $H$ is symmetric and $\codim(\tS)>1$, or in the parabolic case, then $\tE$ is
connected and $\tE^\ast=\tE$. If $H$ is symmetric and $\codim(\tS)=1$, then $\tE$ has two
connected components: $\tE^+$ containing $X_0$ and $\tE^-$ containing $-X_0$; and then
either $\tE^\ast=\tE$ or $\tE^\ast=\tE^+$. There exists a one-parameter subgroup $A=\{a_r\}
\subset G$ consisting of $\R$-diagonalizable elements, such that $\gr(X_0)=a_rX_0$ for all $r
\in\R$.
Let $M=G_{X_{0}}$, which coincides with $Z_{K}(A)$, i.e., the centralizer of $A$ in $K$, and $A^{\pm}=\{a_{\pm r}: r
\ge 0\}$. Let $N$ be the expanding horospherical subgroup with respect to $\{a_r\}$.

When $G_{\R w_{0}}$ is parabolic, then  $G_{w_{0}}=MN=H$ where $M=G_{\br w_0} \cap K$;
hence $N$ is the unipotent radical of $G_{\br w_0}$ so there is no conflict of notation.
 In the case when $H$ is symmetric, then $\tE^\ast=\tE$ if and only if $G=HA^{+}K$.  In all
cases, we have $G=HAK$. Put $E=\p(\tE)$, $E^\ast=\p(\tE^\ast)$, and in the special cases when $\tE$ is not connected, we set 
$E^
\pm=\p(\tE^\pm)$.

\subsubsection{$HAK$ decomposition of Haar measure on $G$}\label{hakm}

Note that $\tE^{\ast}=HX_0\cong H/(M\cap H)$ and recall that
$$d\mu_{\tilde E}^{\Leb}(v)=e^{(n-1)\beta_{v^+}(o,\pi(v))}dm_o(v^+).$$
There is a Haar measure $dh$ on $H$ such that for any $\psi\in C_c(H)$ if we put $
\bar\psi(h)=\int_{m\in M\cap H} \psi(hm)\,dm$, where $dm$ denotes the probability Haar
integral on $M\cap H$, then $\bar\psi\in C_{c}(H)^{M\cap H}=C_{c}(\tE)$, and
\begin{equation} \label{eq:dh}
\int_{H}\psi\,dh=\int_{\tE} \bar\psi\,d\mu^{\Leb}_{\tE}.
\end{equation}

In view of the decompositions $G=HA^{+}K$ or $G=HAK$,  there exists a function $\rho:\R
\to (0,\infty)$, such that we get the following  Haar measure $dg$ on $G$:
For any $\psi\in C_{c}(G)$, by \cite[Theorem~8.1.1]{sch}
\begin{align}  \label{eq:haar:g=hak}
\int_{G}\psi\, dg&=\int_{k\in K}\int_{r\in R}\int_{h\in H} \rho(r)\psi(ha_{r}k)\,dh dr dk, \text{ and}
\\
\rho(r)&\sim
\begin{cases}
e^{(n-1)\abs{r}} & \text{if $r\to\pm\infty$ and $H$ is symmetric}, \\
e^{(n-1)r} & \text{if $r\to\pm\infty$ and $G_{\R w_{0}}$ is parabolic}.
\end{cases}
\label{eq:Xi}
\end{align}
where  $R=\{r\geq 0\}$ if $G=HA^{+}K$, otherwise $R=\R$. In fact the Haar measure $dg$ described in 
Notation~\ref{not:approxid}(1) and the Haar measure $dg$ defined in \eqref{eq:haar:g=hak} are identical, see \S\ref{a2}.

\subsection{Extension of Theorem \ref{mainergint} to $\G\ba G$ for Zariski dense $\G$}\label{A1}
The result in this subsection will enable us to state our counting theorems
for general norms, provided $\Gamma$ is Zariski dense. 

Let $\mBR$ be the measure on $\G\bs G$ which is the $M$-invariant extension of $m^{\BR}$, that is,
  for $\psi\in C_c(\GmG)$,
\[  \mBR(\psi): =m^{\BR}(\bar\psi)
\]
where $\bar\psi(\p(gX_0))=\int_{m\in M} \psi(\G gm)\,dm$ and $dm$ denotes the Haar probability measure on $M$.

As $M$ normalizes $N$, $\mBR$ is
 invariant for the right-translation action of $N$ on $\GmG$.

\begin{theorem}[Flaminio-Spatzier~{\cite[Cor.~1.6]{FlaminioSpatzier1990}}] \label{thm:mBRergodic}
If $\G$ is Zariski dense and $\abs{m^{\BMS}}<\infty$, then $\mBR$ is
$N$-ergodic.
\end{theorem}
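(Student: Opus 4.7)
The plan is to deduce the $N$-ergodicity of $\mBR$ from Roblin's classification of $\G$-invariant measures on the space of horospheres, combined crucially with Zariski density. Under the identification $\Omega_\cH \cong \partial\bH^n\times\R$ via $\cH^+(u)\mapsto (u^-,\beta_{u^-}(o,\pi(u)))$ recalled in \S3.2, the $N$-orbits in $G/M$ correspond to unstable horospheres in $\T^1(\bH^n)$, and the space of $MN$-orbits in $G$ is parametrized (up to the $K/M$ fiber) by $\Omega_\cH$. The measure $\mBR$ disintegrates along this foliation into the horospherical Lebesgue measures $\mu^{\Leb}_{\cH^+}$ on leaves, with transverse measure on $\G\bs\Omega_\cH$ induced by the Radon measure $d\hat\mu(\xi,s)=e^{\delta s}d\nu_o(\xi)\,ds$ on $\Omega_\cH$.

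First I would verify this disintegration precisely, showing that $MN$-invariant bounded Borel functions on $(\G\bs G,\mBR)$ correspond bijectively to $\G$-invariant bounded measurable functions on $(\Omega_\cH,\hat\mu)$. Next, I would invoke Roblin's theorem (Thm.~6.6 of~\cite{Roblin2003}, recalled in \S3.2), which asserts that when $|m^{\BMS}|<\infty$, $\hat\mu$ is the \emph{unique} $\G$-invariant Radon measure on $\Lambda_{\rmr}(\G)\times \R$. Since any nontrivial $\G$-invariant measurable partition would yield two mutually singular $\G$-invariant Radon measures, uniqueness forces $\G$-ergodicity on $(\Lambda_{\rmr}(\G)\times\R,\hat\mu)$. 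Transporting this back through the disintegration yields $MN$-ergodicity of $\mBR$.

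The remaining task is to upgrade $MN$-ergodicity to $N$-ergodicity. Here I would argue as follows: given $\phi\in L^\infty(\G\bs G,\mBR)$ that is $N$-invariant, the $M$-translates $\phi_m(g):=\phi(gm)$ are all $N$-invariant since $M$ normalizes $N$, so $M$ acts on the closed subspace $\mathcal{V}\subset L^\infty(\mBR)$ of $N$-invariant functions. The $M$-average $\bar\phi=\int_M\phi_m\,dm$ is $MN$-invariant, hence constant by the previous step. To conclude that $\phi=\bar\phi$, one exploits Zariski density: it implies that $\Lambda(\G)$ is not contained in any proper algebraic subset of $\partial\bH^n$, so the Patterson--Sullivan density $\{\nu_o\}$ admits no nontrivial $M$-equivariant decomposition compatible with the $\G$-action. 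Consequently the $M$-representation on $\mathcal{V}/\text{(constants)}$ has no $\G$-invariant vectors other than zero, forcing $\phi-\bar\phi\equiv 0$.

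The main obstacle is this final upgrade from $MN$- to $N$-ergodicity. The $M$-averaging argument reduces the question to showing that no nontrivial $M$-isotypic component of $\mathcal{V}$ survives the $\G$-action, and while this is morally a Mautner-type statement, in the infinite-volume setting one cannot invoke Howe--Moore directly. The cleanest route is to exploit Zariski density of $\G$ in $G$ to show that the $\G$-orbit of any nonzero $\phi-\bar\phi$ in a suitable operator-theoretic sense would give rise to an additional $\G$-invariant Radon measure on $\Omega_\cH$ distinct from $\hat\mu$, contradicting Roblin's uniqueness; alternatively, one can verify directly that Zariski density prevents the existence of any measurable $M$-equivariant section of the fibration $G/N\to G/MN$ compatible with $\{\nu_o\}$.
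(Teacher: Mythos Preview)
The paper does not prove this theorem; it is quoted directly from Flaminio--Spatzier~\cite[Cor.~1.6]{FlaminioSpatzier1990}, so your proposal is being measured against that source rather than against anything argued here.

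Your reduction to $MN$-ergodicity via Roblin's uniqueness theorem on $(\Omega_\cH,\hat\mu)$ is correct and gives a clean modern shortcut to that intermediate step (Roblin's memoir postdates Flaminio--Spatzier, so this is not their route). The gap lies in the upgrade from $MN$- to $N$-ergodicity, which you rightly flag as the crux but do not actually carry out. Route~(a) does not work as written: $\phi-\bar\phi$ is an $N$-invariant \emph{function} on $\G\bs G$ sitting in the nontrivial $M$-isotypic part, and precisely because it fails to be $M$-invariant it does not descend to $\G\bs\Omega_\cH$; there is no mechanism by which it yields a second $\G$-invariant Radon measure on $\Omega_\cH$ contradicting Roblin's uniqueness (and its ``$\G$-orbit'' is trivial, since the function is already $\G$-invariant). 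Route~(b) points in the right direction but is only a reformulation of the goal: what must be shown is that the $M$-valued cocycle for the $\G$-action on the principal $M$-bundle $G/N\to G/MN$ over $(\Omega_\cH,\hat\mu)$ admits no measurable reduction to a proper closed subgroup of $M$. You have not supplied an argument for this.

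This cocycle irreducibility is where Zariski density does its real work. The key input, established as Corollary~1.4 in~\cite{FlaminioSpatzier1990} and also invoked later in the present paper, is that for $\G$ Zariski dense the Patterson--Sullivan measure $\nu_o$ assigns zero mass to every proper real-algebraic subvariety of $\partial\bH^n$. One then argues that a measurable cocycle reduction to $M_0\subsetneq M$ would produce a $\G$-invariant measurable structure on $\partial\bH^n$ incompatible with this non-concentration property. Making that step precise is the substantive content your sketch is missing.
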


Let $H$ and $\tE$ be as in subsection \ref{subsec:symmetric} or \ref{subsec:parabolic} so that
 $H=G_{\tE}$. Let $dh$ be the Haar measure on $H$ defined as in \eqref{eq:dh}; by abuse of notation,
we also denote by $dh$ the measure on $\G_H\backslash H$ induced by $dh$.

We recall that for $\G$ Zariski dense, $\abs{\muPS_E}<\infty$ implies that
the canonical map $\G_H\ba H\to \G\backslash G$ is proper by Theorem \ref{thm:finite-closed}.
\begin{theorem} \label{thm:mainerg-group}\label{Zdapp}
Let $\G$ be a Zariski dense discrete subgroup of $G$ such that $\abs{m^{\BMS}}<\infty$ and
 $\abs{\muPS_E}<\infty$. Then for any $\psi\in C_c(\G\bs G)$,
\[
\lim_{r\to\infty} e^{(n-1-\delta)r} \int_{h\in \G_H\backslash H} \psi(\G h a_r)\,d h=\frac{\abs{\muPS_E}}{\abs{m^{\BMS}}}
\mBR(\psi).
\]
\end{theorem}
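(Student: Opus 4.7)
The plan is to bootstrap Theorem~\ref{mainergint}---which yields the statement when $\psi$ is right-$M$-invariant---to arbitrary $\psi\in C_c(\G\bs G)$ via a weak-$\ast$ compactness argument that identifies limits using the $N$-ergodicity of $\mBR$ supplied by Theorem~\ref{thm:mBRergodic}.

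For the first step, take $\psi\in C_c(\G\bs G)^M$, so that $\psi$ descends to an element of $C_c(\T^1(\G\bs\bH^n))$. Using the identification $\tE^\ast\cong H/(M\cap H)$ via $h(M\cap H)\leftrightarrow hX_0$ and the Haar-measure compatibility \eqref{eq:dh}, I can rewrite $\int_{\G_H\bs H}\psi(\G ha_r)\,dh$ as $\int_{E^\ast}\psi(\gr v)\,d\mu^{\Leb}_{E^\ast}(v)$; the key facts used here are $\gr(hX_0)=ha_rX_0$ and that $\psi(\G ha_r)$ depends only on the $(M\cap H)$-coset of $h$, since $a_r$ commutes with $M$ and $\psi$ is $M$-invariant. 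Theorem~\ref{mainergint} applied with $F=E^\ast$ then yields the asymptotic, with a symmetry argument on the two connected components handling the codimension-one symmetric case where $\tE^\ast\ne\tE$.

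For general $\psi$, define the Radon functional $\mu_r(\psi):=e^{(n-1-\delta)r}\int_{\G_H\bs H}\psi(\G ha_r)\,dh$. Dominating $|\psi|$ by a continuous right-$M$-invariant compactly supported function and applying the first step shows $\mu_r(|\psi|)$ is uniformly bounded in $r$, so $\{\mu_r\}$ is weakly-$\ast$ relatively compact on $\G\bs G$. Any weak-$\ast$ limit $\mu$ will be right-$N$-invariant: for $n_0\in N$ I write $a_rn_0=(a_rn_0a_{-r})a_r$ with $a_rn_0a_{-r}\to e$ (the defining property of $N$), so the uniform continuity of $\psi$ combined with the mass bound yields $\mu_r(\psi(\cdot n_0))-\mu_r(\psi)\to 0$. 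Applying the first step to the $M$-average $\bar\psi(g):=\int_M\psi(gm)\,dm$ and noting $\mBR(\psi)=m^{\BR}(\bar\psi)$ further shows the projection of $\mu$ to $\T^1(\G\bs\bH^n)$ equals $\tfrac{|\mu^{\PS}_E|}{|m^{\BMS}|}\,m^{\BR}$.

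The concluding identification $\mu=\tfrac{|\mu^{\PS}_E|}{|m^{\BMS}|}\,\mBR$ is the delicate step, and is where Zariski density enters. The plan is to derive it from Theorem~\ref{thm:mBRergodic} combined with a Roblin-style classification of $N$-invariant Radon measures on $\G\bs G$: any such measure whose projection to $\T^1(\G\bs\bH^n)$ is a scalar multiple of $m^{\BR}$ must coincide with the corresponding multiple of the $M$-invariant lift $\mBR$. In the parabolic case where $H=MN\supset M$, the substitution $h'=hm$ inside $H$ already shows each $\mu_r$ is right-$M$-invariant, making the conclusion immediate; but in the symmetric case with $\codim(\tS)\ge 2$ one has $M\not\subset H$ in general, and genuine ergodic-theoretic input is unavoidable to rule out spurious $N$-invariant lifts with the same $M$-average.
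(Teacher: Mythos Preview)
Your overall strategy matches the paper's: reduce to the $M$-invariant case via Theorem~\ref{mainergint}, take weak-$\ast$ limits, show $N$-invariance of any limit, and use the $N$-ergodicity of $\mBR$ to identify the limit. However, your argument for $N$-invariance has a genuine gap.

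You write $a_r n_0 = (a_r n_0 a_{-r}) a_r =: g_r a_r$ with $g_r \to e$, and claim that uniform continuity of $\psi$ forces $\psi(\G h g_r a_r) \approx \psi(\G h a_r)$. But uniform continuity of $\psi\in C_c(\G\bs G)$ is with respect to the \emph{right} $G$-action: $|\psi(x)-\psi(xu)|$ is small only for $u$ near $e$. Here $\G h g_r a_r = (\G h a_r)(a_{-r} g_r a_r) = (\G h a_r) n_0$, so the two points differ by right multiplication by the fixed element $n_0$, not by something small. The small element $g_r$ sits between $h$ and $a_r$, where it cannot be absorbed: $g_r\in N$ need not lie in $H$ (indeed $N\not\subset H$ in the symmetric case), so you cannot substitute $h' = h g_r$ either. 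The mass bound does not help, since the integrand does not tend to zero pointwise.

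The paper's Claim~1 repairs exactly this. Using that $h\mapsto hX_0^+$ is a submersion from $H$ onto $\partial\bH^n\setminus\partial\tS$ (Lemma~\ref{lemma:diffeo}), one builds a continuous local section $\sigma:\Omega\subset N\to H$ with $\sigma(e)=e$ and $\sigma(z)X_0^+=zX_0^+$. Setting $z_k=a_{r_k}z a_{-r_k}\to e$, $h_k=\sigma(z_k)\in H$, and $b_k=z_k^{-1}h_k\in G_{X_0^+}=MAN^-$, one obtains $a_{r_k}z = h_k\, a_{r_k}\,(a_{-r_k}b_k^{-1}a_{r_k})$ with $a_{-r_k}b_k^{-1}a_{r_k}\to e$ (since $b_k\to e$ and conjugation by $a_{-r_k}$ contracts $MAN^-$). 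Now $h_k\in H$ is absorbed by the $H$-invariance of $dh$, and the remaining factor is on the \emph{right} of $a_{r_k}$ and tends to $e$, so right uniform continuity legitimately applies.

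For the final identification, the paper gives a direct argument (its Claim~2) rather than invoking an external classification: the $M_\e$-average $\eta_\e=|M_\e|^{-1}\int_{M_\e}m\lambda\,dm$ is $N$-invariant (since $M$ normalizes $N$) and absolutely continuous with respect to $\mBR$ (from the known $M$-average \eqref{eq:LmBR}), hence equals $c_\e\mBR$ by Theorem~\ref{thm:mBRergodic}; the $M$-invariance of $\eta_\e$ then forces $\lambda$ itself to be $M$-invariant, whence $\lambda=C\mBR$. Your sketch of this step is in the right spirit but should be made explicit along these lines rather than by appeal to an unstated classification result.
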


\begin{proof}
Define a measure $\lambda_r$ on $\GmG$ as follows: for any $\psi\in C_c(\GmG)$,
\[
\lambda_r(\psi)=e^{(n-1-\delta)r} \int_{h\in \GmH} \psi(\G h a_r)\,d h.
\]
Let $\quo:\GmG\to \T^1(\G\bs \bH^n)\cong \GmG/M$ be the natural quotient map. Then for any
$\bar\psi\in C_c(\T^1(\G\bs\bH^n))$, we have $\bar\psi(\quo(xma_r))=\bar\psi(\quo(xa_r))$ for any $m\in M$ and $x\in \GmG$,
as $M$ and $A$ commute with each other, and hence \[
\quo_\ast(\lambda_r)(\psi)=\lambda_r(\bar\psi\circ\quo)
=e^{(n-1-\delta)r} \int_{E}\bar\psi(va_r)\,d\mu^{\Leb}_E(v).
\]
Therefore by Theorem~\ref{mainergint}, $\quo_\ast(\lambda_r)\to C\cdot m^{\BR}$, where $C=\frac{\abs{\muPS_E}}
{\abs{m^{\BMS}}}$.

In order to show that $\lambda_r$ weakly converges to $C\mBR$, it suffices to show that every sequence $\lambda_{r_k}$ has
a subsequence converging to $C\mBR$.

 For any sequence $r_k\to\infty$, since $\quo$ is a proper map, after passing to a subsequence of $\{r_{k}\}$ there exists a 
measure $\lambda$ on $\GmG$ such that
$\lambda_{r_k}(\phi)\to\lambda(\phi)$ for every $\phi\in C_c(\GmG)$. Therefore
\[
\quo_\ast(\lambda)=C m^{\BR}.
\]

For any $g\in G$, define a measure $g\lambda$ on $\GmG$ by $g\lambda(A)=\lambda(Ag)$ for any measurable $A\subset
\GmG$.  Now for any $\psi\in C_c(\GmG)$,
\begin{equation} \label{eq:LmBR}
\begin{array}{ll}
\int_{m\in M} (m\lambda)(\psi)\,dm & = \int_{m\in M} \int_{\GmG} \psi(xm)\,d\lambda(x)\,dm\\
& =\bar\quo_\ast(\lambda)(\bar\psi)=Cm^{\BR}(\bar\psi)=C\mBR(\psi).
\end{array}
\end{equation}

\subsubsection*{Claim 1} $\lambda$ is $N$-invariant.

\begin{proof}[Proof of Claim~1] Due to Lemma~\ref{lemma:diffeo}, the map $h\mapsto hX_0^+$ is a submersion and hence 
there exists a neighborhood $\Omega$ of $e$ in $N$ and a continuous injective map $\sigma:\Omega\to H$ such that $
\sigma(e)=e$ and
$\sigma(z)X_0^+=zX_0^+$ for all $z\in\Omega$.

Fix $z\in \Omega$, let $z_k:=a_{r_k}za_{-r_k}$, and $h_k=\sigma(z_k)$ for all large $k$. Then $b_k=z_k\inv h_k\in G_{X_0^+}
=MAN^-$. Therefore $b_k\to e$ and $a_{-r_k}b_ka_{r_k}\to e$ as $k\to\infty$.

Let $\psi\in C_c(\GmG)$. Given $\e>0$ and $x\in \G\ba G$, set
\[
\psi_{\e+}(x)=\sup_{g\in U_\e} \psi(xg)  \quad \text{ and }\quad  \psi_{\e-}=\inf_{g\in U_\e} \psi(xg).
\]
Since $\psi$ is uniformly continuous and
$a_{r_k}z = h_kb_k^{-1} a_{r_k} = h_k a_{r_k} (a_{-r_k} b_k^{-1} a_{r_k})$,
 we have for all large $k$ and for all $x\in \GmG$,
\[
\psi_{\e-}(xhh_ka_{r_k}) \le \psi(xa_{r_k}z) \le \psi_{\e+}(xh_ka_{r_k}).
\]
Since the measure $d h$ is $H$-invariant,
\[
\int_{h\in\GmH} \psi(\G h a_{r_k}z)\,d h \leq \int_{\GmH} \psi_{\e+}(\G hh_k a_{r_k})\,d h = \int_{\GmH}
\psi_{\e+}(\G h a_{r_k})\,d  h.
\]
Similarly we get a lower bound in terms of $\psi_{\e-}$. Since $\lambda_{r_k}\to \lambda$ as $k\to\infty$,
\[
\lambda(\psi_{\e-})\leq \int_{\GmG} \psi(xz)\,d\lambda(x) \leq  \lambda(\psi_{\e+}).
\]
Since $\psi\in C_c(\GmG)$, we have that $\lambda(\psi_{\e\pm})\to \lambda(\psi)$ as $\e\to 0$. Therefore the $z$-action 
preserves $\lambda$.
\end{proof}

\subsubsection*{Claim 2} $\lambda=C\mBR$.

\begin{proof}[Proof of Claim~2]
By \eqref{eq:LmBR}, it is enough to show that $\lambda$ is $M$-invariant. 
For any $\e>0$, define a measure $\eta_\e$ on $\G\ba G$ by
 $$\eta_\e:=\frac{1}{|M_{\e}|}\int_{m\in M_\e }m\lambda\,dm,$$
 where $\abs{M_\e}=\int_{M_\e} dm $. Then  since $M$ normalizes $N$, $\eta_{\e}$ 
is $N$-invariant. By \eqref{eq:LmBR}
 $$\eta_{\e}\ll \mBR.$$

almost all $m\in M$, and hence

   Therefore, since $\mBR$ is $N$-ergodic by Theorem~\ref{thm:mBRergodic},
    there exists $c_{\e}> 0$ such that $\eta_{\e}=c_{\e} \mBR$. Thus $\eta_\e$ is $M$-invariant, as
    $\mBR$ is $M$-invariant.

If $\lambda$ is not $M$-invariant, there exist $\psi\in C_c(\G\ba G)$, $m_0\in M$ and $\beta>0$ such that    $\lambda(m_0.\psi)
\ge \lambda(\psi)+\beta$. There exists $\e>0$ such that
 for all $m\in M_{\e}$,
     $\lambda((m m_0) \psi)\ge \lambda(m. \psi)+\beta/2$. This implies that $\eta_\e (m_0\psi) \ge \eta_\e(\psi) +\beta/2$, which is 
a contradiction to the $M$-invariance of $\eta_\e$.
\end{proof}
As noted before this completes the proof of Theorem~\ref{Zdapp}.
\end{proof}

\subsection{Statements of Counting theorems}\label{oc}

 Now we describe the main counting results of this section.
 In the next two theorems \ref{thm:count-sector}, and
\ref{thm:count-cone}, we suppose
that the following conditions hold for $w_0\in V$ and $\G$ a non-elementary discrete torsion-free
subgroup of $G$:
\begin{enumerate}
\item $w_0\G$ is discrete.
\item $H$ is a symmetric subgroup of $G$, or  $G_{\br w_0}$ is a parabolic subgroup of
$G$.
\item  $\abs{m^{\BMS}}<\infty$ and $|\muPS_{E}|<\infty$.
\end{enumerate}
Let $\lambda\in\N$ be the $\log$ of the largest eigenvalue of $a_1$ on $\R$-$\Span(w_0G)$
and set
\begin{equation} \label{eq:wlambda}
w_{0}^{\lambda}:=\lim_{r\to\infty} \frac{w_{0}a_{r}}{e^{\lambda r}} \quad\text{and}\quad w_{0}^{-
\lambda}:=\lim_{r\to\infty}\frac{w_{0}a_{-r}}{e^{\lambda r}}.
\end{equation}

\begin{theorem}[Counting in sectors] \label{thm:count-sector}\label{thm:counting1}
Let $\norm{\cdot}$ be a norm on $V$ satisfying
\begin{equation} \label{eq:norm-cond}
\norm{w_0^{\pm\lambda}mk}=\norm{w_0^{\pm\lambda} k},
\text{ for all $m\in M$ and $k\in K$},
\end{equation}
 and set $B_T:=\{v\in V: \|v\| < T\}$.

\begin{enumerate}
\item For any Borel measurable  $\Omega\subset K$ such that $M\Omega=\Omega$ and $
\nu_o(\partial(\Omega \inv X_{0}^{-}))=0$,
\begin{multline*}
\lim_{T\to\infty} \frac{\#(w_0\G\cap B_T\cap (w_0 A^+\Omega))}{T^{\delta/\lambda}} \\ =
\frac{\muPS_{E}(E^{\ast})}{\delta\cdot \abs{m^{\BMS}}}
\int_{k\in\Omega \inv} \norm{w_0^\lambda k\inv}^{-\delta/\lambda}\, d\nu_o(kX_0^-).
\end{multline*}

\item For the full count in a ball,  we get 
\begin{align} \label{eq:main:counting}
&\lim_{T\to\infty} \frac{\# (w_0\G\cap B_T) }{T^{\delta/\lambda}}
\\
&=\begin{cases}
\frac{\muPS_{E}(E)}{\delta \cdot \abs{m^{\BMS}}}
\int_{k\in K}\norm{w_0^{\lambda}k\inv}^{-\delta/\lambda}\,d\nu_{o}(kX_{0}^{-})>0,
\text{ if $\tE=G_{w_{0}}\cdot X_{0}$} \\[5pt]
\sum_{\pm} \frac{\muPS_{E}(E^{\pm})}{\delta \cdot \abs{m^{\BMS}}}
\int_{k\in K}\norm{w_0^{\pm\lambda}k\inv}^{-\delta/\lambda}\,d\nu_{o}(kX_{0}^{-})>0,
\text{ otherwise}.
\end{cases}
\notag
\end{align}
\end{enumerate}
\end{theorem}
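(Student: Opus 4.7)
The strategy is the Eskin--McMullen equidistribution-to-counting scheme, adapted to the infinite covolume setting by using the homogeneous equidistribution Theorem~\ref{thm:mainerg-group} in place of Haar-measure equidistribution on $\G\backslash G$.

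First I would write $N(T,\Omega) := \#(w_0\G \cap B_T \cap w_0 A^+\Omega) = F_T(\G e)$, where
$$F_T(\G g) := \sum_{[\g]\in \G_H\backslash\G} \chi_{B_T \cap w_0 A^+\Omega}(w_0 \g g),$$
and for an approximate identity $\{\psi_\e\}$ on $\G\backslash G$ supported in a small neighborhood of $\G e$, the uniform continuity of $g\mapsto \|w_0 g\|$ near $e$ yields a sandwich
$$\bigl\langle F_{T(1-c\e)},\psi_\e\bigr\rangle_{\G\backslash G} \le F_T(\G e) \le \bigl\langle F_{T(1+c\e)},\psi_\e\bigr\rangle_{\G\backslash G}.$$
Standard unfolding of $F_T$ combined with the Haar decomposition $dg = \rho(r)\,dh\,dr\,dk$ from \eqref{eq:haar:g=hak}, together with the identity $w_0 h = w_0$ for $h\in H$, converts the smoothed integral into
$$\bigl\langle F_T,\psi_\e\bigr\rangle = \int_K \chi_\Omega(k) \int_{r\ge 0}\chi_{B_T}(w_0 a_r k)\,\rho(r)\Bigl(\int_{\G_H\backslash H}\psi_\e(\G h a_r k)\,dh\Bigr) dr\,dk.$$

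Next I apply Theorem~\ref{thm:mainerg-group} to the right-translate $R_k\psi_\e \in C_c(\G\backslash G)$ defined by $R_k\psi_\e(\G x) := \psi_\e(\G xk)$: the inner $H$-integral is asymptotic to $e^{-(n-1-\delta)r}(\abs{\muPS_E}/\abs{m^{\BMS}})\,\bar m^{\BR}(R_k\psi_\e)$, and combined with $\rho(r)\sim e^{(n-1)r}$ from \eqref{eq:Xi}, the radial integrand behaves like $e^{\delta r}\,C\,\bar m^{\BR}(R_k\psi_\e)$ where $C := \abs{\muPS_E}/\abs{m^{\BMS}}$. Because $K$ is compact, the definition of $w_0^\lambda$ produces a uniform expansion $\|w_0 a_r k\| = e^{\lambda r}\|w_0^\lambda k\|(1 + o(1))$ as $r\to\infty$, so the cutoff $\chi_{B_T}(w_0 a_r k)$ restricts the $r$-integration to $[0,\lambda\inv\log(T/\|w_0^\lambda k\|)]$ up to $o(1)$, and the $r$-integral produces $T^{\delta/\lambda}\|w_0^\lambda k\|^{-\delta/\lambda}/\delta$ to leading order.

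Collecting, the outer $K$-integral takes the form $(T^{\delta/\lambda}/\delta)\,C\int_\Omega \|w_0^\lambda k\|^{-\delta/\lambda}\,\bar m^{\BR}(R_k\psi_\e)\,dk$. The norm hypothesis \eqref{eq:norm-cond} ensures the function $f(k) := \|w_0^\lambda k\|^{-\delta/\lambda}$ descends to $M\backslash K$, and a Fubini swap rewrites the displayed quantity as $(T^{\delta/\lambda}/\delta)\,C\,\bar m^{\BR}(f *_\Omega \psi_\e)$ in the notation of \eqref{eq:astOmega}. Letting $\e\to 0$, Proposition~\ref{corpt} identifies the limit as $\int_{k\in\Omega\inv}\|w_0^\lambda k\inv\|^{-\delta/\lambda}\,d\nu_o(kX_0^-)$, and closing the sandwich establishes part~(1). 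For part~(2) I would take $\Omega = K$ in the $A^+$-sector; when $\tE = G_{w_0}\cdot X_0$ this accounts for the entire count, and otherwise I would run the same analysis in parallel on the $A^-$-sector, which is supported on $\tE^-$ and, by the evident symmetry $r\mapsto -r$, contributes the $w_0^{-\lambda}$ summand.

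The principal technical difficulties I anticipate will be: establishing the required \emph{uniformity in $k\in K$} of the equidistribution rate in Theorem~\ref{thm:mainerg-group}, which I expect to secure via a partition of unity on $K$ together with an equicontinuity argument for the family $\{R_k\psi_\e\}_{k\in K}$; quantifying the uniform asymptotic $\|w_0 a_r k\| = e^{\lambda r}\|w_0^\lambda k\|(1+o(1))$ tightly enough that the approximate radial cutoff agrees with the true cutoff within the $1+O(\e)$ distortion already absorbed by the sandwich; and verifying that the boundary hypothesis $\nu_o(\partial(\Omega\inv X_0^-)) = 0$ is precisely what is needed for the $\e\to 0$ passage in Proposition~\ref{corpt} to survive the Fubini swap against the (possibly non-smooth) weight $f$.
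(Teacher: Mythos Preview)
Your overall architecture is right and matches the paper closely, but there is one genuine gap: you invoke Theorem~\ref{thm:mainerg-group} for the inner $H$-integral, and that theorem requires $\Gamma$ to be Zariski dense, whereas Theorem~\ref{thm:count-sector} is stated only under the hypotheses $|m^{\BMS}|<\infty$ and $|\mu_E^{\PS}|<\infty$. The paper handles this by exploiting the $M$-invariance hypothesis $M\Omega=\Omega$ \emph{before} the equidistribution step rather than after it: in Proposition~\ref{prop:H:E} the $K$-integral is rewritten using $M\Omega=\Omega$ so that the $H$-integrand becomes the $M$-averaged function $\psi_k(g)=\int_M\psi(gmk)\,dm$, which lives in $C_c(\T^1(\Gamma\backslash\bH^n))$. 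One then applies Theorem~\ref{mainergint} (equidistribution on $\T^1(X)$, no Zariski density needed) rather than Theorem~\ref{thm:mainerg-group}. Your route---apply equidistribution on $\Gamma\backslash G$ first, then use $M$-invariance of $f$ only to justify the Fubini swap and Proposition~\ref{corpt}---is exactly what the paper does in \S\ref{zde} to remove the $M$-invariance and norm conditions \emph{under} the additional Zariski-density assumption (see Remark~\ref{rem:main}(\ref{itm:72})). So your argument proves the Zariski-dense case but not the theorem as stated.

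Two smaller points. First, your sandwich perturbs only $T$, but passing from $F_{B_T(\Omega)}(e)$ to $\langle F_{B_{T(1\pm c\e)}(\Omega_{\e\pm})},\Psi_\e\rangle$ requires perturbing $\Omega$ as well; the paper uses the strong wavefront Lemma~\ref{swl} to control the $H$-, $A$-, and $K$-components simultaneously, and the boundary condition $\nu_o(\partial(\Omega^{-1}X_0^-))=0$ is what closes the $\Omega_{\e\pm}$ gap at the end. Second, the constant produced by equidistribution is $\mu_E^{\PS}(E^\ast)$, not $|\mu_E^{\PS}|$: the orbit $\Gamma_{w_0}\backslash H\cdot X_0$ is $E^\ast$, which coincides with $E$ except in the codimension-one symmetric case with $G\neq HA^+K$, where $E^\ast=E^+$ and the $A^-$-contribution (Proposition~\ref{prop:HA-K}) supplies the other half.
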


\begin{remark} \label{rem:main}
\begin{enumerate}
\item By \cite[Lemma~4.2]{GorodnikOhShah2009}, we have $w_{0}^\lambda\neq 0$. And if $H$
is symmetric, then $w_{0}^{-\lambda}\neq 0$.

\item Since $w_0\G$ is discrete,  $H\G$ is closed in $G$, and hence $\G H$ is closed in $G$. It follows that  the canonical 
imbedding $(\G\cap H)\bs H\to \G\bs G$ is a proper injective map; the properness follows from a suitable open mapping theorem 
in the category of locally compact Hausdorff second countable topological group actions. Therefore the map $(\G\cap G_{\tS})
\bs \tS\to \G\bs \bH^n$ is a proper map. In particular, $E$ and $E^{\pm}$ are closed subsets of $\T^{1}(\G\bs\bH^{n})$.

\item \label{itm:71}The condition \eqref{eq:norm-cond} holds if $\norm{\cdot}$ is $K$-invariant as in
Theorem~\ref{m11}. There exists a Weyl group element $k_0\in K$ such that $k_{0}\inv a_{r}
k_{0}=a_{-r}$ for all $r\in\R$. Then $w_0^{-\lambda}=w_0^{\lambda}k_{0}$. Therefore if
$\norm{\cdot}$ is $K$-invariant, then $\norm{w_0^{\pm\lambda}k}=\norm{w_0^\lambda}$ for
all $k\in K$. Then the limit \eqref{eq:main:counting} becomes \eqref{eq:m11}. Thus Theorem~
\ref{thm:counting1} implies Theorem~\ref{m11}.

\item \label{itm:72} When $\G$ is Zariski dense in $G$, Theorem \ref{thm:counting1}
 holds for any norm on $V$ without the condition \eqref{eq:norm-cond} and for the $\Omega$ 
without the $M$-invariance condition. 
See \S\ref{zde} for details.

\item \label{itm:73}Since $w_{0}^{\pm \lambda}$ is fixed by $H\cap Z_{K}(A)$, if $M=Z_{K}(A)\subset H$,
then the condition~\eqref{eq:norm-cond} holds for any norm on $V$.
We have $M\subset H$ in the parabolic case. In the case when $H$ is symmetric, if
$\tS$ is a single point or $\tS$ is of codimension one, then $M\subset H$.
\end{enumerate} \end{remark}


\begin{theorem}[Counting in cones] \label{thm:count-cone}
Suppose further that $\G$ is Zariski dense in $G$. 
Let $\Theta$ be a measurable subset of $V$. Let 
\[
\Omega_{\pm}= \{k\in K:w_{0}^{\pm\lambda}k\in \R^{+}\Theta\}.
\] 
If $\nu_{o}(\partial(\Omega_{\pm}\inv X_{0}^{-}))=0$, then for any norm $\|\cdot \|$ on $V$,
\begin{multline}
\lim_{T\to\infty}\frac{\#(w_{0}\G\cap B_{T}\cap \R^{+}\Theta)}{T^{\delta/\lambda}}
=\frac{1}{\delta\cdot \abs{m^{\BMS}}} \times \\
\begin{cases}
\muPS_{E}(E)\int_{k\in\Omega_{+} \inv}
\norm{w_0^\lambda k\inv}^{-\delta/\lambda}\, d\nu_o(kX_0^-),
\text{ if $\tE=HX_{0}$} \\
\sum
\muPS_{E}(E^\pm) \int_{k\in \Omega_{\pm}\inv}
\norm{w_{0}^{\pm \lambda}k\inv}^{-\delta/\lambda}\,d\bar\nu_{o}(kX_{0}^{-}), \text{ otherwise};
\label{eq:cone}
\end{cases}
\end{multline}
\end{theorem}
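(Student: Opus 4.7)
The plan is to adapt the proof of Theorem~\ref{thm:counting1} by replacing the $M$-invariant equidistribution on $\op{T}^1(\G\bs\bH^n)$ (Theorem~\ref{mainergint}) with the stronger equidistribution on $\G\bs G$ (Theorem~\ref{thm:mainerg-group}), which is valid precisely under the Zariski density hypothesis. This extension is what permits arbitrary norms and non-$M$-invariant cones $\R^+\Theta$.

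First I would use the decomposition $G = HAK$ (or $HA^+K$ when $\tilde E = HX_0$) from \S\ref{hakm} to parametrize each $g\in G$ as $g = ha_rk$. Since $H = G_{w_0}$, we have $w_0 g = w_0 a_r k$. The limits in \eqref{eq:wlambda} yield
\[
w_0 a_r k = e^{\lambda r}(w_0^\lambda k + o(1)) \quad \text{as } r\to +\infty,
\]
and similarly with $w_0^{-\lambda}$ as $r\to -\infty$ in the symmetric case. Consequently the cone condition $w_0 g\in \R^+\Theta$ and the norm condition $\|w_0 g\| < T$ become, up to an error from a bounded range of $r$,
\[
k\in \Omega_\pm \quad\text{and}\quad \pm r < \lambda^{-1}\log(T/\|w_0^{\pm\lambda}k\|).
\]
A standard thickening argument shows the bounded-$r$ contribution is $O(1)$, hence negligible compared to $T^{\delta/\lambda}$.

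Next, following the template of Theorem~\ref{thm:counting1}, I would smooth the indicator of the counting region by convolving with an approximate identity $\psi_\epsilon$ on $G$, unfold the sum over $\Gamma_H\bs\Gamma$, and employ the Haar decomposition $dg = \rho(r)\,dh\,dr\,dk$ with $\rho(r)\sim e^{(n-1)|r|}$ from \eqref{eq:Xi}. Applying Theorem~\ref{thm:mainerg-group} to the resulting $H$-integral,
\[
\int_{\G_H\bs H}\psi(\G h a_r)\,dh \sim e^{-(n-1-\delta)r}\,\frac{|\muPS_E|}{|m^{\BMS}|}\,\mBR(\psi),
\]
and multiplying by $\rho(r)$, the $r$-integration produces
\[
\int_0^{\lambda^{-1}\log(T/\|w_0^{\pm\lambda}k\|)} e^{\delta r}\,dr \;\sim\; \frac{T^{\delta/\lambda}}{\delta\,\|w_0^{\pm\lambda}k\|^{\delta/\lambda}},
\]
while the remaining $k$-integration against $\mBR$ encodes the condition $k\in \Omega_\pm$ with the weight $\|w_0^{\pm\lambda}k\|^{-\delta/\lambda}$. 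Evaluating the $K$-factor via the Iwasawa form of $\tilde m^{\BR}$ in Proposition~\ref{mbr} together with Proposition~\ref{corpt} converts this into the desired integral $\int_{\Omega_\pm^{-1}}\|w_0^{\pm\lambda}k^{-1}\|^{-\delta/\lambda}\,d\nu_o(kX_0^-)$; the hypothesis $\nu_o(\partial(\Omega_\pm^{-1}X_0^-))=0$ is what legitimizes replacing the rough indicator of $\Omega_\pm$ by continuous bracketing functions.

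The main obstacle is the coupling between the norm bound and the cone through $k$: the upper limit $\lambda^{-1}\log(T/\|w_0^{\pm\lambda}k\|)$ of the $r$-integral depends on $k$, so the counting region is not a product and the $k$- and $r$-integrations cannot be cleanly separated. This is handled by the usual level-set partition of $\Omega_\pm$ according to the value of $\|w_0^{\pm\lambda}k\|$, running the above scheme on each piece, and reassembling; since $\|w_0^{\pm\lambda}\cdot\|$ is continuous on $K$, the boundary sets introduced have $\nu_o$-null image under $k\mapsto kX_0^-$, so the approximation error is controlled uniformly in $T$. The other delicate point is that Theorem~\ref{thm:mainerg-group} must be applied to test functions which are genuinely not $M$-invariant (both the norm $\|\cdot\|$ and $\Omega_\pm$ lack $M$-invariance in general), and this is precisely where the Zariski density hypothesis is indispensable: without it, one would be forced to impose $M$-invariance on $\Omega$ and $\|\cdot\|$, reverting to Theorem~\ref{thm:counting1}.
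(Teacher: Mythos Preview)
Your proposal is correct and follows essentially the same strategy as the paper: replace Theorem~\ref{mainergint} by Theorem~\ref{thm:mainerg-group} so that the test functions need not be $M$-invariant, then rerun the machinery of \S\ref{oc} (Propositions~\ref{prop:H:E}--\ref{posone}, Proposition~\ref{corpt}) and pass from sectors to cones via the asymptotic $w_0a_rk\sim e^{\lambda r}w_0^{\pm\lambda}k$; this is exactly what \S\ref{zde} and the brief proof of Theorem~\ref{thm:count-cone} do, with the cone-from-sector step deferred to \cite{GorodnikOhShah2009}.

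One small comment: the ``obstacle'' you flag---the $k$-dependence of the upper limit of the $r$-integral---is not handled in the paper by a level-set partition of $\Omega_\pm$, but simply by carrying the $k$-dependent cutoff $r_\pm(k,T)$ through the computation (see \eqref{eq:rpm}--\eqref{eq:rpm2} and \eqref{eq:634}); the point is that the asymptotic $\int_0^{r_\pm(k,T)}e^{\delta r}\,dr\sim \delta^{-1}(T/\|w_0^{\pm\lambda}k\|)^{\delta/\lambda}$ is uniform in $k\in K$, so one can integrate in $r$ first and then in $k$ without any partitioning. Your level-set approach would also work, but it introduces an unnecessary step and the claim that the partition boundaries are automatically $\nu_o$-null needs the extra remark that one may choose the levels to avoid the at-most-countably-many atoms of the pushforward of $\nu_o$ under $k\mapsto\|w_0^{\pm\lambda}k\|$.
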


Note that if $\G$ is Zariski dense in $G$, and if $\partial(\Omega_\pm)$ is contained in a countable union of proper real 
algebraic subvarieties of $\partial\bH^n$ then $\nu_o(\partial(\Omega_\pm))=0$ (see 
\cite[Corollary~1.4]{FlaminioSpatzier1990} and \cite[Remark~1.7(2)]{OhShahcircle}). 

\subsection{Proof of the counting statements}
We follow the counting technique of \cite{DukeRudnickSarnak1993} and
\cite{EskinMcMullen1993}. For a Borel subset $\Omega\subset K$
satisfying the condition of Theorem \ref{thm:count-sector},
we set $$B_{T}(\Omega)=B_{T}\cap w_{0}A^{+}\Omega,$$
and define the following counting function on $\G\ba G$:
$$F_{B_T(\Omega)}(g):=\sum_{\gamma\in \G_{w_0}\ba\G}\chi_{B_T(\Omega)}(w_0\gamma
g).$$

We note that
\begin{equation} \label{eq:FTe}
F_{B_T(\Omega)}(e)=\# (w_0\G\cap B_T(\Omega))
= \#(w_0\G\cap B_T\cap (w_0 A^+\Omega)).
\end{equation}

For $\psi_1, \psi_2\in C_c(\G\ba G)$,
we set $\la \psi_1, \psi_2\ra:=\int_{\G\ba G} \psi_1(g) \psi_2(g)\, dg$.

Let $\psi\in C_c(\G\ba G)$. Then by \eqref{eq:haar:g=hak},
\begin{align}
&\la F_{B_T(\Omega)}, \psi\ra
= \int_{\G_{w_0}\ba G} \chi_{B_T(\Omega)} (w_0 g) \psi(g) \, dg
\notag \\
& =\int_{k\in \Omega} \int_{\{r\geq 0:\|w_0 a_r k\| <T\}}
\Bigl(\int_{[h]\in \G_{w_0}\ba H} \psi(h a_r k)\, dh\Bigr) \rho(r)\, dr dk
\label{eq:614}
\end{align}

For any $k\in K$ and $T>0$, define
\begin{equation} \label{eq:rkT}
r(k,T)=\sup\{r>0: \norm{w_{0}a_rk}<T\}.
\end{equation}

Let $\lambda_1$ be the $\log$ of the largest eigenvalue of $a_1$ on $V$ strictly less than
$e^{\lambda}$. Then by \eqref{eq:wlambda}
there exist $C_1\geq 1$ and $r_1\geq 0$ such that
\begin{equation} \label{eq:r}
\norm{w_0a_rk -e^{\lambda r} w_0^\lambda k}
\leq C_1e^{\lambda_1 r}, \text{ for all $k\in K$ and $r\ge r_1$.}
\end{equation}

Put $\epsilon_0=(\lambda-\lambda_1)/\lambda>0$ and $C_2=2C_1/\inf_{k\in K}\norm{w_0^\lambda k}$. Let $T_1\geq 1$ be 
such that $C_2T_1^{-\e_0}\leq 1/2$ and $(1/2)(T_1/\sup_{k\in K}\norm{w_0^\lambda k})^{1/\lambda} \geq e^{r_1}$. For $T\geq 
T_1$, we define
functions $r_{\pm}(k,T)$ via
\begin{equation} \label{eq:rpm}
e^{r_{\pm}(k,T)}=(T/\norm{w_0^\lambda k})^{1/\lambda}(1\pm C_2T^{-\e_0}).
\end{equation}
Then by elementary calculation using \eqref{eq:r}
\begin{equation} \label{eq:644}
r_-(k,T) \leq r(k,T) \leq  r_+(k,T), \text{ for all $T\geq T_1$ and $k\in K$}.
\end{equation}
By \eqref{eq:norm-cond},
\begin{equation} \label{eq:rmk=rk}
r_{\pm}(mk,T)=r_{\pm}(k,T), \text{ for all $m\in M$ and $k\in K$}.
\end{equation}

We note that by \eqref{eq:rpm}, given $\e>0$, for $T_1(\e)$ sufficiently large,
\begin{equation} \label{eq:rpm2}
 e^{\delta r_\pm(k,T)}=(1+O(\e))(T/\norm{w_0^\lambda k})^{\delta/\lambda} \text{ for all $T\geq T_1(\e)$}.
\end{equation}

\begin{proposition} \label{prop:H:E}
For any non-negative $\psi\in C_{c}(\G\bs G)$,
\begin{equation*} 
\begin{array}{l}
\int_{k\in \Omega} \int_{0}^{r_-(k,T)}  \rho(r)
\bigl(\int_{E^\ast} \psi_k(\gr(v)) d\mu_{E}^{\Leb}(v)\bigr)\, dr dk
 \le \la F_{B_T(\Omega)}, \psi\ra \\
\le
\int_{k\in \Omega} \int_0^{r_+(k,T)} \rho(r)
\Bigl(\int_{E^\ast} \psi_k(\gr(v)) d\mu_{E}^{\Leb}(v)\Bigr)\, dr dk,
\end{array}
\end{equation*}
where $\psi_k\in C_c(\G\ba G)^M\cong C_c(\T^1(\G\bs\bH^n))$ is given by
\[
\psi_k(g)=\int_{m\in M}\psi(gmk)dm.
\]
\end{proposition}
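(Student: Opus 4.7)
The plan is to transform the identity \eqref{eq:614} for $\langle F_{B_T(\Omega)},\psi\rangle$ into the claimed two-sided bound in three moves: cutting the $r$-integration at $r_\pm(k,T)$, performing an $M$-averaging that converts $\psi$ into $\psi_k$, and reinterpreting the resulting $H$-integral as an $E^*$-integral via \eqref{eq:dh}. As a starting point I would simply record that \eqref{eq:614} reads
\[
\langle F_{B_T(\Omega)},\psi\rangle
=\int_{k\in\Omega}\int_0^{r(k,T)}\rho(r)\Bigl(\int_{\G_H\bs H}\psi(\G h a_r k)\,dh\Bigr)\,dr\,dk,
\]
where I use $\G_{w_0}=\G\cap H=\G_H$, and $r(k,T)$ is defined in \eqref{eq:rkT}.

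Since $\psi\ge 0$ and $\rho>0$, the sandwich $r_-(k,T)\le r(k,T)\le r_+(k,T)$ from \eqref{eq:644} already gives the analog of the desired inequality, but with $\psi(\G h a_r k)$ in place of $\psi_k(\gr(\p(hX_0)))$. To close this gap I would run an $M$-averaging argument. Using $M$-invariance of $\Omega$ together with $M$-invariance of the Haar measure $dk$, the substitution $k\mapsto m^{-1}k$ shows that for any integrand $\Phi(k)$,
\[
\int_\Omega \Phi(k)\,dk=\int_M\int_\Omega \Phi(mk)\,dk\,dm.
\]
Applying this with $\Phi(k)=\int_0^{r_\pm(k,T)}\rho(r)\int_{\G_H\bs H}\psi(\G h a_r k)\,dh\,dr$ and invoking $r_\pm(mk,T)=r_\pm(k,T)$ from \eqref{eq:rmk=rk} (this is exactly where the norm hypothesis \eqref{eq:norm-cond} enters), I can slide the $M$-average through the $r$-cutoff and the $dh$-integral. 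Because $M=Z_K(A)$ centralizes $A$, $a_rm=ma_r$, so the innermost integrand becomes
\[
\int_M\psi(\G h a_r mk)\,dm
=\int_M\psi(\G h m a_r k)\,dm
=\psi_k(\G h a_r).
\]

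It remains to identify $\int_{\G_H\bs H}\psi_k(\G h a_r)\,dh$ with $\int_{E^*}\psi_k(\gr(v))\,d\mu_E^{\Leb}(v)$. For $m\in M\cap H$ the right-$M$-invariance of $\psi_k$ together with $a_r m=m a_r$ shows that $h\mapsto\psi_k(\G h a_r)$ is right $M\cap H$-invariant, so \eqref{eq:dh}, in its $\G_H$-equivariant version, identifies this $\G_H\bs H$-integral with $\int_{\G_H\bs\tE^*}\psi_k(\gr(\p(v)))\,d\mu^{\Leb}_{\G_H\bs\tE}(v)$; here I use $\gr(\p(hX_0))=\p(ha_rX_0)=\G h a_r M$. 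By Remark~\ref{rem:main}(2) the natural map $\G_H\bs\tE^*\hookrightarrow E^*$ is a proper injection, so by the definition of $\mu_E^{\Leb}$ as pushforward this integral equals $\int_{E^*}\psi_k(\gr(v))\,d\mu_E^{\Leb}(v)$. Assembling the three moves produces the inequalities in the Proposition.

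The main obstacle is the bookkeeping of the $M$-averaging step: one must simultaneously use that $\Omega$, $dk$, and $r_\pm(\cdot,T)$ are $M$-invariant in order to freely move the $M$-integration past them, and that $A$ and $M$ commute in order to turn $\psi$ into $\psi_k$. Nothing is conceptually deep, but this interplay is exactly what forces the invariance hypothesis \eqref{eq:norm-cond} into the statement of Theorem~\ref{thm:count-sector}, and relaxing it (as in Remark~\ref{rem:main}(\ref{itm:72})) will require a different argument using Zariski density and Theorem~\ref{thmi:mainerg-group} rather than the scalar mixing input already packaged in Theorem~\ref{mainergint}.
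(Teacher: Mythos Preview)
Your proof is correct and follows essentially the same approach as the paper's: start from \eqref{eq:614}, cut the $r$-range via \eqref{eq:644}, insert the $M$-average using $M\Omega=\Omega$ and \eqref{eq:rmk=rk}, and convert the $\G_H\backslash H$-integral to an $E^\ast$-integral via \eqref{eq:dh}. You supply slightly more detail than the paper on the last step (the right $M\cap H$-invariance of $h\mapsto\psi_k(\G ha_r)$ and the proper injection $\G_H\backslash\tE^\ast\hookrightarrow E^\ast$ from Remark~\ref{rem:main}(2)), which is fine.
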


\begin{proof}
By \eqref{eq:dh}, \eqref{eq:haar:g=hak}, \eqref{eq:614}, \eqref{eq:644},
\eqref{eq:rmk=rk} and Lemma~\ref{lemma:N-Leb}, we get
\begin{equation*}
\begin{array}{ll}
&\la F_{B_T(\Omega)}, \psi\ra\\& =\int_{k\in \Omega} \int_{\{r\geq 0:\|w_0 a_r k\| <T\}}
\bigl(\int_{[h]\in \G_{w_0}\ba H} \psi(h a_r k)\, dh\bigr) \rho(r)\, dr dk
\notag\\
&\leq
\int_{k\in \Omega} \int_{0}^{r_+(k,T)}
\bigl(\int_{[h]\in \G_{w_0}\ba H} \psi(h a_r k)\, dh\bigr) \rho(r)\, dr dk
\notag\\
&= \int_{k\in \Omega}  \int_{0}^{r_+(k,T)}
\bigl(\int_{[h]\in \G_{w_0}\ba H} \int_{m\in M} \psi(h a_r m k)\,dm dh\bigr) \rho(r)\, dk,
\notag \text{ as $M\Omega =\Omega$} \\
&=\int_{k\in \Omega} \int_0^{r_+(k,T)}
\bigl(\int_{[h]\in\G_{w_0}\ba H} \psi_{k}(h a_r)\, dh\bigr) \rho(r)\, dr dk
\notag  \\
&= \int_{k\in \Omega} \int_0^{r_+(k,T)} \rho(r)
\bigl(\int_{E^\ast} \psi_k(\gr(v)) d\mu_{E}^{\Leb}(v)\bigr)\, dr dk.
\end{array}
\end{equation*}

The other inequality is proved similarly.
\end{proof}

\begin{proposition}\label{posone}\label{eq:540}
For any $\psi\in C_c(\G\ba G)$, we have
\begin{equation*} 
\lim_{T\to\infty} T^{-\delta/\lambda} \la F_{B_T(\Omega)}, \psi\ra
=\frac{\mu^{\PS}_{E}(E^\ast)}{\delta \cdot\abs{m^{\BMS}}} \cdot
m^{\BR}(\xi_{w_0}\ast_{\Omega} \psi),
\end{equation*}
where $\xi_{w_{0}}(k)=\norm{w_{0}^{\lambda}k}^{-\delta/\lambda}$.
 \end{proposition}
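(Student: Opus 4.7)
The plan is to combine the two-sided sandwich bounds of Proposition~\ref{prop:H:E} with the expanding-submanifold equidistribution of Theorem~\ref{mainergint}, and then unfold the result to match it with $m^{\BR}(\xi_{w_0}*_\Omega \psi)$. Without loss of generality I take $\psi\ge 0$; the general case follows by linearity. Throughout, write $L(k) := \frac{\mu_E^{\PS}(E^\ast)}{|m^{\BMS}|}\,m^{\BR}(\psi_k)$, where $\psi_k\in C_c(\T^1(\G\bs\bH^n))$ is defined as in Proposition~\ref{prop:H:E}.

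First, I apply Theorem~\ref{mainergint} with $F=E^\ast$, which is a union of connected components of $E$ (so $\mu_E^{\PS}(\partial E^\ast)=0$). For each $k\in K$ this gives $e^{(n-1-\delta)r}\int_{E^\ast}\psi_k(\gr(v))\,d\mu_E^{\Leb}(v)\to L(k)$ as $r\to\infty$. Because $k\mapsto \psi_k$ is continuous into $C_c(\T^1(\G\bs\bH^n))$ with image in a fixed compact set, the convergence is uniform in $k\in K$. Coupled with $\rho(r)\sim e^{(n-1)r}$ from \eqref{eq:Xi}, this yields
\begin{equation*}
\rho(r)\int_{E^\ast}\psi_k(\gr(v))\,d\mu_E^{\Leb}(v) = e^{\delta r}\bigl[L(k)+\eta(k,r)\bigr],
\end{equation*}
with $\eta(k,r)\to 0$ uniformly in $k$ as $r\to\infty$.

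Next, I integrate over $r\in[0,r_\pm(k,T)]$. Introducing an auxiliary cutoff $R$: the contribution from $[0,R]$ is uniformly bounded in $(k,T)$ and becomes negligible after division by $T^{\delta/\lambda}$; the contribution from $[R,r_\pm(k,T)]$ equals $\tfrac{L(k)}{\delta}e^{\delta r_\pm(k,T)}(1+\eta_R+o_T(1))$ with $\eta_R\to 0$ as $R\to\infty$. Applying \eqref{eq:rpm2} in the form $e^{\delta r_\pm(k,T)}=(1+O(\e))(T/\norm{w_0^\lambda k})^{\delta/\lambda}$ and sending first $T\to\infty$, then $R\to\infty$, then $\e\to 0$, gives uniformly in $k$
\begin{equation*}
T^{-\delta/\lambda}\int_0^{r_\pm(k,T)}\rho(r)\!\int_{E^\ast}\psi_k(\gr(v))\,d\mu_E^{\Leb}(v)\,dr \;\longrightarrow\; \frac{L(k)}{\delta}\norm{w_0^\lambda k}^{-\delta/\lambda}.
\end{equation*}
Inserting this into Proposition~\ref{prop:H:E}, the upper and lower sandwich integrals pinch to the same limit, and dominated convergence on $k\in \Omega$ (the integrand is bounded using $\inf_{k\in K}\norm{w_0^\lambda k}>0$, valid since $w_0^\lambda\ne 0$) produces
\begin{equation*}
\lim_{T\to\infty}T^{-\delta/\lambda}\la F_{B_T(\Omega)},\psi\ra = \frac{\mu_E^{\PS}(E^\ast)}{\delta\,|m^{\BMS}|}\int_\Omega \norm{w_0^\lambda k}^{-\delta/\lambda}\,m^{\BR}(\psi_k)\,dk.
\end{equation*}

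It remains to identify the right-hand side with $\tfrac{\mu_E^{\PS}(E^\ast)}{\delta|m^{\BMS}|}m^{\BR}(\xi_{w_0}*_\Omega\psi)$. I lift to the $M$-invariant extension $\bar m^{\BR}$ on $\G\bs G$. Since $\psi_k$ is right $M$-invariant, $m^{\BR}(\psi_k)=\bar m^{\BR}(\psi_k)$; a Fubini-plus-change-of-variables argument using the right $M$-invariance of $\bar m^{\BR}$ then gives $m^{\BR}(\psi_k)=\int_{\G\bs G}\psi(gk)\,d\bar m^{\BR}(\G g)$. The hypothesis \eqref{eq:norm-cond} is precisely the statement that $\xi_{w_0}$ is left $M$-invariant, so $\xi_{w_0}\in C(M\bs K)$ and $\xi_{w_0}*_\Omega\psi$ of \eqref{eq:astOmega} is the right $M$-invariant element of $C_c(\G\bs G)$. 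Swapping the order of integration yields
\begin{equation*}
\int_\Omega \xi_{w_0}(k)\,m^{\BR}(\psi_k)\,dk = \bar m^{\BR}(\xi_{w_0}*_\Omega\psi) = m^{\BR}(\xi_{w_0}*_\Omega\psi),
\end{equation*}
which completes the proof. The main obstacle is promoting Theorem~\ref{mainergint} (which gives pointwise convergence in the test function) to a form uniform in $k$ and tolerant of the moving upper limit $r_\pm(k,T)$; this is handled by the compactness of $\{\psi_k:k\in K\}$ in $C_c(\T^1(\G\bs\bH^n))$ together with the cutoff $R$ that quarantines the finite-$r$ regime where the asymptotics of both Theorem~\ref{mainergint} and of $\rho(r)$ are not yet in effect.
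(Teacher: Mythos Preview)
Your proof is correct and follows essentially the same route as the paper: reduce to $\psi\ge 0$, apply Theorem~\ref{mainergint} to $E^\ast$ with the uniformity in $k$ coming from compactness of $\{\psi_k:k\in K\}$, couple this with the asymptotic $\rho(r)\sim e^{(n-1)r}$, split the $r$-integral at a threshold $R$ (the paper's $r_0$), and then feed the result into the sandwich of Proposition~\ref{prop:H:E}. The only difference is that you spell out the identification $\int_\Omega \xi_{w_0}(k)\,m^{\BR}(\psi_k)\,dk = m^{\BR}(\xi_{w_0}\ast_\Omega\psi)$ via the lift $\bar m^{\BR}$, whereas the paper leaves this step implicit; your justification is fine and in fact clarifies a point the paper passes over.
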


\begin{proof} Without loss of generality, we may assume that $\psi$ is non-negative.
For any $\e>0$ and $k\in K$, by Theorem~\ref{mainergint} and \eqref{eq:Xi},
there exists $r_0>0$ such that for any $r>r_0$:
\begin{gather}
\label{eq:main:plus}
 e^{(n-1-\delta)r} \int_{v\in  E^\ast
} \psi_k(\gr(v)) \,d\mu^{\Leb}_{E}(v) =
\frac{\mu^{\PS}_{E}(E^\ast)\cdot m^{\BR}(\psi_{k})}{\abs{m^{\BMS}} } +O(\e);
\\ \label{e2}
\rho(r)=(1+O(\e))e^{(n-1)r}.
\end{gather}
Since $\psi\in C_c(\G\bs G)$, the map $K\ni k\mapsto \psi_k$ is continuous with respect to the sup-norm on  $C_c(\T^1(\bH^n))
$. Therefore since $K$ is compact, we can choose $r_0>0$ independent of $k\in K$. Now for sufficiently large $T>1$,
\begin{equation}
\begin{array}{l}
 \int_{r_0}^{r^\pm(k,T)} \rho(r) \int_{E^\ast} \psi_k(\gr(v))
d\mu_{E}^{\Leb}(v) dr
\\
=\int_{r_0}^{r_{\pm}(k,T)} \rho(r) e^{(-n+1+\delta)r}
\bigl(e^{(n-1-\delta)r} \int_{E^{\ast}} \psi_k(\gr(v))\, d\mu_{E}^{\Leb}(v) \bigr)\,dr
\\
=\bigl(\frac{\mu^{\PS}_{E}(E^{\ast})\cdot m^{\BR}(\psi_k)}{|m^{\BMS}|} +O(\e)\bigr)(1+O(\e))\int_{r_0}^{r_{\pm}(k,T)} e^{\delta r}
\,dr
\\
= \frac{\mu^{\PS}_{E}(E^\ast)\cdot m^{\BR}(\psi_k)}{\abs{m^{\BMS}}}\cdot \frac{T^{\delta/\lambda}
\norm{w_{0}^{\lambda}k}^{-\delta/\lambda}}{\delta} + O(\e) T^{\delta/\lambda}  +O(e^{\delta r_0}),
\end{array}
\label{eq:634}
\end{equation}
where the last equation follows from \eqref{eq:rpm2} for sufficiently large $T$.

Since $E\subset \T^1(\G\ba \bH^n)$ is a closed subset,
$\psi\in C_c(\G\ba G)$ and $K$ is compact,  it follows that for fixed $r_0>1$,
we have
\[
\sup_{\abs{r}\leq r_{0},k\in K} \int_{E}
\psi_k(v a_r )\, d\mu_{E}^{\Leb}(v)=O(1).
\]
Hence
\begin{equation} \label{eq:550}
\int_{\{r:\|w_0 a_r k\|
<T, \abs{r} \le r_0\}} \rho(r) \int_{E}
\psi_k(\gr(v)) d\mu_{E}^{\Leb}(v) dr =O(e^{(n-1)r_0}).
\end{equation}

By Proposition~\ref{prop:H:E},  \eqref{eq:634} and \eqref{eq:550},
\begin{align*}
\lim_{T\to\infty}\frac{\la F_{B_T(\Omega)}, \psi\ra}{T^{\delta/\lambda}}=\frac{\mu^{\PS}_{E}(E^\ast)}{\delta\cdot \abs{m^{\BMS}}}
\cdot  \int_{k\in\Omega} \norm{w_{0}^{\lambda} k}^{-\delta/\lambda} m^{\BR}(\psi_{k})\,dk +O(\e).
\label{eq:520}
\end{align*}
Since $\e>0$ is arbitrary, we finish the proof.
\end{proof}

\begin{lemma}[Strong wavefront lemma]
\label{swl} There exist $\ell>1$ and $\e_0>0$ such
that for any $0<\e<\e_0$ and $g=hak\in HA^{+}K$ with $\norm{a}\geq 2$,
\[
gU_\e\subset h(H\cap U_{\ell \e}) a (A\cap U_{\ell \e}) k(K\cap U_{\ell \e}),
\]
where $\norm{g}$ denotes the distance of $g$ from
$e$ in $G$ which is $K$-invariant.
\end{lemma}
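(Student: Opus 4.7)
I would first reduce the lemma to a statement at the identity. By the bi-$K$-invariance of $\norm{\cdot}$, conjugation by $k\in K$ preserves $U_\delta$ for every $\delta>0$, and $k(K\cap U_{\ell\e})k\inv = K\cap U_{\ell\e}$. Writing $gu = hak u = ha(kuk\inv)k$ and noting $\norm{kuk\inv}=\norm{u}$, the lemma reduces to showing: there exist $\ell>1$ and $\e_0>0$ such that for every $r\ge\log 2$ and every $u\in U_\e$ with $\e<\e_0$,
\[
a_r u \in (H\cap U_{\ell\e})\cdot a_r\cdot (A\cap U_{\ell\e})\cdot (K\cap U_{\ell\e}).
\]
This I would derive by applying a quantitative inverse function theorem to the smooth map $\Phi_r\colon H\times A\times K\to G$, $(h_1,a_1,k_1)\mapsto h_1 a_r a_1 k_1$. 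Since $\Phi_r$ is a submersion near $(e,e,e)$, one obtains local sections with uniformly bounded Lipschitz constant provided the differential of $\Phi_r$ admits a right-inverse whose operator norm is bounded uniformly for $r\ge\log 2$.

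The crucial input is a root-space calculation. Under the left-trivialization $T_{a_r}G\cong\mathfrak{g}$, the differential of $\Phi_r$ at $(e,e,e)$ is $D\Phi_r(X,Z,W) = X+Z+\Ad(a_r)W$ for $(X,Z,W)\in\mathfrak{h}\times\mathfrak{a}\times\mathfrak{k}$. Decompose $\mathfrak{g} = \mathfrak{n}^-\oplus\mathfrak{m}\oplus\mathfrak{a}\oplus\mathfrak{n}^+$ into $\Ad(a_r)$-eigenspaces with eigenvalues $e^{-r},1,1,e^r$. The Cartan involution $\theta$ interchanges $\mathfrak{n}^\pm$, giving $\mathfrak{k}=\mathfrak{m}\oplus\{X-\theta X:X\in\mathfrak{n}^+\}$. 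In the symmetric case, since $\sigma$ commutes with $\theta$ and $\mathfrak{a}\subset\mathfrak{q}$, the involution $\sigma$ also swaps $\mathfrak{n}^\pm$ and
\[
\mathfrak{h} = (\mathfrak{m}\cap\mathfrak{h})\oplus\{X+\sigma X:X\in\mathfrak{n}^+\};
\]
in the parabolic case, the definition~\eqref{eq:Nhoro} forces $\mathfrak{h}=\mathfrak{m}\oplus\mathfrak{n}^-$. In either case one checks $\mathfrak{h}+\mathfrak{a}+\Ad(a_r)\mathfrak{k}=\mathfrak{g}$ and constructs a uniformly bounded right-inverse: given $Y=Y_-+Y_0+Y_a+Y_+\in\mathfrak{g}$, the $\mathfrak{a}$-part $Y_a$ is supplied by $Z$; the $\mathfrak{n}^+$-part $Y_+$ by $W\in\mathfrak{k}$ of norm $O(e^{-r}\norm{Y_+})$; the $\mathfrak{n}^-$-part $Y_-$ by $X\in\mathfrak{h}$ at bounded cost; the $\mathfrak{m}$-part $Y_0$ is absorbed into $X$ and $W$. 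The spurious cross-contributions (of $W$ into $\mathfrak{n}^-$ at scale $O(e^{-2r})$, and of $X$ into $\mathfrak{n}^+$ at scale $O(1)$ in the symmetric case) are corrected by a convergent iteration.

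Finally, picking complementary subspaces $\mathfrak{h}''\subset\mathfrak{h}$ and $\mathfrak{k}''\subset\mathfrak{k}$ to a section of the kernel of $D\Phi_r$ (which has $r$-independent dimension equal to $\dim(\mathfrak{h}\cap\mathfrak{k})$), the restriction of $D\Phi_r$ to $\mathfrak{h}''\times\mathfrak{a}\times\mathfrak{k}''$ is a linear isomorphism with uniformly bounded inverse. Since left-translation on $G$ is an isometry for any left-invariant Riemannian metric, the higher derivatives of $\Phi_r$ relative to the $G$-norm $\norm{\cdot}$ are bounded independently of $r$, so the quantitative inverse function theorem yields the desired $\ell>1$ and $\e_0>0$ uniformly in $r\ge\log 2$. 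The main obstacle is precisely this uniform bound on the right-inverse of $D\Phi_r$ as $r\to\infty$: since $\Ad(a_r)\mathfrak{k}$ concentrates asymptotically in $\mathfrak{m}\oplus\mathfrak{n}^+$ and contributes negligibly to $\mathfrak{n}^-$, the decomposition could fail unless $\mathfrak{h}$ supplies the $\mathfrak{n}^-$ direction at uniformly bounded cost. That this happens is exactly the content of the structural descriptions above—through the $\sigma$-pairing $\{X+\sigma X\}$ in the symmetric case, and through the direct inclusion $\mathfrak{n}^-\subset\mathfrak{h}$ in the parabolic case.
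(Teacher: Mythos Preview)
Your approach is correct but differs from the paper's. The paper dispatches the symmetric case by citing \cite[Theorem~4.1]{GorodnikShahOh2010} and gives a separate, very short argument in the parabolic case: after the same conjugation-by-$k$ reduction you use, one simply writes $u'=kuk^{-1}$ in the local Iwasawa form $u'=h_1a_1k_1$ (with $h_1\in N\cap U_{\ell\e}$, $a_1\in A\cap U_{\ell\e}$, $k_1\in K\cap U_{\ell\e}$ for some fixed $\ell$), and then
\[
au'k = (a h_1 a^{-1})(a a_1)(k^{-1}k_1 k)\cdot k,
\]
using that $A$ normalizes $N$ and that $\|a h_1 a^{-1}\|\le\|h_1\|$ for $a\in A^+$. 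No iteration or uniform inverse-function-theorem estimate is needed here, because the contraction $\Ad(a)|_{\mathfrak n}$ does all the work in one step.

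Your unified inverse-function-theorem argument is essentially what underlies the cited reference in the symmetric case, and your root-space analysis is the right one: the key structural point---that $\mathfrak h$ supplies the contracting direction $\mathfrak n^-$ at bounded cost, via $\{X+\sigma X\}$ when $H$ is symmetric and via $\mathfrak n^-\subset\mathfrak h$ when $H=MN$---is exactly what makes the right-inverse of $D\Phi_r$ uniformly bounded. So your route is more self-contained at the price of more work in the parabolic case. One small slip: the formula $D\Phi_r(X,Z,W)=X+Z+\Ad(a_r)W$ is what one gets under \emph{right}-trivialization of $T_{a_r}G$, not left; this does not affect the argument.
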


\begin{proof} If $H$ is symmetric, the result follows from \cite[Theorem~4.1]{GorodnikShahOh2010}.

Now suppose that $H=N$ is horospherical. We may assume that the distance from $e$ in $G$ is invariant under conjugation by 
elements of $K$. Let $u\in U_{\e}$. Then $kuk\inv\in U_{\e}$. Write $kuk\inv=h_{1}a_{1}k_{1}$, where $h_{1}\in H\cap U_{\ell\e}
$, $a_{1}\in A\cap U_{\ell\e}$ and $k_{1}\in K\cap U_{\ell \e}$ for some $\ell\geq 1$ independent of $\e$. Now
\[
gu=haku=ha(kuk\inv)k=(h(ah_{1}a\inv))(aa_{1})k(k\inv k_{1}k).
\]
Since $a\in A^{+}$ and $h_{1}\in H=N$, by \eqref{eq:Nhoro},  $\norm{ah_{1}a\inv}\leq \norm{h_{1}}$. Also
$\norm{k\inv k_{1}k}=\norm{k_{1}}$. Hence $gu$ has the required form.
\end{proof}

\begin{proof}[Proof of Theorem~\ref{thm:count-sector}(1)]
By the assumption that $\nu_o(\partial(\Omega^{-1}))=0$, for all sufficiently small $\e>0$,
 there exists an $\e$-neighborhood $K_\e$ of $e$ in $K$
  such that for $\Omega_{\e+}=\Omega K_\e $ and
  $\Omega_{\e-}=\cap_{k\in K_\e} \Omega k$,
 \begin{equation}\label{eq:600}
 \lim_{\e\to 0} \nu_o(\Omega_{\e +}^{-1} -  \Omega_{\e-}^{-1})= 0.
 \end{equation}

Let $\ell>1$ as in Lemma \ref{swl}. Then for $T\gg 1$,
$$B_T(\Omega) U_{\ell^{\inv} \e} \subset B_{(1+\e)T}(\Omega_{\e+}) \quad\text{and}\quad
B_{(1-\e)T}(\Omega_{\e-})\subset\cap_{u\in U_{\ell\inv \e}} B_T(\Omega) u .$$

Let $\psi_\e\in C_c(G)$ be a non-negative function supported on $U_{\ell\inv \e}$
and $\int\psi_\e dg=1$, and let $\Psi_\e\in C_c(\G\ba G)$ the
$\G$-average of $\psi_\e$:
\begin{equation} \label{eq:Psi_e}
\Psi_\e(g):=\sum_{\gamma\in \G}\psi_\e(\gamma g).
\end{equation}
Then  $F_{B_{(1-\e)T}(\Omega_{\e-})} (g)\le F_{B_T(\Omega)}(e)
\le F_{B_{(1+\e)T}(\Omega_{\e+})}(g)$ for all $g\in U_{\ell\inv \e}$.
Therefore, by integrating against $\Psi_\e$, we have
\begin{equation*} \label{eq:610}
\la F_{B_{(1-\e)T}(\Omega_{\e-})}, \Psi_\e \ra \le
F_{B_T(\Omega)}(e)\le \la F_{B_{(1+\e)T}(\Omega_{\e+})},
\Psi_\e\ra.
\end{equation*}

Let $\xi_{w_0}$ be as defined in Proposition \ref{posone}.
By Proposition \ref{corpt}, for any $\eta>0$, there exists $\e>0$ such that
\begin{align*}
m^{\BR}(\xi_{w_0} \ast_{\Omega} \Psi_\e)
= \tilde m^{\BR}(\xi_{w_0} \ast_{\Omega} \psi_\e)
= \int_{k\in \Omega^{-1}} \xi_{w_{0}}(k\inv)\,d\nu_o(kX_{0}^{-})+O(\eta).
\end{align*}

Therefore by Proposition~\ref{posone},
\begin{equation}
\begin{array}{l}
  \lim_{T\to\infty} T^{-\delta/\lambda}
 \cdot \la F_{B_{(1\pm\e)T}(\Omega_{\e{\pm}} )}, \Psi_\e\ra
 \\
 = \frac{{\mu^{\PS}_{E}(E^{\ast})}}{\delta \cdot\abs{m^{\BMS}}} \cdot
\int_{k\in \Omega_{\e\pm}^{-1}} \xi_{w_{0}}(k\inv)\,d\nu_o(kX_{0}^{-}) + O(\eta);
\label{eq:630}
\end{array}
\end{equation}

In view of \eqref{eq:600}, we get
\begin{equation*}
\lim_{T\to \infty}\frac{F_{B_{T}(\Omega)}(e)}{T^{\delta/\lambda}}
=\frac{{\mu^{\PS}_{E}(E^{\ast})}}{\delta \cdot\abs{m^{\BMS}}} \cdot
\int_{k\in \Omega^{-1}} \xi_{w_{0}}(k\inv)\,d\nu_o(kX_{0}^{-})+O(\eta).
\end{equation*}
Since $\eta>0$ is arbitrarily chosen, we finish the proof of (1).
\end{proof}

\begin{proposition} \label{prop:HA-K}
Suppose that $H=G_{w_{0}}$ is symmetric and that $G\neq HA^{+}K$. Let $\Omega\subset M\bs K$
such that $\nu_{o}(\partial(\Omega^{\inv}X_{0}^{-}))=0$.
Then
\begin{equation} \label{eq:661}
\begin{array}{c}
\lim_{T\to\infty}\frac{\#(w_{0}\G\cap B_{T}\cap w_{0}A^{-}\Omega)}{T^{\delta/\lambda}} \\ =
\frac{\muPS_{E}(E^{-})}{\delta\cdot \abs{m^{\BMS}}}
\int_{k\in\Omega \inv} \norm{w_0^{-\lambda} k\inv}^{-\delta/\lambda}\, d\nu_o(kX_0^-).
\end{array}
\end{equation}
\end{proposition}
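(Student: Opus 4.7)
The plan is to adapt the proof of Theorem \ref{thm:count-sector}(1) to the contribution from the $A^{-}$ direction, using the Weyl element $k_{0}\in N_{K}(A)\setminus Z_{K}(A)$ satisfying $k_{0}a_{r}k_{0}^{-1}=a_{-r}$ (so $k_{0}X_{0}=-X_{0}$, $k_{0}^{2}\in M$, and $w_{0}^{-\lambda}=w_{0}^{\lambda}k_{0}$ by Remark \ref{rem:main}(\ref{itm:71})) to translate the contracting direction acting on $\tilde E^{+}$ into the expanding direction acting on $\tilde E^{-}$, where our equidistribution theorem applies.

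First I would unfold $\la F_{w_{0}\G\cap B_{T}\cap w_{0}A^{-}\Omega},\psi\ra$ against a test function $\psi\in C_{c}(\G\bs G)$ using the decomposition $G=HAK$ restricted to the $A^{-}$-piece exactly as in \eqref{eq:614}, obtaining
\[
\la F,\psi\ra=\int_{k\in\Omega}\int_{0}^{\infty}\chi_{\|w_{0}a_{-s}k\|<T}\left(\int_{\G_{w_{0}}\bs H}\psi_{k}(ha_{-s})\,dh\right)\rho(-s)\,ds\,dk,
\]
where $\psi_{k}(g)=\int_{M}\psi(gmk)\,dm$ is inserted using $M\Omega=\Omega$ as in Proposition \ref{prop:H:E}.

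Next, using $a_{-s}=k_{0}a_{s}k_{0}^{-1}$ and that $k_{0}$ normalizes $M$ with Haar-preserving conjugation, a direct change of variable $m\mapsto k_{0}^{-1}mk_{0}$ in the $M$-average inside $\psi_{k}(ha_{-s})$ produces the key identity
\[
\int_{\G_{w_{0}}\bs H}\psi_{k}(ha_{-s})\,dh=\int_{E^{-}}\psi_{k_{0}^{-1}k}(\gs(v))\,d\mu^{\Leb}_{E^{-}}(v),
\]
where we parametrize $\tilde E^{-}=Hk_{0}X_{0}$ via $h\mapsto hk_{0}X_{0}$, so that Haar on $H$ pushes forward to $\mu^{\Leb}_{E^{-}}$ (the analogue of \eqref{eq:dh}) and $\gs(hk_{0}X_{0})=hk_{0}a_{s}X_{0}=ha_{-s}k_{0}X_{0}$ in $G/M$. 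This is the expanding direction, so Theorem \ref{mainergint} applies with $F=E^{-}$ (a closed, hence $\muPS_{E}$-null-boundary subset of $E$ with $|\muPS_{E^-}|\le|\muPS_{E}|<\infty$).

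Following verbatim the template of Proposition \ref{posone} — using $\rho(-s)\sim e^{(n-1)s}$ from \eqref{eq:Xi} (valid in the symmetric case) and the norm asymptotic $\|w_{0}a_{-s}k\|\sim e^{\lambda s}\|w_{0}^{-\lambda}k\|$ from \eqref{eq:r} with $w_{0}^{-\lambda}$ in place of $w_{0}^{\lambda}$ — I obtain
\[
\lim_{T\to\infty}T^{-\delta/\lambda}\la F,\psi\ra
=\frac{\muPS_{E}(E^{-})}{\delta\,|m^{\BMS}|}\int_{\Omega}\|w_{0}^{-\lambda}k\|^{-\delta/\lambda}\,m^{\BR}(\psi_{k_{0}^{-1}k})\,dk.
\]

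Finally, I would perform the smoothing step of the proof of Theorem \ref{thm:count-sector}(1): take $\psi=\Psi_{\e}$ (the $\G$-averaged approximate identity \eqref{eq:Psi_e}), sandwich $F_{w_{0}A^{-}\Omega\cap B_{T}}(e)$ between $\la F_{w_{0}A^{-}\Omega_{\e\pm}\cap B_{(1\pm\e)T}},\Psi_{\e}\ra$ via Lemma \ref{swl} (whose conclusion is insensitive to the sign of $r$), and evaluate the $\e\to 0$ limit using Proposition \ref{corpt}. Writing $\xi(k)=\|w_{0}^{-\lambda}k\|^{-\delta/\lambda}$ and passing to the variable $u=k_{0}^{-1}k$ shows that the integrand agrees (after applying \eqref{eq:norm-cond} together with $w_{0}^{-\lambda}k_{0}=w_{0}^{\lambda}k_{0}^{2}$ and $k_{0}^{2}\in M$) with a shifted $\xi_{w_{0}}*_{k_{0}^{-1}\Omega}\Psi_{\e}$-type quantity to which Proposition \ref{corpt} applies; the resulting $\nu_{o}$-integral on $\partial{\bH^{n}}$ is then transported back to the $X_{0}^{-}$-parametrization to yield the stated formula \eqref{eq:661}.

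The main technical obstacle lies in the bookkeeping of Step 4: the $k_{0}^{-1}$ inside $\psi_{k_{0}^{-1}k}$ corresponds, after $M$-averaging, to a right-translate of $\Psi_{\e}$ by $k_{0}$, and one must simultaneously track its interaction with (i) the norm factor $\|w_{0}^{-\lambda}\cdot\|^{-\delta/\lambda}$ via the relation $w_{0}^{-\lambda}=w_{0}^{\lambda}k_{0}$ and the hypothesis \eqref{eq:norm-cond}, and (ii) the $K/M\cong\partial{\bH^{n}}$ identification underlying $d\nu_{o}(kX_{0}^{-})$, for which $k_{0}$ swaps $X_{0}^{+}\leftrightarrow X_{0}^{-}$. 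Every other ingredient — the mixing/equidistribution input from \S\ref{sec:mix}, the $HAK$ decomposition \eqref{eq:haar:g=hak}, the strong wavefront lemma, and the $\nu_{o}$-asymptotics of Proposition \ref{corpt} — is already in place.
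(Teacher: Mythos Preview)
Your route differs from the paper's. The paper does not introduce the Weyl element at all: it simply asserts, citing Theorem~\ref{mainergint}, the limit
\[
\lim_{r\to\infty}e^{(n-1-\delta)r}\int_{E^{+}}\phi(\grinv(v))\,d\mu_E^{\Leb}
=\lim_{r\to\infty}e^{(n-1-\delta)r}\int_{E^{-}}\phi(\gr(v))\,d\mu_E^{\Leb}
=\frac{\mu_E^{\PS}(E^{-})}{|m^{\BMS}|}\,m^{\BR}(\phi),
\]
and then reruns Proposition~\ref{posone} and the smoothing argument of Theorem~\ref{thm:count-sector}(1) verbatim with $a_{-r}$, $w_0^{-\lambda}$, $E^{-}$ in place of $a_r$, $w_0^{\lambda}$, $E^{*}$. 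That produces $m^{\BR}(\psi_k)$ with no shift, and Proposition~\ref{corpt} then delivers the stated $\int_{\Omega^{-1}}\cdots\,d\nu_o(kX_0^{-})$ directly, with exactly the boundary hypothesis assumed. Your approach is more explicit about how the backward flow on $E^{+}$ becomes the forward flow on $E^{-}$, and your key identity $\int_{\G_{w_0}\bs H}\psi_k(ha_{-s})\,dh=\int_{E^{-}}\psi_{k_0^{-1}k}(\gs(v))\,d\mu_{E^{-}}^{\Leb}(v)$ is correct.

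There is, however, a genuine gap in your final step. The $k_0^{-1}$ you pick up does not cancel when you pass through Proposition~\ref{corpt}. After your change of variable $u=k_0^{-1}k$ and the norm identities you cite (which are correct and do reduce the \emph{integrand} to $\xi_{w_0}(u)=\|w_0^{\lambda}u\|^{-\delta/\lambda}$), you are left with $m^{\BR}(\xi_{w_0}\ast_{k_0^{-1}\Omega}\Psi_\e)$. Applying Proposition~\ref{corpt} to this requires the hypothesis $\nu_o(\partial(\Omega^{-1}k_0 X_0^{-}))=\nu_o(\partial(\Omega^{-1}X_0^{+}))=0$, not the stated one, and yields
\[
\int_{\Omega^{-1}k_0}\xi_{w_0}(k^{-1})\,d\nu_o(kX_0^{-})
=\int_{\kappa\in\Omega^{-1}}\|w_0^{-\lambda}\kappa^{-1}\|^{-\delta/\lambda}\,d\nu_o(\kappa X_0^{+}),
\]
with $X_0^{+}$ in place of $X_0^{-}$. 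Your ``transport back to the $X_0^{-}$-parametrization'' does not fix this: the two pullbacks of $\nu_o$ to $K/M$ via $kM\mapsto kX_0^{+}$ and $kM\mapsto kX_0^{-}$ differ by the right $k_0$-shift on $K/M$, and there is no reason for them to agree on $\Omega^{-1}$. So your argument, as outlined, lands on a $k_0$-twisted variant of \eqref{eq:661} rather than \eqref{eq:661} itself; the paper's shorter route avoids this by never introducing $k_0$.
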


\begin{proof}
For $k\in K$ and $T>0$, let $s(k,T)=\sup\{r>0:\norm{w_{0}a_{-r}k}<T\}$. Then there exist
$A_{0}>0$ and $T_{0}>0$ such that if we define $s_{\pm}(k,T)$ via
\[
e^{s_{\pm}(k,T)}=(1\pm A_{0}T^{-\e_{0}})(T/\norm{w_{0}^{-\lambda}k})^{1/\lambda},
\]
then for all $T\geq T_{0}$, we have
$s_{-}(k,T)\leq s(k,T)\leq s_{+}(k,T)$.

By Theorem~\ref{mainergint}, for any $\phi\in C_{c}(\G\bs\T^{1}(\bH^{n}))$, we have
\begin{equation*}
\begin{array}{l}
\lim_{r\to\infty} e^{(n-1-\delta)r}\int_{E^{+}}\phi(\grinv(v))\,d\mu_E^{\Leb}(v)
\\
=\lim_{r\to\infty} e^{(n-1-\delta)r}\int_{E^{-}}\phi(\gr(v))\,d\mu_E^{\Leb}(v)
=\frac{\muPS_{E}(E^{-})}{\delta \cdot \abs{m^{\BMS}}}\cdot m^{\BR}(\phi).
\end{array}
\end{equation*}

Let $B_{T}^{-}(\Omega)=B_{T}\cap w_{0}A^{-}\Omega$ and
$$F_{B_{T}^{-}(\Omega)}(g):=\sum_{\gamma\in \G_{w_0}\ba\G}\chi_{B_T^{-}(\Omega)}
(w_0\gamma g).$$

In view of these observations, by arguing as in the proof of Proposition~\ref{posone}, we get
that for any $\psi\in C_{c}(\G\bs G)$,
\begin{equation*}
\begin{array}{l}
\lim_{T\to\infty}T^{-\delta/\lambda}\la F_{B_T^{-}(\Omega)}, \psi\ra
\\
=\lim_{T\to\infty} T^{-\delta/\lambda}
\int_{k\in \Omega} \int_{\{r>0:\|w_0 a_{-r} k\| <T\}}
\bigl[\int_{[h]\in \G_{w_0}\ba H} \psi(h a_{-r} k)\, dh\bigr] \rho(r)\, dr dk
 \\
=\frac{\mu^{\PS}_{E}(E^{-})}{\delta \cdot \abs{m^{\BMS}}}
 \cdot \int_{k\in\Omega} \norm{w_{0}^{-\lambda}k}^{-\delta/\lambda} m^{\BR}(\psi_{k})\,dk.
 \end{array}
 \end{equation*}
Now \eqref{eq:661} follows from the arguments as in the proof of
Theorem~\ref{thm:count-sector} (1).
\end{proof}

\begin{remark} \label{rem:par0}
If $G_{\R w_{0}}$ is parabolic, then $w_{0}a_{r}\to 0$ as $r\to -\infty$. Since $w_{0}\G$ is
discrete,
\begin{equation} \label{eq:finiteset}
\#(w_{0}\G \cap w_{0}A^-K)<\infty.
\end{equation}
\end{remark}

\begin{proof}[Proof of Theorem~\ref{thm:counting1}(2)]
If $G=HA^{+}K$, then (2) follows from (1) by
putting $\Omega=K$.

If $H$ is symmetric and $G\neq HA^{+}K$, $G=HA^+K\sqcup HA^- K$, and then
(2) follows by combining (1) and Proposition~
\ref{prop:HA-K} and putting $\Omega=K$.

If $G_{\R w_{0}}$ is parabolic, \eqref{eq:main:counting} follows from
Theorem~\ref{thm:count-sector} and \eqref{eq:finiteset}.
\end{proof}

\subsection{Counting in bisectors of $HA^+K$ coordinates}
\ref{oc}. $K$ a maximal compact subgroup.
We state a counting result for bisectors in $HA^+K$ coordinates. For any $g\in HA^+K$, we
set $a(g)$ to be the $A^+$-component of $g$, which is unique.
Consider bounded Borel subsets $\Omega_1\subset H$ and $ \Omega_2\subset K$ with
$\Omega_1(H\cap M)=\Omega_1$ and $M\Omega_2=\Omega_2$.
Set $$N_T(\Omega_1, \Omega_2)=\# ( \G \cap \Omega_1A_T^+\Omega_2)$$
where $A_T^+=\{a_r\in A^+: e^r <T\} .$ For the sake of simplicity,
we assume that the projection map $\Omega_1 \to \G\ba G$ is injective.

\begin{theorem}\label{bisec}
 If
 $\mu_E^{\PS}(\partial(\Omega_1(X_0)))=\nu_o(\partial(\Omega_2^{-1}(X_0^{-})))=0$, then
\[
\lim_{T\to\infty} \frac{ N_T(\Omega_1, \Omega_2)}{T^\delta}=
\frac{1}{\delta\cdot \abs{m^{\BMS}}}
\mu_E^{\PS}(\Omega_{1}(X_0))\cdot\nu_o(\Omega_{2}^{-1}(X_0^-)).
\]
\end{theorem}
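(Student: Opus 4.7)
The plan is to follow the counting-via-equidistribution scheme of Proposition~\ref{posone} and Theorem~\ref{thm:count-sector}(1), with the only new feature being the constraint $h\in\Omega_{1}$ on the $H$-side. The crucial observation is that Theorem~\ref{mainergint} applies to any Borel subset $F\subset E$ with $\mu^{\PS}_{E}(\partial F)=0$, so we may take $F:=\Omega_{1}(X_{0})\subset E$ by hypothesis. Likewise, the second hypothesis $\nu_{o}(\partial(\Omega_{2}^{-1}X_{0}^{-}))=0$ is exactly the input for Proposition~\ref{eq:BRformula} with $\Omega=\Omega_{2}$.

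Concretely, I would set
$$F_{T}(g):=\sum_{\gamma\in\G}\chi_{\Omega_{1}A_{T}^{+}\Omega_{2}}(\gamma g),$$
so that the injectivity of the natural map $\Omega_{1}\to\G\bs G$, together with the essential uniqueness of the $HA^{+}K$-decomposition (using the $(H\cap M)$- and $M$-invariance of $\Omega_{1}$ and $\Omega_{2}$), gives $F_{T}(e)=N_{T}(\Omega_{1},\Omega_{2})$. Let $\{\psi_{\e}\}$ be an approximate identity on $G$ and $\Psi_{\e}\in C_{c}(\G\bs G)$ its $\G$-average as in \eqref{eq:Psi_e}. The strong wavefront lemma (Lemma~\ref{swl}) in $HA^{+}K$-coordinates yields the standard sandwich
$$\langle F_{T,\e-},\Psi_{\e}\rangle\le F_{T}(e)\le \langle F_{T,\e+},\Psi_{\e}\rangle,$$
where $F_{T,\e\pm}$ is the analogous counting function for $O(\e)$-thickenings/erosions of $\Omega_{1}, A_{T}^{+}, \Omega_{2}$. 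Unfolding $\langle F_{T},\Psi_{\e}\rangle=\int_{\Omega_{1}A_{T}^{+}\Omega_{2}}\Psi_{\e}(g)\,dg$ and applying \eqref{eq:haar:g=hak} together with \eqref{eq:dh} rewrites this as
$$\langle F_{T},\Psi_{\e}\rangle=\int_{k\in\Omega_{2}}\int_{0}^{\log T}\rho(r)\Bigl(\int_{v\in\Omega_{1}(X_{0})}(\Psi_{\e})_{k}(\gr(v))\,d\mu^{\Leb}_{E}(v)\Bigr)\,dr\,dk,$$
where $(\Psi_{\e})_{k}(g):=\int_{M}\Psi_{\e}(gmk)\,dm$ defines a right-$M$-invariant element of $C_{c}(\G\bs G)\cong C_{c}(\T^{1}(X))$ depending continuously on $k\in K$.

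For each fixed $k$, Theorem~\ref{mainergint} applied to $F=\Omega_{1}(X_{0})$ and test function $(\Psi_{\e})_{k}$, combined with $\rho(r)\sim e^{(n-1)r}$ and $\int_{0}^{\log T}e^{\delta r}\,dr\sim T^{\delta}/\delta$, yields---uniformly in $k\in K$ by compactness of $K$ and continuity of $k\mapsto(\Psi_{\e})_{k}$---that
$$\lim_{T\to\infty}T^{-\delta}\langle F_{T},\Psi_{\e}\rangle=\frac{\mu^{\PS}_{E}(\Omega_{1}(X_{0}))}{\delta\cdot\abs{m^{\BMS}}}\,\tilde m^{\BR}(\chi_{\Omega_{2}}\ast_{\Omega_{2}}\psi_{\e}).$$
Proposition~\ref{eq:BRformula} (applied with $f\equiv 1$ and $\Omega=\Omega_{2}$, using $\nu_{o}(\partial(\Omega_{2}^{-1}X_{0}^{-}))=0$) evaluates $\lim_{\e\to 0}\tilde m^{\BR}(\chi_{\Omega_{2}}\ast_{\Omega_{2}}\psi_{\e})=\nu_{o}(\Omega_{2}^{-1}X_{0}^{-})$, while the sandwich collapses in the limit $\e\to 0$ because $\mu^{\PS}_{E}(\Omega_{1,\e+}(X_{0})\setminus\Omega_{1,\e-}(X_{0}))\to 0$ by the first boundary-nullity hypothesis, yielding the claim. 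The main technical obstacle is the uniformity in $k\in K$ of the asymptotic from Theorem~\ref{mainergint}, which is needed in order to exchange the $T\to\infty$ limit with the $k$-integration; this follows from the equicontinuity of $\{(\Psi_{\e})_{k}\}_{k\in K}$ in the sup-norm, since $\psi_{\e}$ is compactly supported and continuous.
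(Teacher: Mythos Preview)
Your proposal is correct and follows essentially the same route as the paper's own proof: unfold $\langle F_T,\Psi_\e\rangle$ via the $HAK$-decomposition of Haar measure, apply Theorem~\ref{m2} (equivalently Theorem~\ref{mainergint}) to the Borel set $F=\Omega_1(X_0)\subset E$ using $\mu_E^{\PS}(\partial(\Omega_1(X_0)))=0$, invoke Proposition~\ref{corpt} with $f\equiv 1$ on the $K$-side, and close the sandwich with the strong wavefront lemma and the two boundary-nullity hypotheses. Your treatment of the uniformity in $k$ via equicontinuity of $k\mapsto(\Psi_\e)_k$ is exactly what the paper does in the proof of Proposition~\ref{posone}.
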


This result for $H=K$ was also obtained by Roblin \cite{Roblin2003} by a different approach.
 When $\G$ is a lattice in a semisimple Lie group $G$ and $H=K$, the analogue of Theorem~\ref{bisec} was obtained in 
\cite{GorodnikOh2007}.

\begin{proof}

We define the following function on $\G\ba G$:
$$F_{T, \Omega_1, \Omega_2}(g):=\sum_{\gamma\in \G}\chi_{\Omega_1A^+_T\Omega_2}
 (\gamma g) .$$

For $\psi\in C_c(\G\ba G)$, given $\e>0$, by Theorem \ref{m2} for sufficiently large $T>1$,
\begin{equation}
\begin{array}{l}
 \la F_{T,\Omega_1, \Omega_2},\psi\ra =\int_{g\in \Omega_1A^+_T\Omega_2}\psi(g)dg
\\
=\int_{k\in \Omega_2} \int_{1\le e^r<T} \int_{h\in \Omega_1} \psi(h a_r k) \rho(a_r)
dh dr dk
\\ =
\int_{k\in \Omega_2}\int_{T_{0}\le e^r <T} \rho(a_r)
\bigl(\int_{h\in \Omega_1.X_{0}}\psi_{k}(h a_r)
dh\bigr) dr dk + O_{T_{0}}(1)
\\  =
\Bigl(\frac{1}{\delta\cdot |m^{\BMS}|}\mu_E^{\PS}(\Omega_{1}X_0^{+})
\int_{k\in \Omega_2}  m^{\BR}(\psi_{k}) dk+O(\e)\Bigr)
\\
\qquad \times \bigl(\int_{0}^{\log T}e^{(r-n-1)\delta}\rho(r) dr \bigr) + O_{T_{0}}(1)
\\ =
\frac{T^{\delta}}{\delta\cdot |m^{\BMS}|}\mu_E^{\PS}(\Omega_{1}X_0)\cdot
m^{\BR}(\chi_{K}{\ast} _{\Omega_2} \psi)+O(\e)T^{\delta}+O_{T_{0}}(1),
\label{fto1}
\end{array}
\end{equation}
where $\chi_K*_{\Omega_2}\psi(g)=\int_{k\in \Omega_2} \psi(gk) dk$.

By the assumptions on $\Omega_1$ and $\Omega_2$, for every $\e>0$ there exist
$\e$-neighborhoods $H_\e$ and $K_\e$ of $e$ in $H$ and $K$ respectively  such that
 for $\Omega_{1,\e^-}:=\cap_{h\in H_\e(H\cap M)} \Omega_1 h$,
$\Omega_{1,\e^+}:=\Omega_1 H_\e(H\cap M)$, $\Omega_{2,\e^-}:=\cap_{k\in K_\e} \Omega_2 k$ and
$\Omega_{2,\e^+}:=\Omega_2 K_\e$, as $\e\to 0$,
\begin{equation*} \label{eq:bdrypm}
\mu_E^{\PS}(\Omega_{1, \e^ +}(X_0)\setminus \Omega_{1,\e ^-}(X_0))\to 0, \
\nu_o(\Omega^{-1}_{2, \e^+}(X_0^-)\setminus\Omega_{2,\e^-}^{-1}(X_0^-))\to 0.
\end{equation*}

By Lemma \ref{swl}, for $\ell>1$ as therein,
there exists an $\e$-neighborhood $U_\e$ of $G$ such that for all $T\gg 1$,
\[
\begin{array}{c}
\Omega_1 A_T^+
 \Omega_2 U_{\ell\inv \e}
  \subset \Omega_{1,\e^+}A_{(1+\e)T}^+ \Omega_{2, \e^+}\\
\Omega_{1,\e^-}A_{(1-\e)T}^+ \Omega_{2, \e^-}\subset \cap_{g\in U_{\ell\inv \e}} \Omega_1 A_T^+ \Omega_2 g.
\end{array}
\]
 Let $\psi_\e\in
C_c(G)$ be a non-negative function supported on $U_{\ell\inv \e}$
and $\int\psi_\e dg=1$, and let $\Psi_\e\in C_c(\G\ba G)$ the
$\G$-average of $\psi_\e$:
$$\Psi_\e(g)=\sum_{\gamma\in \G} \psi_\e (\gamma g) .$$
It follows that
\begin{equation} \label{eq:FTpm}
\la F_{(1-\e)T,\Omega_{1,\e^-}, \Omega_{2,\e^-}}, \Psi_\e\ra
   \le F_{T, \Omega_1, \Omega_2}(e)\le
\la  F_{(1+\e)T,\Omega_{1,\e^+}, \Omega_{2,\e^+}}, \Psi_\e \ra.
\end{equation}

On the other hand, by Proposition~\ref{corpt},
\[
\lim_{\e\to 0} m^{\BR}(\chi_K  *_{\Omega_{2,\e^\pm}} \Psi_\e)
=\nu_o(\Omega_{2,\e^\pm}^{-1}(X_0^-)).
\]
Therefore by \eqref{fto1},
\[
\lim_{T\to\infty} T^{-\delta}\la F_{(1\pm \e)T,\Omega_{1,\e^\pm}, \Omega_{2,\e^\pm}}, \Psi_\e\ra
= \frac{\mu_E^{\PS}(\Omega_{1,\e^\pm}(X_0) ) \nu_o(\Omega_{2,\e^\pm}^{-1}(X_0^{-}))}{\delta\cdot |m^{\BMS}|}.
\]
By \eqref{eq:FTpm} we get
$$\lim_{T\to\infty} \frac{ F_{T,\Omega_1, \Omega_2}(e)}{T^{\delta}}
=\frac{1}{\delta\cdot |m^{\BMS}|}\mu_E^{\PS}(\Omega_{1,\e^+}(X_0))
\nu_o(\Omega_{2,\e^+}^{-1}(X_0^-)).$$
\end{proof}

\subsection{Counting theorems for $\G$ Zariski dense} \label{zde}
In the case when $\G$ Zariski dense,
Theorem \ref{thm:count-sector}
 holds for any norm on $V$ and for any $\Omega$ without the $M$-invariance condition.
Similarly, Theorem~\ref{bisec} holds without the $M$-invariance assumption
on $\Omega_1$ and $\Omega_2$.

The reason that this generalization is possible is because
for $\G$ Zariski dense, we use Theorem~\ref{Zdapp} instead of Theorem \ref{mainergint}.  In proving 
Theorem~\ref{thm:count-sector}, the place where we needed the $M$-invariance of $\Omega$ 
is  Proposition \ref{prop:H:E}; for general $\Omega$, we replace this proposition by:
\begin{equation*}
\begin{array}{l}
\int_{k\in \Omega} \int_{0}^{r_-(k,T)}  \rho(r)
\Bigl(\int_{\G_{w_0}\ba H} \psi_k(ha_r) dh \Bigr)\, dr dk \\
 \le \la F_{B_T(\Omega)}, \psi\ra 
\le
\int_{k\in \Omega} \int_0^{r_+(k,T)} \rho(r)
\Bigl(\int_{\G_{w_0}\ba H} \psi_k(ha_r) dh\Bigr)\, dr dk,
\end{array}
\end{equation*}
where $\psi_k(g):=\psi(gk)\in C_c(\G\ba G)$ is simply the translation of $\psi$ by $k$.

Applying Theorem \ref{Zdapp} to the inner integral in the above,
we deduce in the same way as in the proof of Proposition \ref{posone}
 that for any $\psi\in C_c(\G\ba G)$, we have
\begin{equation}\label{final1} 
\lim_{T\to\infty} T^{-\delta/\lambda} \la F_{B_T(\Omega)}, \psi\ra
=\frac{\mu^{\PS}_{E}(E^\ast)}{\delta \cdot\abs{m^{\BMS}}} \cdot
\bar{m}^{\BR}(\xi_{w_0}\ast_{\Omega} \psi),
\end{equation}
where $\mBR$ defined as in subsection \ref{A1} and
 $\xi_{w_{0}}(k):=\norm{w_{0}^{\lambda}k}^{-\delta/\lambda}$.
Now for a general norm $\|\cdot \|$ on $V$,
note that the function $\xi_{w_0}(k)$ 
is not necessarily $M$-invariant. 
However, for an approximate identity $\{\psi_\e\}_{\e>0}$ on $G$ and any $f\in C(K)$,
the proof of Proposition~\ref{corpt} can be easily modified to
prove \begin{equation}\label{final2} \lim_{\e\to 0}
 {\mBR}(f*_\Omega \psi_\e)=
 \int_{k\in \Omega^{-1}} f(k^{-1})\, d{\nu}_o(kX_0^-).
 \end{equation}

Hence applying \eqref{final1} to $\psi=\psi_\e$ and \eqref{final2}
to $f=\xi_{w_0}$ and by sending $\e \to 0$,
we obtain
\begin{equation}\label{final3}
\lim_{T\to\infty} { T^{-\delta/\lambda} }\cdot { F_{B_T(\Omega)}(e)}
=\frac{\mu^{\PS}_{E}(E^\ast)}{\delta \cdot\abs{m^{\BMS}}} \cdot
\int_{k\in \Omega^{-1}} \norm{w_{0}^{\lambda}k^{-1}}^{-\delta/\lambda} d\nu_o(kX_0^-).
\end{equation}
This explains the generalization of Theorem  \ref{thm:count-sector} (1).
The generalization for  Theorem  \ref{thm:count-sector} (2)
and Theorem~\ref{bisec} can be done similarly.

\begin{proof}[Proof of Theorem~\ref{thm:count-cone}]
In view of the above explanation, the result can be deduced from Theorem~\ref{thm:count-sector} (or its combination with 
Proposition~\ref{prop:HA-K} or Remark~\ref{rem:par0}) via elementary arguments; see
\cite{GorodnikOhShah2009}.
\end{proof}

\section{Appendix: Equality of two Haar measures} \label{a2}
Let $H$ be a symmetric group as in \S\ref{subsec:symmetric}.
As in Notation~\ref{not:approxid}(1), consider the Haar measure on $G$ corresponding to the Iwasawa decomposition 
$G=NAK$ given by 
\[
dg=e^{(n-1)t}\, dn\,dt\, dq, \text{ for $g=na_tq$, $n\in N$, $a_t\in A$, $q\in K$}.
\]
Corresponding to the generalized Cartan decomposition $G=HAK$, by \eqref{eq:haar:g=hak} the Haar measure on $G$ can be 
expressed as 
\[
dg=c_0\cdot  \rho(r)\,dh\, dr \, dk, \text{ for $g=ha_rk\in HAK$},
\]
where $c_0>0$ is a constant. We note that $dn$ is defined by Lemma~\ref{lemma:N-Leb} and $dh$ is determined by 
\eqref{eq:dh}. 

\begin{theorem} 
$c_0=1$. 
\end{theorem}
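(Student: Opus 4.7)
Both displayed expressions for $dg$ are left-invariant measures on the unimodular group $G$, so they must be proportional: $dg_H = c_0\, dg_I$ for some $c_0 > 0$. To pin down $c_0$ I will push both measures down to $\T^1(\bH^n) = G/M$ (using the probability Haar $dm$ on $M$) and identify each pushforward with the Liouville measure $\tilde m^{m,m}$ defined via \eqref{eq:m-mumu}. Since $\tilde m^{m,m}$ is the (unique up to scale) $G$-invariant measure on $G/M$, any positive discrepancy between the two pushforwards is exactly $c_0$.

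For the Iwasawa side, the identification is contained in the computation behind Proposition~\ref{prop:BR-Iwasawa}: specializing that proof by replacing $\{\nu_x\}$ with the Lebesgue density $\{m_x\}$ and $\delta$ by $n-1$, one obtains, for any right-$M$-invariant $\phi \in C_c(G)$,
\[
\int_G \phi\, dg_I \;=\; \int_{k\in K}\int_{r\in\R}\int_{n\in N}\phi(ka_rn)\,e^{-(n-1)r}\,dn\,dr\,dm_o(kX_0^-).
\]
Under $u = ka_rnX_0$, Lemma~\ref{lemma:N-Leb} rewrites $e^{(n-1)\beta_{u^+}(o,\pi(u))}dm_o(u^+)$ as $dn$, and a direct computation (as in the proof of Proposition~\ref{mbr}) gives $\beta_{u^-}(o,\pi(u))=-r$, so the right-hand side above is exactly $\tilde m^{m,m}(\phi)$. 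Hence $\int_G\phi\,dg_I = \tilde m^{m,m}(\phi)$, with no auxiliary constant.

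For the $HAK$ side, the analogous identity reads
\[
\int_G \phi\, dg_H \;=\; c_0\int_{k\in K}\int_{r\in\R}\int_{h\in H}\phi(ha_rk)\,\rho(r)\,dh\,dr\,dk.
\]
The inner integral over $H$ is rewritten, via \eqref{eq:dh}, as an integral against $\mu^{\Leb}_{\tilde E^*}$ on $\tilde E^* = HX_0$, whose explicit density is given by \eqref{eq:muELeb} as $e^{(n-1)\beta_{v^+}(o,\pi(v))}\,dm_o(v^+)$. Using the semi-invariance $(\gr)_\ast\mu^{\Leb}_{\cH^+_u} = e^{-(n-1)r}\mu^{\Leb}_{\cH^+_{\gr u}}$ from \eqref{eq:conf} to move the factor $a_r$ past the horospherical density, the factor $\rho(r)$ combines with the Busemann exponentials to reproduce the full two-point density of $\tilde m^{m,m}$. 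Matching at a generic point $u_0 = h_0a_{r_0}k_0 X_0$ where $(h,r,k)\mapsto ha_rk$ is a local diffeomorphism, one reads off that the pushforward of $dg_H$ to $G/M$ equals $c_0\tilde m^{m,m}$; comparing with the Iwasawa result forces $c_0 = 1$.

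The main obstacle is the non-injectivity of the $HAK$ parametrization: it has an $(H\cap M)$-fiber (and in the symmetric case degenerates at $r=0$), so the above rewriting via \eqref{eq:dh} is subtle globally. I sidestep this by matching the two $G$-invariant pushforwards on a single neighborhood of regular points, then invoking uniqueness of the $G$-invariant measure on $G/M$ up to a scalar. The cancellation that yields $c_0 = 1$ (rather than some transcendental constant) is structural: both $dn$ via Lemma~\ref{lemma:N-Leb} and $dh$ via \eqref{eq:dh} are built from the same probability density $m_o$ together with Busemann exponentials of the same weight $n-1$, so when both are inserted into the definition \eqref{eq:m-mumu} of $\tilde m^{m,m}$ no extra normalizing factor can appear.
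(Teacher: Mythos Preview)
Your Iwasawa-side identification is fine (it is exactly the specialization of Proposition~\ref{mbr} with $\{\nu_x\}$ replaced by $\{m_x\}$), but the $HAK$ side contains the entire difficulty of the theorem, and your argument there is not a proof.

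Concretely, you need to show that the pushforward to $G/M$ of $\rho(r)\,dh\,dr\,dk$ (without the $c_0$) equals $\tilde m^{m,m}$ on the nose. Your justification has three problems. First, the semi-invariance $(\gr)_\ast\mu^{\Leb}_{\cH^+_u}=e^{-(n-1)r}\mu^{\Leb}_{\cH^+_{\gr u}}$ from \eqref{eq:conf} is a statement about \emph{horospherical} leaves; in the symmetric case $\tilde E$ is the unit normal bundle of a totally geodesic submanifold, so $\mu^{\Leb}_{\tilde E}$ is not one of these measures and the identity does not apply. Second, the change of variables $(hX_0,r,k)\mapsto u=ha_rkX_0$ does not send $(v^+,r,kM)$ to $(u^+,u^-,s)$ by a product map: both $u^+$ and $u^-$ depend jointly on $h$, $r$, and $k$, so there is a genuine Jacobian to compute, and nothing you have written computes it. Third, you only know $\rho(r)\sim e^{(n-1)\lvert r\rvert}$, not its exact form; the phrase ``$\rho(r)$ combines with the Busemann exponentials to reproduce the full two-point density'' therefore cannot be verified by inspection. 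In effect you have assumed what is to be proved.

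The paper's proof does precisely the computation you are missing. It writes $ha_rk$ in $NAK$ coordinates for $h$ near $e$, evaluates the Radon--Nikodym derivative of the two Haar expressions at the single point $([h],r)=([e],s)$, and obtains $c_0=(e^{(n-1)s}/\rho(s))\bigl(1+O(e^{-2s(n-1)})\bigr)$. Since $c_0$ is constant in $s$ and $\rho(s)/e^{(n-1)s}\to 1$, letting $s\to\infty$ yields $c_0=1$. This is the ``matching at a generic point'' you allude to in your final step, but the Jacobian calculation together with the use of the asymptotic of $\rho$ \emph{is} the proof; without it your outline does not determine $c_0$.
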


\begin{proof}
Let the notation be as in \S\ref{cmbr}. Let $N^-=\{g\in G: a_{-r} g a_{r}\to e \text{ as $r\to\infty$}\}$.
Then for $y\in \Lie(N^-)$ we have $a_{-r}\exp(y)a_{r}=\exp(e^{-r}y)$. In view of $NAN^-M$-decomposition of a small 
neighborhood of $e$ in $G$, for $h$ in such a neighborhood we write
\[
h=n(x(h))a_{b(h)}v(y(h))m(h)
\]
where $x(h)\in \Lie(N)\cong \R^{n-1}$ and $n(x(h))=\exp(x(g))$, $y(h)\in\Lie(N^+)\cong\R^{n-1}$ and 
$v(y(h))=\exp(y(h))$, $b(h) \in\R$ and $m(h)\in M$. In particular,
\begin{equation} \label{eq:xhXo}
hX_{0}^{+}=n(x(h))a_{b(h)}v(y(h))m(h)X_{0}^{+}=n(x(h))X_{0}^{+}
\end{equation}

In view of the decompositions $G=HAK$ and $G=NAK$, For $h\in H$, $r>0$ and $k\in K$, we express
\[
ha_rk=n(z(h,r,k))a_{t(h,r,k)}q(h,r,k), \text{ where $q(h,r,k)\in K$.}
\]
Now for $h$ in a small neighborhood of $e$ in $H$, we have
\begin{align*}
ha_rk=n(x(h))a_{b(h)}v(y(h))m(h)a_rk =n(x(h))a_{r+b(h)}v(e^{-r}y(h))(m(h)k).
\end{align*}
In view of $G=NAK$ decomposition,
\begin{gather*}
v(e^{-r}y(h))=n(x_1(h,r))a_{b_1(h,r)}k_1(h,r), \text{ with } \\
\max(\norm{x_1(h,r)},\norm{b_1(h,r)},\norm{k_1(h,r)})=O(e^{-r}\norm{x(h)}).
\end{gather*}
Therefore
\begin{align*}
ha_{r}k&=n(x(h))a_{r+b(h)}n(x_1(h,r))a_{b_1(h,r)}(k_1(h,r)m(h)k) \\
&=n(x(h)+x_2(h,r))a_{r+b(h)+b_1(h,r)}(k_1(h,r)m(h)k),
\end{align*}
where $x_2(h,r)=e^{-r-b(h)}x_1(h,r)$. So
\begin{equation} \label{eq:x2}
\norm{x_2(h,r)}=e^{-2r}O(\norm{x(h)}).
\end{equation}
Therefore
\begin{equation}
\begin{array}{l}
z(h,r,k)=n(x(h)+x_2(h,r)), \quad t(h,r,k)=r+b(h)+b_1(h,r),   \label{eq:zhrk} \\
q(h,r,k) =k_1(h,r)m(h)k.
\end{array}
\end{equation}
Since $z(h,r,k)=z(hm,r,e)$ and $t(h,r,k)=t(hm,r,e)$ for any $k\in K$ and $m\in M\cap H=G_{X_0^+}\cap H$, we can write
$z(h,r,k)=z([h],r)$ and $t([h],r,k)=t([h],r)$, where $[h]=h(M\cap H)=hX_{0}^+$. Moreover, for any fixed $h$ and $r$, since $dk$ is 
$K$-invariant, we have that $dq(h,r,k)=dk$.

For $h$ in a small neighborhood of $e$ in $H$, $r>0$ and $k\in K$,
\begin{equation*}
\begin{array}{ll}
c_0
&=\frac{e^{(n-1)t(h,r,k)}\, dn(z(h,r,k))\,dt(h,r,k)\,dq(h,r,k)}{\rho(r)\,dh\,dr\,dk}\\
&=\frac{e^{(n-1)t([h],r)}\,dn(z([h],r))\,dt([h],r)}{\rho(r)\,dh\,dr}\cdot \frac{dq(h,r,k)}{dk} \\
&=\frac{e^{(n-1)t([h],r)}\,dn(z([h],r))\,dt([h],r)}{\rho(r)\,dh\,dr},
\end{array}
\end{equation*}
because $z$ and $t$ do not depend on $k$ and for fixed $(h,r)$ we have $d(q(h,r,k))=dk$. 
Now the numerator depends only on $[h]=hM$ and $\int_{m\in H\cap M}1\, dm=1$. Therefore
\begin{multline}
c_0=\frac{e^{(n-1)t([h],r)} e^{(n-1)\beta_{n(z([h],r))X_{0}^+}(o,n(z(h,r,k))o)}}
{\rho(r)e^{(n-1)\beta_{[h]}(o,[h]o)}} \times \\ 
\frac{dm_o(n(z([h],r))X_{0}^+)\, dt([h],r)} {dm_o([h])\, dr}.
\label{eq:RNd}
 \end{multline}

To compute $c_0$, we evaluate the Radon-Nikodym derivative at the point $([h],r)=([e],s)=(X_{0}^+,s)$ for any fixed $s>0$.
Then we consider the upper half space model $\R^{n-1}\times \R_{>0}$ for $\bH^{n}$ with $o=(0,1)$ and $X_{0}^{-}=\infty$. 
Then  $X_{0}^{+}=0\in\R^{n-1}=\partial\bH^{n}\setminus\{\infty\}$. Since $m_o$ is equivalent to the Lebesgue measure, let
\begin{equation} \label{eq:mo0}
0<C:=\frac{dm_{o}(x)}{dx}\Bigr |_ {x=0} \text{; also $n(x)X_{0}^{+}=x$ for all $x\in\R^{n-1}$}.
\end{equation}

We define a map $\Phi$ from a small neighborhood of $(0,s)$ in $\R^{n-1}\times\R$ to $\R^{n-1}\times \R$ by
\[
\Phi([h],r)=(n(z(h,r,k)X_{0}^{+},t([h],r))).
\]
To compute the Jacobian of $\Phi$ at the point $(X_{0}^{+},s)=(0,s)$,
we write $\Phi=(\Phi_{1},\Phi_{2})$ and $([h],r)=(z_{1},z_{2})$.

Fixing $[h]=[e]$, we get $z([e],r)=0$, $t([e],r)=r$. Therefore $\partial_{z_{2}}(\Phi_{1}, \Phi_{2})= (0,1)$.
Hence the Jacobian of $\Phi$ at $([h],r)=(0,s)$ is
\begin{equation*}
\begin{array}{ll}
J(\Phi)(0,s) & =\abs{\partial_{z_{1}}\Phi_{1}(0,s)} \\
& = \frac{dm_o(n(x_{2}([h],s)+x(h))X_0^+)}{dm_o([h])}
\text{ at $[h]=0$,   by \eqref{eq:zhrk}}
\notag \\
& =\frac{dm_{o}(n(x_{2}([h],s)+x([h]))X_{0}^+)}{dm_{o}(n(x([h]))X_{0}^{+})}, \text{ by  \eqref{eq:xhXo}} \\
&=\frac{d(x_{2}([h],s)+x([h]))}{d(x([h]))} \text{ at $[h]=0=x([h])$, by \eqref{eq:mo0}}  \notag \\
&=1+ \frac{d(x_{2}([h],s))}{d(x([h]))} \text{ at $[h]=0=x([h])$} \notag \\
&=1+O(e^{-2s(n-1)}), \text{ by \eqref{eq:x2}};
\end{array}
\end{equation*}
note that for a fixed $s$, due to \eqref{eq:xhXo} and \eqref{eq:mo0}, $x_2([h],s)$ is a smooth function of $x([h])$. By 
\eqref{eq:RNd}, 
the Radon-Nikodym derivative at $([h],r)=([e],s)$ is
\begin{align*}
c_{0}&=\frac{e^{(n-1)t([e],s)} e^{(n-1)\beta_{n(z([e],s))X_0^+}(o,n(z([e],s))o)}}
{\rho(s)e^{(n-1)\beta_{[e]}(o,[e]o)}}\cdot J(\Phi)(0,s)\\
&=(e^{(n-1)s}/\rho(s))(1+O(e^{-2s(n-1)})).
\end{align*}
Since $\rho(s)/e^{(n-1)s}\to 1$ as $s\to\infty$,  we have  $c_{0}=1$.
\end{proof}


\begin{thebibliography}{10}


\bibitem{Babillot2002}
Martine Babillot.
\newblock On the mixing property for hyperbolic systems.
\newblock {\em Israel J. Math.}, 129:61--76, 2002.

\bibitem{Beardon1983}
Alan~F. Beardon.
\newblock  The Geometry of Discrete Groups, volume~91 of {\em Graduate
  Texts in Mathematics}.
\newblock Springer-Verlag, New York, 1983.

%

\bibitem{BeO}
Yves Benoist and Hee Oh.
\newblock  Effective equidistribution of S-integral points on symmetric varieties
\newblock {\em To appear in Annales de L'Institut Fourier}, arXive 0706.1621

\bibitem{Bowditch1993}
B.~H. Bowditch.
\newblock Geometrical finiteness for hyperbolic groups.
\newblock {\em J. Funct. Anal.}, 113(2):245--317, 1993.

\bibitem{Bowen1971}
Rufus Bowen.
\newblock Periodic points and measures for {A}xiom {$A$} diffeomorphisms.
\newblock {\em Trans. Amer. Math. Soc.}, 154:377--397, 1971.

\bibitem{Burger1990}
Marc Burger.
\newblock Horocycle flow on geometrically finite surfaces.
\newblock {\em Duke Math. J.}, 61(3):779--803, 1990.

\bibitem{Dalbo2000}
F.~Dal'bo.
\newblock Topologie du feuilletage fortement stable.
\newblock {\em Ann. Inst. Fourier (Grenoble)}, 50(3):981--993, 2000.



\bibitem{DalboOtalPeign}
F. Dal'bo and J. P. Otal and M. Peign{\'e}.
\newblock S\'eries de {P}oincar\'e des groupes g\'eom\'etriquement finis.
 \newblock {\em Israel J. Math.},  118, 109--124, 2000.


\bibitem{DukeRudnickSarnak1993}
W.~Duke, Z.~Rudnick, and P.~Sarnak.
\newblock Density of integer points on affine homogeneous varieties.
\newblock {\em Duke Math. J.}, 71(1):143--179, 1993.


\bibitem{EskinMcMullen1993}
Alex Eskin and C.~T. McMullen.
\newblock Mixing, counting, and equidistribution in {L}ie groups.
\newblock {\em Duke Math. J.}, 71(1):181--209, 1993.

\bibitem{FlaminioSpatzier1990}
Livio Flaminio and Ralf Spatzier.
\newblock Geometrically finite groups, Patterson-Sullivan measures and Ratner's ridigity theorem.
\newblock {\em Invent math.}, 99:601--626, 1990.

\bibitem{GorodnikOh2007}
Alex Gorodnik and Hee Oh.
\newblock Orbits of discrete subgroups on a symmetric space and the
  {F}urstenberg boundary.
\newblock {\em Duke Math. J.}, 139(3):483--525, 2007.

\bibitem{GorodnikOhShah2009}
Alex Gorodnik, Nimish Shah, and Hee Oh.
\newblock Integral points on symmetric varieties and Satake compactifications.
\newblock {\em  Amer. J. Math.}, 131(1): 1-57, 2009.

\bibitem{GorodnikShahOh2010}
Alex Gorodnik, Nimish Shah, and Hee Oh.
\newblock Strong wavefront lemma and counting lattice points in sectors.
\newblock {\em Israel J. Math}, 176:419--444, 2010.


\bibitem{KleinbockMargulis1996}
D.~Y. Kleinbock and G.~A. Margulis.
\newblock Bounded orbits of nonquasiunipotent flows on homogeneous spaces.
\newblock In {\em Sina\u\i's {M}oscow {S}eminar on {D}ynamical {S}ystems},
  volume 171 of {\em Amer. Math. Soc. Transl. Ser. 2}, pages 141--172. Amer.
  Math. Soc., Providence, RI, 1996.

\bibitem{KontorovichOh2008}
Alex Kontorovich and Hee Oh.
\newblock Almost prime {P}ythagorean triples in thin orbits.
\newblock {\em To appear in Crelle}, arXiv:1001.0370.

\bibitem{KontorovichOh}
Alex Kontorovich and Hee Oh.
\newblock Apollonian circle packings and closed horospheres on hyperbolic
  3-manifolds; with appendix by Oh and Shah.
\newblock {\em Journal of AMS} 24 (2011) 603--648.

\bibitem{Lalley1989}
Steven~P. Lalley.
\newblock Renewal theorems in symbolic dynamics, with applications to geodesic
  flows, non-{E}uclidean tessellations and their fractal limits.
\newblock {\em Acta Math.}, 163(1-2):1--55, 1989.

\bibitem{LaxPhillips}
Peter~D. Lax and Ralph~S. Phillips.
\newblock The asymptotic distribution of lattice points in {E}uclidean and
  non-{E}uclidean spaces.
\newblock {\em J. Funct. Anal.}, 46(3):280--350, 1982.

\bibitem{Margulisthesis}
Gregory Margulis.
\newblock  On some aspects of theory of {A}nosov systems.
\newblock Springer Monographs in Mathematics. Springer-Verlag, Berlin, 2004.

\bibitem{Maskit1988}
Bernard Maskit.
\newblock Kleinian groups.
\newblock Springer-Verlag, Berlin, 1988.

\bibitem{OhICM}
Hee Oh.
\newblock Dynamics on {G}eometrically finite hyperbolic manifolds with
  applications to {A}pollonian circle packings and beyond.
\newblock {\em Proc. of I.C.M}, Hyderabad, Vol III, 1308--1331, 2010.


\bibitem{OhShahcircle}
Hee Oh and Nimish Shah.
\newblock The asymptotic distribution of circles in the orbits of {K}leinian
  groups.
\newblock {\em Inventiones Math.}, Vol 187, 1--35, 2012.

\bibitem{OhShahsphere}
Hee Oh and Nimish Shah.
\newblock Counting visible circles on the sphere and {K}leinian groups.
\newblock {\em Preprint}, arXiv:1004.2129.

\bibitem{OhShahlog}
Hee Oh and Nimish Shah.
\newblock  Limits of translates
  of divergent geodesics and integral points on one-sheeted hyperboloids.
\newblock {\em Preprint}, arXiv:1104.4988.

\bibitem{Patterson1976}
S.J. Patterson.
\newblock The limit set of a {F}uchsian group.
\newblock {\em Acta Mathematica}, 136:241--273, 1976.

\bibitem{Peigne2003}
Marc Peign{\'e}.
\newblock On the {P}atterson-{S}ullivan measure of some discrete group of
  isometries.
\newblock {\em Israel J. Math.}, 133:77--88, 2003.

\bibitem{Randol1984}
B. Randol.
\newblock The behavior under projection of dilating sets in a covering space.
\newblock {\em Trans. Amer. Math. Soc.}, 285:855--859, 1984.

\bibitem{Raghunathanbook}
M.~S. Raghunathan.
\newblock  Discrete subgroups of {L}ie groups.
\newblock Springer-Verlag, New York, 1972.
\newblock Ergebnisse der Mathematik und ihrer Grenzgebiete, Band 68.

\bibitem{Ratner1991} Marina Ratner.
  \newblock  On {R}aghunathan's measure conjecture.
  \newblock {\em Ann. of Math. (2)}, volume 134, 1991.

\bibitem{Roblin2003}
Thomas Roblin.
\newblock Ergodicit\'e et \'equidistribution en courbure n\'egative.
\newblock {\em M\'em. Soc. Math. Fr. (N.S.)}, (95):vi+96, 2003.

\bibitem{Rudolph1982}
Daniel~J. Rudolph.
\newblock Ergodic behaviour of {S}ullivan's geometric measure on a
  geometrically finite hyperbolic manifold.
\newblock {\em Ergodic Theory Dynam. Systems}, 2(3-4):491--512 (1983),

\bibitem{Sarnak1981}
Peter Sarnak.
\newblock Asymptotic behavior of periodic orbits of the horocycle flow and
  eisenstein series.
\newblock {\em Comm. Pure Appl. Math.}, 34(6):719--739, 1981.

\bibitem{Schapira2005}
Barbara Schapira.
\newblock Equidistribution of the horocycles of a geometrically finite surface.
\newblock {\em Int. Math. Res. Not.}, (40):2447--2471, 2005.

\bibitem{sch} H. Schlichtkrull.
\newblock Hyperfunctions and Harmonic
    Analysis on Symmetric Spaces, Progress in Mathematics, 49.
  Birkha\"user Boston, Inc., Boston, MA, 1984.


\bibitem{Sullivan1979}
Dennis Sullivan.
\newblock The density at infinity of a discrete group of hyperbolic motions.
\newblock {\em Inst. Hautes \'Etudes Sci. Publ. Math.}, (50):171--202, 1979.

\bibitem{Sullivan1984}
Dennis Sullivan.
\newblock Entropy, {H}ausdorff measures old and new, and limit sets of
  geometrically finite {K}leinian groups.
\newblock {\em Acta Math.}, 153(3-4):259--277, 1984.

\bibitem{Yau1975} S. T. Yau.
Harmonic functions on complete Riemannian manifolds.
\newblock {\em Comm. Pure. Appl. Math.}, (28):201--228, 1975.


\end{thebibliography}
\end{document}